\renewcommand{\subsection}{\@startsection{subsection}{1}{0pt}{-3.25ex plus -1ex minus-.2ex}{1.5ex plus.2ex}{\normalfont\it}}
\renewcommand{\section}{\@startsection{section}{1}{\parindent}{3.5ex plus 1ex minus .2ex}{2.3ex plus.2ex}{\sc}}
\renewcommand{\phi}{\varphi}
\renewcommand{\leq}{\leqslant}
\renewcommand{\geq}{\geqslant}
\renewcommand{\epsilon}{\varepsilon}
\renewcommand{\kappa}{\varkappa}
\DeclareMathOperator{\Ext}{Ext}
\DeclareMathOperator{\Hom}{Hom} 
 \DeclareMathOperator{\id}{id}
\DeclareMathOperator{\chr}{char} 
\DeclareMathOperator{\coker}{Coker} \DeclareMathOperator{\nis}{nis}
\newcommand{\cc}{\mathcal}
\newcommand{\bb}{\mathbb}
\newcommand{\op}{{\textrm{\rm op}}}
\newtheorem{thm}{Theorem}[section]
\newtheorem{prop}[thm]{Proposition}
\newtheorem{cor}[thm]{Corollary}
\newtheorem{lem}[thm]{Lemma}
\newtheorem{rem}[thm]{Remark}
\newtheorem{defs}[thm]{Definition}
\newtheorem{notn}[thm]{Notation}
\newtheorem{constr}[thm]{Construction}
\newtheorem{claim}[thm]{Claim}
\begin{document}

\footskip30pt


\title{ Homotopy invariant presheaves with framed transfers}
\author{Grigory Garkusha}
\address{Department of Mathematics, Swansea University, Singleton Park, Swansea SA2 8PP, United Kingdom}
\email{g.garkusha@swansea.ac.uk}

\author{Ivan Panin}
\address{St. Petersburg Branch of V. A. Steklov Mathematical Institute,
Fontanka 27, 191023 St. Petersburg, Russia}

\email{paniniv@gmail.com}

\address{Institute for Advanced Study, Einstein Drive, Princeton, NJ, 08540, USA}

\email{panin@math.ias.edu}

\thanks{Theorem \ref{thm1} is proved thanks to the support of the
Russian Science Foundation (grant no. 14-21-00035).}

\thanks{The second author thanks the
Institute for Advanced Study for the support and for the kind
hospitality during his visit in the Second Term of 2014--2015.}

\begin{abstract}
The category of framed correspondences $Fr_*(k)$, framed presheaves
and framed shea\-ves were invented by Voevodsky in his unpublished
notes~\cite{Voe2}. Based on the theory, framed motives are
introduced and studied in~\cite{GP3}. The main aim of this paper is
to prove that for any $\bb A^1$-invariant quasi-stable radditive
framed presheaf of Abelian groups $\cc F$, the associated Nisnevich
sheaf $\cc F_{\nis}$ is $\bb A^1$-invariant whenever the base field
$k$ is infinite of characteristic different from 2. Moreover, if the
base field $k$ is infinite perfect of characteristic different from
2, then every $\bb A^1$-invariant quasi-stable Nisnevich framed
sheaf of Abelian groups is strictly $\bb A^1$-invariant and
quasi-stable. Furthermore, the same statements are true in
characteristic 2 if we also assume that the $\bb A^1$-invariant
quasi-stable radditive framed presheaf of Abelian groups $\cc F$ is
a presheaf of $\bb Z[1/2]$-modules.

This result and the paper are inspired by Voevodsky's
paper~\cite{Voe1}.
\end{abstract}

\keywords{Motivic homotopy theory, framed presheaves}

\subjclass[2010]{14F42, 14F05}

\maketitle

\thispagestyle{empty} \pagestyle{plain}

\newdir{ >}{{}*!/-6pt/@{>}} 

\tableofcontents

\section{Introduction}
The main goal of the Voevodsky theory on framed correspondences (see~\cite[Introduction]{Voe2})
is to suggest a new approach to the stable motivic homotopy theory $SH(k)$ more
amenable to explicit calculations. Inspired by the Voevodsky theory~\cite{Voe2},
the authors introduce and develop the theory of framed motives of algebraic varieties in~\cite{GP3}.
One of the main purposes of the theory~\cite{GP3} is to give {\it explicit\/} computations
of the suspension spectra/bispectra of smooth algebraic varieties in terms of {\it explicit\/}
motivic spaces with framed correspondences. One of the {\it key steps\/} in the computations of~\cite{GP3} is
Theorem~\ref{thm1} proved in this paper. Theorem~\ref{thm1} is also inspired by Voevodsky's
paper~\cite{Voe1}.

The main result, Theorem~\ref{thm1}, can be reformulated in terms of $\bb ZF_*$-presheaves of
abelian groups on smooth algebraic varieties $Sm/k$. Recall that $\bb ZF_*(k)$ is
defined in~\cite{GP3} as an additive category whose objects are
those of $Sm/k$ and Hom-groups are defined as follows. We set for
every $n\geq 0$ and $X,Y\in Sm/k$,
   $$\bb ZF_n(X,Y)=\bb ZFr_n(X,Y)/\langle Z_1\sqcup Z_2-Z_1-Z_2\rangle,$$
where $Z_1,Z_2$ are supports of correspondences. In other words,
$\bb ZF_n(X,Y)$ is a free abelian group generated by the framed
correspondences of level $n$ with connected supports. We then set
   $$\Hom_{\bb ZF_*(k)}(X,Y)=\bigoplus_{n\geq 0}\bb ZF_n(X,Y).$$
The canonical morphisms $Fr_*(X,Y) \to \Hom_{\bb ZF_*(k)}(X,Y)$
define a functor $R: Fr_*(k) \to \bb ZF_*(k)$, which is the identity
on objects. For any $\bb A^1$-invariant quasi-stable $\bb
ZF_*$-presheaf of abelian groups $\cc F$ the functor $\cc F \circ R:
Fr_*(k)^{\op} \to Ab$ is $\bb A^1$-invariant quasi-stable radditive
framed presheaf of abelian groups.

By definition, a $Fr_*$-presheaf $\cc F$ of Abelian groups is
stable if for any $k$-smooth variety the pull-back map
$\sigma^*_X: \cc F(X) \to \cc F(X)$
equals the identity map, where
$\sigma_X=(X\times 0, X\times \bb A^1, t; pr_X) \in Fr_1(X,X)$.
In turn, $\cc F$ is quasi-stable if for any $k$-smooth variety the pull-back map
$\sigma^*_X: \cc F(X) \to \cc F(X)$
is an isomorphism. Also, recall that $\cc F$ is  radditive if
$\cc F(\emptyset)=\{0\}$ and $\cc F(X_1\sqcup X_2)=\cc F(X_1)\times \cc F(X_2)$.

For any $\bb A^1$-invariant stable (respectively quasi-stable) radditive $Fr_*$-presheaf of
Abelian groups $G$ there is a unique $\bb A^1$-invariant stable (respectively quasi-stable) $\bb
ZF_*$-presheaf of Abelian groups $\cc F$ such that $G=\cc F \circ
R$. This follows easily from the Additivity Theorem of~\cite{GP3}.

Therefore {\it the category of $\bb A^1$-invariant stable (respectively quasi-stable) radditive
framed pre\-sheaves of Abelian groups is equivalent to the category of
$\bb A^1$-invariant stable (respectively quasi-stable) $\bb ZF_*$-pre\-sheaves of Abelian groups}.

The latter means that the main result formulated in the abstract is equivalent to the following

\begin{thm}[Main]\label{thm1}
For any $\bb A^1$-invariant quasi-stable $\bb
ZF_*$-presheaf of abelian groups $\cc F$, the associated Nisnevich
sheaf $\cc F_{\nis}$ is $\bb A^1$-invariant whenever the base field
$k$ is infinite of characteristic different from 2. Moreover, if the
base field $k$ is infinite perfect of characteristic different from
2, then every $\bb A^1$-invariant quasi-stable Nisnevich framed
sheaf of Abelian groups is strictly $\bb A^1$-invariant and
quasi-stable. Furthermore, the same statements are true in
characteristic 2 if we also assume that the $\bb ZF_*$-presheaf of
abelian groups $\cc F$ is a presheaf of $\bb Z[1/2]$-modules.
\end{thm}

In the rest of the paper we suppose that {\it the base field $k$ is infinite}.

\subsubsection*{Acknowledgements} The authors would like to thank
Alexey Ananyevskiy,
Andrey Druzhinin and Alexander Neshitov for many
helpful discussions.

\section{A few theorems}
The main aim of this section is to state a few major theorems on
preshaves with framed transfers. As an application, we deduce the
following result (which is the first assertion of
Theorem~\ref{thm1}).

\begin{thm}\label{Nisnevich_sheaf}
For any $\bb A^1$-invariant quasi-stable $\bb ZF_*$-presheaf of
abelian groups $\cc F$, the associated Nisnevich sheaf $\cc
F_{\nis}$ is $\bb A^1$-invariant and quasi-stable if the
characteristic of the base field $k$ is different from 2. If the
characteristic of $k$ equals 2 and $\cc F$ is an $\bb A^1$-invariant
quasi-stable $\bb ZF_*$-presheaf of $\bb Z[1/2]$-modules, then the
associated Nisnevich sheaf $\cc F_{\nis}$ is $\bb A^1$-invariant and
quasi-stable presheaf of $\bb Z[1/2]$-modules.
\end{thm}

We need some definitions. We will write $(V,\varphi;g)$ for an
element $a$ in $Fr_n(X,Y)$. We also write $Z_a$ to denote the
support of $(V,\varphi;g)$. It is a closed subset in $X\times \bb
A^n$ which is finite over $X$ and which coincides with the common
vanishing locus of the functions $\varphi_1, ... , \varphi_n$ in
$V$. Next, by $\langle V,\varphi;g \rangle$ we denote the image of
the element $1\cdot(V,\varphi;g)$ in $\bb ZF_n(X,Y)$.

\begin{defs}
\label{sigma} Given any $k$-smooth variety $X$, there is a
distinguished morphism $\sigma_X=(X\times \bb A^1,t,pr_X)\in
Fr_1(X,X)$. Each morphism $f: Y\to X$ in $Sm/k$ can be regarded
tautologically as a morphism in $Fr_0(Y,X)$.
\end{defs}

In what follows by $SmOp/k$ we mean a category whose objects
are pairs $(X,V)$, where $X\in Sm/k$ and $V$ is an open subset of
$X$, with obvious morphisms of pairs.

\begin{defs}
\label{bar_ZF} Define $\bb ZF^{pr}_*(k)$ as an additive category
whose objects are those of $SmOp/k$
and Hom-groups are defined as follows. We set for every
$n\geq 0$ and $(X,V),(Y,W)\in SmOp/k$:
$$\bb ZF^{pr}_*((Y,W),(X,V))=\ker[\bb ZF_n(Y,X)\oplus \bb ZF_n(W,V)\xrightarrow{i^*_Y-i_{X,*}} \bb ZF_n(W,X)],$$
where $i_Y: W \to Y$ is the embedding and $i_X: V \to X$ is the
embedding. In other words, the group $\bb ZF^{pr}_*((Y,W),(X,V))$
consists of pairs $(a,b) \in \bb ZF_n(Y,X)\oplus \bb ZF_n(Y^0,X^0)$
such that $i_Y\circ b= a\circ i_X$. By definition, the composite
$(a,b)\circ (a',b')$ is the pair $((a\circ b), (a'\circ b'))$.

We define $\overline {\bb ZF}_*(k)$ as an additive category whose
objects are those of $Sm/k$ and Hom-groups are defined as follows.
We set for every $n\geq 0$ and $X,Y\in Sm/k$:
$$\overline {\bb ZF}_*(Y,X)= \coker[\bb ZF_*(\bb A^1\times Y,X) \xrightarrow{i^*_0-i^*_1} \bb ZF_*(Y,X)].$$

Next, one defines $\overline {\bb ZF}^{pr}_*(k)$ as an additive
category whose objects are those of $SmOp/k$
and Hom-groups are defined as follows. We set for every
$n\geq 0$ and $(X,V),(Y,W)\in SmOp/k$:
$$\overline {\bb ZF}^{pr}_*((Y,W),(X,V))= \coker[\bb ZF^{pr}_*(\bb A^1\times (Y,W),(X,V)) \xrightarrow{i^*_0-i^*_1} \bb ZF^{pr}_*((Y,W),(X,V)].$$
\end{defs}

\begin{notn}
Given $a \in \bb ZF_*(Y,X)$, denote by $[a]$ its class in $\overline
{\bb ZF}_*(Y,X)$. Similarly, if $r=(a,b) \in \bb
ZF^{pr}_*((Y,W),(X,V))$, then we will write $[[r]]$ to denote its
class in $\overline {\bb ZF}^{pr}_*((Y,W),(X,V))$.

If $X^0$ is open in $X$ and $Y^0$ is open in $Y$ and $g(Z^0) \subset
Y^0$ with $Z^0$ the support of $(V,\varphi;g)$, then $\langle\langle
V,\varphi;g \rangle\rangle$ will stand for the element $(\langle
V,\varphi;g \rangle,\langle V^0,\varphi^0;g^0 \rangle)$ in $\bb
ZF_n((X,X^0),(Y,Y^0))$.

We will as well write $[V,\varphi;g]$ to denote the class of
$\langle V,\varphi;g \rangle$ in $\overline {\bb ZF}_n(X,Y)$. In
turn, $[[V,\varphi;g]]$ will stand for the class of $\langle\langle
V,\varphi;g \rangle\rangle$ of $\overline {\bb
ZF}_n((X,X^0),(Y,Y^0))$.
\end{notn}

\begin{rem} Clearly, the category
$\bb ZF_*(k)$ is a full subcategory of $\bb ZF^{pr}_*(k)$ via the
assignment $X \mapsto (X,\emptyset)$. Similarly, the category
$\overline {\bb ZF}_*(k)$ is a full subcategory of $\overline {\bb
ZF}^{pr}_*(k)$ via the assignment $X \mapsto (X,\emptyset)$.
\end{rem}

In what follows we will also use the following category.

\begin{defs}
Let $\overline{\overline {\bb ZF}}_*(k)$ be a category whose objects
are those of $SmOp/k$ and whose $Hom$-groups are obtained from the
$Hom$-groups of the category $\overline {\bb ZF}^{pr}_*(k)$ by
annihilating the identity morphisms $id_{(X,X)}$ of objects of the
form $(X,X)$ for all $X \in Sm/k$.
\end{defs}

\begin{notn}
\label{sigma_and_pairs} If $r=(a,b) \in \bb ZF^{pr}_*((Y,W),(X,V))$,
then we will write $\overline {[[r]]}$ for its class in
$\overline{\overline {\bb ZF}}_*((Y,W),(X,V))$. For $(X,V)$ in
$SmOp/k$ we write $\langle\langle \sigma_X \rangle\rangle$ for the
morphism $(1\cdot \sigma_X, 1\cdot \sigma_V)$ in $\bb
ZF_1((X,V),(X,V))$.

We will denote by $\overline {[[V,\varphi;g]]}$ the class of the
element $[[V,\varphi;g]]$ in $\overline{\overline {\bb
ZF}}_n((X,X^0),(Y,Y^0))$.
\end{notn}

\begin{constr}
\label{F_on_pairs}
Let $\cc F$ be an $\bb A^1$-invariant $\bb ZF_*$-presheaf of abelian
groups. Then the assignments
$(X,V) \mapsto \cc F(X,V):= \cc F(V)/Im(\cc F(X))$
and
$$(a,b)\mapsto [(a,b)^*=b^*: \cc F(V)/Im(\cc F(X)) \to \cc F(W)/Im(\cc F(Y))],$$
for any $(a,b)\in \bb ZF_*((Y,W),(X,V))$ define a presheaf $\cc
F^{pairs}$ on the category $\overline{\overline {\bb ZF}}_*(k)$.
\end{constr}

The nearest aim is to formulate a series of theorems (each of which
is of independent interest), which are crucial for the proof of
Theorem~\ref{thm1}.

\begin{thm}[Injectivity on affine line]
\label{InjOnAffLine}
Let $U \subset \bb A^1_k$ be an open subset and let $i: V\hookrightarrow U$ be a non-empty open subset.
Then there is a morphism $r \in \bb ZF_1(U,V)$ such that
$[i]\circ [r]= [\sigma_U]$ in $\overline {\bb ZF_1}(U,U)$.
\end{thm}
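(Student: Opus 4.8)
The plan is to construct the correspondence $r$ explicitly using the geometry of the affine line and the theory of framed correspondences. Since $U\subset\bb A^1_k$ is a curve and $V\subset U$ is a non-empty open subset, the complement $S=U\setminus V$ is a finite set of closed points. The key idea is that for a point $u\in U$, the framed correspondence $\sigma_U$ at the level of supports is essentially the diagonal $\{u\}\times\{0\}$ inside $U\times\bb A^1$ with framing function $t$; I want to produce a framed correspondence $r$ from $U$ to $V$ which, after composing with the open inclusion $i:V\hookrightarrow U$ and passing to the $\bb A^1$-homotopy quotient $\overline{\bb ZF}_1$, agrees with $\sigma_U$.

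First I would reduce to a local/explicit picture. Choose a function, say coordinate $x$ on $\bb A^1$, and for each point of the finite set $S\subset U$ exploit the fact that $V$ is obtained by removing finitely many points; one can pick a polynomial $p\in k[x]$ vanishing on $S$ (with appropriate multiplicity) so that the "moving the support" trick works. The correspondence $r$ should have support a graph-like closed subset of $U\times\bb A^1$ finite over $U$, lying over $V$, with a framing of level $1$. Concretely, following the standard moving lemma for framed correspondences on curves (cf. the constructions behind the injectivity/moving arguments in \cite{Voe1} and \cite{GP3}), one takes $r$ to be given by data $(\bb A^1\times U, \psi; q)$ where $q$ lands in $V$, arranged so that the composite $i\circ r$ has support a deformation of the graph of $0:U\to\bb A^1$ that avoids the missing points. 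The $\bb A^1$-homotopy connecting $i\circ r$ to $\sigma_U$ is the heart of the construction: one builds a framed correspondence over $\bb A^1\times U$ interpolating, using a parameter to slide the support from the constant section $0$ into $V\times\{0\}$, and then checks that the two endpoints are $\langle\sigma_U\rangle$ and $\langle i\circ r\rangle$ respectively, so that $[i\circ r]=[\sigma_U]$ in $\overline{\bb ZF}_1(U,U)$.

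I expect the main obstacle to be the explicit verification that the homotopy is well-defined as an element of $\bb ZF_*(\bb A^1\times U, U)$ — that is, that the proposed support is genuinely \emph{finite} over $\bb A^1\times U$ throughout the deformation, and that the chosen framing functions have the prescribed vanishing locus equal to that support (neither too big nor too small). Degenerations where the support acquires extra components or fails to be finite over the base at special values of the homotopy parameter are exactly what one must rule out, and this is where one needs $U\subset\bb A^1$ (so that one has a global coordinate and can write everything in terms of polynomials) and possibly where the precise multiplicities of $p$ at the points of $S$ enter. A secondary, more bookkeeping-type difficulty is tracking the passage through the quotient defining $\overline{\bb ZF}_1$ and making sure that the relations $Z_1\sqcup Z_2 - Z_1 - Z_2$ built into $\bb ZF_*$ are respected, i.e.\ that when the support breaks into connected pieces one has correctly decomposed the class; this should follow formally from the additivity theorem \cite[Theorem~5.1]{GP3}, but it needs to be invoked carefully.

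Once $r$ is constructed and the homotopy is verified, the conclusion $[i]\circ[r]=[\sigma_U]$ in $\overline{\bb ZF}_1(U,U)$ is immediate, since $[i]\circ[r]$ is by definition the class of $i\circ r$ and we have exhibited an $\bb A^1$-homotopy between $i\circ r$ and $\sigma_U$ in $\bb ZF_1(U,U)$, hence equality of their images in the cokernel $\overline{\bb ZF}_1(U,U)=\coker[\bb ZF_1(\bb A^1\times U,U)\xrightarrow{i_0^*-i_1^*}\bb ZF_1(U,U)]$.
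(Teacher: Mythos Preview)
Your proposal outlines the right \emph{shape} of argument but is missing the actual mechanism, and the sketch as written cannot be completed in the way you suggest. You propose to find a single framed correspondence $r$ together with a direct $\bb A^1$-homotopy from $i\circ r$ to $\sigma_U$. This does not work at level one: $\sigma_U=(U\times\bb A^1,Y;pr_U)$ has support the zero section, and if you try to deform it by a monic degree-one framing $Y-g(X)$ so that the support lands over $V$, you need $g(U)\subset V$, but then the target map $pr_2$ no longer agrees with $pr_U$ on the support, and there is no evident homotopy back to $\sigma_U$ that keeps the support away from $U\times B$ where $B=U\setminus V$. The ``sliding'' picture you describe is the intuition from Voevodsky's finite correspondences, but in the framed setting the framing function is extra data that obstructs this naive move.

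The paper's proof resolves this by taking $r$ to be a \emph{virtual} correspondence, namely a difference
\[
r=\langle U\times V,\;F_{2n+1};\;pr_V\rangle-\langle U\times V,\;F_{2n};\;pr_V\rangle
\]
for suitable monic interpolating polynomials $F_m(Y)\in k[U][Y]$ of degree $m$ (with $F_m\equiv(Y-X)^m$ over $U\times(\bb A^1\setminus U)$ and $F_m\equiv 1$ over $U\times B$). After composing with $i$ each $F_m$ is homotopic to $(Y-X)^m$ by a linear homotopy (Lemma~\ref{G0_and_G1}), and the key non-obvious computation is Proposition~\ref{Diagonal}:
\[
[U\times U,(Y-X)^{2n+1};p_2]=[U\times U,(Y-X)^{2n};p_2]+[\sigma_U]
\]
in $\overline{\bb ZF}_1(U,U)$. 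This identity is what produces $[\sigma_U]$ from the difference; its proof requires factoring $Y^{2n+1}$ as $Y^{2n}(Y+1)$, splitting the support, and separately homotoping the two pieces (Lemmas~\ref{g_and_g'}--\ref{Higher_terms}). None of this is visible in your outline, and without it the construction of $r$ and the homotopy cannot be carried out.
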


\begin{thm}[Excision on affine line]
\label{Ex_on_line} Let $U \subset \bb A^1_k$ be an embedding. Let
$i: V \hookrightarrow U$ be an open inclusion with $V$ non-empty.
Let $S \subset V$ be a closed subset. Then there are morphisms $r
\in\bb ZF_1((U,U-S),(V,V-S))$ and $l\in\bb ZF_1((U,U-S),(V,V-S))$
such that
   $$\overline {[[i]]}\circ \overline {[[r]]}=\overline{[[\sigma_U]]} \ \text{and} \ \overline {[[i]]}\circ \overline {[[I]]}=\overline{[[\sigma_V]]}$$
in $\overline {\overline {\bb ZF_1}}((U,U-S),(U,U-S))$
and $\overline {\overline {\bb ZF_1}}((V,V-S),(V,V-S))$
respectively.
\end{thm}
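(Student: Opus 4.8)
The plan is to construct the framed correspondences $r$ and $l$ by explicit polynomial formulas on the affine line, as a relative refinement of the construction behind Theorem~\ref{InjOnAffLine}, and then to establish the two identities by producing explicit $\bb A^1$-homotopies whose residual terms factor through objects of the form $(X,X)$ and therefore vanish in $\overline{\overline{\bb ZF}}_*$.

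First I would pass to a normal form. Since $U$ is an open subset of $\bb A^1_k$ we may assume $U=\bb A^1_k\setminus D$ for a finite (possibly empty) set $D$, that $V=U\setminus C$ for a finite set $C$ disjoint from $D$, and that $S$ is a finite set of closed points of $V$; the cases where $S$ or $V\setminus S$ is empty are either vacuous or reduce immediately to Theorem~\ref{InjOnAffLine}. Fix a polynomial $q\in k[t]$ with $q^{-1}(0)=C$, so that $q$ is a unit on $V$ and on $V\setminus S$ and vanishes on the complement of $V$ in $U$.

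Next, the construction. Using $q$ and the coordinate $t$ I would build a level~$1$ framed correspondence $\rho\in\bb ZF_1(U,V)$ whose support $Z_\rho\subset\bb A^1\times U$ is a closed subscheme finite over $U$, cut out by a framing function built from a suitable power of $q$, and whose structure map is an explicit polynomial map into $V$ (here the standing hypothesis that $k$ is infinite enters, to put $Z_\rho$ in general enough position that it is finite over $U$ and that the structure map misses $C$). The new point compared with Theorem~\ref{InjOnAffLine} is to arrange the structure map so that the portion of $Z_\rho$ lying over $U\setminus S$ is carried into $V\setminus S$; then $\rho$ together with its restriction $\rho^0$ over the open part is a genuine element $r=(\rho,\rho^0)$ of $\bb ZF_1((U,U-S),(V,V-S))$, i.e. it satisfies the compatibility condition of Definition~\ref{bar_ZF}. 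I would produce $l\in\bb ZF_1((U,U-S),(V,V-S))$ by the same recipe, adapted so that the composite $l\circ i$ over the pair $(V,V-S)$ is the one that simplifies.

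Finally, the verification. Working in $\overline{\bb ZF}^{pr}_*$, I would write down an $\bb A^1$-homotopy — realized by a one-parameter deformation of the framing function and the structure map, affine-linear in the parameter — connecting the relative correspondence $i\circ r$ to $\langle\langle\sigma_U\rangle\rangle$ up to a single residual relative correspondence $\delta$. By construction $\delta$ will have its structure map landing in the region away from $S$, so that it factors as $(U,U-S)\to(U-S,U-S)\to(U,U-S)$; since $\overline{\overline{\bb ZF}}_*$ is obtained from $\overline{\bb ZF}^{pr}_*$ by annihilating the identities of the objects $(X,X)$, every morphism through such an object is zero, and hence $\overline{[[i]]}\circ\overline{[[r]]}=\overline{[[\sigma_U]]}$. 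Running the same argument for $l\circ i$ and $\langle\langle\sigma_V\rangle\rangle$ over the pair $(V,V-S)$ yields the second identity. The main obstacle is the simultaneous bookkeeping forced by the relative setting: one choice of polynomial data has to deliver at once (i) a support finite over the base with everywhere-defined framing and structure maps, (ii) a structure map compatible with restriction to the open parts and carrying the part of the support over $U\setminus S$ into $V\setminus S$ (the pair condition of Definition~\ref{bar_ZF}), and (iii) a homotopy whose endpoint discrepancy $\delta$ genuinely avoids $S$, so that the factorization through an object $(X,X)$ is actually available. Reconciling (i)--(iii) — essentially the relative upgrade of the moving argument underlying Theorem~\ref{InjOnAffLine} — is the crux; once it is in place, $\bb A^1$-homotopy invariance and the vanishing of morphisms through $(X,X)$-objects make the rest formal.
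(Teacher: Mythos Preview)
Your plan is correct and is broadly the paper's strategy, but the execution differs in a few concrete ways worth flagging. The paper does not use powers of a polynomial $q$ vanishing on $U\setminus V$, nor any general-position argument invoking the infinite-field hypothesis; instead, by Chinese-remainder interpolation it builds monic polynomials $F_m(Y)\in k[U][Y]$ with $F_m\equiv(Y-X)^m$ on $U\times((\bb A^1\setminus U)\sqcup S)$ and $F_m\equiv 1$ on $U\times(U\setminus V)$, and takes the structure map to be simply the second projection $pr_V$; the pair compatibility then comes for free because $F_m|_{(U-S)\times S}=(Y-X)^m|_{(U-S)\times S}$ is a unit, so the support over $U-S$ misses $S$. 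For the first identity the paper sets $r=\langle\langle F_{2n+1}\rangle\rangle-\langle\langle F_{2n}\rangle\rangle$ and invokes the key computation $[[U\times U,(Y-X)^{2n+1};p_2]]=[[U\times U,(Y-X)^{2n};p_2]]+[[\sigma_U]]$ of Proposition~\ref{Diagonal}, so that $[[i]]\circ[[r]]=[[\sigma_U]]$ already holds in $\overline{\bb ZF}_1$, with no residual term at all. Only the second identity uses your residual mechanism: with $l=\langle\langle F_m\rangle\rangle$ one homotopes $F_m|_{V\times V}$ to a product $(Y-X)\cdot E_{m-1}(Y)$ for an interpolated monic $E_{m-1}$ whose zero locus avoids both $\Delta(V)$ and $V\times S$, and the component of the support on $\{E_{m-1}=0\}$ produces exactly your $\delta$ factoring through $(V-S,V-S)$. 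Your single-correspondence-plus-residual scheme would also handle the first identity, so the divergence is one of convenience rather than necessity; what your outline is really missing is the specific interpolation recipe and the role of Proposition~\ref{Diagonal}.
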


\begin{thm}[Injectivity for local schemes]
\label{Local_Inj}
Let $X \in Sm/k$, $x\in X$ be a point, $U=Spec(\cc O_{X,x})$,
$i: D \hookrightarrow X$ be a closed subset. Then there exists
an integer $N$ and a morphism $r\in \bb ZF_N(U,X-D)$ such that
$$[r]\circ [j]= [can]\circ [\sigma^N_U]$$
in $\overline {\bb ZF}_N(U,X)$ with $j: X-D \hookrightarrow X$ the
open inclusion and $can: U \to X$ the canonical morphism.
\end{thm}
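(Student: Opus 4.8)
The plan is to reduce the statement about a local scheme $U=\spec(\cc O_{X,x})$ and an arbitrary closed subset $D$ to the one-dimensional statements already in hand, namely Theorems \ref{InjOnAffLine} and \ref{Ex_on_line}, via a Noether-type normalization that produces a smooth relative curve over a lower-dimensional base. First I would shrink $X$ to an affine neighbourhood of $x$ and, using that $k$ is infinite (so that generic linear projections are available), choose a finite surjective morphism $\pi\colon X \to \bb A^{d-1}_k \times \bb A^1_k$ whose composite with the second projection exhibits a Zariski neighbourhood of $x$ as a quasi-affine relative curve $p\colon X \to B$ over a smooth affine $B$ of dimension $d-1$, arranged so that $D$ is finite over $B$ and so that $x$ maps to a point $b\in B$. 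Passing to the henselization (or the local scheme) of $B$ at $b$, I get a base $B_b$ over which $X\times_B B_b$ is a smooth curve containing a section through $x$ and a closed subscheme $D$ finite over $B_b$; this is precisely the geometric situation to which the affine-line arguments generalize after base change.

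The core of the argument is then a relative, parametrized version of the excision/injectivity construction on the affine line. Over the one-dimensional fibre, Theorem \ref{Ex_on_line} supplies framed correspondences $r$ (and $l$) realizing $\overline{[[i]]}\circ\overline{[[r]]}=\overline{[[\sigma]]}$ with control over the supports relative to the closed subset coming from $D$; the key step is to check that the same linear-algebra data (the choice of the auxiliary function cutting out the support, the homotopy contracting onto a neighbourhood of the section) can be chosen uniformly in the parameter $B_b$, so that one obtains a framed correspondence $r \in \bb ZF_N(U, X-D)$ — the level $N$ arising because one iterates or composes the one-variable construction $d-1$ times, once for each normalization step, or because one must compose with $\sigma^N$ to absorb the discrepancy between $U$ and its image under the normalization. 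Here I would invoke quasi-stability implicitly through the appearance of $[\sigma^N_U]$: the identity is only claimed up to the stabilizing suspension, which is exactly what is needed since each excision step on a fibre introduces one factor of $\sigma$. The canonical morphism $can\colon U\to X$ enters because, after all the base changes and compositions, the correspondence $r$ composed with the open inclusion $j\colon X-D\hookrightarrow X$ lands not at the identity of $U$ but at the composite $U \xrightarrow{can} X$, twisted by the suspension.

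I expect the main obstacle to be the uniformity of the one-dimensional excision data over the parameter scheme: in Theorem \ref{Ex_on_line} the morphisms $r,l$ depend on choices (a defining function for the support, a contracting $\bb A^1$-homotopy) that a priori vary with the fibre, and one must show these can be made into honest framed correspondences over $B_b$ — i.e. that the relevant functions and the homotopy are defined globally on the total space, are étale/finite where required, and that the support remains finite over $U$ after the homotopy. This is where the local (henselian) nature of the base is essential: finiteness and the existence of the required étale neighbourhoods of the support are guaranteed only after localization, and the gluing of the $d-1$ successive normalization steps must be tracked carefully so that the final support is finite over $U$ and avoids $D$. A secondary technical point is bookkeeping the precise level $N$ and verifying that the equality holds already in $\overline{\bb ZF}_N(U,X)$ (i.e. modulo $\bb A^1$-homotopy) rather than only after further stabilization; this should follow by assembling the fibrewise $\bb A^1$-homotopies from Theorems \ref{InjOnAffLine} and \ref{Ex_on_line} into a single homotopy over $U$.
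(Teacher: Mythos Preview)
Your geometric setup is right in spirit: the paper does shrink $X$ so that it admits an almost elementary fibration $p\colon X\to B$ with $p|_D$ finite, and then works with the relative curve $\cc X=U\times_B X$ over $U=\spec(\cc O_{X,x})$ together with the diagonal section $\Delta\colon U\to\cc X$. But from that point on your proposed mechanism is not the one that works, and the gap is substantive.

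You propose to run a parametrized version of Theorems~\ref{InjOnAffLine} and~\ref{Ex_on_line} along the fibres, possibly iterating $d-1$ times. Neither variant is what happens, and for a reason: the affine-line proofs rely on explicit polynomial interpolation in $k[U][Y]$ (the $(Y-X)^m$ trick and Chinese-remainder choices of $F_m$) which have no direct analogue on a general smooth relative curve $\cc X/U$. The paper does not attempt to relativize those arguments. Instead the integer $N$ has nothing to do with iterating a one-variable step; it is the number of coordinates needed to \emph{frame} $\cc X$. Concretely, one chooses a closed embedding $\cc X\hookrightarrow\bb A^1\times U\times\bb A^N$, an \'etale neighbourhood $\cc V$ of $\cc X$ with functions $\varphi_1,\dots,\varphi_N$ generating the ideal of $\cc X$ in $\cc V$, and a retraction $r\colon\cc V\to\cc X$ (Lemma~\ref{Framing_of_X_script}). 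Separately, an Ojanguren--Panin type lemma produces a finite surjective $U$-morphism $(p_U,h_\theta)\colon\cc X\to\bb A^1\times U$ whose fibre $Z_1$ over $1$ avoids $\cc D$ and whose fibre $Z_0$ over $0$ decomposes as $\Delta(U)\sqcup Z'_0$ with $Z'_0$ also avoiding $\cc D$. From these data one writes down a single level-$N$ framed correspondence $h_\theta=(\cc X,\cc V,\varphi_1,\dots,\varphi_N;p_X\circ r)\in Fr_N(\bb A^1\times U,X)$, splits $\langle h_0\rangle$ along the decomposition of $Z_0$, and sets $r=\langle a_1\rangle-\langle a_0\rangle$ where $a_1,a_0$ are the pieces supported on $Z_1$ and $Z'_0$, both landing in $X-D$.

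The second missing ingredient is how the $\Delta(U)$-piece becomes $[can]\circ[\sigma^N_U]$. This is not automatic and is the content of the ``straightening'' Theorem~\ref{Spriamlenie_1}: after normalizing one of the $\varphi_i$ so that the Jacobian of the framing against the linear coordinates on $\bb A^N$ equals $1$ along $\Delta(U)$, that theorem identifies the class of the $\Delta(U)$-supported framed correspondence with $[g\circ s^h]\circ[\sigma^N_U]=[can]\circ[\sigma^N_U]$ in $\overline{\bb ZF}_N(U,X)$. Your sketch has no analogue of this step; without it you cannot pass from ``support equals the diagonal'' to the suspension $\sigma^N_U$. So the pieces you should look for are Lemma~\ref{OjPanin}, Lemma~\ref{Framing_of_X_script}, and Theorem~\ref{Spriamlenie_1}, rather than a relative form of the affine-line excision.
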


\begin{thm}[Excision on relative affine line]
\label{Exc_On_Rel_Aff_Line} Let $X \in Sm/k$, $x\in X$ be a point,
$W=Spec(\cc O_{X,x})$. Let $i:V=(\bb A^1_W)_f \subset \bb A^1_W$ be
an affine open subset, where $f\in \cc O_{X,x}[t]$ is monic such
that $f(0)\in \cc O^{\times}_{X,x}$.
Then there are morphisms
$$r \in\bb ZF_1((\bb A^1_W,\bb A^1_W-0\times W),(V,V-0\times W)) \ \ \text{and} \ \ l\in\bb ZF_1((\bb A^1_W,\bb A^1_W-0\times W),(V,V-0\times W))$$
such that
$$\overline {[[i]]}\circ \overline {[[r]]}=\overline{[[\sigma_{\bb A^1_W}]]} \ \text{and} \ \overline {[[l]]}\circ \overline {[[i]]}=\overline{[[\sigma_V]]}$$
in $\overline {\overline {\bb ZF_1}}((\bb A^1_W,\bb A^1_W-0\times W),(\bb A^1_W,\bb A^1_W-0\times W))$
and $\overline {\overline {\bb ZF_1}}((V,V-0\times W),(V,V-0\times W))$ respectively.
\end{thm}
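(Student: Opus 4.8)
The plan is to prove Theorem~\ref{Exc_On_Rel_Aff_Line} as the relative version, over the local scheme $W=\spec(\cc O_{X,x})$, of Theorem~\ref{Ex_on_line}. Concretely, I would feed into the construction of that theorem the data ``$U$'' $=\bb A^1_W$, ``$V$'' $=(\bb A^1_W)_f$ and ``$S$'' $=0\times W$, with the base $\spec k$ replaced throughout by $W$. The two statements then have exactly the same shape, and what remains is to verify that the framed correspondences $r$, $l$ and the homotopies produced in the proof of Theorem~\ref{Ex_on_line} survive this relativization.

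First I would record the geometric facts that legitimize the substitution. Since $f$ is monic, $\cc O_{X,x}[t]/(f)$ is free of finite rank over $\cc O_{X,x}$, so $Z(f)\subset\bb A^1_W$ is finite locally free over $W$ — the relative analogue of ``$U-V$ is a proper closed subset of the affine line over $k$''; in particular $V=(\bb A^1_W)_f$ is dense (here one also uses that $\cc O_{X,x}$ is a regular local ring, hence a domain, so that $\bb A^1_W$ is irreducible). Since $f(0)\in\cc O_{X,x}^{\times}$ one has $\cc O_{X,x}[t]/(f,t)=\cc O_{X,x}/(f(0))=0$, i.e. $Z(f)\cap(0\times W)=\emptyset$, so $0\times W\subset V$ and
\[
V-0\times W=\bb A^1_W-\bigl(Z(f)\cup(0\times W)\bigr)\subset\bb A^1_W-0\times W,
\]
whence $i\colon(V,V-0\times W)\hookrightarrow(\bb A^1_W,\bb A^1_W-0\times W)$ is precisely the inclusion excising the finite-over-$W$ set $Z(f)$ while fixing the section $0\times W$; equivalently, $(\bb A^1_W,\,Z(f)\cup\infty_W,\,0\times W)$ is a relative standard triple over $W$.

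Next I would carry out the construction of Theorem~\ref{Ex_on_line} over $W$. It produces, by explicit formulas in $t$ and in $f$, level-$1$ framed correspondences $r,l$ from $\bb A^1_W$ to $V$, explicit framed correspondences over $\bb A^1_W\times\bb A^1$ and over $V\times\bb A^1$ playing the role of the homotopies, and the bookkeeping needed to see that, after passing to $\overline{\bb ZF}^{pr}$ (cokernel of $i^*_0-i^*_1$) and then to $\overline{\overline{\bb ZF}}$ (annihilating the identities $\id_{(Y,Y)}$), the classes satisfy $\overline{[[i]]}\circ\overline{[[r]]}=\overline{[[\sigma_{\bb A^1_W}]]}$ and $\overline{[[l]]}\circ\overline{[[i]]}=\overline{[[\sigma_V]]}$. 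Finiteness of $Z(f)$ over $W$ is what keeps the supports finite over the relevant copy of $\bb A^1$ and the étale neighbourhoods and structure maps to $V$ globally defined over $W$, while $f(0)\in\cc O_{X,x}^{\times}$ keeps everything compatible near the zero section; hence the data descends to morphisms of pairs $r,l\in\bb ZF_1((\bb A^1_W,\bb A^1_W-0\times W),(V,V-0\times W))$ and the two identities hold in $\overline{\overline{\bb ZF_1}}((\bb A^1_W,\bb A^1_W-0\times W),(\bb A^1_W,\bb A^1_W-0\times W))$ and $\overline{\overline{\bb ZF_1}}((V,V-0\times W),(V,V-0\times W))$ respectively.

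The main obstacle is exactly the assertion that the proof of Theorem~\ref{Ex_on_line} is stable under this relativization: it is written over a field, and one must check that no step uses that the base is a field rather than the regular local ring $\cc O_{X,x}$ — that all schemes occurring remain smooth (resp. regular) over $k$, that the closed subsets occurring remain finite over the relevant copy of $\bb A^1$, and that the constructed homotopies remain genuine $\bb A^1$-homotopies of framed correspondences over $W$. I expect the delicate case to be the ``left inverse'' $l$: producing a level-$1$ framed correspondence $\bb A^1_W\to V$ whose composite with $i$ is $\langle\langle\sigma_V\rangle\rangle$ requires a framing on a neighbourhood of the graph of $i$ that is simultaneously compatible with the excision of $Z(f)$ and with the quasi-stability morphism $\langle\langle\sigma\rangle\rangle$, and the resulting identity becomes valid only after annihilating $\id_{(Y,Y)}$, so the relations defining $\overline{\overline{\bb ZF}}$ must be invoked with care; this is also the natural place for any extra input, such as Theorem~\ref{Local_Inj}, to enter. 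Once both identities are verified over $W$ the proof is complete, the theorem being already formulated over $W$.
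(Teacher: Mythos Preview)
Your approach is essentially identical to the paper's: set $U=\bb A^1_W$, $V=(\bb A^1_W)_f$, $S=0\times W$, $A=\emptyset$, $B=\{f=0\}$, observe that $B$ is finite over $W$ (since $f$ is monic) and $B\cap S=\emptyset$ (since $f(0)$ is a unit), and then repeat the proof of Theorem~\ref{Ex_on_line} verbatim. Your anticipated need for extra input such as Theorem~\ref{Local_Inj} in the construction of $l$ does not materialize --- the explicit polynomial constructions of Section~\ref{Inj_and_Exc_On_Aff_Line} go through unchanged over the local base $W$.
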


To formulate further two theorems relating \'{e}tale excision
property, we need some preparations. Let
$S\subset X$ and $S'\subset X'$ be closed subsets. Let
$$\xymatrix{V'\ar[r]\ar[d]&X'\ar^{\Pi}[d]\\
               V\ar[r]&X}$$
be an elementary distinguished square with $X$ and $X'$ affine
$k$-smooth. Let $S=X-V$ and $S'=X'-V'$ be closed subschemes equipped
with reduced structures. Let $x\in S$ and $x' \in S'$ be two points
such that $\Pi(x')=x$. Let $U=Spec(\cc O_{X,x})$ and $U'=Spec(\cc
O_{X',x'})$. Let $\pi: U' \to U$ be the morphism induced by $\Pi$.

\begin{thm}[Injective \'{e}tale excision]
\label{Inj_Etale_exc} Under the notation above there is an integer
$N$ and a morphism $r \in\bb ZF_N((U,U-S),(X',X'-S'))$ such that
$$\overline {[[\Pi]]}\circ \overline {[[r]]}=\overline{[[can]]}\circ \overline{[[\sigma^N_U]]} $$
in $\overline {\overline {\bb ZF_N}}((U,U-S),(X,X-S))$,
where $can: U \to X$ is the canonical morphism.
\end{thm}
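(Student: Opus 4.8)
The plan is to reduce the étale-excision statement over the local ring $U=\spec(\cc O_{X,x})$ to the one-dimensional excision statements already available, namely Theorem~\ref{Ex_on_line}, Theorem~\ref{Local_Inj}, and above all Theorem~\ref{Exc_On_Rel_Aff_Line}, which is tailored precisely to a monic polynomial over a local ring with unit constant term. The first step is geometric normalization: since $X$ and $X'$ are affine and $\Pi\colon X'\to X$ is étale and an isomorphism over $V=X-S$, one chooses (after possibly shrinking $X$ around $x$, which is harmless over $U$) a presentation of $X'$ near $x'$ as a standard étale neighbourhood. Concretely, I would find an étale map $X\to\bb A^{d}$ and a function on $X'$ exhibiting $X'$ as an open subscheme of $\spec\big(\cc O_X[t]/(f)\big)$ with $f\in\cc O_{X,x}[t]$ monic; since $\Pi^{-1}(S)\cap(\text{the relevant component})$ maps to $x$ and $\Pi$ is an isomorphism away from $S$, one arranges $f(0)\in\cc O_{X,x}^{\times}$, so that the closed point $0\times W$ of the affine line plays the role of the excised locus. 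This is the point where the "standard triple'' / Gabber-type geometric presentation lemma for elementary distinguished squares is used, exactly as in Voevodsky's \cite{Voe1}.

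The second step is to transport the excision morphism along this presentation. Write $W=\spec(\cc O_{X,x})$ as before; Theorem~\ref{Exc_On_Rel_Aff_Line} supplies, for $V=(\bb A^1_W)_f\hookrightarrow \bb A^1_W$, morphisms $r,l\in\bb ZF_1$ between the pairs $(\bb A^1_W,\bb A^1_W-0\times W)$ and $(V,V-0\times W)$ with $\overline{[[i]]}\circ\overline{[[r]]}=\overline{[[\sigma_{\bb A^1_W}]]}$. The morphism $\bb A^1_W-0\times W \to \bb A^1_W$ contains, as an open piece, a neighbourhood of the fibre over $x$ of $X'$, and $\bb A^1_W$ itself receives the canonical map $\sigma$ from $U$ after a finite iteration; composing with the open immersion $(V,V-0\times W)\hookrightarrow (X',X'-S')$ (which is a morphism in $\bb ZF_0$, hence in $\bb ZF^{pr}_*$) and with the closed/finite correspondence coming from the monic $f$, one produces a framed correspondence $r\in\bb ZF_N((U,U-S),(X',X'-S'))$ for a suitable level $N$ (the $N$ accumulates from iterating $\sigma$ and from the level-$1$ excision morphisms). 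The identity $\overline{[[\Pi]]}\circ\overline{[[r]]}=\overline{[[can]]}\circ\overline{[[\sigma^N_U]]}$ then follows by pushing the corresponding identity of Theorem~\ref{Exc_On_Rel_Aff_Line} through $\Pi$, using that $\Pi$ is an isomorphism over $V$ so that the "open'' components of all pairs match on the nose, while the "closed'' parts are killed in $\overline{\overline{\bb ZF_*}}$ because the relevant identity morphisms $\id_{(-,-)}$ of square pairs are annihilated there.

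The third and bookkeeping step is to check homotopy-invariance of each step modulo $\bb A^1$, i.e.\ that every auxiliary choice (the presentation of $X'$, the shrinking of $X$, the choice of the étale coordinate) changes $r$ only by a boundary, so that the class $\overline{[[r]]}$ in $\overline{\overline{\bb ZF_N}}$ is well defined; this is where the passage to the categories $\overline{\bb ZF}_*$ and $\overline{\overline{\bb ZF}}_*$ (cokernels of $i_0^*-i_1^*$, and annihilation of diagonal-pair identities) does the work, and it is exactly analogous to the reductions in the proofs of Theorems~\ref{Local_Inj} and \ref{Ex_on_line}. The main obstacle I expect is the \emph{geometric} first step: producing, uniformly over the local ring $U$ and compatibly with the support conditions required to even write down a framed correspondence, a monic $f\in\cc O_{X,x}[t]$ with $f(0)$ a unit whose vanishing locus realizes $X'$ near $x'$ together with the identification of the excised closed set $S'$ with $0\times W$. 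Once that presentation is in hand, the rest is a (careful but essentially formal) composition of the already-established one-variable excision isomorphisms together with tracking the framing level $N$.
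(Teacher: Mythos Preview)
Your plan has a genuine geometric gap in the first step, and the rest does not recover from it.

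You propose to present $X'$ near $x'$ as an open subscheme of $\spec(\cc O_{X}[t]/(f))$ with $f$ monic and $f(0)\in\cc O_{X,x}^{\times}$, and then to invoke Theorem~\ref{Exc_On_Rel_Aff_Line}. But Theorem~\ref{Exc_On_Rel_Aff_Line} is a \emph{Zariski} excision statement between $(\bb A^1_W,\bb A^1_W-0\times W)$ and an \emph{open} subscheme $(V,V-0\times W)$ of the relative affine line; both sides have dimension $\dim W+1$, and the excised locus $0\times W$ is a codimension-one section. In the \'etale square at hand, $\Pi\colon X'\to X$ is \'etale (so $\dim X'=\dim X=\dim U$), $S$ can have arbitrary codimension, and a standard-\'etale presentation exhibits $X'$ as a locally \emph{closed} subscheme of $\bb A^1_U$, disjoint from $0\times U$ precisely because $f(0)$ is a unit. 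There is thus no identification of $S'$ with $0\times W$, no open inclusion of $X'$ into $\bb A^1_W$, and no way to read off a morphism in $\bb ZF_N((U,U-S),(X',X'-S'))$ from the level-$1$ morphisms of Theorem~\ref{Exc_On_Rel_Aff_Line}. The sentence ``composing with the open immersion $(V,V-0\times W)\hookrightarrow (X',X'-S')$'' is not meaningful: there is no such immersion.

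The paper's argument is of a different nature. After shrinking and normalizing (Definition~\ref{X'_new}, Remark~\ref{X'andX'_new}), one chooses an almost elementary fibration $q\colon X\to B$ with $q|_{Y\cup S}$ finite (Lemma~\ref{Elementary_fibr}), so that $U\times_B X$ is an affine relative curve over $U$. One then constructs by hand a function $h_\theta$ on $\bb A^1\times U\times_B X$ whose zero locus $Z_\theta$ is finite over $\bb A^1\times U$, with $Z_0=\Delta(U)\sqcup G$, $G\subset U\times(X-S)$, and $Z_1$ lifting through $\Pi$ to a piece $Z'_1\subset U\times_B X'$ (Proposition~\ref{h_theta}, built from sections of ample line bundles on the compactification). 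Together with a framing of $X$ inside $B\times\bb A^N$ (Proposition~\ref{Framing_Of_X}), this yields explicit framed correspondences $a\in Fr_N(U,X')$, $b\in Fr_N(U,X)$ and a homotopy $H_\theta$ with $H_1=\Pi\circ a$, $H_0=b$. Splitting $\langle\langle b\rangle\rangle=\langle\langle b_1\rangle\rangle+\langle\langle b_2\rangle\rangle$ along $Z_0=\Delta(U)\sqcup G$, the piece $b_2$ factors through $j\colon(X-S,X-S)\hookrightarrow(X,X-S)$ and dies in $\overline{\overline{\bb ZF}}_N$, while $[[b_1]]=[[can]]\circ[[\sigma^N_U]]$ by the ``straightening'' Theorem~\ref{Spriamlenie_1} (after normalizing the Jacobian, equation~(\ref{J_new_equal_one})). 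None of these ingredients --- the elementary fibration, the explicit $h_\theta$, the framing of $X$, or Theorem~\ref{Spriamlenie_1} --- appears in your plan, and they are not replaceable by Theorem~\ref{Exc_On_Rel_Aff_Line}.
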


The statements of the next theorem depend on the characteristic of the base field $k$.

\begin{thm}[Surjective \'{e}tale excision]
\label{Surj_Etale_exc} Under the above notations suppose in addition that $S$ is $k$-smooth
and $k$ is of characteristic different from 2. Then there are an integer
$N$ and a morphism $l \in \bb ZF_N((U,U-S),(X',X'-S'))$ such that
$$\overline {[[l]]}\circ \overline {[[\pi]]}=\overline{[[can']]}\circ \overline{[[\sigma^N_{U'}]]} $$
in $\overline {\overline {\bb ZF_N}}((U',U'-S'),(X',X'-S'))$ with
$can': U' \to X'$ the canonical morphism.

If the charcteristic of $k$ is $2$, then
there are an integer
$N$ and a morphism $l \in \bb ZF_N((U,U-S),(X',X'-S'))$ such that
$$2\cdot \overline {[[l]]}\circ \overline {[[\pi]]}=2\cdot \overline{[[can']]}\circ \overline{[[\sigma^N_{U'}]]} $$
in $\overline {\overline {\bb ZF_N}}((U',U'-S'),(X',X'-S'))$.
\end{thm}

We are now in a position to prove the following

\begin{thm}
\label{Few_On_P_w_Tr} For any $\bb A^1$-invariant quasi-stable $\bb
ZF_*$-presheaf of abelian groups $\cc F$ the following statements
are true:
\begin{itemize}
\item[(1)] under the assumptions of Theorem \ref{InjOnAffLine} the map
$i^*: \cc F(U)\to  \cc F(V)$ is injective;
\item[(2)] under the assumptions of Theorem \ref{Ex_on_line} the map
$$[[i]]^*: \cc F(U-S)/\cc F(U) \to  \cc F(V-S)/\cc F(V)$$ is an isomorphism;
\item[(3)] under the assumptions of Theorem \ref{Local_Inj} the map
$$\eta^*: \cc F(U)\to  \cc F(Spec(k(X))$$
is injective, where $\eta: Spec(k(X)) \to U$
is the canonical morphism;
\item[(3')] under the assumptions of Theorem \ref{Local_Inj} let $U^h_x$ be the henselization of $U$ at $x$ and let
$k(U^h_x)$ be the function field on $U^h_x$. Then the map
$$\eta^*_h: \cc F(U^h_x)\to  \cc F(Spec(k(U^h_x))$$
is injective, where $\eta_h: Spec(k(U^h_x)) \to U^h_x$
is the canonical morphism;
\item[(4)] under the assumptions of Theorem \ref{Exc_On_Rel_Aff_Line} the map
$$[[i]]^*: \cc F(\bb A^1_W-0\times W)/\cc F(\bb A^1_W) \to  \cc F(V-0\times W)/\cc F(V)$$ is an isomorphism;
\item[(5)] under the assumptions of Theorems \ref{Inj_Etale_exc} and \ref{Surj_Etale_exc}
the map
$$[[\Pi]]^*: \cc F(U-S)/\cc F(U) \to  \cc F(U'-S')/\cc F(U')$$ is an
isomorphism whenever the characteristic of $k$ is different from 2.

If the characteristic of $k$ is 2 and the presheaf $\cc F$ is a presheaf of
$\bb Z[1/2]$-modules, then the map
$$[[\Pi]]^*: \cc F(U-S)/\cc F(U) \to  \cc F(U'-S')/\cc F(U')$$ is an isomorphism.
\end{itemize}
\end{thm}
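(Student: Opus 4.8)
The idea is to deduce each assertion from the corresponding morphism-level identity of Theorems~\ref{InjOnAffLine}--\ref{Surj_Etale_exc} by applying the contravariant functor attached to $\cc F$. Since $\cc F$ is $\bb A^1$-invariant it sends $\bb A^1$-homotopic morphisms to the same map, hence factors through $\overline{\bb ZF}_*(k)$; and by Construction~\ref{F_on_pairs} the rule $(X,V)\mapsto\cc F(X,V)=\cc F(V)/\im(\cc F(X))$ defines a presheaf $\cc F^{pairs}$ on $\overline{\overline{\bb ZF}}_*(k)$ (all presheaves tacitly extended to essentially smooth schemes by the usual colimit). Since $\cc F$ is quasi-stable, $\sigma_X^*$ is an automorphism of $\cc F(X)$ for every smooth $X$, so every power $(\sigma_X^N)^*$ is an automorphism; and, by quasi-stability applied to both $X$ and $V$ together with naturality of $\sigma$, the morphism $\langle\langle\sigma_X\rangle\rangle=(\sigma_X,\sigma_V)$ induces an automorphism of $\cc F^{pairs}(X,V)$. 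The basic mechanism is now purely formal: from an identity of the form $\mu\circ r=\sigma^N$ one gets $r^*\circ\mu^*=(\sigma^N)^*$, an automorphism, whence $\mu^*$ is injective; from $l\circ\mu=\sigma^N$ one gets $\mu^*\circ l^*=(\sigma^N)^*$, an automorphism, whence $\mu^*$ is surjective.

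With this in hand, (1) is immediate from Theorem~\ref{InjOnAffLine}, since $[i]\circ[r]=[\sigma_U]$ gives $r^*\circ i^*=\sigma_U^*$. Statement~(2) follows from Theorem~\ref{Ex_on_line}: the identity involving $r$ makes $[[i]]^*$ injective and the one involving $l$ makes it surjective, hence an isomorphism, using that $\cc F^{pairs}(U,U-S)=\cc F(U-S)/\im(\cc F(U))$ by construction. Statement~(4) is obtained the same way from Theorem~\ref{Exc_On_Rel_Aff_Line}, after extending $\cc F$ and $\cc F^{pairs}$ to the essentially smooth scheme $W=\spec(\cc O_{X,x})$ by the colimit over open neighbourhoods of $x$ in $X$.

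For (3) we use $\cc F(U)=\colim_{x\in V}\cc F(V)$ and $\cc F(\spec k(X))=\colim_{\emptyset\ne V}\cc F(V)$, the colimits running over opens $V\subseteq X$. If $\eta^*\alpha=0$, pick a representative of $\alpha$ over some $V_0\ni x$; replacing $X$ by $V_0$ we may assume $\alpha=can^*\alpha_0$ with $\alpha_0\in\cc F(X)$, and then $\eta^*\alpha=0$ forces $j^*\alpha_0=0$ for the inclusion $j\colon X-D\hookrightarrow X$ of a suitable dense open $X-D$. Theorem~\ref{Local_Inj} yields $r^*\circ j^*=(\sigma_U^N)^*\circ can^*$, so $(\sigma_U^N)^*\alpha=r^*(j^*\alpha_0)=0$, whence $\alpha=0$ since $(\sigma_U^N)^*$ is an automorphism. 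Statement~(3$'$) follows from~(3) by passing to the filtered colimit over \'etale neighbourhoods $(U',x')$ of $(X,x)$: one has $\cc F(U^h_x)=\colim\cc F(\spec\cc O_{U',x'})$ and $\cc F(\spec k(U^h_x))=\colim\cc F(\spec k(U'))$, under which $\eta^*_h$ is the colimit of the maps $\eta^*_{U'}$, each injective by~(3), and a filtered colimit of injections is injective.

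Finally, (5) combines Theorems~\ref{Inj_Etale_exc} and~\ref{Surj_Etale_exc} with the commutative square $\Pi\circ can'=can\circ\pi$, which on $\cc F^{pairs}$ reads $(can')^*\circ\Pi^*=\pi^*\circ can^*$. Applying $\cc F^{pairs}$ to the identities of those two theorems gives $r^*\circ\Pi^*=(\sigma_U^N)^*\circ can^*$ and $\pi^*\circ l^*=(\sigma_{U'}^N)^*\circ (can')^*$; combined with the square relation and a continuity argument (shrinking $X$, resp.\ $X'$, around $x$, resp.\ $x'$, so as to keep an elementary distinguished square and so that the class being tested already comes from $X$, resp.\ $X'$), the first identity forces $[[\Pi]]^*$ to be injective and the second forces it to be surjective, hence an isomorphism. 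I expect this last item to be the main obstacle: the \'etale-excision identities only control $\Pi^*$, $can^*$ and $(can')^*$ a priori, and extracting from them the statement about the map $[[\Pi]]^*$ induced by $\pi\colon U'\to U$ requires the delicate continuity bookkeeping that lets one reduce to classes defined on $X$ and $X'$ while keeping the square elementary distinguished; all the other items are formal once the mechanism of the first paragraph is in place.
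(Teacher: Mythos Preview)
Your plan matches the paper's own proof almost exactly: reduce everything to the morphism-level identities of Theorems~\ref{InjOnAffLine}--\ref{Surj_Etale_exc} and then apply $\cc F$ (for items (1), (3), (3$'$)) or the pair-presheaf $\cc F^{pairs}$ of Construction~\ref{F_on_pairs} (for items (2), (4), (5)). The only cosmetic difference is that the paper first says ``we may assume that $\cc F$ is stable'' and then treats each $[\sigma^N]^*$ as the identity, whereas you keep $\cc F$ quasi-stable and use that $(\sigma^N)^*$ is merely an automorphism; the two are equivalent and your formulation is arguably cleaner, since the reduction to the stable case is not spelled out in the paper.

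You are also right to single out item~(5) as the one non-formal step. The paper's proof simply says ``apply Theorems~\ref{Inj_Etale_exc} and~\ref{Surj_Etale_exc}'', but, as you note, those identities live over $(X,X-S)$ and $(X',X'-S')$ while the desired isomorphism is between the stalks at $x$ and $x'$. The missing observation (implicit in Remark~\ref{Srinking_X_and_X'} and its dual) is that the system of elementary distinguished squares obtained by shrinking is cofinal on \emph{both} sides: replacing $X$ by an affine open $X^{\circ}\ni x$ and $X'$ by $\Pi^{-1}(X^{\circ})$ gives a new elementary distinguished square, and conversely, given any open $W'\ni x'$ in $X'$, setting $W=X\setminus\Pi(S'\setminus W')$ (then shrinking to affines) again yields one, because $\Pi^{-1}(S)=S'$ and $\Pi|_{S'}$ is an isomorphism. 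With this, the continuity bookkeeping you describe goes through: for injectivity one first spreads $\alpha$ to $X$, then shrinks $X'$ so that $\Pi^*\alpha_0$ vanishes, and re-applies Theorem~\ref{Inj_Etale_exc} to the shrunk square; for surjectivity one first spreads $\beta$ to $X'$ (shrinking $X'$, hence $X$, as above) and applies Theorem~\ref{Surj_Etale_exc}, using naturality of $\sigma$ to move the automorphism across $\pi^*$. So your diagnosis of where the work lies is accurate, and the indicated argument is the correct one.
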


\begin{proof}
Without loss of generality we may assume that $\cc F$ is stable and prove the theorem for this case
(by slight modifications, which are left to the reader, the theorem is similarly proved if $\cc F$ is quasi-stable).
Assertions (1), (3) and (3') follow from Theorems \ref{InjOnAffLine} and
\ref{Local_Inj}. To prove assertions (2), (4) and (5), use
Construction \ref{F_on_pairs} and apply Theorems \ref{Ex_on_line},
\ref{Exc_On_Rel_Aff_Line}, \ref{Inj_Etale_exc}, and
\ref{Surj_Etale_exc} respectively (recall that $\cc F$ is stable).
\end{proof}

\begin{proof}[Proof of Theorem \ref{Nisnevich_sheaf}]
We prove the theorem for fields of characteristic not 2 and leave
the reader to prove it for fields of characteristic 2.
Firstly, $(1)$ and $(2)$ imply $\cc F|_{\bb A^1}$ is a Zariski
sheaf. Using $(5)$ applied to $X=\bb A^1$, one shows that for any
open $V$ in $\bb A^1$ one has $\cc F_{Nis}(V)=\cc F(V)$.

Now consider the following Cartesian square of schemes
$$\xymatrix{Spec(k(X)) \ar[r]^{\eta}\ar[d]_{i_{0,k(X)}}&X\ar^{i_{0,X}}[d]\\
               \bb A^1_{k(X)} \ar[r]^{\eta \times id}&X \times \bb A^1}$$
Evaluating the Nisnevich sheaf $\cc F_{Nis}$ on this square, we get
a square of abelian groups
$$\xymatrix{\cc F_{Nis}(Spec(k(X))) & \cc F_{Nis}(X)\ar[l]_{\eta^*}\\
               \cc F_{Nis}(\bb A^1_{k(X)})\ar[u]^{i^*_{0,k(X)}} & \cc F_{Nis}(X \times \bb A^1) \ar[l]_{(\eta \times id)^*} \ar[u]_{i^*_{0,X}}}$$
The map $i^*_{0,X}$ is plainly surjective. It remains to check its
injectivity. The map $(\eta \times id)^*$ is injective (apply (3')).
As already mentioned in this proof, $\cc F_{Nis}(\bb A^1_{k(X)})=\cc
F(\bb A^1_{k(X)}))$. Since $\cc F_{Nis}(Spec(k(X))=\cc
F(Spec(k(X))$, we see that the map $i^*_{0,k(X)}$ is an isomorphism.
Thus the map $i^*_{0,X}$ is injective.
\end{proof}

We finish the section by proving the following useful statement,
which is a consequence of Theorem~\ref{Few_On_P_w_Tr}(4):

\begin{cor}
\label{Exc_On_Rel_Aff_Line_3}
Let $X \in Sm/k$, $x\in X$ be a point,
$W=Spec(\cc O_{X,x})$. Let
$\cc V:=Spec(\cc O_{W\times \bb A^1,(x,0)})$
and $can: \cc V \hookrightarrow W\times \bb A^1$
be the canonical embedding.
Let $\cc F$ be an $\bb A^1$-invariant quasi-stable $\bb ZF_*$-presheaf of abelian groups.
Then the pull-back map
   $$[[can]]^*: \cc F(W\times(\bb A^1-\{0\}))/\cc F(W\times \bb A^1) \to \cc F(\cc V - W\times \{0\})/\cc F(\cc V)$$
is an isomorphism (both quotients make sense: the second quotient makes sense due to
Theorem~\ref{Few_On_P_w_Tr}(3), the first one makes sense due to homotopy invariance of $\cc F$).
\end{cor}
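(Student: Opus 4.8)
The plan is to deduce Corollary \ref{Exc_On_Rel_Aff_Line_3} from Theorem \ref{Few_On_P_w_Tr}(4) by choosing the right monic polynomial $f$ and then re-expressing the quotients appearing there in terms of the quotients in the statement. Concretely, I would apply Theorem \ref{Exc_On_Rel_Aff_Line} (and hence item (4) of Theorem \ref{Few_On_P_w_Tr}) with $W=\spec(\cc O_{X,x})$ and with $f\in\cc O_{X,x}[t]$ taken to be $f=t-1$ (or any monic linear polynomial with $f(0)\in\cc O^\times_{X,x}$); then $V=(\bb A^1_W)_f$ is precisely the localization of $\bb A^1_W$ away from the hyperplane $\{t=1\}$. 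Item (4) then gives that
\[
[[i]]^*\colon \cc F(\bb A^1_W-0\times W)/\cc F(\bb A^1_W)\;\longrightarrow\;\cc F(V-0\times W)/\cc F(V)
\]
is an isomorphism. So the first reduction is to identify $\cc F(\bb A^1_W)$, $\cc F(\bb A^1_W-0\times W)$ with the groups on the left of the corollary, and $\cc F(V)$, $\cc F(V-0\times W)$ with the groups on the right, after a further localization at $(x,0)$.

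For the left-hand side this is immediate once one observes $\bb A^1_W=W\times\bb A^1$ and $\bb A^1_W-0\times W=W\times(\bb A^1-\{0\})$, so $\cc F(\bb A^1_W-0\times W)/\cc F(\bb A^1_W)$ is literally the source of $[[can]]^*$; the first quotient makes sense by $\bb A^1$-invariance of $\cc F$ exactly as noted. For the right-hand side the point is that $\cc V=\spec(\cc O_{W\times\bb A^1,(x,0)})$ is the local scheme of $\bb A^1_W$ at the point $(x,0)$, and that $(x,0)$ lies in $V=(\bb A^1_W)_f$ since $f(0)\in\cc O^\times_{X,x}$; hence $\cc V$ is also the local scheme of $V$ at $(x,0)$. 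Thus I would pass to the limit over affine open neighbourhoods of $(x,0)$ in $V$ (using that $\cc F$ commutes with the relevant filtered colimits of schemes, which is automatic for a presheaf) to get $\cc F(\cc V)=\colim \cc F(\text{nbhd})$ and $\cc F(\cc V-W\times\{0\})=\colim\cc F(\text{nbhd}-W\times\{0\})$, and the map $[[i]]^*$ induces $[[can]]^*$ on these colimits. The second quotient makes sense by Theorem \ref{Few_On_P_w_Tr}(3) applied with the local scheme $\cc V$ in place of $U$ and the divisor $W\times\{0\}$ in place of $D$, exactly as stated in the corollary.

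The remaining bookkeeping is to check that the isomorphism of item (4) is compatible with these colimit identifications, i.e. that the square relating $[[i]]^*$ on a neighbourhood to $[[i]]^*$ on a smaller neighbourhood commutes, which is just functoriality of the presheaf $\cc F^{pairs}$ from Construction \ref{F_on_pairs}; and to check that localizing $W\times\bb A^1$ at $(x,0)$ and then intersecting with $V$ agrees with localizing $V$ at $(x,0)$, which holds because $f$ is a unit at $(x,0)$. Passing to a colimit of isomorphisms yields an isomorphism, completing the argument.

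The main obstacle I anticipate is not in the formal deduction but in verifying that all the quotients are well defined and stable under the limit: one must make sure that $\cc F(\cc V)\to\cc F(\cc V-W\times\{0\})$ is injective (so that $\cc F(\cc V-W\times\{0\})/\cc F(\cc V)$ is an honest quotient), which is precisely where Theorem \ref{Few_On_P_w_Tr}(3) enters, and that this injectivity is compatible with the colimit presentation. A minor subtlety is the reduction to $\cc F$ stable: since the quotients $\cc F(-)/\cc F(-)$ and the statement only involve $\cc F$ as a presheaf with framed transfers, the reduction is harmless, exactly as in the proof of Theorem \ref{Few_On_P_w_Tr}, and I would simply invoke it at the outset (indeed the corollary already assumes $\cc F$ stable).
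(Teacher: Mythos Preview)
There is a genuine gap in the ``colimit of isomorphisms'' step. Theorem \ref{Few_On_P_w_Tr}(4) gives you an isomorphism only when the smaller open is of the special form $(\bb A^1_W)_f$ with $f\in\cc O_{X,x}[t]$ \emph{monic} and $f(0)\in\cc O_{X,x}^\times$. To pass to the local scheme $\cc V$ you need a \emph{cofinal} system of such opens among all Zariski neighbourhoods of $(x,0)$, and this fails in general. For instance, take $W=\spec(k[s]_{(s)})$ and the neighbourhood $U=(\bb A^1_W)_{st-1}$ of $(x,0)$. Any $(\bb A^1_W)_f\subset U$ would force $f\in(st-1)$, but writing $f=(st-1)g$ one sees the top coefficient of $f$ in $t$ is $s\cdot(\text{top coeff of }g)$, which lies in the maximal ideal and so is never $1$. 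Thus no monic $f$ works, and your colimit is not over a system on which item (4) applies termwise. A single application with $f=t-1$ only identifies the source with the quotient for $V=(\bb A^1_W)_{t-1}$, not with the colimit.

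The paper fixes exactly this by enlarging the class of neighbourhoods: it works inside $W\times\bb P^1$ and considers $V_{H,D}=W\times\bb P^1-(H\sqcup D)$ where $H$ is an \emph{arbitrary} section of $W\times\bb P^1\to W$ (not just $W\times\infty$) disjoint from $W\times 0$, and $D$ is a reduced divisor disjoint from both $H$ and $W\times 0$. It then (i) shows, using that $k$ is infinite and $W$ regular local, that the $V_{H,D}$ are cofinal among all Zariski neighbourhoods of $W\times 0$; and (ii) proves by a short case analysis (either $H_1=H_2$, or $H_1\subset D_2$) that every transition map between two such neighbourhoods induces an isomorphism on the quotients, each case reducing to Theorem \ref{Few_On_P_w_Tr}(4). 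In the counterexample above, the closure of $\{st=1\}$ in $W\times\bb P^1$ is a section through $(x,\infty)$, which is why allowing $H\neq W\times\infty$ is essential. Your left-hand identification and the use of item (3) for well-definedness are fine; the missing ingredient is precisely this cofinality/transition argument.
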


\begin{proof}
Consider the $W$-scheme $W\times {\bb P^1}$ and effective divisors of the form
$H \sqcup D$
on $W\times {\bb P^1}$ such that
$H$ is a section of the projection $W\times {\bb P^1}\to W$,
$D$ is a reduced divisor,
$H\cap (W\times 0)=\emptyset$
and
$D\cap (W\times 0)=\emptyset$. For such a divisor
set $V_{H,D}:=W\times \bb P^1-(H \sqcup D)$. Note that
$(W\times 0)\subset V_{H,D}$.

Consider the category, $\cc C$, of
Zarisky neighborhoods of $(W\times 0)$ in $W\times {\bb P^1}$ as well as
the presheaf $V\mapsto \cc F(V-W\times 0)/Im(\cc F(V))$ on $\cc C$.
Clearly, the category $\cc C$ is co-filtered.
By definition, one has
$$\cc F(\cc V)=\lim_{\rightarrow} \cc F(V) \ \
\text{and} \ \
\cc F(\cc V-W\times 0)=\lim_{\rightarrow} \cc F(V-W\times 0),$$
where $V$ runs over all
Zarisky neighborhoods of $(W\times 0)$ in $W\times {\bb P^1}$. Thus
$$\cc F(\cc V - W\times \{0\})/\cc F(\cc V)=\lim_{\rightarrow} \cc F(V-W\times 0)/Im(\cc F(V)).$$
Let $\cc C^{\prime}$ be the full subcategory of $\cc C$ consisting of objects of the form $V_{H,D}$.
Since the base field $k$ is infinite and $W$ is regular local, then the subcategory
$\cc C^{\prime}$ is cofinal in $\cc C$.
Thus,
$$\cc F(\cc V - W\times \{0\})/\cc F(\cc V)=\lim_{\rightarrow} \cc F(V_{H,D}-W\times 0)/Im(\cc F(V_{H,D})).$$
{\it We claim that for any inclusion}
$\epsilon: V_{H_2,D_2} \hookrightarrow V_{H_1,D_1}$
the pull-back map
$$[[\epsilon]]^*: \cc F(V_{H_1,D_1}-W\times 0)/Im(\cc F(V_{H_1,D_1})) \to \cc F(V_{H_2,D_2}-W\times 0)/Im(\cc F(V_{H_2,D_2}))$$
is an isomorphism. To prove this claim, note that the inclusion above yields an inclusion
$H_1 \sqcup D_1 \subset H_2 \sqcup D_2$. We have that either $H_1=H_2$ or $H_1\subset D_2$.
In the second case one has
$H_1\cap H_2=\emptyset$,
$H_1 \subset D_2$
and
$H_2\sqcup H_1 \subset H_2\sqcup D_2$.
If $H_1=H_2$, then one has inclusions
$V_{H_2,D_2} \xrightarrow{\alpha} V_{H_1,D_1} \xrightarrow{\beta} V_{H_1,\emptyset}$.
Set $\gamma=\beta \circ \alpha$.
By Theorem \ref{Few_On_P_w_Tr}(item (4)) the maps
$[[\beta]]^*$ and $[[\gamma]]^*$ are isomorphisms.
Thus the map $[[\alpha]]^*$ is an isomorphism in this case, too.
In the second case consider inclusions
$V_{H_2,D_2} \xrightarrow{\gamma} V_{H_2,H_1} \xrightarrow{\beta} V_{H_2,\emptyset}$
and set $\alpha=\beta \circ \gamma$.
By Theorem \ref{Few_On_P_w_Tr}(item (4)) the maps
$[[\beta]]^*$ and $[[\alpha]]^*$ are isomorphisms.
Thus the map $[[\gamma]]^*$ is an isomorphism.
Now consider inclusions
$V_{H_2,H_1} \xrightarrow{\delta} V_{H_1,\emptyset}$
and
$V_{H_1,D_1} \xrightarrow{\rho} V_{H_1,\emptyset}$.
One has $\delta \circ \gamma=\rho\circ \epsilon$.
We already know that $[[\gamma]]^*$ is an isomorphism.
By Theorem \ref{Few_On_P_w_Tr}(item (4)) the maps
$[[\rho]]^*$ and $[[\delta]]^*$ are isomorphisms.
Thus $[[\epsilon]]^*$ is an isomorphism in the second case, too.
{\it The claim is proved}. Thus for any
$V_{H,D} \in \cc C^{\prime}$ the map
$$\cc F(V_{H,D}-W\times 0)/Im(\cc F(V_{H,D})) \to \cc F(\cc V - W\times \{0\})/\cc F(\cc V)$$
is an isomorphism.
In particular, the map
$$\cc F(W\times {\bb A^1}-W\times 0)/\cc F(W\times {\bb A^1}) =
\cc F(V_{W\times {\infty}\ ,\emptyset}-W\times 0)/\cc F(V_{W\times {\infty}\ , \emptyset}) \to \cc F(\cc V - W\times \{0\})/\cc F(\cc V)$$
is an isomorphism, whence the corollary.
\end{proof}

\section{Notation and agreements}

\begin{notn}
\label{Main} Given a morphism $a \in Fr_n(Y,X)$, we will write
$\langle a\rangle$ for the image of $1\cdot a$ in $\bb ZF_n(Y,X)$ and write
$[a]$ for the class of $\langle a\rangle$ in $\overline {\bb
ZF}_n(Y,X)$.

Given a morphism $a \in Fr_n(Y,X)$, we will write $Z_a$ for the
support of $a$ (it is a closed subset in $Y\times \bb A^n$ which
finite over $Y$ and determined by $a$ uniquely). Also, we will often
write
$$(\cc V_a,\varphi_a: \cc V_a \to \bb A^n; g_a: \cc V_a\to X) \ \text{or shorter} \ (\cc V_a,\varphi_a; g_a) $$
for a representative of the morphism $a$ (here $(\cc V_a, \rho: \cc
V_a \to Y\times \bb A^n, s: Z_a \hookrightarrow \cc V_a)$ is an
\'{e}tale neighborhood of $Z_a$ in $Y\times \bb A^n$).
\end{notn}

\begin{lem}
\label{Disjoint_Support} If the support $Z_a$ of an element $a=(\cc
V,\varphi; g) \in Fr_n(X,Y)$ is a disjoint union of $Z_1$ and
$Z_2$, then the element $a$ determines two elements $a_1$ and $a_2$
in $Fr_n(X,Y)$. Namely, $a_1=(\cc V-Z_2,\varphi|_{\cc V-Z_2};
g|_{\cc V-Z_2})$ and $a_2=(\cc V-Z_1,\varphi|_{\cc V-Z_1}; g|_{\cc
V-Z_1})$. Moreover, by the definition of $\bb ZF_n(X,Y)$ one has the
equality
$$\langle a \rangle= \langle a_1 \rangle + \langle a_2 \rangle$$
in $\bb ZF_n(X,Y)$.
\end{lem}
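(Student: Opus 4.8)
The plan is to read off both assertions directly from the definition of a level-$n$ framed correspondence and from the defining relation of $\bb ZF_n(X,Y)$; there is no genuine difficulty here, only a small amount of \'etale-neighborhood bookkeeping. Recall that the datum of $a=(\cc V,\varphi;g)$ consists of an \'etale neighborhood $(\cc V,\rho\colon\cc V\to X\times\bb A^n,s\colon Z_a\hookrightarrow\cc V)$ of the closed subset $Z_a\subset X\times\bb A^n$, functions $\varphi_1,\dots,\varphi_n$ on $\cc V$ whose common vanishing locus equals $Z_a$ (inside $\cc V$, via $s$), and a morphism $g\colon\cc V\to Y$. First I would record the elementary facts about the decomposition $Z_a=Z_1\sqcup Z_2$: since $Z_1$ and $Z_2$ are disjoint closed subsets with union $Z_a$, each of them is also open in $Z_a$, hence a clopen subscheme of the $X$-finite scheme $Z_a$ and therefore itself finite over $X$; and viewing $Z_a=s(Z_a)$ as a closed subset of $\cc V$, the sets $Z_1$ and $Z_2$ are disjoint closed subsets of $\cc V$.

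Next I would check that $a_1$ (and symmetrically $a_2$) is a well-defined framed correspondence of level $n$ from $X$ to $Y$ with support $Z_1$. Indeed $\cc V-Z_2$ is open in $\cc V$ and contains $Z_1$; the restriction $\rho|_{\cc V-Z_2}\colon\cc V-Z_2\to X\times\bb A^n$ is again \'etale; the section $s$ restricts to a closed immersion $Z_1\hookrightarrow\cc V-Z_2$; and the common vanishing locus of $\varphi_1|_{\cc V-Z_2},\dots,\varphi_n|_{\cc V-Z_2}$ is $Z_a\cap(\cc V-Z_2)=Z_1$ as closed subschemes. Thus $a_1=(\cc V-Z_2,\varphi|_{\cc V-Z_2};g|_{\cc V-Z_2})$ satisfies all the axioms, and one checks in the same way that a refinement of the \'etale neighborhood $\cc V$ produces a compatible refinement after deleting $Z_2$, so $a_1$ does not depend on the chosen representative of $a$; the same applies to $a_2$.

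Finally, the equality $\langle a\rangle=\langle a_1\rangle+\langle a_2\rangle$ in $\bb ZF_n(X,Y)$ is nothing but the defining relation: by construction $\bb ZF_n(X,Y)=\bb ZFr_n(X,Y)/\langle Z_1\sqcup Z_2-Z_1-Z_2\rangle$, where the quotient is by the subgroup generated by all expressions $b-b_1-b_2$ with $b$ a level-$n$ framed correspondence whose support splits as a disjoint union of closed subsets and $b_1,b_2$ the two induced correspondences obtained exactly as in the previous paragraph. Applying this relation to $b=a$, $b_1=a_1$, $b_2=a_2$ gives the claim. The only step meriting any care is the verification in the middle paragraph that deleting the clopen piece $Z_2$ from $\cc V$ leaves an \'etale neighborhood of $Z_1$ with the correct vanishing locus; everything else is immediate from the definitions.
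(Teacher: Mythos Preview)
Your proposal is correct and matches the paper's approach: the paper states this lemma without proof, treating both claims as immediate from the definitions (the statement itself says ``by the definition of $\bb ZF_n(X,Y)$''), and your write-up simply unpacks those definitions carefully.
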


\begin{defs}
\label{Runs_Inside} Let $i_Y: Y'\hookrightarrow Y$ and $i_X:
X'\hookrightarrow X$ be open embeddings. Let $a\in Fr_n(Y,X)$. We
say that the restriction $a|_{Y'}$ of $a$ to $Y'$ runs inside $X'$,
if there is $a' \in Fr_n(Y',X')$ such that
\begin{equation}
\label{Eq_Runs_Inside}
i_X\circ a'=a\circ i_Y
\end{equation}
in $Fr_n(Y',X)$.

It is easy to see that if there is a morphism $a'$ satisfying
condition (\ref{Eq_Runs_Inside}), then it is unique. In this case
the pair $(a,a')$ is an element of $\bb ZF_n((Y,Y'),(X,X'))$. For
brevity we will write $\langle\langle a\rangle\rangle$
for $(a,a') \in \bb ZF_n((Y,Y'),(X,X'))$
and write $[[a]]$ to denote the class of $\langle\langle a\rangle\rangle$
in $\overline {\bb ZF}_n((Y,Y'),(X,X'))$.
\end{defs}

\begin{lem}
\label{Criteria} Let $i_Y: Y'\hookrightarrow Y$ and $i_X:
X'\hookrightarrow X$ be open embeddings. Let $a\in Fr_n(Y,X)$. Let
$Z_a \subset Y\times \bb A^n$ be the support of $a$. Set $Z'_a=Z_a
\cap Y' \times \bb A^n$. Then the following are equivalent:
\begin{itemize}
\item[(1)]
$g_a(Z'_a)\subset X'$;
\item[(2)]
the morphism $a|_{Y'}$ runs inside $X'$.
\end{itemize}
\end{lem}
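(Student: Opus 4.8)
The plan is to prove the two implications separately. The implication $(2)\Rightarrow(1)$ is a matter of unwinding the definition of composition with open immersions in $Fr_*$, while $(1)\Rightarrow(2)$ requires shrinking the \'etale neighborhood representing $a$. For $(2)\Rightarrow(1)$, suppose $a'\in Fr_n(Y',X')$ satisfies $i_X\circ a'=a\circ i_Y$ in $Fr_n(Y',X)$. Composition with the open immersion $i_Y$ on the right pulls back every datum of $a$ along $i_Y\times\id$; in particular the support of $a\circ i_Y$ is $(i_Y\times\id)^{-1}(Z_a)=Z_a\cap(Y'\times\bb A^n)=Z'_a$, the \'etale neighborhood becomes the preimage of $Y'\times\bb A^n$ in $\cc V_a$, and the framing functions and structure map restrict. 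Composition with the open immersion $i_X$ on the left leaves the support, the \'etale neighborhood and the framing functions of $a'$ unchanged and merely post-composes the structure map by $i_X$. Hence the equality $i_X\circ a'=a\circ i_Y$, read after passing to a common \'etale refinement, forces $Z_{a'}=Z'_a$ together with $i_X\circ g_{a'}|_{Z'_a}=g_a|_{Z'_a}$ as maps $Z'_a\to X$. Since $g_{a'}$ takes values in $X'$ by definition, $g_a(Z'_a)=i_X(g_{a'}(Z'_a))\subset X'$, which is $(1)$.

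For $(1)\Rightarrow(2)$, fix a representative $(\cc V_a,\varphi_a;g_a)$ of $a$ with distinguished closed immersion $s\colon Z_a\hookrightarrow\cc V_a$. Restricting along $i_Y$, put $\cc V':=\cc V_a\times_{Y\times\bb A^n}(Y'\times\bb A^n)$; this is an \'etale neighborhood of $Z'_a$ in $Y'\times\bb A^n$ (note $Z'_a$ is finite over $Y'$ by base change), with section $s'$ the restriction of $s$, framing $\varphi'$ the restriction of $\varphi_a$, and structure map $g'\colon\cc V'\to X$ the restriction of $g_a$, so that $(\cc V',\varphi';g')$ represents $a\circ i_Y$. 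Now set $\cc V'':=(g')^{-1}(X')$, an open subscheme of $\cc V'$. Hypothesis $(1)$ says precisely that $s'(Z'_a)\subset\cc V''$ (under the identification $g_a\circ s|_{Z'_a}=g'\circ s'$), so $\cc V''$ is still an \'etale neighborhood of $Z'_a$ in $Y'\times\bb A^n$: \'etaleness and the closedness of $s'(Z'_a)$ pass to open subschemes, and $\varphi'|_{\cc V''}$ still has $Z'_a$ as its zero locus because $s'(Z'_a)\subset\cc V''$. Therefore $a':=(\cc V'',\varphi'|_{\cc V''};g'|_{\cc V''})$ is a well-defined element of $Fr_n(Y',X')$ because $g'$ maps $\cc V''$ into $X'$ by construction; and since $\cc V''$ is an open sub-\'etale-neighborhood of $\cc V'$ containing the support, $(\cc V'',\varphi'|_{\cc V''};i_X\circ g'|_{\cc V''})$ and $(\cc V',\varphi';g')$ represent the same morphism, i.e.\ $i_X\circ a'=a\circ i_Y$. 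This proves $(2)$; uniqueness of $a'$ is the one already recorded in Definition \ref{Runs_Inside}.

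The main obstacle is the legitimacy of the shrinking step in $(1)\Rightarrow(2)$: one must check that replacing $\cc V'$ by the open subscheme $(g')^{-1}(X')$ preserves all the data defining a framed correspondence of level $n$ over $Y'$ --- that the open subscheme still contains the (image of the) support $Z'_a$, is still \'etale over $Y'\times\bb A^n$ with $s'(Z'_a)$ closed in it, and that the restricted framing functions still cut out $Z'_a$ --- and that two representatives differing only by such a shrinking define the same element of $Fr_n(Y',X)$. Each of these points is routine once one notes that hypothesis $(1)$ is exactly the inclusion $s'(Z'_a)\subset(g')^{-1}(X')$, which is what makes the shrunk datum a legitimate \'etale neighborhood at all.
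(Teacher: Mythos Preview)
Your proof is correct and follows essentially the same approach as the paper: for $(1)\Rightarrow(2)$ you shrink the \'etale neighborhood to the open piece lying over $Y'$ and mapping into $X'$, and for $(2)\Rightarrow(1)$ you read off the support and structure map of $a'$ from the equation $i_X\circ a'=a\circ i_Y$. The paper's argument is terser but identical in substance; your added checks that the shrunk datum is still a valid framed correspondence are routine but welcome.
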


\begin{proof}
$(1) \Rightarrow (2)$. Set $\cc V'=p^{-1}_Y \cap g^{-1}(X')$, where
$p_Y=pr_Y\circ \rho_a: \cc V \to Y\times \bb A^n$. Then $a':=(\cc
V', \varphi|_{\cc V'}); g|_{\cc V'})\in Fr_n(Y',X')$ satisfies
condition (\ref{Eq_Runs_Inside}).

$(2) \Rightarrow (1)$. If $a|_{Y'}$ runs inside $X'$, then for some
$a'=(\cc V', \varphi'; g') \in Fr_n(Y',X'))$ equality
(\ref{Eq_Runs_Inside}) holds. In this case the support $Z'$ of $a'$
must coincide with $Z'_a=Z_a \cap Y' \times \bb A^n$ and
$g_a|_{Z'}=g'|_{Z'}$. Since $g'(Z')$ is a subset of $X'$, then
$g_a(Z'_a)=g_a(Z')\subset X'$.
\end{proof}

\begin{cor}
\label{Critaria_for_H} Let $i_Y: Y'\hookrightarrow Y$ and $i_X:
X'\hookrightarrow X$ be open embeddings. Let $h_{\theta}=(\cc
V_{\theta},\varphi_{\theta}; g_{\theta})\in Fr_n(\bb A^1\times
Y,X)$. Suppose $Z_{\theta}$, the support of $h_{\theta}$, is such
that for $Z'_{\theta}:=Z_{\theta}\cap \bb A^1\times Y' \times \bb
A^n$ one has $g_{\theta}(Z'_{\theta})\subset X'$. Then there are
morphisms $\langle\langle h_{\theta} \rangle\rangle \in\bb ZF_n(\bb
A^1\times (Y,Y'),(X,X'))$, $\langle\langle h_0 \rangle\rangle \in\bb
ZF_n((Y,Y'),(X,X'))$, $\langle\langle h_1 \rangle\rangle \in\bb
ZF_n((Y,Y'),(X,X'))$ and one has an obvious equality
$$[[h_0]=[[h_1]]$$
in $\overline {\bb ZF}_n((Y,Y'),(X,X'))$.
\end{cor}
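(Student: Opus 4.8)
The plan is to reduce the statement to two applications of Lemma~\ref{Criteria}, together with the uniqueness clause in Definition~\ref{Runs_Inside}, and then to read the asserted equality straight off the description of $\overline{\bb ZF}_n$ as a cokernel.

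First I would regard $h_\theta$ as a morphism in $Fr_n(\bb A^1\times Y,X)$ and apply Lemma~\ref{Criteria} to the open embeddings $\id_{\bb A^1}\times i_Y\colon \bb A^1\times Y'\hookrightarrow \bb A^1\times Y$ and $i_X\colon X'\hookrightarrow X$. The support of $h_\theta$ in this picture is $Z_\theta\subset \bb A^1\times Y\times\bb A^n$, and the part of it lying over $\bb A^1\times Y'$ is exactly $Z'_\theta$; thus the hypothesis $g_\theta(Z'_\theta)\subset X'$ is precisely condition~(1) of Lemma~\ref{Criteria}. Hence $h_\theta|_{\bb A^1\times Y'}$ runs inside $X'$, and by Definition~\ref{Runs_Inside} this produces the pair $\langle\langle h_\theta\rangle\rangle\in\bb ZF_n(\bb A^1\times(Y,Y'),(X,X'))$.

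Next I would restrict along the two closed embeddings $i_0,i_1\colon Y\hookrightarrow \bb A^1\times Y$. Pulling back the \'{e}tale neighborhood $\cc V_\theta$, the framing functions $\varphi_\theta$ and the map $g_\theta$ along $i_\epsilon$ ($\epsilon=0,1$) yields $h_\epsilon\in Fr_n(Y,X)$, whose support $Z_\epsilon$ is the fibre of $Z_\theta\to\bb A^1$ over $\epsilon$; this is finite over $Y$ since $Z_\theta$ is finite over $\bb A^1\times Y$. Because $Z'_\epsilon:=Z_\epsilon\cap Y'\times\bb A^n\subseteq Z'_\theta$, we get $g_\epsilon(Z'_\epsilon)\subset g_\theta(Z'_\theta)\subset X'$, so a second application of Lemma~\ref{Criteria} shows that $h_\epsilon|_{Y'}$ runs inside $X'$ and yields $\langle\langle h_\epsilon\rangle\rangle\in\bb ZF_n((Y,Y'),(X,X'))$. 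Finally I would identify $i_\epsilon^*\langle\langle h_\theta\rangle\rangle$ with $\langle\langle h_\epsilon\rangle\rangle$: the first component of $i_\epsilon^*\langle\langle h_\theta\rangle\rangle$ is $h_\epsilon$ by construction, and its second component is some lift of $h_\epsilon|_{Y'}$ into $Fr_n(Y',X')$; by the uniqueness statement of Definition~\ref{Runs_Inside} such a lift is unique, so it agrees with the second component of $\langle\langle h_\epsilon\rangle\rangle$. Consequently $(i_0^*-i_1^*)\langle\langle h_\theta\rangle\rangle=\langle\langle h_0\rangle\rangle-\langle\langle h_1\rangle\rangle$ in $\bb ZF_n((Y,Y'),(X,X'))$, and since $\overline{\bb ZF}_n((Y,Y'),(X,X'))$ is by definition the cokernel of $i_0^*-i_1^*$, we conclude $[[h_0]]=[[h_1]]$.

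There is no serious obstacle here; the only point that needs a little care is the compatibility of restriction of framed correspondences with base change along $i_\epsilon$ — that the pullback of a ``runs inside'' pair is again such a pair with the expected two components — and this is immediate from flat base change for the support together with the uniqueness of the lift just invoked.
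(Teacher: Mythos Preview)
Your proof is correct and follows exactly the route the paper intends: the corollary is stated without proof (the equality is even called ``obvious'' in the statement), and your argument is precisely the natural unpacking --- apply Lemma~\ref{Criteria} to $h_\theta$ and to each $h_\epsilon$, use the uniqueness clause of Definition~\ref{Runs_Inside} to match $i_\epsilon^*\langle\langle h_\theta\rangle\rangle$ with $\langle\langle h_\epsilon\rangle\rangle$, and read off the conclusion from the cokernel description of $\overline{\bb ZF}^{pr}_n$.
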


\begin{lem}[A disconnected support case]
\label{Criteria_for_pairs} Let $i_Y: Y'\hookrightarrow Y$ and $i_X:
X'\hookrightarrow X$ be open embeddings. Let $a\in Fr_n(Y,X)$ and
let $Z_a \subset Y\times \bb A^n$ be the support of $a$. Set
$Z'_a=Z_a \cap Y' \times \bb A^n$. Suppose that $Z_a=Z_{a,1} \sqcup
Z_{a,2}$. For $i=1,2$ set $\cc V_i=\cc V_a-Z_{a,j}$ with $j\in
\{1,2\}$ and $j\neq i$. Also set $\varphi_i=\varphi_a|_{\cc V_i}$
and $g_i=g_a|_{\cc V_i}$. Suppose $a|_{Y'}$ runs inside $X'$, then
\begin{itemize}
\item[(1)]
for each $i=1,2$ the morphism $a_i:=(\cc V_i, \varphi_i; g_i)$ is such that $a_i|_{Y'}$ runs inside $X'$;
\item[(2)]
$\langle\langle a \rangle\rangle = \langle\langle a_1 \rangle\rangle + \langle\langle a_2 \rangle\rangle$
in $\bb ZF_n((Y,Y'),(X,X'))$.
\end{itemize}

\end{lem}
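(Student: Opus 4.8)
The plan is to reduce everything to the already-established ``disconnected support'' principle for $\bb ZF_*(k)$ itself, namely Lemma~\ref{Disjoint_Support}, and then upgrade it to the category of pairs using Lemma~\ref{Criteria} as the bookkeeping device that tells us when a restriction runs inside an open subset. First I would prove assertion~(1). We are given that $a|_{Y'}$ runs inside $X'$, so by Lemma~\ref{Criteria} we know $g_a(Z'_a)\subset X'$, where $Z'_a=Z_a\cap Y'\times\bb A^n$. Now $Z_a=Z_{a,1}\sqcup Z_{a,2}$ is a decomposition into closed-and-open pieces, hence $Z'_a=(Z_{a,1}\cap Y'\times\bb A^n)\sqcup(Z_{a,2}\cap Y'\times\bb A^n)$, and the support of $a_i=(\cc V_i,\varphi_i;g_i)$ is exactly $Z_{a,i}$ (removing $Z_{a,j}$ from $\cc V_a$ does not disturb the other component). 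Therefore $g_{a_i}(Z_{a,i}\cap Y'\times\bb A^n)=g_a(Z_{a,i}\cap Y'\times\bb A^n)\subset g_a(Z'_a)\subset X'$, and Lemma~\ref{Criteria} applied to $a_i$ gives that $a_i|_{Y'}$ runs inside $X'$, which is what (1) asserts.

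Next I would prove assertion~(2). By Definition~\ref{Runs_Inside}, each of $\langle\langle a\rangle\rangle$, $\langle\langle a_1\rangle\rangle$, $\langle\langle a_2\rangle\rangle$ is a well-defined element of $\bb ZF_n((Y,Y'),(X,X'))$, and by construction $\langle\langle a\rangle\rangle$ is the pair $(a,a')$ where $a'$ is the unique morphism in $Fr_n(Y',X')$ with $i_X\circ a'=a\circ i_Y$, and similarly for $a_i'$. It therefore suffices to check the equality in each of the two coordinate groups separately, i.e.\ in $\bb ZF_n(Y,X)$ and in $\bb ZF_n(Y',X')$. The first coordinate equality $\langle a\rangle=\langle a_1\rangle+\langle a_2\rangle$ in $\bb ZF_n(Y,X)$ is precisely Lemma~\ref{Disjoint_Support}. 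For the second coordinate, I claim the restriction $a'$ has support $Z_{a'}=Z'_a$ decomposing as $(Z_{a,1}\cap Y'\times\bb A^n)\sqcup(Z_{a,2}\cap Y'\times\bb A^n)$, with the two pieces being the supports of $a_1'$ and $a_2'$ respectively; this is essentially the content of the proof of Lemma~\ref{Criteria}, where $\cc V'=p_Y^{-1}(Y'\times\bb A^n)\cap g^{-1}(X')$ and $a_i'$ corresponds to $\cc V'-(Z_{a,j}\cap Y'\times\bb A^n)$. Applying Lemma~\ref{Disjoint_Support} once more, now to $a'\in Fr_n(Y',X')$, gives $\langle a'\rangle=\langle a_1'\rangle+\langle a_2'\rangle$ in $\bb ZF_n(Y',X')$. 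Combining the two coordinate equalities yields $\langle\langle a\rangle\rangle=\langle\langle a_1\rangle\rangle+\langle\langle a_2\rangle\rangle$ in $\bb ZF_n((Y,Y'),(X,X'))$.

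The only mildly delicate point — and the step I expect to be the main obstacle — is the compatibility between the two decompositions: one must be sure that the passage ``restrict to $Y'$'' and the passage ``split the support'' commute, i.e.\ that $(a_i)|_{Y'}=(a|_{Y'})_i$ as elements of $Fr_n(Y',X')$. This is a statement about \'etale neighborhoods: forming $\cc V_i=\cc V_a-Z_{a,j}$ and then pulling back along $Y'\hookrightarrow Y$ (intersecting with $p_Y^{-1}(Y'\times\bb A^n)\cap g^{-1}(X')$) gives the same open subscheme of $\cc V_a$ as first pulling back $\cc V_a$ and then removing $Z_{a,j}\cap Y'\times\bb A^n$, because $Z_{a,j}$ is closed in $\cc V_a$ and everything in sight is an open or closed subscheme of the fixed ambient space $Y\times\bb A^n$ (with its $\cc V_a$). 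One checks this on the level of the defining functions $\varphi_a$ and the map $g_a$, both of which simply restrict; the uniqueness clause in Definition~\ref{Runs_Inside} then forces the two constructions to agree. Once this commutativity is in hand the proof is a two-line assembly from Lemmas~\ref{Disjoint_Support} and~\ref{Criteria}.
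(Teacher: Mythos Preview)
Your proof is correct. The paper states this lemma without proof, and your argument is exactly the straightforward verification one would supply: use Lemma~\ref{Criteria} and the inclusion $g_a(Z_{a,i}\cap Y'\times\bb A^n)\subset g_a(Z'_a)\subset X'$ for part~(1), then check part~(2) componentwise in $\bb ZF_n(Y,X)\oplus\bb ZF_n(Y',X')$ via Lemma~\ref{Disjoint_Support}, with the commutation $(a_i)|_{Y'}=(a|_{Y'})_i$ handled by the explicit description of the \'etale neighborhood in the proof of Lemma~\ref{Criteria}.
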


\section{Some homotopies}

Suppose $U,W \subset \bb A^1_k$ are open and non-empty.

\begin{lem}
\label{g_and_g'} Let $a_0=(\mathcal V,\varphi;g_0)\in Fr_1(U,W)$,
$a_1=(\mathcal V,\varphi;g_1) \in Fr_1(U,W)$. Suppose that the
supports of $a_0$ and $a_1$ coincide. Denote their common support by
$Z$. If $g_0|_Z=g_1|_Z$, then $[a_0]=[a_1]$ in $\bb ZF_1(U,W)$.
\end{lem}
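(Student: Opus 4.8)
The plan is to reduce the claim to the level-one homotopy formalism by exhibiting an explicit framed correspondence of level one from $\bb A^1\times U$ to $W$ whose restriction to $0\in\bb A^1$ recovers $a_0$ and whose restriction to $1\in\bb A^1$ recovers $a_1$. The natural candidate is to keep the étale neighborhood $\mathcal V$ and the framing function $\varphi$ fixed, and to interpolate linearly between the two structure maps: on $\bb A^1\times U$ one takes the correspondence whose underlying scheme is (a suitable open in) $\bb A^1\times\mathcal V$, whose framing function is the pullback of $\varphi$, and whose structure map is $(s,v)\mapsto (1-s)g_0(v)+s\,g_1(v)$, interpreted using that $W$ is an open subscheme of $\bb A^1_k$ so that this affine-linear combination makes sense as long as it lands in $W$. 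Since the supports of $a_0$ and $a_1$ coincide (call it $Z$) and the framing function is unchanged, the support of this homotopy is $\bb A^1\times Z$, which is finite over $\bb A^1\times U$; and on $Z$ the hypothesis $g_0|_Z=g_1|_Z$ guarantees that the interpolated map equals $g_0|_Z=g_1|_Z$ on the whole of $\bb A^1\times Z$, so in particular its image is contained in $W$ there. One then has to shrink $\mathcal V$ to an open neighborhood of $\bb A^1\times Z$ on which the interpolated structure map is defined and lands in $W$; this is possible because the "bad locus" (where the combination leaves $W$, or is undefined) is closed and disjoint from $\bb A^1\times Z$.

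Having produced such a homotopy $h=(\bb A^1\times\mathcal V',\operatorname{pr}^*\varphi; G)\in Fr_1(\bb A^1\times U,W)$, I would invoke Corollary~\ref{Critaria_for_H} (or rather just the definition of $\overline{\bb ZF}_1$ together with Lemma~\ref{Disjoint_Support}-type bookkeeping) to conclude that $[h_0]=[h_1]$ in $\overline{\bb ZF}_1(U,W)$; but note we actually want equality already in $\bb ZF_1(U,W)$, not just in $\overline{\bb ZF}_1(U,W)$. So the homotopy argument by itself is not quite what is asked. The cleaner route is: since $\mathcal V$, $\varphi$ and $Z$ are common to $a_0$ and $a_1$ and $g_0|_Z=g_1|_Z$, after shrinking $\mathcal V$ to a neighborhood of $Z$ on which $g_0$ and $g_1$ agree on $Z$ we may choose the étale neighborhood small enough that $g_0$ and $g_1$ are "close", and then directly compare the two generators. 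Concretely, one uses that an element of $\bb ZF_1(U,W)$ generated by a connected support depends on the structure map $g$ only through a neighborhood of $Z$, so one is free to replace both $g_0$ and $g_1$ by the single map $g_0|_Z=g_1|_Z$ extended over a small enough neighborhood — but these extensions need not agree off $Z$, which is exactly where the level-$1$ homotopy above re-enters to bridge the gap.

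I expect the main obstacle to be the following subtlety: equality in $\bb ZF_1(U,W)$ (not in $\overline{\bb ZF}_1(U,W)$) is being claimed, so one cannot simply quote $\bb A^1$-homotopy. The resolution should use that we are in level $n=1$ on a curve $U\subset\bb A^1_k$: there the framing function $\varphi$ is a single function on $\mathcal V$, the support $Z$ is its zero locus, and one has enough room to perform the interpolation entirely within a Zariski neighborhood of $Z$ without ever passing to $\bb A^1$-classes — essentially because the presheaf-theoretic relations defining $\bb ZF_1$ already identify two correspondences that differ only in the choice of étale neighborhood of a common support with a common framing and a common structure map on that support. I would make this precise by first normalizing both representatives to have the same $\mathcal V$ (possible by taking a fibre product of the two neighborhoods over $U\times\bb A^1$, which is again étale over $U\times\bb A^1$ near $Z$), then checking that the resulting two generators $\langle a_0\rangle$ and $\langle a_1\rangle$ are literally equal as generators of the free abelian group $\bb ZF_1(U,W)$ because a framed correspondence of level one is determined by $(\mathcal V,\varphi)$ together with the germ of $g$ along $Z$, and along $Z$ the two germs coincide by hypothesis. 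The one genuinely delicate point is justifying that last "determined by the germ along $Z$" statement — that shrinking $\mathcal V$ does not change the class — which is where the explicit affine-linear homotopy between $g_0$ and $g_1$ (valid since $W$ is open in $\bb A^1$ and the two maps agree on $Z$) does the work, as it stays within a fixed small neighborhood of $Z$ throughout.
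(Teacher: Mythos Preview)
Your first approach --- the affine-linear interpolation $g_\theta=(1-\theta)g_0+\theta g_1$, restricted to the open locus $\cc V_\theta:=g_\theta^{-1}(W)\subset\bb A^1\times\cc V$, with framing $\varphi\circ pr_{\cc V}$ --- is exactly the paper's proof. The support of $h_\theta$ is $\bb A^1\times Z$ (since the framing is unchanged), and because $g_0|_Z=g_1|_Z$ one has $g_\theta|_{\bb A^1\times Z}=pr_Z^*(g_0|_Z)$, so $\bb A^1\times Z\subset\cc V_\theta$; hence $h_\theta\in Fr_1(\bb A^1\times U,W)$ with $h_0=a_0$, $h_1=a_1$.

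The second half of your proposal is based on a misreading of the notation. By Notation~\ref{Main}, the symbol $[a]$ denotes the class of $\langle a\rangle$ in $\overline{\bb ZF}_n$, not in $\bb ZF_n$. The phrase ``in $\bb ZF_1(U,W)$'' in the lemma statement is a minor slip in the paper; what is meant (and what the paper actually proves) is equality in $\overline{\bb ZF}_1(U,W)$. So the homotopy is all that is required, and you should stop there.

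Your attempt to upgrade to equality in $\bb ZF_1$ is both unnecessary and flawed: the assertion that a level-one framed correspondence ``is determined by $(\cc V,\varphi)$ together with the germ of $g$ along $Z$'' conflates the germ of $g$ at $Z$ with the restriction $g|_Z$. The equivalence relation defining $Fr_n$ lets you shrink $\cc V$, but on the shrunk neighborhood the map is still the \emph{restriction} of the original $g$, not an arbitrary extension of $g|_Z$. Two maps agreeing on $Z$ but nowhere else near $Z$ give genuinely different elements of $Fr_1$, and hence in general different generators of $\bb ZF_1$. So $\langle a_0\rangle=\langle a_1\rangle$ in $\bb ZF_1$ is not expected to hold.
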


\begin{proof}
Consider a function $g_{\theta}=(1-\theta)g_0+ \theta g_1: \bb
A^1\times \cc V \to \bb A^1$ and set $\cc
V_{\theta}=g^{-1}_{\theta}(W)$, $\varphi_{\theta}= \varphi \circ
pr_{\cc V}: \cc V_{\theta} \to \bb A^1_k$. Next, consider a homotopy
\begin{equation}
\label{H_g_and_g'}
h_{\theta}=(\cc V_{\theta}, \varphi_{\theta};g_{\theta}) \in Fr_1(\bb A^1 \times U, W).
\end{equation}
The support of $h_{\theta}$ equals $\bb A^1\times Z \subset \bb
A^1\times U\times \bb A^1$. Clearly, $h_0=a_0$ and $h_1=a_1$. Whence
the lemma.
\end{proof}

\begin{cor}
\label{g_and_g'_for_pairs} Under the assumptions of Lemma
\ref{g_and_g'} let $U'\subset U$ and $W'\subset W$ be open subsets.
Suppose that $a_0|_{U'}$ runs inside $W'$. Then $a_1|_{U'}$ runs
inside $W'$, the restriction $h_{\theta}|_{\bb A^1\times U'}$ of the
homotopy $h_{\theta}$ runs inside $W'$ and
   $$[[a_0]]=[[a_1]]$$
in $\overline {\bb ZF}_1((U,U'),(W,W'))$.
\end{cor}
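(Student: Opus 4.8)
The plan is to upgrade Lemma~\ref{g_and_g'} to the relative (pairs) setting by checking that the homotopy $h_\theta$ constructed there automatically respects the open subschemes $U'\subset U$ and $W'\subset W$. First I would record that since $a_0|_{U'}$ runs inside $W'$, Lemma~\ref{Criteria} gives $g_0(Z\cap U'\times\bb A^1)\subset W'$, where $Z$ is the common support of $a_0$ and $a_1$. Because $g_0|_Z=g_1|_Z$ by hypothesis, we immediately get $g_1(Z\cap U'\times\bb A^1)=g_0(Z\cap U'\times\bb A^1)\subset W'$, so by Lemma~\ref{Criteria} again $a_1|_{U'}$ runs inside $W'$, i.e.\ $\langle\langle a_1\rangle\rangle$ makes sense in $\bb ZF_1((U,U'),(W,W'))$.

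Next I would treat the homotopy itself. Recall from the proof of Lemma~\ref{g_and_g'} that the support of $h_\theta\in Fr_1(\bb A^1\times U,W)$ is $\bb A^1\times Z\subset \bb A^1\times U\times\bb A^1$, and that on this support the map to $W$ is $g_\theta=(1-\theta)g_0+\theta g_1$, which restricted to $\bb A^1\times Z$ equals the composite $\bb A^1\times Z\to Z\xrightarrow{g_0|_Z=g_1|_Z}W$ (independent of $\theta$, because $g_0$ and $g_1$ agree on $Z$). Hence the image of $(\bb A^1\times Z)\cap(\bb A^1\times U'\times\bb A^1)=\bb A^1\times(Z\cap U'\times\bb A^1)$ under $g_\theta$ is exactly $g_0(Z\cap U'\times\bb A^1)\subset W'$. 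By Corollary~\ref{Critaria_for_H} (applied with $Y=U$, $Y'=U'$, $X=W$, $X'=W'$), this shows that $h_\theta|_{\bb A^1\times U'}$ runs inside $W'$, producing $\langle\langle h_\theta\rangle\rangle\in\bb ZF_1(\bb A^1\times(U,U'),(W,W'))$ together with $\langle\langle h_0\rangle\rangle,\langle\langle h_1\rangle\rangle\in\bb ZF_1((U,U'),(W,W'))$, and an equality $[[h_0]]=[[h_1]]$ in $\overline{\bb ZF}_1((U,U'),(W,W'))$.

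Finally I would identify the endpoints: by construction $h_0=a_0$ and $h_1=a_1$ as framed correspondences, and since the running-inside data is unique (Definition~\ref{Runs_Inside}), the relative lifts also agree, so $\langle\langle h_0\rangle\rangle=\langle\langle a_0\rangle\rangle$ and $\langle\langle h_1\rangle\rangle=\langle\langle a_1\rangle\rangle$. Combining with the previous paragraph gives $[[a_0]]=[[a_1]]$ in $\overline{\bb ZF}_1((U,U'),(W,W'))$, as claimed.

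There is essentially no hard step here: the whole point is that the specific homotopy of Lemma~\ref{g_and_g'} keeps the target map constant on the support, so the hypothesis $g_0|_Z=g_1|_Z$ is exactly what is needed for the running-inside condition to be preserved along the homotopy. The only thing to be careful about is bookkeeping — matching the notation of Corollary~\ref{Critaria_for_H} (the roles of $Y,Y',X,X'$ and the intersection $Z'_\theta=Z_\theta\cap\bb A^1\times Y'\times\bb A^n$) with the data at hand, and verifying that $\cc V_\theta=g_\theta^{-1}(W)$ indeed contains a neighbourhood of $\bb A^1\times Z$ so that $h_\theta$ is a well-defined framed correspondence, which was already arranged in the proof of Lemma~\ref{g_and_g'}.
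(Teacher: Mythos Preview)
Your proof is correct and follows exactly the intended route: use Lemma~\ref{Criteria} together with $g_0|_Z=g_1|_Z$ to see that $a_1|_{U'}$ and $h_\theta|_{\bb A^1\times U'}$ run inside $W'$, and then invoke Corollary~\ref{Critaria_for_H}. The paper states this corollary without proof, so your argument is precisely the verification the reader is expected to supply.
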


\begin{lem}
\label{u_and_u'} Let $a_0=(\mathcal V,\varphi u_0 ;g)\in Fr_1(U,W)$,
$a_1=(\mathcal V,\varphi u_1;g) \in Fr_1(U,W)$, where $u_0,u_1 \in
k[\cc V]$ are units. In this case the supports of $a_0$ and $a_1$
coincide. Denote their common support by $Z$. Suppose
$u_0|_Z=u_1|_Z$, then $[a_0]=[a_1]$ in $\bb ZF_1(U,W)$.
\end{lem}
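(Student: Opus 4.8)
The plan is to adapt the proof of Lemma~\ref{g_and_g'}: there one moves the map $g$ along a straight-line homotopy, and here one moves the unit. First I would note that, since $u_0$ and $u_1$ are invertible on $\cc V$, the zero loci of $\varphi u_0$ and of $\varphi u_1$ both coincide with $\{\varphi=0\}$, so the two supports agree, with common support $Z=\{\varphi=0\}$. I would then set
$$u_\theta=(1-\theta)u_0+\theta u_1\in k[\bb A^1\times\cc V],$$
and take as candidate framing function and map on $\bb A^1\times\cc V$ the functions $\varphi_\theta:=(\varphi\circ pr_{\cc V})\cdot u_\theta$ and $g_\theta:=g\circ pr_{\cc V}$.

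The one point that needs care---and where the hypothesis $u_0|_Z=u_1|_Z$ is used---is that $u_\theta$ need not be a unit on all of $\bb A^1\times\cc V$, so $\varphi_\theta$ would acquire the spurious zero locus $\{u_\theta=0\}$. However, $u_0-u_1$ vanishes at every point of $Z$, hence $u_\theta|_{\bb A^1\times Z}=u_0|_Z$ is invertible, and therefore the closed set $\{u_\theta=0\}\subset\bb A^1\times\cc V$ is disjoint from $\bb A^1\times Z$. I would put $\cc W:=(\bb A^1\times\cc V)\setminus\{u_\theta=0\}$. This is an open subset of the \'etale neighborhood $\bb A^1\times\cc V$ of $\bb A^1\times Z$ that still contains $\bb A^1\times Z$, hence is itself an \'etale neighborhood of $\bb A^1\times Z$ in $\bb A^1\times U\times\bb A^1$, and $u_\theta$ is a unit on it. Restricting $\varphi_\theta$ and $g_\theta$ to $\cc W$ then yields
$$h_\theta=(\cc W,\varphi_\theta;g_\theta)\in Fr_1(\bb A^1\times U,W),$$
whose support is the zero locus of $\varphi_\theta$ on $\cc W$, namely $\bb A^1\times Z$, which is finite over $\bb A^1\times U$ since $Z$ is finite over $U$.

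It then remains to identify the endpoints. Since $u_0$ is a unit on all of $\cc V$, one has $\cc W\cap(\{0\}\times\cc V)=\cc V$, and the restriction of the data of $h_\theta$ to this fibre is exactly $(\cc V,\varphi u_0;g)=a_0$; symmetrically $h_1=a_1$. Thus $h_\theta$ is a homotopy connecting $a_0$ and $a_1$ with support $\bb A^1\times Z$, and the conclusion $[a_0]=[a_1]$ follows exactly as in Lemma~\ref{g_and_g'}. The hard part, as flagged above, is really just this localization that keeps $u_\theta$ invertible in a neighborhood of the support; everything else is the same bookkeeping with \'etale neighborhoods as in the previous lemma.
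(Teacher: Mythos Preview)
Your proof is correct and is essentially identical to the paper's own argument: the paper also sets $u_\theta=(1-\theta)u_0+\theta u_1$, passes to the open locus $\cc V_\theta=\{u_\theta\neq 0\}\subset\bb A^1\times\cc V$ (your $\cc W$), and takes $h_\theta=(\cc V_\theta,u_\theta\varphi;g\circ pr_{\cc V})$ as the connecting homotopy with support $\bb A^1\times Z$. You have simply spelled out more explicitly why the localization is harmless and why the endpoints recover $a_0,a_1$ on the nose.
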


\begin{proof}
Set $u_{\theta}=(1-\theta) u_0+ \theta u_1 \in k[\bb A^1\times \cc
V]$. Clearly, $u_{\theta}|_{\bb A^1\times Z}=pr^*_Z(u_0)=pr^*_Z(u_1)
\in k[\bb A^1\times Z]$. Let $\cc V_{\theta}=\{u_{\theta}\neq 0\}
\subset \bb A^1\times \cc V$. Set,
\begin{equation}
\label{H_g_and_g'}
h_{\theta}=(\cc V_{\theta}, u_{\theta} \varphi ;g\circ pr_{\cc V}) \in Fr_1(\bb A^1 \times U, W).
\end{equation}
The support of $h_{\theta}$ equals $\bb A^1\times Z \subset \bb
A^1\times U\times \bb A^1$. Clearly, $h_0=a_0$ and $h_1=a_1$. Whence
the lemma.
\end{proof}

\begin{cor}
\label{u_and_u'_for_pairs} Under the assumptions of Lemma
\ref{u_and_u'}, let $U'\subset U$ and $W'\subset W$ be open subsets.
Suppose $a_0|_{U'}$ runs inside $W'$. Then $a_1|_{U'}$ runs inside
$W'$, the restriction $h_{\theta}|_{\bb A^1\times U'}$ of the
homotopy $h_{\theta}$ from the proof of Lemma \ref{u_and_u'} runs
inside $W'$ and
$$[[a_0]]=[[a_1]]$$
in $\overline {\bb ZF}_1((U,U'),(W,W'))$.
\end{cor}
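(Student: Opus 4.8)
The plan is to run the same argument that proves Corollary~\ref{g_and_g'_for_pairs}, reducing everything to the support criterion of Lemma~\ref{Criteria} and then to Corollary~\ref{Critaria_for_H}. Write $Z\subset U\times\bb A^1$ for the common support of $a_0$ and $a_1$ (they agree because multiplying the framing function $\varphi$ by a unit does not change its vanishing locus), and set $Z'=Z\cap U'\times\bb A^1$. Since $a_0|_{U'}$ runs inside $W'$, Lemma~\ref{Criteria} gives $g(Z')\subset W'$. Now $a_1=(\cc V,\varphi u_1;g)$ has the very same support $Z$ and the very same $W$-component $g$ as $a_0$, so Lemma~\ref{Criteria} applied to $a_1$ yields at once that $a_1|_{U'}$ runs inside $W'$.

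Next I would treat the homotopy $h_{\theta}=(\cc V_{\theta},u_{\theta}\varphi;g\circ pr_{\cc V})\in Fr_1(\bb A^1\times U,W)$ from the proof of Lemma~\ref{u_and_u'}. As recorded there, $u_{\theta}|_{\bb A^1\times Z}=pr_Z^*(u_0)$ is a unit on $\bb A^1\times Z$, hence $\bb A^1\times Z\subset\cc V_{\theta}=\{u_{\theta}\neq0\}$ and the support of $h_{\theta}$ is exactly $\bb A^1\times Z$. Its intersection with $\bb A^1\times U'\times\bb A^1$ is $\bb A^1\times Z'$, and on this set the $W$-component $g\circ pr_{\cc V}$ coincides with $g\circ pr_Z$, so its image is $g(Z')\subset W'$. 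This is precisely the hypothesis of Corollary~\ref{Critaria_for_H} with $(Y,Y')=(U,U')$ and $(X,X')=(W,W')$.

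Invoking Corollary~\ref{Critaria_for_H} then produces the lifts $\langle\langle h_{\theta}\rangle\rangle\in\bb ZF_1(\bb A^1\times(U,U'),(W,W'))$ and $\langle\langle h_0\rangle\rangle,\langle\langle h_1\rangle\rangle\in\bb ZF_1((U,U'),(W,W'))$, together with $[[h_0]]=[[h_1]]$ in $\overline{\bb ZF}_1((U,U'),(W,W'))$; in particular the restriction $h_{\theta}|_{\bb A^1\times U'}$ runs inside $W'$. Since $h_0=a_0$ and $h_1=a_1$, this gives exactly $[[a_0]]=[[a_1]]$ in $\overline{\bb ZF}_1((U,U'),(W,W'))$, completing the proof. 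The argument is formal once the support is computed, so I do not expect a genuine obstacle; the one place that needs care is the verification that passing from $\varphi$ to $u_{\theta}\varphi$ neither enlarges nor shrinks the support away from $\bb A^1\times Z$ and leaves the $W$-component untouched — this is precisely what lets the "runs inside $W'$" condition propagate from $a_0$ to $a_1$ and to the whole homotopy.
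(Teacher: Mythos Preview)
Your proposal is correct and follows exactly the approach the paper intends: the corollary is stated without proof precisely because it is the routine consequence of Lemma~\ref{Criteria} and Corollary~\ref{Critaria_for_H} that you have written out, using that $a_0$, $a_1$, and $h_\theta$ share the same support and the same $W$-component $g$ (up to projection). Your care in checking that the support of $h_\theta$ is exactly $\bb A^1\times Z$ and lies in $\cc V_\theta$ is the only non-tautological point, and you handle it correctly.
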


\begin{lem}
\label{Higher_terms} Let $U \subset \bb A^1_k$ be non-empty open as
above. Suppose $F_0(Y),F_1(Y) \in k[U][Y]$. Let
$deg_Y(F_0)=deg_Y(F_1)=d>0$ and let their leading coefficients
coincide and are units in $k[U]$. Then,
$$[U\times \bb A^1, F_0(Y), pr_U]= [U\times \bb A^1, F_1(Y), pr_U] \in \bb ZF_1[U,U].$$
\end{lem}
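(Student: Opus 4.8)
\emph{Plan.} I read the assertion as the equality of the two classes $[U\times\bb A^1,F_0(Y);pr_U]$ and $[U\times\bb A^1,F_1(Y);pr_U]$ in $\overline{\bb ZF}_1(U,U)$ (the bracket being the class of Notation~\ref{Main}), and I would prove it by exhibiting both framed correspondences as the two ends of one explicit $\bb A^1$-homotopy, exactly in the spirit of the proofs of Lemmas~\ref{g_and_g'} and~\ref{u_and_u'}. Write $F_0=cY^d+\sum_{j<d}a_jY^j$ and $F_1=cY^d+\sum_{j<d}b_jY^j$ with $a_j,b_j\in k[U]$ and common leading coefficient $c\in k[U]^\times$ (this is the hypothesis). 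First I would form the linear homotopy
$$F_\theta=(1-\theta)F_0+\theta F_1=cY^d+\sum_{j<d}\big((1-\theta)a_j+\theta b_j\big)Y^j\ \in\ k[U][\theta][Y],$$
regarded as a regular function on $\bb A^1_\theta\times U\times\bb A^1_Y$.

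The one substantive point to check — and it is mild — is that $F_\theta$ defines a level-$1$ framed correspondence over the base $\bb A^1_\theta\times U$. Because the two leading terms agree, the $\theta$-dependence only enters the coefficients of $Y^j$ for $j<d$, so $F_\theta$ is, as a polynomial in $Y$ over $k[\bb A^1_\theta\times U]=k[U][\theta]$, of degree exactly $d>0$ with leading coefficient the same unit $c$. Hence $k[\bb A^1_\theta\times U][Y]/(F_\theta)$ is finite free of rank $d$ over $k[\bb A^1_\theta\times U]$, so the vanishing locus $Z_\theta\subset\bb A^1_\theta\times U\times\bb A^1_Y$ of $F_\theta$ is finite over $\bb A^1_\theta\times U$. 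Therefore
$$h_\theta:=\big(\bb A^1_\theta\times U\times\bb A^1_Y,\ F_\theta;\ pr_U\big)\ \in\ Fr_1(\bb A^1\times U,U)$$
is a well-defined element, the ambient scheme $(\bb A^1_\theta\times U)\times\bb A^1$ serving as its own \'etale neighbourhood of $Z_\theta$, and $pr_U$ denoting the projection onto the $U$-factor.

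Finally I would restrict $h_\theta$ at $\theta=0$ and $\theta=1$: one gets $i^*_0(h_\theta)=(U\times\bb A^1,F_0;pr_U)$ and $i^*_1(h_\theta)=(U\times\bb A^1,F_1;pr_U)$ in $Fr_1(U,U)$, hence in $\bb ZF_1(U,U)$. Since $\overline{\bb ZF}_1(U,U)=\coker[\,\bb ZF_1(\bb A^1\times U,U)\xrightarrow{i^*_0-i^*_1}\bb ZF_1(U,U)\,]$ by Definition~\ref{bar_ZF}, the difference $\langle U\times\bb A^1,F_0;pr_U\rangle-\langle U\times\bb A^1,F_1;pr_U\rangle$ is $(i^*_0-i^*_1)(\langle h_\theta\rangle)$ and thus vanishes in $\overline{\bb ZF}_1(U,U)$, which is the claim. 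I do not expect any real obstacle: the whole argument reduces to the observation that fixing the (unit) leading coefficient forces the support of the linear interpolation to stay finite over $\bb A^1_\theta\times U$, i.e.\ no branch of the zero locus escapes to infinity along the homotopy, so the naive linear homotopy already works.
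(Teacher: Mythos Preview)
Your proof is correct and essentially identical to the paper's: it too sets $F_\theta=(1-\theta)F_0+\theta F_1$, observes that $h_\theta=(\bb A^1\times U\times\bb A^1,F_\theta;pr)$ lies in $Fr_1(\bb A^1\times U,U)$, and reads off $h_0$ and $h_1$. You even supply the finiteness check (unit leading coefficient persists along the homotopy) that the paper leaves implicit.
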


\begin{proof}
Set $F_{\theta}(Y)=(1-\theta)F_0(Y)+ \theta F_1(Y) \in
k[U][\theta,Y]$. Consider a morphism
\begin{equation}
\label{H_for_higher_terms}
h_{\theta}=(\bb A^1\times U\times \bb A^1, F_{\theta} ; pr_{U}) \in Fr_1(\bb A^1 \times U, U).
\end{equation}
Clearly, $h_0=(U\times \bb A^1, F_0(Y), pr_U)$ and $h_1=(U\times \bb A^1, F_1(Y), pr_U)$. Whence the lemma.
\end{proof}

\begin{cor}
\label{Higher_terms_for_pairs}
Under the assumptions of Lemma \ref{Higher_terms} let $U'\subset U$ be an open subset. Then
$(U\times \bb A^1, F_0(Y), pr_U)|_{U'}$, $(U\times \bb A^1, F_1(Y), pr_U)|_{U'}$ runs inside $U'$
and the restriction
$h_{\theta}|_{\bb A^1\times U'}$ of the homotopy
$h_{\theta}$ from the proof of Lemma \ref{Higher_terms} runs inside $W'=\bb A^1\times U'$ and
$$[[U\times \bb A^1, F_0(Y), pr_U]]= [[U\times \bb A^1, F_0(Y), pr_U] \in \bb ZF_1[(U,U'),(U,U')]]$$
in $\overline {\bb ZF}_1((U,U'),(U,U'))$.
\end{cor}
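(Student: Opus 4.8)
\emph{Proof proposal.} The plan is to relativize the proof of Lemma~\ref{Higher_terms} in exactly the way Corollaries~\ref{g_and_g'_for_pairs} and~\ref{u_and_u'_for_pairs} relativize Lemmas~\ref{g_and_g'} and~\ref{u_and_u'}: the only tools needed are the ``runs inside'' criterion of Lemma~\ref{Criteria} and the homotopy statement for pairs of Corollary~\ref{Critaria_for_H}, applied to the very homotopy $h_\theta$ constructed in the proof of Lemma~\ref{Higher_terms}.

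First I would check that the correspondences $a_i=(U\times\bb A^1,F_i(Y),pr_U)\in Fr_1(U,U)$ for $i=0,1$ have the property that $a_i|_{U'}$ runs inside $U'$. Since $\deg_Y F_i=d>0$ and the leading coefficient of $F_i$ in $Y$ is a unit in $k[U]$, the support $Z_{a_i}=\{F_i=0\}\subset U\times\bb A^1$ is finite over $U$, so $a_i$ is a bona fide framed correspondence; moreover its structure map $g_{a_i}=pr_U$ sends $Z'_{a_i}:=Z_{a_i}\cap U'\times\bb A^1$ tautologically into $U'$. Hence Lemma~\ref{Criteria} applies and $\langle\langle a_i\rangle\rangle$ is a well-defined element of $\bb ZF_1((U,U'),(U,U'))$, which gives the first assertion.

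Next, consider $F_\theta(Y)=(1-\theta)F_0(Y)+\theta F_1(Y)$ and the homotopy $h_\theta=(\bb A^1\times U\times\bb A^1,F_\theta;pr_U)\in Fr_1(\bb A^1\times U,U)$ from the proof of Lemma~\ref{Higher_terms}, so that $h_0=a_0$ and $h_1=a_1$. The leading coefficient of $F_\theta$ in $Y$ is the common leading coefficient of $F_0$ and $F_1$, still a unit, so the support $Z_{h_\theta}=\{F_\theta=0\}$ is finite over $\bb A^1\times U$ and $h_\theta$ is a legitimate framed correspondence. Exactly as above, $pr_U$ carries $Z'_{h_\theta}:=Z_{h_\theta}\cap\bb A^1\times U'\times\bb A^1$ into $U'$, so the hypothesis of Corollary~\ref{Critaria_for_H} is satisfied with $Y=X=U$, $Y'=X'=U'$, $n=1$. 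That corollary then produces $\langle\langle h_\theta\rangle\rangle\in\bb ZF_1(\bb A^1\times(U,U'),(U,U'))$, yields that $h_\theta|_{\bb A^1\times U'}$ runs inside $U'$, and gives the equality $[[h_0]]=[[h_1]]$ in $\overline{\bb ZF}_1((U,U'),(U,U'))$, which is precisely $[[U\times\bb A^1,F_0(Y),pr_U]]=[[U\times\bb A^1,F_1(Y),pr_U]]$.

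I do not expect a genuine obstacle here; the proof is a routine relativization. The single point that needs a line of care is the finiteness of the relevant supports over the base and over $U'$ — this is exactly where the ``unit leading coefficient'' hypothesis of Lemma~\ref{Higher_terms} is used, and it is inherited by the linear homotopy $F_\theta$ because the leading coefficient is constant along it.
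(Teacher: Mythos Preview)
Your proposal is correct and follows exactly the approach the paper intends: the corollary is stated without proof, relying on the same relativization pattern used in Corollaries~\ref{g_and_g'_for_pairs} and~\ref{u_and_u'_for_pairs}, namely checking via Lemma~\ref{Criteria} that the projection $pr_U$ sends the restricted support into $U'$ and then invoking Corollary~\ref{Critaria_for_H}. The only thing to note is that the ``$W'$'' in the statement is a typo for $U'$ (and the displayed equality should have $F_1$ on one side), both of which you interpreted correctly.
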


\begin{prop}
\label{Diagonal} Let $U\subset \bb A^1_k$ and $U' \subset U$ be open
subsets. Let $t\in k[\bb A^1]$ be the standard parameter on $\bb
A^1_k$. Set $X:=(t\otimes 1)|_{U\times U}\in k[U\times U]$ and
$Y:=(1\otimes t)|_{U\times U}\in k[U\times U]$. Then for any integer
$n\geq 1$, one has an equality
$$[[U\times U, (Y-X)^{2n+1}, p_2]]=[[U\times U, (Y-X)^{2n}, p_2]]+[[\sigma_U]]$$
in $\overline {\bb ZF}_1((U,U'),(U,U'))$.
\end{prop}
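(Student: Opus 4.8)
The plan is to exhibit an explicit $\bb A^1$-homotopy in $\bb ZF^{pr}_1$ whose endpoints are, on the one hand, the correspondence $\langle\langle U\times U,(Y-X)^{2n+1},p_2\rangle\rangle$, and on the other hand the sum $\langle\langle U\times U,(Y-X)^{2n},p_2\rangle\rangle+\langle\langle\sigma_U\rangle\rangle$. The key observation is that the polynomial $(Y-X)^{2n+1}$, regarded as a function on $U\times U\times\bb A^1$ (with coordinate $Y$ on the last factor, say, after the standard identification), has vanishing locus the diagonal $\Delta_U\subset U\times U$, while $(Y-X)^{2n}$ has the same vanishing locus but with even multiplicity. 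First I would set up a one-parameter family $F_\theta(Y)=(Y-X)^{2n}\bigl((Y-X)-\theta\bigr)+\theta\cdot(\text{something})$ — more precisely, I expect the right homotopy to be built so that at $\theta=0$ one gets $(Y-X)^{2n+1}$ and at $\theta=1$ the support splits as a disjoint union: the diagonal (carrying $(Y-X)^{2n}$) together with a second component along which the framing function becomes a unit times the coordinate, so that by Lemma \ref{Disjoint_Support} (resp.\ Lemma \ref{Criteria_for_pairs}) the class decomposes additively, and by Lemma \ref{u_and_u'} (resp.\ Corollary \ref{u_and_u'_for_pairs}) and Lemma \ref{Higher_terms} (resp.\ Corollary \ref{Higher_terms_for_pairs}) the extra component is identified with $\langle\langle\sigma_U\rangle\rangle$.

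Concretely, the homotopy I would write down is
$$h_\theta=\bigl(U\times U\times\bb A^1,\ (Y-X)^{2n}(Y-X-\theta)\ ;\ p_2\bigr)\in Fr_1(\bb A^1\times(U\times U),U),$$
whose support over $\theta$ is $\{Y=X\}\cup\{Y=X+\theta\}$. The subtle point is that this support fails to be finite over $\bb A^1\times(U\times U)$ once $X+\theta$ leaves $U$; to fix this one restricts the base, or rather works with a suitable shrinking of $U\times U$, exactly as in Lemma \ref{Higher_terms}, and checks the ``runs inside'' condition for the pair $(U,U')$ via Corollary \ref{Critaria_for_H} — this is where the hypothesis $U'\subset U$ open is used. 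At $\theta=1$ the two sheets $\{Y=X\}$ and $\{Y=X+1\}$ are disjoint, so Lemma \ref{Criteria_for_pairs}(2) gives $[[h_1]]=[[a_1]]+[[a_2]]$ in $\overline{\bb ZF}_1((U,U'),(U,U'))$, where $a_1$ carries the framing $(Y-X)^{2n}(Y-X-1)$ near $\{Y=X\}$, which restricted to that component is $(Y-X)^{2n}$ times the unit $(Y-X-1)$, hence equals $[[U\times U,(Y-X)^{2n},p_2]]$ by Corollary \ref{u_and_u'_for_pairs} and Corollary \ref{Higher_terms_for_pairs}; and $a_2$ carries the framing which near $\{Y=X+1\}$ is a unit times the first-order function $(Y-X-1)$, hence is $\langle\langle\sigma_U\rangle\rangle$ up to the same two corollaries (after the linear change of coordinate $Y\mapsto Y-1$ on the target, which is why a term of odd total degree was split off rather than even).

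I expect the main obstacle to be the bookkeeping of finiteness/properness of the supports over the base of the homotopy: the naive family above only has finite support over an open subscheme of $\bb A^1\times(U\times U)$, so one must either pass to an \'etale neighborhood of the relevant support (as in Notation \ref{Main}) or replace $U\times U$ by the open locus where both $X$ and $X+\theta$ lie in $U$, and then verify that after applying $[\,\cdot\,]$ and passing to the pair $(U,U')$ nothing is lost. A secondary technical point is keeping track of which of $(Y-X)^{2n}$ and $(Y-X)^{2n}(Y-X-1)$ plays the role of ``$F_0$'' versus ``$F_1$'' in Corollary \ref{Higher_terms_for_pairs}: the leading coefficients in $Y$ are both $1$ and the two polynomials differ only in lower-order terms on the component $\{Y=X\}$, so that corollary applies, but one should double-check the degree count $2n+1=2n+1$ and that $2n+1>0$, which is immediate from $n\geq 1$. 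Everything else is a direct concatenation of the homotopy lemmas of Section~4 with the disjoint-support additivity, so the proposition follows once these finiteness issues are addressed.
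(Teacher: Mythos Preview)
Your strategy—deform $(Y-X)^{2n+1}$ to a product that splits the support—is close in spirit to the paper's, but there is a genuine gap: the homotopy $(Y-X)^{2n}(Y-X-\theta)$ with target map $p_2$ is not a well-defined element of $Fr_1(\bb A^1\times U,U)$. The étale neighborhood has to carry a morphism to the target $U$, so if you take the neighborhood to be $\bb A^1\times U\times U$ (so that $p_2$ lands in $U$), the support component $\{Y=X+\theta\}$ escapes it whenever $X+\theta\notin U$; if instead you take $\bb A^1\times U\times\bb A^1$, then $p_2$ lands in $\bb A^1$, not $U$. Neither of your proposed fixes works: shrinking the base changes the source scheme (so the $\theta=0$ endpoint is no longer the element you want), and Lemma~\ref{Higher_terms} is stated for $pr_U=p_1$, not $p_2$, precisely because only with $p_1$ can one enlarge the second factor to all of $\bb A^1$.

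The missing idea is the very first step of the paper's argument: since $p_1$ and $p_2$ agree on the diagonal support $\{Y=X\}$, Corollary~\ref{g_and_g'_for_pairs} gives $[[U\times U,(Y-X)^m,p_2]]=[[U\times U,(Y-X)^m,p_1]]$, and with $p_1$ one may pass to the neighborhood $U\times\bb A^1$. Now Corollary~\ref{Higher_terms_for_pairs} applies freely (replace $(Y-X)^m$ by $Y^m$, then $Y^{2n+1}$ by $Y^{2n}(Y+1)$), the supports $\{Y=0\}$ and $\{Y=-1\}$ are disjoint and sit happily in $U\times\bb A^1$, and the splitting via Lemma~\ref{Criteria_for_pairs} and Corollary~\ref{u_and_u'_for_pairs} goes through without any containment issues. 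At the end one switches $p_1$ back to $p_2$ the same way. So the ``bookkeeping of finiteness'' you flagged is not a technicality to be patched but the whole point, and the $p_2\to p_1$ switch is what resolves it.
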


\begin{proof}
Let $m\geq 1$ be an integer. Then
\begin{equation}
\label{star} [[U\times U, (Y-X)^{m}, p_2]]= [[U\times U, (Y-X)^{m},
p_1]]= [[U\times \bb A^1, (Y-X)^{m}, p_1]]=[[U\times \bb A^1, Y^{m},
p_1]]
\end{equation}
in $\overline {\bb ZF}_1((U,U'),(U,U'))$.
The first equality follows from Corollary
\ref{g_and_g'_for_pairs}, the third one follows from Corollary
\ref{Higher_terms_for_pairs}, the middle one is obvious.

There is a chain of equalities in
$\overline {\bb ZF}_1((U,U'),(U,U'))$:
$$
[[U\times \bb A^1, Y^{2n+1}; p_1]]=[[U\times \bb A^1, Y^{2n}(Y+1); p_1]]=
$$
$$
=[[U\times (\bb A^1-\{-1\}),Y^{2n}(Y+1); p_1]]+ [[U\times (\bb A^1-\{0\}),Y^{2n}(Y+1); p_1]]=
$$
$$
=[[\cc V_0,Y^{2n}; p_1]]+ [[\cc V_1,(Y+1); p_1]]=
$$
$$
=[[U\times \bb A^1,Y^{2n}; p_1]]+ [[U\times \bb A^1,(Y+1); p_1]].
$$
Here the first equality holds by Corollary
\ref{Higher_terms_for_pairs}, the second one holds by Lemma
\ref{Criteria_for_pairs}, the third one holds by Corollary
\ref{u_and_u'_for_pairs}, the forth one is obvious (replacement of
neighborhoods).

Continue the chain of equalities in $\overline {\bb ZF}_1((U,U'),(U,U'))$ as follows:
$$
[[U\times \bb A^1,Y^{2n}; p_1]]+ [[U\times \bb A^1,(Y+1); p_1]]=[[U\times \bb A^1,(Y-X)^{2n}; p_1]]+ [[U\times \bb A^1,Y; p_1]]=
$$
$$
=[[U\times \bb A^1,(Y-X)^{2n}; p_1]]+[[\sigma_U]]=[[U\times U,(Y-X)^{2n}; p_1]]+[[\sigma_U]]=[[U\times U,(Y-X)^{2n}; p_2]]+[[\sigma_U]].
$$
Here the first equality holds by
Corollary \ref{Higher_terms_for_pairs}, the second one holds by the definition of $\sigma_U$
(see Notation \ref{sigma_and_pairs}), the third one is obvious,
the fouth one holds by Corollary~\ref{g_and_g'_for_pairs}. We proved the equality
\begin{equation}
\label{star_star} [[U\times \bb A^1, Y^{2n+1}; p_1]]=[[U\times U,(Y-X)^{2n}; p_2]]+[[\sigma_U]].
\end{equation}
Combining that with the equality
(\ref{star}) for $m=2n+1$ we get the desired equality
$$[[U\times U, (Y-X)^{2n+1}; p_2]]=[[U\times U,(Y-X)^{2n}; p_2]]+[[\sigma_U]]$$
in $\overline {\bb ZF}_1((U,U'),(U,U'))$. Whence the proposition.
\end{proof}

\section{Injectivity and excision on affine line}\label{Inj_and_Exc_On_Aff_Line}

The aim of this section is to prove Theorems \ref{InjOnAffLine} and \ref{Ex_on_line}.

\begin{lem}
\label{G0_and_G1}
Let $U\subset \bb A^1$ be open and non-empty. Let $A=\bb A^1_k - U$.
Let $G_0(Y), G_1(Y)\in k[U][Y]$ be such that
\begin{itemize}
\item[(1)]
$deg_Y(G_0)=deg_Y(G_1)$;
\item[(2)]
both are unitary in $Y$ and the leading coefficients equal one;
\item[(3)]
$G_0|_{U\times A}= G_1|_{U\times A} \in k[U\times A]^{\times}$.
\end{itemize}
Then
$$[U\times U, G_0; p_2]=[U\times U, G_1; p_2]$$
in $\overline {\bb ZF}_1(U,U)$.
\end{lem}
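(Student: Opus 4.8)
The plan is to connect $G_0$ and $G_1$ by the obvious linear homotopy, exactly as in the proof of Lemma~\ref{Higher_terms}. The one extra thing to watch is that hypothesis~(3) is precisely what keeps the support of the homotopy inside $U\times U$, so that the homotopy defines a genuine framed correspondence into the target $U$ for every value of the parameter, and not just at its two endpoints.

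First I would put $G_\theta(Y)=(1-\theta)G_0(Y)+\theta G_1(Y)\in k[\bb A^1\times U][Y]$, where $\theta$ is the coordinate on the homotopy line $\bb A^1$. By~(1) and~(2) this polynomial is monic in $Y$ of the common degree $d=\deg_Y(G_0)=\deg_Y(G_1)$, so its zero locus $Z_\theta=\{G_\theta=0\}$ in $\bb A^1\times U\times\bb A^1$ (the last $\bb A^1$ being the level-one coordinate) is finite over $\bb A^1\times U$. Writing $u=G_0|_{U\times A}=G_1|_{U\times A}\in k[U\times A]^\times$, hypothesis~(3) gives $G_\theta|_{\bb A^1\times U\times A}=(1-\theta)u+\theta u=u$, which is still a unit; hence $G_\theta$ has no zero on $\bb A^1\times U\times A$, and therefore $Z_\theta$ is contained in the open subset $\bb A^1\times U\times U$ of $\bb A^1\times U\times\bb A^1$.

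This lets me form the level-one framed correspondence $h_\theta=(\bb A^1\times U\times U,\,G_\theta;\,pr_U)\in Fr_1(\bb A^1\times U,U)$, where $pr_U$ is the projection onto the last $U$-factor; it is well defined precisely because $Z_\theta$ is finite over $\bb A^1\times U$ and lies in $\bb A^1\times U\times U$. Restricting along the inclusions $i_0,i_1\colon U\hookrightarrow\bb A^1\times U$ at $\theta=0$ and $\theta=1$ gives $h_0=(U\times U,G_0;p_2)$ and $h_1=(U\times U,G_1;p_2)$ in $Fr_1(U,U)$, so $\langle h_0\rangle-\langle h_1\rangle=(i^*_0-i^*_1)\langle h_\theta\rangle$ lies in the image of $i^*_0-i^*_1$. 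Since $\overline{\bb ZF}_1(U,U)=\coker[\bb ZF_1(\bb A^1\times U,U)\xrightarrow{i^*_0-i^*_1}\bb ZF_1(U,U)]$, this yields $[U\times U,G_0;p_2]=[U\times U,G_1;p_2]$, as desired.

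The only step that is not purely formal is the inclusion $Z_\theta\subset\bb A^1\times U\times U$: it is here, and only here, that hypothesis~(3) is used, and it is exactly what makes the second projection $p_2$ (equivalently $pr_U$) a legitimate morphism to the target $U$ along the whole homotopy rather than only at its endpoints. Everything else is the standard linear-homotopy argument already employed in Lemmas~\ref{g_and_g'} and~\ref{Higher_terms}, so the write-up should be brief.
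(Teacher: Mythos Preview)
Your proof is correct and follows exactly the same approach as the paper: define the linear homotopy $G_\theta=(1-\theta)G_0+\theta G_1$, form $h_\theta=(\bb A^1\times U\times U,G_\theta;pr_U)\in Fr_1(\bb A^1\times U,U)$, and restrict at $\theta=0,1$. The paper's write-up is a single sentence and leaves implicit the point you spell out, namely that hypothesis~(3) forces $G_\theta|_{\bb A^1\times U\times A}$ to be a unit so that the support stays in $\bb A^1\times U\times U$; your added detail is welcome and changes nothing in the argument.
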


\begin{proof}
One has a homotopy $h_{\theta}=(\bb A^1\times U\times U,
G_{\theta},p_{2,U}) \in Fr_1(\bb A^1\times U,U)$,
where
$G_{\theta}=(1-\theta)G_0+\theta G_1$
and $pr_{2,U}: \bb A^1\times U\times U \to U$
is the projection onto the second copy of $U$.
Its restriction to $0\times U$ and to $1\times U$ coincides with
morphisms $(U\times U, G_0; p_2)$ and $(U\times U, G_1; p_2)$
respectively. Whence the lemma.
\end{proof}

\begin{proof}[Proof of Theorem \ref{InjOnAffLine}]
Under the assumptions of this theorem set $A=\bb A^1_k - U$ and
$B=U-V$. For each big enough integer $m\geq 0$ find a polinomial
$F_m(Y)\in k[U][Y]$ such that $F_m(Y)$ is of degree $m$ with the
leading coefficient equal 1 and such that
\begin{itemize}
\item[(i)]
$F_m(Y)|_{U\times A}=(Y-X)^m|_{U\times A}\in k[U\times A]^{\times}$;
\item[(ii)]
$F_m(Y)|_{U\times B}=1\in k[U\times B]^{\times}$.
\end{itemize}
Take $n\gg 0$ and set $r=\langle U\times V, F_{2n+1}; pr_V\rangle -
\langle U\times V, F_{2n}; pr_V\rangle \in \bb ZF_1(U,V)$. Then one
has a chain equalities in $\overline {\bb ZF}_1(U,U)$:
$$
[i]\circ [r]=[U\times U, F_{2n+1}; p_2]- [U\times U, F_{2n}; p_2]
=[U\times U, (Y-X)^{2n+1}; p_2]-[U\times U, (Y-X)^{2n}; p_2]=
$$
$$
=[\sigma_U].
$$
Here the first equality is obvious, the second one holds by Lemma
\ref{G0_and_G1}, the third one holds by Proposition \ref{Diagonal}.
Whence the theorem.
\end{proof}

\begin{cor}[of Lemma \ref{G0_and_G1}]
\label{G0_and_G1_for_pairs} Under the conditions and notation of
Lemma \ref{G0_and_G1} let $S\subset U$ be a closed subset.
Additionally to the conditions $(1)-(3)$ suppose that the following
to conditions hold:
\begin{itemize}
\item[(4)]
$G_0(Y)|_{U\times S}=G_1(Y)|_{U\times S}$,
\item[(5)] $G_0(Y)|_{(U-S)\times S}$ is invertible.
\end{itemize}
Then one has an equality
$$[[U\times U, G_0; p_2]]= [[U\times U, G_1; p_2]]$$
in $\overline {\bb ZF}_1((U,U-S),(U,U-S))$.
\end{cor}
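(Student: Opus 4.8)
The plan is to imitate the proof of Lemma~\ref{G0_and_G1}, but now working with pairs so that the homotopy is controlled over $S$ as well. First I would set $G_{\theta}=(1-\theta)G_0+\theta G_1\in k[\bb A^1\times U][Y]$ exactly as in the proof of Lemma~\ref{G0_and_G1}, and form the homotopy $h_{\theta}=(\bb A^1\times U\times U, G_{\theta}, p_{\bb A^1\times U})\in Fr_1(\bb A^1\times U,U)$. Its support is $\bb A^1\times Z\subset\bb A^1\times U\times \bb A^1$, where $Z\subset U\times\bb A^1$ is the common vanishing locus of $G_0$ (equivalently of any $G_\theta$, since condition~(4) forces all $G_\theta$ to agree on $U\times S$, and conditions (1)--(2) fix the degree and leading term, so the support is genuinely constant in $\theta$; this is the point where conditions (1), (2), (4) are used to get a well-defined support over all of $\bb A^1_\theta$).

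Next I would verify that the homotopy runs inside the open pair. Put $Y'=U-S$, and view $h_\theta$ as landing in $(U,U-S)$ over $(U,U-S)$. By Lemma~\ref{Criteria} (applied with $Y=\bb A^1\times U$, $Y'=\bb A^1\times(U-S)$, $X=U$, $X'=U-S$), the restriction $h_\theta|_{\bb A^1\times(U-S)}$ runs inside $U-S$ precisely when $g_{h_\theta}$ sends the part of the support lying over $\bb A^1\times(U-S)$ into $U-S$. Here $g_{h_\theta}=p_2\circ\rho$, i.e.\ the target coordinate is the $Y$-coordinate, so on the support $Z$ we have $g=Y$ and the condition becomes: the zero locus of $G_\theta$ over $(U-S)$ avoids $S$ in the target. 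By condition~(5), $G_0(Y)|_{(U-S)\times S}$ is invertible, hence $G_\theta(Y)|_{(U-S)\times S}$ is invertible for every $\theta$ (again using (1), (2), (4): on $(U-S)\times S$ the polynomials $G_0$ and $G_1$ need not agree, but their values lie in a set avoiding zero — here I should be a bit careful and may need condition (5) to hold for $G_1$ as well, which follows from (3) and (5) on the overlap $U\times A$, or should be added/checked). Granting this, the support of $h_\theta$ over $\bb A^1\times(U-S)$ maps into $(U-S)\times\bb A^1$ and avoids $(U-S)\times S$, so Corollary~\ref{Critaria_for_H} gives $\langle\langle h_\theta\rangle\rangle\in\bb ZF_1(\bb A^1\times(U,U-S),(U,U-S))$ together with $\langle\langle h_0\rangle\rangle,\langle\langle h_1\rangle\rangle\in\bb ZF_1((U,U-S),(U,U-S))$ and the equality $[[h_0]]=[[h_1]]$ in $\overline{\bb ZF}_1((U,U-S),(U,U-S))$.

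Finally I would identify the endpoints. By construction $h_0=(U\times U,G_0;p_2)$ and $h_1=(U\times U,G_1;p_2)$ as elements of $Fr_1(U,U)$, and conditions (4)--(5) show these run inside $(U,U-S)$ over $(U,U-S)$ (Lemma~\ref{Criteria} again), so $\langle\langle h_0\rangle\rangle=\langle\langle U\times U,G_0;p_2\rangle\rangle$ and likewise for $h_1$. Passing to classes in $\overline{\bb ZF}_1((U,U-S),(U,U-S))$ yields
$$[[U\times U,G_0;p_2]]=[[U\times U,G_1;p_2]],$$
which is the assertion. The main obstacle, as flagged above, is bookkeeping the "runs inside" condition uniformly in the homotopy parameter $\theta$: one must make sure that conditions (4) and (5), which are stated for $G_0$ (and partly for $G_1$ via~(3)), really do guarantee invertibility of every $G_\theta$ on $(U-S)\times S$; this is the only non-formal point, and it is where the precise interplay of hypotheses (1)--(5) matters. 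Everything else is a direct transcription of the proof of Lemma~\ref{G0_and_G1} into the setting of pairs via Corollary~\ref{Critaria_for_H}.
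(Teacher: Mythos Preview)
Your approach is the same as the paper's, but you have two inaccuracies that you should fix.

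First, the support of $h_\theta$ is \emph{not} of the form $\bb A^1\times Z$ for a fixed $Z$; the vanishing locus of $G_\theta$ genuinely moves with $\theta$. Conditions (1)--(2) and (4) do not force the zero locus to be constant in $\theta$; they only force the degree and leading coefficient to be constant, which is what makes $\{G_\theta=0\}$ finite and surjective over $\bb A^1\times U$ (so that $h_\theta\in Fr_1(\bb A^1\times U,U)$). This misstatement is harmless for the argument, but you should not claim it.

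Second, the ``obstacle'' you flag does not exist, and your worry that ``on $(U-S)\times S$ the polynomials $G_0$ and $G_1$ need not agree'' is simply mistaken. Condition (4) says $G_0|_{U\times S}=G_1|_{U\times S}$, and $(U-S)\times S\subset U\times S$, so $G_0$ and $G_1$ \emph{do} agree on $(U-S)\times S$. Hence $G_\theta|_{(U-S)\times S}=G_0|_{(U-S)\times S}$ for every $\theta$, and this is invertible by (5). That is the whole point of hypotheses (4) and (5), and it immediately gives $Z_\theta\cap\bigl(\bb A^1\times(U-S)\times S\bigr)=\emptyset$, so Lemma~\ref{Criteria} and Corollary~\ref{Critaria_for_H} apply. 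Once you correct these two points your proof is exactly the paper's.
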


\begin{proof}[Proof of the corollary]
The support $Z_{\theta}$ of the homotopy $h_{\theta}$ from the proof
of Lemma \ref{G0_and_G1} coincides with the vanishing locus of the
polinomial $G_{\theta}$. Since $G_{\theta}|_{\bb A^1\times
(U-S)\times S}$ is invertible, then $Z_{\theta}\cap \bb A^1\times
(U-S)\times S=\emptyset$. By Lemma \ref{Criteria} the homotopy $h_{\theta}|_{\bb
A^1\times (U-S)}$ runs inside $U-S$. Hence
$$[[U\times U, G_0; p_2]]=[[h_0]]=[[h_1]]=[[U\times U, G_1; p_2]]$$
in $\overline {\bb ZF}_1((U,U-S),(U,U-S))$.
In fact, the second equality here holds by Corollary
\ref{Critaria_for_H}.
The first and the third equalities hold since
for $i=0,1$ one has $h_i=(U\times U, G_i; p_2)$ in $Fr_1(U,U)$.
\end{proof}

\begin{proof}[Proof of Theorem \ref{Ex_on_line}]
Firstly construct a morphism
$r\in \bb ZF_1((U,U-S)),(V,V-S))$
such that for its class $[[r]]$ in
$\overline {\bb ZF}_1((U,U-S)),(V,V-S))$
one has
\begin{equation}
\label{Injectivity_Part}
[[i]]\circ [[r]]=[[\sigma_U]]
\end{equation}
in
$\overline {\bb ZF}_1((U,U-S)),(U,U-S))$.

To this end set $A=\bb A^1_k - U$, $B=U-V$. Recall that $S\subset V$
is a closed subset. Take any big enough integer $m\geq 1$ and find a
unitary polinomial $F_m(Y)$ of degree $m$ satisfying the following
properties:
\begin{itemize}
\item[(i)]
$F_m(Y)|_{U\times A}=(Y-X)^m|_{U\times A}\in k[U\times A]^{\times}$;
\item[(ii)]
$F_m(Y)|_{U\times B}=1\in k[U\times B]^{\times}$;
\item[(iii)]
$F_m(Y)|_{U\times S}=(Y-X)^m|_{U\times S}\in k[U\times S]$.
\end{itemize}
Note that $F_m(Y)|_{(U-S)\times S}\in k[(U-S)\times S]^{\times}$.
Hence by Lemma \ref{Criteria} the morphism
$(U\times V, F_{m}; pr_V) \in Fr_1(U,V)$
being restricted to $U-S$ runs inside $V-S$.
Thus using Definition \ref{Runs_Inside} we get a morphism
$$\langle\langle U\times V, F_{m}; pr_V\rangle\rangle \in \bb ZF_1((U,U-S)),(V,V-S)).$$
For that morphism one has equalities
$$
[[i]]\circ [[U\times V, F_{m}; pr_V]]= [[U\times U, F_{m}; p_2]]=[[U\times U, (Y-X)^{m}; p_2]]
$$
in
$\bb ZF_1((U,U-S)),(U,U-S))$.
Here the first equality is obvious, the second one follows from Corollary
\ref{G0_and_G1_for_pairs}.
Take a big enough integer $n$.
Set
$$r=\langle\langle U\times V, F_{2n+1}; pr_V \rangle\rangle-\langle\langle U\times V, F_{2n}; pr_V\rangle\rangle \in \bb ZF_1((U,U-S)),(V,V-S)).$$
{\it We claim that}
$[[i]]\circ [[r]]=[[\sigma_U]]$
in
$\overline {\bb ZF}_1((U,U-S)),(U,U-S))$. In fact,
$$
[[i]]\circ [[r]]=[[U\times U, (Y-X)^{2n+1}; p_2]]-[[U\times U, (Y-X)^{2n}; p_2]]=[[\sigma_U]].
$$
The first equality proven a few lines above and the second one
follow from Proposition \ref{Diagonal}. Whence equality
(\ref{Injectivity_Part}) holds.

Now find morphisms $l\in \bb ZF_1((U,U-S)),(V,V-S))$ and $g\in \bb ZF_1((V,V-S)),(V-S,V-S))$ such that
\begin{equation}
\label{Surjectivity_Part}
[[l]]\circ [[i]]-[[j]]\circ [[g]] =[[\sigma_V]]
\end{equation}
in $\overline {\bb ZF}_1(V,V-S)),(V,V-S))$. Here $j: (V-S,V-S) \to
(V,V-S)$ is the inclusion. Clearly, equality
(\ref{Surjectivity_Part}) yields $[[l]]\circ [[i]]=[[\sigma_V]] \in
\overline{\overline {\bb ZF}}_1(V,V-S)),(V,V-S))$.

Set
$A'=\bb A^1_k - U$, $B=U-V$ and recall that $S\subset V$ is a closed subset.
Take an integer $m$ big enough and find a monic in $Y$ polinomial $F_m(Y)\in k[U][Y]$
such that
\begin{itemize}
\item[(i)]
$F_m(Y)|_{U\times A'}=(Y-X)|_{U\times A'}\in k[U\times A']^{\times}$;
\item[(ii)]
$F_m(Y)|_{U\times B}=1\in k[U\times B]^{\times}$;
\item[(iii)]
$F_m(Y)|_{U\times S}=(Y-X)|_{U\times S}\in k[U\times S]$.
\end{itemize}
Note that $F_m(Y)|_{(U-S)\times S}\in k[(U-S)\times S]^{\times}$.
Hence by Lemma \ref{Criteria} the morphism $(U\times V, F_{m}; pr_V)
\in Fr_1(U,V)$ being restricted to $U-S$ runs inside $V-S$. Thus,
using Definition \ref{Runs_Inside}, we get a morphism
$$l=\langle\langle U\times V, F_{m}; pr_V\rangle\rangle \in \bb ZF_1((U,U-S)),(V,V-S)).$$
To construct the desired morphism $g$, find a monic in $Y$
polynomial $E_{m-1}\in k[V][Y]$ of degree $m-1$ such that
\begin{itemize}
\item[(i')]
$E_{m-1}(Y)|_{V\times A'}=1|_{U\times A'}\in k[V\times A']^{\times}$;
\item[(ii')]
$E_{m-1}(Y)|_{V\times B}=(Y-X)^{-1}\in k[V\times B]^{\times}$;
\item[(iii')]
$E_{m-1}(Y)|_{V\times S}=1|_{V\times S}\in k[U\times S]$;
\item[(iv')]
$E_{m-1}(Y)|_{\Delta(V)}=1|_{\Delta(V)}\in k[\Delta(V)]$.
\end{itemize}
Let $G\subset V\times \bb A^1_k$ be a closed subset defined by
$E_{m-1}(Y)=0$. By conditions $(i')-(iv')$ one has $G\subset V\times
(V-S)$ and $G\cap \Delta(V)=\emptyset$. Set $ g'=\langle V\times
(V-S) - \Delta(V),(Y-X)E_{m-1}(Y)\rangle; pr_{V-S})\in \bb
ZF_1(V,V-S). $ Since $g'|_{V-S}\in \bb ZF_1(V-S,V-S)$, we get a
morphism
\begin{equation}
\label{g} g=(g',g'|_{V-S}) \in \bb ZF_1((V,V-S)),(V-S,V-S)).
\end{equation}

\begin{claim}
Equality (\ref{Surjectivity_Part}) holds for the morphisms $l$ and
$g$ defined above.
\end{claim}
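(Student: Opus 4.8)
The plan is to evaluate both composites as explicit level-one framed correspondences over $V$ and then to deform the framing function of $[[l]]\circ[[i]]$ into that of $[[j]]\circ[[g]]$ modulo $[[\sigma_V]]$. Throughout I write $X=(t\otimes 1)|_{V\times V}$, $Y=(1\otimes t)|_{V\times V}$, and let $p_1,p_2\colon V\times V\to V$ be the two projections; all equalities below take place in $\overline{\bb ZF}_1((V,V-S),(V,V-S))$. First I would unwind the composites. Since $i\colon(V,V-S)\to(U,U-S)$ is induced by the open embedding $V\hookrightarrow U$, base-changing the underlying correspondence $(U\times V,F_m;pr_V)$ of $l$ along $V\hookrightarrow U$ gives $[[l]]\circ[[i]]=[[V\times V,F_m;p_2]]$; here conditions $(i)$--$(ii)$ for $F_m$ ensure its zero locus in $V\times\bb A^1$ lies in $V\times V$ and is finite over the first factor, and condition $(iii)$ together with Lemma~\ref{Criteria} shows this correspondence restricted to $V-S$ runs inside $V-S$. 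Postcomposing $g'$ with $j\colon V-S\hookrightarrow V$ only enlarges the target, so $[[j]]\circ[[g]]=[[V\times(V-S)-\Delta(V),(Y-X)E_{m-1};p_2]]$, whose support is $G=\{E_{m-1}=0\}$.

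Next I would replace $F_m|_{V\times V}$ by $(Y-X)E_{m-1}$. Both polynomials are monic of degree $m$ in $Y$ with leading coefficient $1$; combining conditions $(i)$--$(iii)$ for $F_m$ with $(i')$--$(iii')$ for $E_{m-1}$ shows that on $V\times A'$ (with $A'=\bb A^1-U$) both equal the unit $Y-X$, on $V\times B$ (with $B=U-V$) both equal $1$, and on $V\times S$ both restrict to $Y-X$; moreover $F_m$ is invertible on $(V-S)\times S$ because $Y-X$ is. Hence Corollary~\ref{G0_and_G1_for_pairs}, applied with $V$ in place of $U$ and closed set $S\subset V$, gives $[[V\times V,F_m;p_2]]=[[V\times V,(Y-X)E_{m-1};p_2]]$.

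Then I would split the support. The zero locus of $(Y-X)E_{m-1}$ in $V\times V$ is the disjoint union $\Delta(V)\sqcup G$ of closed subsets finite over $V$ (disjoint because $E_{m-1}|_{\Delta(V)}=1$ by $(iv')$), and both pieces, restricted to $V-S$, run inside $V-S$, since $p_2(\Delta(V-S))=V-S$ and $G\subset V\times(V-S)$. So Lemma~\ref{Criteria_for_pairs} yields $[[V\times V,(Y-X)E_{m-1};p_2]]=[[V\times V-G,(Y-X)E_{m-1};p_2]]+[[V\times V-\Delta(V),(Y-X)E_{m-1};p_2]]$. After shrinking the neighbourhood to $V\times(V-S)-\Delta(V)$ the second summand is exactly $[[j]]\circ[[g]]$. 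For the first summand, $V\times V-G=D(E_{m-1})$ is affine with $E_{m-1}$ invertible and $E_{m-1}|_{\Delta(V)}=1$, so Corollary~\ref{u_and_u'_for_pairs} rewrites it as $[[V\times V-G,Y-X;p_2]]$; since $p_1$ and $p_2$ agree on the support $\Delta(V)$, Corollary~\ref{g_and_g'_for_pairs} gives $[[V\times V-G,Y-X;p_1]]$; enlarging the neighbourhood to $V\times\bb A^1$ (the support is still the section $\Delta(V)$) and deforming $Y-X$ to $Y$ via Corollary~\ref{Higher_terms_for_pairs} identifies this with $[[V\times\bb A^1,Y;p_1]]=[[\sigma_V]]$ by the definition of $\sigma_V$, exactly as in the proof of Proposition~\ref{Diagonal}. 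Chaining the equalities gives $[[l]]\circ[[i]]=[[\sigma_V]]+[[j]]\circ[[g]]$, which is (\ref{Surjectivity_Part}).

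The polynomial manipulations are routine. The main obstacle will be the pair bookkeeping: one must check at every step, via Lemma~\ref{Criteria}, that the correspondence and each of its sub-pieces, when restricted to $V-S$, genuinely runs inside $V-S$, and one must verify hypotheses $(1)$--$(5)$ of Corollary~\ref{G0_and_G1_for_pairs} carefully from the defining conditions of $F_m$ and $E_{m-1}$.
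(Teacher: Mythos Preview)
Your proposal is correct and follows essentially the same approach as the paper's own proof: compute $[[l]]\circ[[i]]$ as $[[V\times V,F_m;p_2]]$, use Corollary~\ref{G0_and_G1_for_pairs} (applied with $V\subset\bb A^1$ and the complement $A'\cup B$) to pass to $(Y-X)E_{m-1}$, split the support $\Delta(V)\sqcup G$ via Lemma~\ref{Criteria_for_pairs}, identify the $G$-piece with $[[j]]\circ[[g]]$, and reduce the diagonal piece to $[[\sigma_V]]$ via Corollaries~\ref{u_and_u'_for_pairs}, \ref{g_and_g'_for_pairs}, and~\ref{Higher_terms_for_pairs}. The only cosmetic difference is that the paper enlarges the neighborhood from $V\times V-G$ to $V\times V$ before invoking the $m=1$ case of equation~(\ref{star}), whereas you switch from $p_2$ to $p_1$ first and then enlarge to $V\times\bb A^1$; the lemmas and verifications involved are the same.
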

Note firstly that $l \circ \langle\langle i\rangle\rangle=
\langle\langle V\times V, F_{m}(Y)|_{V\times V};pr_2 \rangle\rangle
\in \bb ZF_1((V,V-S),(V,V-S))$. Applying Corollary
\ref{G0_and_G1_for_pairs} to the case $V\subset \bb A^1$, $S\subset
V$ and $A:=A'\cup B$, we get an equality
$$[[V\times V, F_{m}(Y)|_{V\times V};pr_2]]=[[V\times V, (Y-X)E_{m-1}(Y);pr_2]]$$
in $\overline {\bb ZF}_1((V,V-S),(V,V-S))$. By Lemma
\ref{Criteria_for_pairs} and the fact that $G\cap
\Delta(V)=\emptyset$, one has
$$
[[V\times V, (Y-X)E_{m-1}(Y);pr_2]]=
$$
$$
[[ V\times V-G, E_{m-1}(Y-X);pr_2]] + [[ V\times V-\Delta(V),(Y-X)E_{m-1}; pr_2]]=
$$
$$
=[[ V\times V-G, E_{m-1}(Y-X);pr_2]] + [[j]]\circ [[g]]
$$
in $\overline {\bb ZF}_1((V,V-S)),(V,V-S))$.

One has a chain of equalities
$$[[ V\times V-G, E_{m-1}(Y-X);pr_2]]=[[ V\times V-G,(Y-X);pr_2]]= [[ V\times V,(Y-X);pr_2]]=$$
$$=[[ V\times \bb A^1,Y;pr_1]]=[[\sigma_V]].$$
The first equality holds by condition $(iv')$ and Corollary
\ref{u_and_u'_for_pairs}. The second one is obvious. The third one
is equality (\ref{star}) for $m=1$ from the proof of Proposition
\ref{Diagonal}. The forth one is the definition of $\langle\langle
\sigma_V \rangle\rangle$ (see Definition \ref{sigma} and Notation
\ref{sigma_and_pairs}). Combining altogether, we get a chain of
equalities
$$[[l]] \circ [[i]]=[[V\times V, F_{m}(Y)|_{V\times V};pr_2]]=[[V\times V, (Y-X)E_{m-1}(Y);pr_2]]=[[\sigma_V]]+ [[j]]\circ [[g]],$$
which proves the claim. Whence the theorem.
\end{proof}

\section{Excision on relative affine line}

\begin{proof}[Proof of Theorem \ref{Exc_On_Rel_Aff_Line}]
Let $U=\bb A^1_W$, let $V \subset U$ be the open $V$ from
Theorem \ref{Exc_On_Rel_Aff_Line}.
Let $S=0\times W$. Note that $S\subset V$.
Set $A=\bb A^1_W-U=\emptyset$, $B=U-V=\{f=0\}$.
Then $B$ is finite over $U$, since $f$ is monic.
Note that $B\cap (0\times W)=\emptyset$.

Repeat literally the proof of Theorem \ref{Ex_on_line}
(see Section \ref{Inj_and_Exc_On_Aff_Line}).
\end{proof}

\section{Almost elementary fibrations}
In this section we recall a modification of a result of M. Artin
from~\cite{LNM305} concerning existence of nice neighborhoods. The
following notion
(see \cite[Defn.2.1]{PSV})
is a modification of that introduced by Artin
in~\cite[Exp. XI, D\'ef. 3.1]{LNM305}.
\begin{defs}(\cite{PSV})
\label{DefnElemFib} An \emph{almost elementary fibration\/} over a
scheme $B$ is a morphism of schemes $p:X\to B$ which can be included
in a commutative diagram
\begin{equation}
\label{SquareDiagram_2}
    \xymatrix{
     X\ar[drr]_{q}\ar[rr]^{j}&&
\overline X\ar[d]_{\overline q}&&X_{\infty}\ar[ll]_{i}\ar[lld]^{q_{\infty}} &\\
     && B  &\\    }
\end{equation}
of morphisms satisfying the following conditions:
\begin{itemize}
\item[{\rm(i)}] $j$ is an open immersion dense at each fibre of
$\overline q$, and $X=\overline X-X_{\infty}$;
\item[{\rm(ii)}]
$\overline q$ is smooth projective all of whose fibres are geometrically
irreducible of dimension one;
\item[{\rm(iii)}] $q_{\infty}$ is a finite flat morphism all of whose fibres are non-empty;
\item[\rm(iv)] the morphism $i$ is a closed embedding and
the ideal sheaf
$I_{X_{\infty}} \subset \mathcal O_{\overline X}$ defining the closed subscheme $X_{\infty}$ in $\overline X$
is locally principal.
\end{itemize}
\end{defs}

\begin{prop}[\cite{PSV}]
\label{ArtinsNeighbor} Let $k$ be an infinite field, $X$ be a
smooth geometrically irreducible affine variety over $k$, $x_1,x_2,\dots,x_n\in X$
be closed points. Then there exists a Zariski open neighborhood
$X^0$ of the family $\{x_1,x_2,\dots,x_n\}$ and an almost elementary
fibration $p:X^0\to S$, where $S$ is an open sub-scheme of the
projective space $\mathbb P^{\dim X-1}_k$.
\par
If, moreover, $Z$ is a closed co-dimension one subvariety in $X$,
then one can choose $X^0$ and $p$ in such a way that $p|_{Z\bigcap
X^0}:Z\bigcap X^0\to S$ is finite surjective.
\end{prop}

\begin{prop}[\cite{PSV}]
\label{CartesianDiagramArtin} Let $p: X \to S$ be an almost elementary
fibration. If $S$ is a regular semi-local irreducible scheme, then there
exists a commutative diagram of $S$-schemes
\begin{equation}
\label{RactangelDiagram}
    \xymatrix{
X\ar[rr]^{j}\ar[d]_{\pi}&&\overline X\ar[d]^{\overline \pi}&&
Y\ar[ll]_{i}\ar[d]_{}&\\
\mathbb A^1\times S\ar[rr]^{\text{\rm in}}&&\mathbb P^1\times S&&
\ar[ll]_{i}\{\infty\}\times S &\\  }
\end{equation}
\noindent
such that the left hand side square is Cartesian. Here $j$ and
$i$ are the same as in Definition \ref{DefnElemFib}, while $pr_S
\circ\pi=p$, where $pr_S$ is the projection $\mathbb A^1\times S\to
S$.

In particular, $\pi:X\to\mathbb A^1\times S$ is a finite surjective
morphism of $S$-schemes, where $X$ and $\mathbb A^1\times S$ are
regarded as $S$-schemes via the morphism $p$ and the projection
$pr_S$, respectively.
\end{prop}

\section{Injectivity for local schemes}
The main aim of this section is to prove Theorem~\ref{Local_Inj}.
Let $X \in Sm/k$, $x\in X$ be a point, $U=Spec(\cc O_{X,x})$,
$i: D \hookrightarrow X$ be a closed subset.
Let $j: X-D \hookrightarrow X$ be the open inclusion.
Under the notation of Theorem~\ref{Local_Inj}
we will construct an integer $N$ and a morphism $r\in \bb ZF_N(U,X-D)$ such that
$$[r]\circ [j]= [can]\circ [\sigma^N_U]$$
in $\overline {\bb ZF_N}(U,X)$ (see Definition~\ref{r_for_local_inj}).
For this we need some preparations.

Let $X'\subset X$ be an open subset containing the point $x$ and let
$D'=X'\cap D$. Clearly, if we solve a similar problem for the triple
$U$, $X'$ and $X'-D'$, then we solve the problem for the given
triple $U$, $X$ and $X-D$. So, we may shrink $X$ appropriately. In
particular, we may assume that $X$ is irreducible and
the canonical sheaf $\omega_{X/k}$ is trivial,
i.e. is isomorphic to the sheaf $\cc O_X$. Let $d=dim X$.

Shrinking $X$ further (and replacing $D$ with its trace) and using Propositions
\ref{ArtinsNeighbor} and \ref{CartesianDiagramArtin}, we can find a
commutative diagram of the form
\begin{equation}\label{SquareDiagram}
    \xymatrix{
     \bb A^1\times B \ar[drr]_{pr_B} &&
X\ar[d]_{p}\ar[ll]_{\pi} &&D\ar[dll]^{p|_D}  \ar[ll]_{i}&&\\
     && B  &\\    }
\end{equation}
where $p: X\to B$ is an almost elementary fibration in the sense
of~\cite{PSV}, $B$ is an affine open subset of the projective space
$\bb P^{d-1}_k$, $\pi$ is a finite surjective morphism, $p|_D$ is a
finite morphism.

The canonical sheaf $\omega_{X/k}$ remains trivial. Since $p$
is an almost elementary fibration, then it is a smooth morphism such
that for each point $b\in B$ the fibre $p^{-1}(b)$ is a
$k(b)$-smooth absolutely irreducible affine curve. Since $\pi$ is finite, then the
$B$-scheme $X$ is affine.

Set $U=Spec(\cc O_{X,x})$, $\cc X=U\times_B X$, $\cc D=U\times_B D$.
There is an obvious morphism $\Delta=(id,can): U\to \cc X$. It is a
section of the projection $p_U: \cc X \to U$. Let $p_X: \cc X \to X$
be the projection to $X$.
The base change of diagram (\ref{SquareDiagram}) gives a commutative
diagram of the form
\begin{equation}
\label{SquareDiagram_over_U}
    \xymatrix{
     \bb A^1\times U \ar[drr]_{pr_U} &&
\cc X\ar[d]_{p_U}\ar[ll]_{\Pi} &&\cc D\ar[dll]^{{p_U}|_{\cc D}}  \ar[ll]_{i}&&\\
     && U  &\\    }
\end{equation}
where $p_U: \mathcal X\to U$ is an almost elementary fibration over $U$ in the sense
of Definition \ref{DefnElemFib}, $i$ is a closed embedding, $\Pi$ is a finite surjective morphism, $p_U|_{\mathcal D}$ is a
finite morphism. Since
$\omega_{X/k}$ is trivial and $U$ is local and essentially $k$-smooth,
hence the relative canonical sheaf $\omega_{\cc X/U}$ is trivial,
t.e. isomorphic to the structure sheaf $\mathcal O_{\cc X}$.

\begin{lem}[\cite{OjP}, Lemma 10.1]\label{OjPanin}
Given the commutative diagram (\ref{SquareDiagram_over_U}),
there is a finite surjective morphism
$H_{\theta}=(p_U, h_{\theta}): \cc X \to \bb A^1\times U$ of
$U$-schemes such that for the closed subschemes
$\mathcal D_1:=H^{-1}_{\theta}(1\times U)$ and
$\mathcal D_0:=H^{-1}_{\theta}(0\times U)$ of $\cc X$ one has
\begin{itemize}
\item[(i)]
$\mathcal D_1 \subset \cc X- \cc D$;
\item[(ii)]
$\mathcal D_0=\Delta(U)\sqcup \mathcal D'_0$ (equality of schemes) and $\mathcal D'_0 \subset
\cc X- \cc D$.
\end{itemize}
\end{lem}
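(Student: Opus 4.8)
The plan is to exhibit the function $h_\theta$ explicitly using the trivialization of $\omega_{X/k}$ and the almost elementary fibration structure, following the Ojanguren--Panin ``geometric presentation'' technique. First I would recall that since $p\colon X\to B$ is an almost elementary fibration with $X$ affine over $B$ and of relative dimension $1$, after the base change of \eqref{SquareDiagram} the $U$-scheme $\cc X$ is a smooth affine curve over $U$ carrying the section $\Delta$, and $\cc D\subset\cc X$ is finite over $U$ and disjoint from a generic part of the curve. The data I want to produce is a finite surjective $U$-morphism $H_\theta=(p_U,h_\theta)\colon\cc X\to\bb A^1\times U$ whose fibre over $1\times U$ avoids $\cc D$ and whose fibre over $0\times U$ splits off exactly the section $\Delta(U)$ with the remaining component also avoiding $\cc D$.

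Next I would construct $h_\theta$ as a regular function on $\cc X$ with prescribed vanishing and polar behaviour. Using that $U$ is semilocal (the spectrum of a local ring), the relevant closed subschemes $\Delta(U)$, $\cc D$, and the ``divisor at infinity'' of the relative curve are disjoint and finite over $U$, so by a standard moving/avoidance argument over a semilocal base (this is where infiniteness of $k$, already assumed, enters) one can choose $h_\theta\in\cc O(\cc X)$ so that $h_\theta\equiv 0$ on $\Delta(U)$ to first order, $h_\theta\equiv 1$ on $\cc D$, and $h_\theta$ is finite, i.e. the map $(p_U,h_\theta)$ is quasi-finite; finiteness then follows because $\cc X$ is affine over $U$ and, by properness of the compactified curve together with the choice that $h_\theta$ has a pole precisely along the section at infinity, the morphism extends to a finite one. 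Concretely, the trivialization $\omega_{X/k}\iso\cc O_X$ lets one pick a coordinate on the relative curve realizing $\cc X$ (after shrinking) as an open subscheme of a $\bb P^1$-bundle minus a section, and $h_\theta$ is taken to be a unit-twisted polynomial in that coordinate chosen to vanish along $\Delta$, to be invertible along $\cc D$, and to take the value $1$ on $\cc D$.

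Then I would verify (i) and (ii). For (i), $Z_1=H_\theta^{-1}(1\times U)$ is the zero locus of $h_\theta-1$, which by construction contains no point of $\cc D$ since $h_\theta|_{\cc D}=1$ forces $h_\theta-1$ to vanish on $\cc D$ --- so instead one must arrange $h_\theta|_{\cc D}$ to be a unit different from $1$, say $h_\theta|_{\cc D}\equiv$ some value avoiding $1$, so that $Z_1\cap\cc D=\emptyset$; this is a harmless adjustment of the normalization. For (ii), $Z_0$ is the zero locus of $h_\theta$; since $h_\theta$ vanishes to order exactly one along $\Delta(U)$ and $\Delta(U)$ is a connected component of this fibre (the fibre being finite over the local base $U$, its decomposition into the component through the closed point and the rest is a genuine disjoint union of schemes), we get $Z_0=\Delta(U)\sqcup Z_0'$, and $Z_0'\subset\cc X-\cc D$ again by choosing $h_\theta|_{\cc D}$ to be a non-zero (hence $Z_0\cap\cc D=\emptyset$). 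Surjectivity and finiteness of $H_\theta$ over $\bb A^1\times U$ follow since a finite morphism of integral schemes of the same dimension with affine source over the curve base is automatically surjective onto its image, and the image is all of $\bb A^1\times U$ because $h_\theta$ is non-constant on each fibre.

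The main obstacle is the simultaneous moving lemma: producing a single function $h_\theta$ that meets \emph{all} the constraints at once --- first-order vanishing exactly along $\Delta(U)$, a fixed unit value on $\cc D$, and controlled (single) pole along the section at infinity so that $(p_U,h_\theta)$ is finite of the correct degree behaviour --- over the semilocal base $U$. This is precisely the Ojanguren--Panin construction, and the delicate point is that the constraints must be compatible and that the resulting morphism be finite rather than merely quasi-finite; finiteness is what is bought by the careful choice of the polar divisor together with the properness of the relative projective curve compactifying $\cc X$. I expect the proof to reduce this to the geometric presentation results of \cite{PSV}, so the bulk of the argument is checking that the present set-up (base-changed diagram \eqref{SquareDiagram_over_U}, triviality of $\omega_{X/k}$, finiteness of $p|_D$) supplies exactly the hypotheses needed to invoke that machinery.
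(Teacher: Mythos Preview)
The paper does not give a proof of this lemma: it is stated with the attribution ``Ojanguren--Panin'' and then used immediately, so there is no in-paper argument to compare against. Your sketch is in the spirit of the standard geometric-presentation construction that the name refers to (and is close to the analogous constructions the paper \emph{does} carry out later, in the section building $h_\theta$ for Proposition~\ref{h_theta} via sections of $\cc L(nE)$ on the compactified relative curve).

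Two small points on your write-up. First, your treatment of (i) stumbles: you impose $h_\theta|_{\cc D}=1$, notice this forces $\cc D\subset Z_1$, and then backtrack. The clean way is not to prescribe a specific value on $\cc D$ at all, but to choose a section of a sufficiently ample line bundle on the compactified relative curve $U\times_B\bar X$ that (a) vanishes to order exactly one along $\Delta(U)$, (b) is nowhere zero on $\cc D$ and on the divisor at infinity, and then (after dividing by the appropriate power of the canonical section at infinity) observe by a genericity/moving argument over the semilocal base $U$ that the resulting $h_\theta$ avoids the value $1$ on $\cc D$ as well. Second, your claim that $\Delta(U)$ is automatically a connected component of $Z_0$ is where the ``order exactly one'' condition is doing the work: without it $\Delta(U)$ might be embedded nonreducedly or meet the rest of $Z_0$; you should say explicitly that the transversality of the vanishing along $\Delta(U)$, together with $h_\theta$ being a unit on $\cc D$ (hence $Z_0\cap\cc D=\emptyset$), gives the scheme-theoretic splitting $Z_0=\Delta(U)\sqcup Z_0'$.
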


Now regard $\cc X$ as an affine $\bb A^1\times U$-scheme via the
morphism $\Pi$. And also regard $\cc X$ as an $X$-scheme via $p_X$.

\begin{lem}\label{Framing_of_X_script}
There are an integer $N\geq 0$, a closed
embedding $\cc X \hookrightarrow \bb A^1\times U\times \bb A^N$ of
$\bb A^1\times U$-schemes, an \'{e}tale affine neighborhood $(\cc V,
\rho: \cc V \to \bb A^1\times U\times \bb A^N, s: \cc X
\hookrightarrow \cc V)$ of $\cc X$ in $\bb A^1\times U\times \bb
A^N$, functions $\varphi_1,...,\varphi_{N} \in k[\cc V]$ and a
morphism $r: \cc V \to \cc X$ such that:
\begin{itemize}
\item[(i)]
the functions $\varphi_1,...,\varphi_{N}$ generate the ideal
$I_{s(\cc X)}$ in $k[\cc V]$ defining the closed subscheme $s(\cc
X)$ of $\cc V$;
\item[(ii)]
$r\circ s = id_{\cc X}$;
\item[(iii)]
the morphism $r$ is a $U$-scheme morphism if $\cc V$ is regarded as
a $U$-scheme via the morphism $pr_U\circ \rho$ and $\cc X$ is
regarded as a $U$-scheme via the morphism $p_U$.
\end{itemize}
\end{lem}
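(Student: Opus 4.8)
The plan is to assemble the data in three stages --- first the closed embedding, then the \'{e}tale neighborhood together with the retraction $r$, and finally the framing functions $\varphi_i$ --- with the existence of the $\varphi_i$ being the one genuinely delicate point and the only place where the triviality of $\omega_{X/k}$ enters. Throughout set $S=\bb A^1\times U$; recall that $\cc X=U\times_B X$ is affine (a fibre product of affine schemes over the affine base $B$), finite over $S$ via $\Pi$, regular, and smooth over $U$ of relative dimension one, being the base change of the almost elementary fibration $p\colon X\to B$. \emph{Step 1 (the embedding).} Since $\cc X$ is finite over $S$, the ring $k[\cc X]$ is a finitely generated $k[S]$-algebra; choosing algebra generators over $k[S]$ gives a closed embedding of $S$-schemes $\cc X\hookrightarrow S\times\bb A^N$ for some $N$, and adjoining dummy coordinates we may assume $N>\dim\cc X=1+\dim U$. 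As the embedding is over $S=\bb A^1\times U$, it is over $U$ and $pr_U$-compatible with $p_U$, which will handle the $U$-scheme compatibility of (iii) for the ambient space.

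\emph{Step 2 (the neighborhood and the retraction).} Both $\cc X$ and $S\times\bb A^N$ are smooth affine $U$-schemes and $\cc X\hookrightarrow S\times\bb A^N$ is a regular immersion, so the conormal sequence of $U$-differentials is locally split with locally free terms. I would then invoke the by-now-standard construction (used systematically in the theory of framed correspondences, cf.~\cite{Voe2} and~\cite{GP3}) producing an affine \'{e}tale neighborhood $(\cc V,\rho,s)$ of $\cc X$ in $S\times\bb A^N$ carrying a $U$-morphism retraction $r\colon\cc V\to\cc X$ with $r\circ s=\id_{s(\cc X)}$. This gives (ii) and (iii) outright, and it persists under any further shrinking of $\cc V$ done in Step~3.

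\emph{Step 3 (the framing functions).} The conormal sheaf $I_{s(\cc X)}/I_{s(\cc X)}^{2}$ of the regular immersion $s$ is a locally free $\cc O_{\cc X}$-module of rank $N$ (the codimension of $\cc X$ in $S\times\bb A^N$), and the goal is to exhibit $N$ global generators of the ideal $I_{s(\cc X)}\subset k[\cc V]$. The key claim is that this conormal module is \emph{stably free}. Indeed $\cc V$ is \'{e}tale over $S\times\bb A^N=\bb A^1\times U\times\bb A^N$, so $\Omega_{\cc V/U}$ is free of rank $N+1$, and since $\cc X\to U$ is smooth the conormal sequence $0\to I/I^{2}\to\Omega_{\cc V/U}|_{\cc X}\to\Omega_{\cc X/U}\to 0$ is exact, whence in $K_0(\cc X)$ one has $[I/I^{2}]=(N+1)[\cc O_{\cc X}]-[\Omega_{\cc X/U}]$. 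Now $\Omega_{\cc X/U}=p_X^{*}\Omega_{X/B}=p_X^{*}\omega_{X/B}$ is a line bundle which is trivial: $\omega_{X/B}\cong\omega_{X/k}\otimes(p^{*}\omega_{B/k})^{-1}$, and $\omega_{X/k}\cong\cc O_X$ by hypothesis while $\omega_{B/k}\cong\cc O_B$ after the harmless extra shrinking of $B$ inside an affine chart $\bb A^{d-1}\subset\bb P^{d-1}$. Hence $[I/I^{2}]=N[\cc O_{\cc X}]$. Since $\cc X$ is Noetherian with $\dim\cc X<N$, Bass's cancellation theorem upgrades "stably free of rank $N$" to "free of rank $N$"; lifting a basis to $\varphi_1,\dots,\varphi_N\in I_{s(\cc X)}$ and applying Nakayama along $s(\cc X)$ shows these functions generate $I_{s(\cc X)}$ on an open, hence \'{e}tale, neighborhood of $s(\cc X)$. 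Replacing $\cc V$ by such a neighborhood and restricting $r$ and the $\varphi_i$, all of (i)--(iii) hold.

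The main obstacle is precisely the stable-freeness claim of Step~3: the rest is either combinatorial (choosing generators, padding with coordinates) or a routine manipulation in the framed-correspondences toolbox (the \'{e}tale retraction). What makes Step~3 go through is the observation that triviality of $\omega_{X/k}$, together with that of $\omega_{B/k}$, forces $\Omega_{X/B}$ to be trivial and thereby pins down $[\Omega_{\cc X/U}]=[\cc O_{\cc X}]$ in $K_0$; the passage from a stably free module of rank exceeding the dimension to a free module, and thence to $N$ generators of the ideal, is then formal.
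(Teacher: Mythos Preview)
The paper states this lemma without proof; the closest the paper comes to an argument is Remark~\ref{Stable_Normal_Bundle} (written for the parallel Proposition~\ref{Framing_Of_X}), which records exactly your key computation: $\Omega^{1}_{X/B}\cong\cc O_X$ forces the normal bundle of the embedding to be stably free. Your three-step outline is correct and matches that implicit reasoning.

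Two small remarks on points where you diverge slightly from the paper's hints. First, where you invoke Bass cancellation to pass from ``stably free of rank $N>\dim\cc X$'' to ``free'', the paper's Remark~\ref{Stable_Normal_Bundle} instead says ``increasing the integer $N$'': if $\cc N\oplus\cc O^{a}\cong\cc O^{N+a}$ witnesses stable freeness, then adjoining $a$ dummy coordinates to the embedding replaces $\cc N$ by $\cc N\oplus\cc O^{a}$, which is free on the nose. This avoids citing cancellation at the cost of a larger $N$; either route is fine. Second, your shrinking of $B$ to trivialize $\omega_{B/k}$ is harmless but unnecessary in this particular lemma: since $p\circ p_X=(U\to B)\circ p_U$, one has $p_X^{*}\Omega_{X/B}\cong p_X^{*}p^{*}\omega_{B/k}^{-1}\cong p_U^{*}\bigl((U\to B)^{*}\omega_{B/k}^{-1}\bigr)$, and any line bundle on the local scheme $U$ is already trivial. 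So $\Omega_{\cc X/U}\cong\cc O_{\cc X}$ follows from $\omega_{X/k}\cong\cc O_X$ alone.
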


\begin{proof}
Since $\Pi$ is a finite morphism, then for some integer $N\geq 1$
there is a closed embedding of $U$-schemes
$in: \cc X \hookrightarrow \bb A^1\times U\times \bb A^N$.
Consider the short exact sequence of vector bundles on
$\cc X$ defining the normal bundle $\cc N=N_{\bb A^1\times U\times \bb A^N/\cc X}$:
\begin{equation}\label{NormalBundle}
\{0\} \to T_{\cc X/U} \to \bb A^1\times \cc X \times \bb A^{N}=T_{(\bb A^1\times U\times \bb A^N)/U}|_{\cc X}  \xrightarrow{q} \cc N \to \{0\}
\end{equation}
Since $\Pi$ is finite, the scheme $\cc X$ is affine.
As mentioned above the bundle $T_{\cc X/U}$ is trivial. Thus the bundle $\cc N$
is stably trivial. Increasing the integer $N$ we may assume that
the bundle $\cc N$ is trivial.
Hence there is a linear section
$t: \cc N\to \bb A^{1} \times \cc X \times \bb A^N$ of the morphism $q$.
Let $q_{\cc X}: \cc N \to \cc X$ be the projection on $\cc X$.
There are two morphisms of $U$-schemes:
$$in\circ q_{\cc X}: \cc N \to \bb A^1\times U\times \bb A^N \
\text{and} \
(id\times p_U \times id)\circ t: \cc N \to \bb A^1\times U\times \bb A^N.$$
Regarding $\bb A^1\times U\times \bb A^N$ as a vector bundle over $U$
we have a morphism
   $$+: (\bb A^1\times U\times \bb A^N)\times_U (\bb A^1\times U\times \bb A^N) \to \bb A^1\times U\times \bb A^N.$$
Set $\rho'=in\circ q_{\cc X}+(id\times p_U \times id)\circ t: \cc N \to \bb A^1\times U\times \bb A^N$. It is easy to check that
$\rho'$ is \'{e}tale along $s_0(\cc X)$, where $s_0: \cc X \to \cc N$ is the zero section of $\cc N$.
Hence $\rho'$ is \'{e}tale in an affine neighborhood $\cc V'$ of $s_0(\cc X)$.
Since $\rho' \circ s_0=in: \cc X \hookrightarrow \bb A^1\times U\times \bb A^N$,
hence $(\rho')^{-1}(in(\cc X))=s_0(\cc X)\sqcup Y$. Hence there is an open affine subscheme
$\cc V$ in $\cc V'$ containing $s_0(\cc X)$ such that
$(\rho'|_{\cc V})^{-1}(in(\cc X))=s_0(\cc X)$.
Set $\rho=\rho'|_{\cc V}: \cc V \to \bb A^1\times U\times \bb A^N$.
Set $s=s_0: \cc X \to \cc V$.

Clearly, $(\cc V,\rho: \cc V \to \bb A^1\times U\times \bb A^N, s: \cc X \hookrightarrow \cc V)$
is an \'{e}tale neighborhood of $in(\cc X)$ in $\bb A^1\times U\times \bb A^N$.
We will write in this proof $\cc X$ for $in(\cc X)$.

Set $r=(q_{\cc X})|_{\cc V}: \cc V \to \cc X$.
Since the bundle $\cc N$ is trivial we can choose its
trivialization $\cc N\cong \cc X \times \bb A^{N}$.
The trivialization gives functions $\varphi_1,...,\varphi_{N}$ which generate the ideal
$I_{s(\cc X)}$ in $k[\cc V]$ defining the closed subscheme $s_0(\cc X)$ of $\cc V$.

Clearly, $r\circ s=id_{\cc X}$.
Also, the morphism $r$ is a $U$-scheme morphism if $\cc V$ is regarded as
a $U$-scheme via the morphism $pr_U\circ \rho$ and $\cc X$ is
regarded as a $U$-scheme via the morphism $p_U$.
Whence the lemma.
\end{proof}

By Lemma \ref{OjPanin}, $\cc D_0= \Delta(U)\sqcup \cc D'_0$. Set
$\cc V_0=\rho^{-1}(0\times U\times \bb A^N)$ and let $\cc W$ be the
henselization of $\cc V_0$ in $s(\Delta(U))$ (which is the same as
the henselization of $0\times U\times \bb A^N$ in $\Delta(U)$).

\begin{rem}
\label{Deteminant_one} By Lemma \ref{Framing_of_X_script} the
functions ${\varphi_1}|_{\cc W},...,{\varphi_N}|_{\cc W}$ generate
the ideal $I$ defining the closed subscheme $s(\Delta(U))$ of the
scheme $\cc W$. In particular, the family
$$\overline {({\varphi_1}|_{\cc W})},...,\overline {({\varphi_N}|_{\cc W})} \in I/I^2$$
is a free basis of the $k[U]$-module $I/I^2$.
Another free basis of the
$k[U]$-module $I/I^2$
is the family
$$\overline {({t_1-\Delta^*(t_1)})|_{\cc W}},...,\overline {({t_1-\Delta^*(t_N)})|_{\cc W}} \in I/I^2.$$

Let $A\in GL_N(k[\cc W])$ be a unique matrix which converts the
second free basis to the first one and let $J:=det(A)$ be its
determinant. Replacing ${\varphi_1}$ by $J^{-1}\varphi_1$, we may
and will assume below in this section that $J=1 \in k[\cc W]$. This
is useful to apply Theorem \ref{Spriamlenie_1} below.
\end{rem}

Set $\cc V_1=\rho^{-1}(1\times U\times \bb A^N)\cap r^{-1}(\cc X-\cc
D)$. Then $s(\cc D_1)\subset \cc V_1$. In fact, $(r\circ s)(\cc
D_1)=\cc D_1\subset \cc X-\cc D$ and $\rho(\cc D_1)\subset 1\times
U\times \bb A^N$. Thus $\cc V_1\neq \emptyset$.

\begin{constr}[\'{E}tale neighborhood of $\cc D_1$]
\label{Et_neighb_of_D_1}
The morphism
$\rho|_{1\times U\times \bb A^N}: \rho^{-1}(1\times U\times \bb A^N)\to 1\times U\times \bb A^N$
is \'{e}tale
and the inclusion
$i_1: \cc V_1\hookrightarrow \rho^{-1}(1\times U\times \bb A^N)$
is open.
Set $\rho_1=(\rho|_{1\times U\times \bb A^N}) \circ i_1$. Then the triple
$$(\cc V_1, \rho_1: \cc V_1 \to 1\times U\times \bb A^N,s_1=s|_{\cc D_1}: \cc D_1 \to \cc V_1)$$
is an \'{e}tale neighborhood of $\cc D_1$ in $1\times U\times \bb A^N$.
Let $r_1=r|_{\cc V_1}: \cc V_1 \to \cc X-\cc D$.
\end{constr}

\begin{defs}
\label{a_1_for_local_inj}
Set
$a_1=(\cc D_1,\cc V_1,{\varphi_1}|_{\cc V_1},...,{\varphi_N}|_{\cc V_1}; (p_X)|_{\cc X-\cc D}\circ r_1) \in Fr_N(U,X-D)$.
\end{defs}


Set $\cc V'_0=\rho^{-1}(0\times U\times \bb A^N)\cap r^{-1}(\cc X-\cc D)$. Then
$s(\cc D'_0)\subset \cc V'_0$. In fact,
$(r\circ s)(\cc D'_0))=\cc D'_0\subset \cc X-\cc D$
and
$\rho(\cc D'_0)\subset 0\times U\times \bb A^N$.
Thus $\cc V'_0\neq \emptyset$.

\begin{constr}
\label{Et_neighb_of_D'_0}
The morphism
$\rho|_{0\times U\times \bb A^N}: \rho^{-1}(0\times U\times \bb A^N)\to 0\times U\times \bb A^N$
is \'{e}tale
and the inclusion
$i_0: \cc V'_0\hookrightarrow \rho^{-1}(0\times U\times \bb A^N)$
is open.
Set $\rho_0=(\rho|_{1\times U\times \bb A^N}) \circ i_0$. Then the triple
$$(\cc V'_0, \rho_0: \cc V'_0 \to 0\times U\times \bb A^N,s_0=s|_{\cc D'_0}: \cc D'_0 \to \cc V_1)$$
is an \'{e}tale neighborhood of $\cc D'_0$ in $0\times U\times \bb A^N$.
Let $r_0=r|_{\cc V'_0}: \cc V'_0 \to \cc X-\cc D$.
\end{constr}

\begin{defs}
\label{a_0_for_local_inj}
Set
$a_0=(\cc D'_0,\cc V'_0,{\varphi_1}|_{\cc V_0},...,{\varphi_N}|_{\cc V_0}; (p_X)|_{\cc X-\cc D}\circ r_0) \in Fr_N(U,X-D)$.
\end{defs}

\begin{defs}
\label{r_for_local_inj}
Set $r = \langle a_1 \rangle - \langle a_0 \rangle \in \bb ZF_N(U,X-D)$.
\end{defs}

\begin{claim}
\label{} One has an equality $[j]\circ [r]=[can]\circ [\sigma^N_U]
\in \overline {\bb ZF}_N(U,X)$.
\end{claim}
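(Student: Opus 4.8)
The plan is to produce an explicit homotopy in $Fr_N(\mathbb{A}^1 \times U, X)$ interpolating between $a_1$ and $a_0$, built directly from the framing data of $\mathcal{X}$ constructed in Lemmas~\ref{OjPanin} and~\ref{Framing_of_X_script}. The starting point is the finite surjective morphism $H_\theta = (p_U, h_\theta) : \mathcal{X} \to \mathbb{A}^1 \times U$ together with the closed embedding $\mathcal{X} \hookrightarrow \mathbb{A}^1 \times U \times \mathbb{A}^N$, the \'etale neighborhood $\mathcal{V}$, the framing functions $\varphi_1, \dots, \varphi_N$ generating $I_{s(\mathcal{X})}$, and the retraction $r : \mathcal{V} \to \mathcal{X}$. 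First I would carry the whole package over $\mathbb{A}^1$: regard everything as living over $\mathbb{A}^1 \times U$ and note that the pair $(\mathcal{V}, \varphi_1|_{\mathcal{V}}, \dots, \varphi_N|_{\mathcal{V}}; (p_X)|_{\mathcal{X}-\mathcal{D}} \circ r)$ — suitably restricted away from $\mathcal{D}$ — is an honest element of $Fr_N(\mathbb{A}^1 \times U, X)$ whose support is exactly the zero locus of the $\varphi_i$, namely $s(\mathcal{X})$ viewed inside $\mathbb{A}^1 \times U \times \mathbb{A}^N$.

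The key point is controlling the support along the homotopy. By construction the support of this homotopy $h$ equals $s(\mathcal{X})$, and by Lemma~\ref{OjPanin} its fibres over $\{0\} \times U$ and $\{1\} \times U$ are respectively $\mathcal{D}'_0 = \Delta(U) \sqcup \mathcal{D}_0$ and $\mathcal{D}_1$, all of which are finite over $U$; hence the whole support is finite over $\mathbb{A}^1 \times U$, which is what makes $h$ a legitimate framed correspondence. Restricting to the open locus $r^{-1}(\mathcal{X} - \mathcal{D})$ cuts the support down so that it maps into $X - D$ over an open neighborhood of the endpoints; the map $g = (p_X)|_{\mathcal{X}-\mathcal{D}} \circ r$ is a $U$-scheme morphism by part~(iii) of Lemma~\ref{Framing_of_X_script}, so composing with $j : X - D \hookrightarrow X$ gives $j \circ h \in Fr_N(\mathbb{A}^1 \times U, X)$ with $h_1 = a_1$ and $h_0 = a_0 + (\text{the } \Delta(U)\text{-component})$. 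Using Lemma~\ref{Disjoint_Support} applied to the disjoint decomposition $\mathcal{D}'_0 = \Delta(U) \sqcup \mathcal{D}_0$, one gets $\langle h_0 \rangle = \langle a_0 \rangle + \langle b \rangle$ in $\bb ZF_N(U,X)$, where $b$ is the framed correspondence supported on $\Delta(U)$ with the same framing functions $\varphi_i$ and with structure map $can : U \to X$.

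Next I would identify the class $[b]$ in $\overline{\bb ZF}_N(U,X)$ with $[can] \circ [\sigma^N_U]$. This is where Remark~\ref{Deteminant_one} enters: after the normalization $J = \det(A) = 1$ on the henselization $\mathcal{W}$, the framing $(\mathcal{W}, \varphi_1|_{\mathcal{W}}, \dots, \varphi_N|_{\mathcal{W}}; can)$ of $\Delta(U)$ differs from the standard framing $(\mathbb{A}^N_U, t_1 - \Delta^*(t_1), \dots; can) = can \circ \sigma^N_U$ only by an element of $GL_N$ of determinant one connecting the two free bases of $I/I^2$. The passage from $\overline{\bb ZF}$ being a homotopy quotient, a determinant-one change of framing is $\mathbb{A}^1$-homotopic to the identity — this is precisely the content of the result labelled \ref{Spriamlenie_thm_1} referenced in Remark~\ref{Deteminant_one} — so $[b] = [can] \circ [\sigma^N_U]$. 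Assembling: in $\overline{\bb ZF}_N(U,X)$ one has $[j] \circ [r] = [j] \circ \langle a_1 \rangle - [j] \circ \langle a_0 \rangle = [h_1] - [h_0] + [b] = [b] = [can] \circ [\sigma^N_U]$, using that $[h_0] = [h_1]$ since $h$ is a homotopy.

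I expect the main obstacle to be the bookkeeping around the open restrictions: one must check that shrinking $\mathcal{V}$ to $r^{-1}(\mathcal{X}-\mathcal{D})$ does not destroy the \'etale-neighborhood property near the relevant supports and that the resulting object is genuinely in $Fr_N(\mathbb{A}^1 \times U, X)$ rather than merely $Fr_N$ of some open subscheme — in particular verifying that the support stays finite and closed over all of $\mathbb{A}^1 \times U$ and not just near the endpoints. The second delicate point is matching the framings of $\Delta(U)$ on the nose (as opposed to up to homotopy that also moves the support), which is why the determinant normalization in Remark~\ref{Deteminant_one} was arranged in advance; invoking \ref{Spriamlenie_thm_1} correctly requires that the two framings share the same support $\Delta(U)$ and the same structure morphism $can$, which they do by construction.
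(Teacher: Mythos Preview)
Your overall architecture is exactly the paper's: build the homotopy $h_\theta=(\mathcal X,\mathcal V,\varphi_1,\dots,\varphi_N;\,p_X\circ r)\in Fr_N(\mathbb A^1\times U,X)$, split $h_0$ along $\Delta(U)\sqcup\mathcal D'_0$ via Lemma~\ref{Disjoint_Support}, and identify the $\Delta(U)$ piece with $[can]\circ[\sigma^N_U]$ using the determinant normalisation and Theorem~\ref{Spriamlenie_1}. That part is fine.

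The gap is precisely the one you flagged yourself, and it is real rather than merely bookkeeping. You try to make the whole homotopy land in $X-D$ by shrinking $\mathcal V$ to $r^{-1}(\mathcal X-\mathcal D)$ and then composing with $j$. This cannot work: the support of the resulting framed correspondence would be $s(\mathcal X-\mathcal D)$, and $\mathcal X-\mathcal D$ is \emph{not} finite over $\mathbb A^1\times U$ (Lemma~\ref{OjPanin} only controls the fibres over $\{0\}\times U$ and $\{1\}\times U$; over intermediate values of $\theta$ the divisor $\mathcal D$ generically meets the fibre of $H_\theta$). Moreover, even at $\theta=0$ the $\Delta(U)$ component maps via $can$ to $U\subset X$, which need not lie in $X-D$ when $x\in D$; so the endpoint decomposition you wrote, with the $\Delta(U)$ piece appearing inside $j\circ h_0$, is not coherent.

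The fix is simply not to restrict. Take $g=p_X\circ r:\mathcal V\to X$ with target $X$, so the support is all of $s(\mathcal X)$, which \emph{is} finite over $\mathbb A^1\times U$ because $H_\theta$ is finite. Then $h_1=j\circ a_1$ in $Fr_N(U,X)$ (since $Z_1\subset\mathcal X-\mathcal D$, the map $p_X\circ r$ factors through $X-D$ on a neighbourhood of $Z_1$), and $\langle h_0\rangle=j\circ\langle a_0\rangle+\langle\Delta(U),\mathcal W,\varphi|_{\mathcal W};\,p_X\circ r|_{s(\Delta(U))}\rangle$ in $\mathbb ZF_N(U,X)$. Now Remark~\ref{Deteminant_one} and Theorem~\ref{Spriamlenie_1} give $[\Delta(U)\text{ piece}]=[can]\circ[\sigma^N_U]$, and the chain $[j]\circ[a_1]=[h_1]=[h_0]=[j]\circ[a_0]+[can]\circ[\sigma^N_U]$ finishes the argument. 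This is exactly what the paper does.
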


In fact, take an element $h_{\theta}=(\cc X, \cc V,
\varphi_1,...,\varphi_N; p_X\circ r) \in Fr_N(\bb A^1\times U,X)$.
By Lemma \ref{OjPanin} the support of $h_0$ is the closed subset
$\Delta\sqcup \cc D'_0$. Thus by Lemma \ref{Disjoint_Support}
$\langle h_0 \rangle$ is the sum of two summands. Namely,
$$\langle h_0 \rangle= j\circ \langle a_0 \rangle +
\langle \Delta(U),\cc W, {\varphi_1}|_{\cc W},...,{\varphi_N}|_{\cc W}; p_X \circ (r|_{s(\Delta(U)}) \rangle$$
in $\bb ZF_N(U,X)$.
By Remark \ref{Deteminant_one} and Theorem \ref{Spriamlenie_1} for the second summand one has
$$[\Delta(U),\cc W, {\varphi_1}|_{\cc W},...,{\varphi_N}|_{\cc W}; p_X \circ (r|_{s(\Delta(U)})]=
[p_X\circ r|_{s(\Delta(U)} \circ (s\circ \Delta)]\circ [\sigma^N_U]=[can]\circ [\sigma^N_U]$$
in
$\overline {\bb ZF}_N(U,X)$.
Clearly, $h_1= j\circ a_1$
in $Fr_N(U,X)$.
Thus one has a chain of equalities
$$[j]\circ [a_1]=[h_1]=[h_0]=[j]\circ [a_0] + [can]\circ [\sigma^N_U]$$
in $\overline {\bb ZF}_N(U,X)$. Whence the Claim. Whence
Theorem~\ref{Local_Inj}.

\section{Preliminaries for the injective part of the \'{e}tale excision}
\label{Pre_for_Inj_Etale_Exc}
Let
$$\xymatrix{V'\ar[r]\ar[d]&X'\ar^{\Pi}[d]\\
               V\ar[r]&X}$$
be an elementary distinguished square with affine $k$-smooth $X$ and $X'$.
Let $S=X-V$ and $S'=X'-V'$ be closed subschemes equipped with reduced structures.
Let $x\in S$ and $x' \in S'$ be two points such that $\Pi(x')=x$.
Let
$U=Spec(\cc O_{X,x})$
and
$U'=Spec(\cc O_{X',x'})$.
Let $\pi: U' \to U$ be the morphism induced by $\Pi$.

To prove Theorem \ref{Inj_Etale_exc}, it suffices to find morphisms
$a\in \bb ZF_N((U,U-S)),(X',X'-S'))$ and $b_G\in \bb
ZF_N((U,U-S)),(X-S,X-S))$ such that
\begin{equation}
\label{Injectivity_Part_Details}
[[\Pi]]\circ [[a]]-[[j]]\circ [[b_G]] =[[can]]\circ [[\sigma^N_U]]
\end{equation}
in $\overline {\bb ZF}_N(U,U-S)),(X,X-S))$. Here $j: (X-S,X-S) \to
(X,X-S)$ and $can: (U,U-S) \to (X,X-S)$ are inclusions.
In this section we do some preparations to
construct the desired morphisms
$a\in \bb ZF_N((U,U-S)),(X',X'-S'))$ and $b_G\in \bb
ZF_N((U,U-S)),(X-S,X-S))$ in Section~\ref{Proof_of_Etale_Injectivity}
satisfying~\eqref{Injectivity_Part_Details}.

Let $in:X^{\circ}\hookrightarrow X$ and $in':
(X')^{\circ}\hookrightarrow X'$ be open such that
\begin{itemize}
\item[(1)]
$x\in X^{\circ}$,
\item[(2)]
$x'\in (X')^{\circ}$,
\item[(3)]
$\Pi((X')^{\circ})\subset X^{\circ}$,
\item[(4)]
the square
$$\xymatrix{V'\cap (X')^{\circ} \ar[r]\ar[d]& (X')^{\circ}\ar^{\Pi|_{(X')^{\circ}}}[d]\\
               V\cap X^{\circ}\ar[r]&X^{\circ}}$$
is an elementary distinguished square.
\end{itemize}

Suppose morphisms $a^{\circ}\in \bb
ZF_N((U,U-S)),((X')^{\circ},(X')^{\circ}-S'))$, $b^{\circ}_G\in \bb
ZF_N((U,U-S)),(X^{\circ}-S,X^{\circ}-S))$ are such that for the
inclusions $j^{\circ}: (X^{\circ}-S,X^{\circ}-S) \to
(X^{\circ},X^{\circ}-S)$ and $can_{X^{\circ}}: (U,U-S) \to
(X^{\circ},X^{\circ}-S)$ one has
\begin{equation}\label{Injectivity_Part_Details_new}
[[\Pi|_{(X')^{\circ}}]]\circ [[a^{\circ}]]-[[j^{\circ}]]\circ [[b^{\circ}_G]] =[[can_{X^{\circ}}]]\circ [[\sigma^N_{U}]].
\end{equation}
Then the morphisms $a=in' \circ a^{\circ}$ and $b_G=in \circ
b^{\circ}_G$ satisfy~\eqref{Injectivity_Part_Details}. {\it
Thus if we shrink $X$ and $X'$ in such a way that properties
$(1)-(4)$ are fulfilled and find appropriate morphisms $a^Y$ and
$b^Y_G$, then we find $a$ and $b_G$ satisfying~\eqref{Injectivity_Part_Details}}.

\begin{rem}
\label{Srinking_X_and_X'} One way of shrinking $X$ and $X'$ such
that properties $(1)-(4)$ are fulfilled is as follows. Replace $X$
by an affine open $X^{\circ}$ containing $x$ and then replace $X'$
by $(X')^{\circ}=\Pi^{-1}(X^{\circ})$.
\end{rem}

Let $X'_n$ be the normalization of $X$ in $Spec(k(X'))$. Let $\Pi_n:
X'_n \to X$ be the corresponding finite morphism. Since $X'$ is
$k$-smooth it is an open subscheme of $X'_n$. Let $Y''=X'_n - X'$.
It is a closed subset in $X'_n$. Since $\Pi|_{S'}: S' \to S$ is an
isomorphism of schemes, then $S'$ is closed in $X'_n$. Thus $S' \cap Y''
= \emptyset$. Hence there is a function $f\in k[X'_n]$ such that
$f|_{Y''}=0$ and $f|_{S'}=1$.

\begin{defs}
\label{X'_new} Set $X'_{new}=(X'_n)_f$, $Y'=\{f=0\}$,
$Y=\Pi_n(Y'_{red})\subset X$. Note that $X'_{new}$ is an affine
$k$-variety as a principal open subset of the affine $k$-variety
$X'_n$. We regard $Y'$ as an effective Cartier divisor of $X'_n$.
The subset $Y$ is closed in $X$, because $\Pi_n$ is finite. Set
$\Pi_{new}=\Pi|_{X'_{new}}$.
\end{defs}

\begin{rem}
\label{X'andX'_new} We have that $\Pi^{-1}_{new}(S)=S'$. Therefore
the varieties $X$ and $X'_{new}$ are subject to properties $(1)-(4)$
of the present section. Below we will work with this $X'_{new}$.
However, we will write $X'$ for $X'_{new}$.
\end{rem}

\begin{rem}
\label{Elementary_fibr} Take $X$ and $X'$ as in Remark
\ref{X'andX'_new}. Shrinking $X$ and $X'$ as described in Remark
\ref{Srinking_X_and_X'},
and using Proposition \ref{CartesianDiagramArtin}
one can find an almost elementary fibration $q: X\to B$
in the sense of
Definition \ref{DefnElemFib}
(here $B$ is
affine open in $\bb P^{n-1}$) such that $q|_{Y\cup S}: Y\cup S \to B$ is finite,
$\omega_{B/k}\cong \cc O_B$, $\omega_{X/k}\cong \cc O_X$.

The shrunk scheme $X'$ will be regarded below as a $B$-scheme via
the morphism $q\circ \Pi$.
\end{rem}

\begin{rem}
\label{Stable_Normal_Bundle}
If $q: X\to B$ is the almost elementary fibration from
Remark \ref{Elementary_fibr}, then
$\Omega^1_{X/B}\cong \cc O_X$.
In fact, $\omega_{X/k}\cong q^*(\omega_{B/k})\otimes \omega_{X/B}$.
Thus $\omega_{X/B}\cong \cc O_X$. Since $X/B$ is a smooth relative curve,
then $\Omega^1_{X/B}=\omega_{X/B}\cong \cc O_X$.

If, furthermore, $j: X\hookrightarrow B\times \bb A^N$ is a closed
embedding of $B$-schemes, then one has
$[\cc N(j)]=(N-1)[\cc O_X]$ in $K_0(X)$, where $\cc N(j)$ is the normal bundle to $X$
for the imbedding $j$.

Thus by increasing the integer $N$, we may assume that the normal
bundle $\cc N(j)$ is isomorphic to the trivial bundle $\cc
O^{N-1}_X$.
\end{rem}

Repeating arguments from the proof of Lemma~\ref{Framing_of_X_script}
we get the following
\begin{prop}
\label{Framing_Of_X} Let $q: X\to B$ be the almost elementary
fibration from Remark \ref{Elementary_fibr}. Then there are an
integer $N\geq 0$, a closed embedding $X \hookrightarrow B\times \bb
A^N$ of $B$-schemes, an \'{e}tale affine neighborhood $(\cc V, \rho:
\cc V \to B\times \bb A^N, s: X \hookrightarrow \cc V)$ of $X$ in
$B\times \bb A^N$, functions $\varphi_1,...,\varphi_{N-1} \in k[\cc
V]$ and a morphism $r: \cc V \to X$ such that:
\begin{itemize}
\item[(i)]
the functions $\varphi_1,...,\varphi_{N-1}$ generate the ideal
$I_{s(X)}$ in $k[\cc V]$ defining the closed subscheme $s(X)$ of
$\cc V$;
\item[(ii)]
$r\circ s = id_{X}$;
\item[(iii)]
the morphism $r$ is a $B$-scheme morphism if $\cc V$ is regarded as
a $B$-scheme via the morphism $pr_U\circ \rho$, and $X$ is regarded
as a $B$-scheme via the morphism $q$.
\end{itemize}
\end{prop}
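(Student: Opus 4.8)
The plan is to realize $X$ as a smooth closed subscheme of an affine space over $B$ with a \emph{trivial} normal bundle, and then to trivialize not just the normal bundle but to produce a global equation system together with a retraction onto an \'etale neighborhood. First I would use the fact that $X$ is an affine $B$-scheme (which holds since $\pi$, or rather $q\circ\Pi$, is finite as in Lemma \ref{Elementary_fibr}; in any case an almost elementary fibration over an affine base with $X$ affine) to pick \emph{some} closed embedding $j_0\colon X\hookrightarrow B\times\bb A^{M}$ of $B$-schemes. By Remark \ref{Stable_Normal_Bundle}, $\Omega^1_{X/B}\cong\cc O_X$, so in $K_0(X)$ one has $[\cc N(j_0)]=(M-1)[\cc O_X]$. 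The point of Remark \ref{Stable_Normal_Bundle} is precisely that, after stabilizing by composing with a linear embedding $B\times\bb A^{M}\hookrightarrow B\times\bb A^{N}$ (adding extra coordinates, $N\gg M$), we may assume the normal bundle $\cc N(j)$ of the resulting embedding $j\colon X\hookrightarrow B\times\bb A^N$ is isomorphic to the trivial bundle $\cc O_X^{N-1}$. (Here one uses that $X$ is affine of finite Krull dimension, so that a stably trivial bundle of rank $>\dim X$ is trivial, by Bass's cancellation theorem.)

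Next, with $\cc N(j)\cong\cc O_X^{\,N-1}$ fixed, I would construct the \'etale neighborhood and the functions $\varphi_i$. Let $I=I_{s(X)}\subset k[B\times\bb A^N]$ be the ideal of $X$ in $B\times\bb A^N$; then $I/I^2$ is the conormal module, which is the dual of $\cc N(j)$, hence free of rank $N-1$ over $k[X]$. Lift a basis of $I/I^2$ to elements $\varphi_1,\dots,\varphi_{N-1}\in I$. These generate $I$ locally at $X$ by Nakayama; concretely they generate $I$ after inverting a function that does not vanish on $X$, i.e.\ they generate $I_{s(X)}$ in $k[\cc V]$ for a suitable principal open (hence affine) neighborhood $\cc V$ of $X$ in $B\times\bb A^N$. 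Shrinking $\cc V$ further, the closed subscheme $\{\varphi_1=\cdots=\varphi_{N-1}=0\}$ agrees with $X$ scheme-theoretically, and since $X\to B$ and $\cc V\to B$ are both smooth with $X$ cut out by a regular sequence of the expected length, the projection $\rho\colon\cc V\to B\times\bb A^N$ can be arranged to be \'etale in a neighborhood of $X$ (this is the standard "etale neighborhood of the zero locus" construction: after replacing $\cc V$ by the locus where the relevant Jacobian is invertible, $\rho$ is \'etale). This gives (i) and the \'etale neighborhood data $(\cc V,\rho,s)$.

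Finally I would produce the retraction $r\colon\cc V\to X$ satisfying (ii) and (iii). Since $\cc V$ is \'etale over $B\times\bb A^N$ and $X\subset\cc V$ is closed and smooth over $B$, after shrinking $\cc V$ (to a tubular-neighborhood-type open) one gets a $B$-morphism $r$ with $r\circ s=\id_X$: the cleanest route is to use that $X$, being smooth over $B$, is a retract of the henselization of $\cc V$ along $X$, then spread the retraction out to an actual Zariski (principal affine) open; alternatively, use formal smoothness of $X\to B$ to lift the identity $X\to X$ along the nilpotent-free situation locally and patch. Either way one obtains $r$ as a $B$-scheme morphism, which is (iii), with $r\circ s=\id_X$, which is (ii). The main obstacle is the trivialization of the normal bundle together with the choice of a \emph{global} regular sequence $\varphi_1,\dots,\varphi_{N-1}$ cutting out $X$: this is exactly where the hypotheses $\omega_{X/k}\cong\cc O_X$, $\omega_{B/k}\cong\cc O_B$ (forcing $\Omega^1_{X/B}\cong\cc O_X$), affineness of $X$, and the stabilization of $N$ from Remark \ref{Stable_Normal_Bundle} all have to be combined. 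Everything after that—the \'etale neighborhood and the retraction—is the standard deformation-to-the-normal-cone/tubular-neighborhood package in the affine setting, carried out over the base $B$.
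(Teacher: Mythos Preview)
Your proposal is correct and matches the approach the paper intends. The paper does not give a separate proof of Proposition~\ref{Framing_Of_X}; instead it places Remark~\ref{Stable_Normal_Bundle} immediately before the proposition precisely to supply the key step you isolate (stabilize $N$ so that $\cc N(j)\cong\cc O_X^{\,N-1}$, using $\Omega^1_{X/B}\cong\cc O_X$), after which the lift of a free basis of $I/I^2$ to a regular sequence $\varphi_1,\dots,\varphi_{N-1}$, the shrinking to an affine \'etale neighborhood $\cc V$, and the $B$-retraction $r$ are the standard affine tubular-neighborhood package you describe.

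One small wording fix: your parenthetical ``since $\pi$, or rather $q\circ\Pi$, is finite'' is beside the point here; affineness of $X$ over $B$ is already part of the almost elementary fibration data of Lemma~\ref{Elementary_fibr} (both $X$ and $B$ are affine), and this is all that is needed to choose the initial embedding $j_0$.
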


\begin{defs}
Let $x\in S$, $x'\in S'$ be such that $\Pi(x')=x$.
Set $U=Spec(\cc O_{X,x})$,
There is an obvious morphism $\Delta=(id,can): U\to U\times_B X$. It
is a section of the projection $p_U: U\times_B X \to U$. Let $p_X:
U\times_B X \to X$ be the projection onto $X$. Let $\pi: U'\to U$ be
the restriction of $\Pi$ to $U'$.
\end{defs}

\begin{notn}
\label{X_script} In what follows we will write $U\times X$ to denote
$U\times_B X$,
$U\times X'$  to denote $U\times_B X'$, $U'\times X'$  to denote
$U'\times_B X'$, etc. Here $X'$ is regarded as a $B$-scheme via the
morphism $q\circ \Pi$.
\end{notn}

\begin{prop}
\label{h_theta}
Under the conditions of Remark \ref{Elementary_fibr}
and Notation \ref{X_script}
there is a function
$h_{\theta}\in k[\bb A^1\times U\times X]$
($\theta$ is the parameter on the left factor $\bb A^1$)
such that the following properties hold for the functions
$h_{\theta}$, $h_1:=h_{\theta}|_{1\times U\times X}$ and $h_0:=h_{\theta}|_{0\times U\times X}$:
\begin{itemize}
\item[(a)]
the morphism $(pr,h_{\theta}): \bb A^1\times U\times X \to \bb
A^1\times U\times \bb A^1$ is finite surjective, and hence the
closed subscheme $Z_{\theta}:=h^{-1}_{\theta}(0)\subset \bb
A^1\times U\times X$ is finite flat and surjective over $\bb A^1
\times U$;
\item[(b)]
for the closed subscheme
$Z_{0}:=h^{-1}_{0}(0)$ one has
$Z_{0}=\Delta(U) \sqcup G$ (an equality of closed subschemes) and $G\subset U\times (X-S)$;
\item[(c)]
the closed subscheme $(id_U \times \Pi)^{*}(h_1)=0$ is a disjoint union of the form
$Z'_1\sqcup Z'_2$ and
$m:=(id_U \times \Pi)|_{Z'_1}$ identifies $Z'_1$ with the closed subscheme $Z_1:=\{h_1=0\}$;
\item[(d)]
$Z_{\theta}\cap \bb A^1\times (U-S)\times S=\emptyset$ or,
equivalently, $Z_{\theta}\cap \bb A^1\times (U-S)\times X \subset
\bb A^1 \times (U-S) \times (X-S)$.
\end{itemize}
\end{prop}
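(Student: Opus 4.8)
The plan is to run the Ojanguren--Panin geometric presentation technique on the relative affine curve supplied by the almost elementary fibration $q\colon X\to B$ of Lemma~\ref{Elementary_fibr}, exactly as in the proof of Lemma~\ref{OjPanin}, and then to impose the extra constraints (c)--(d) using the \'etale excision datum. First I would fix a smooth projective relative curve $\overline q\colon\overline X\to B$ compactifying $q$, with $X$ open in $\overline X$ and $X_\infty:=\overline X-X$ finite surjective over $B$. Base changing to $U$, set $\cc X=U\times_B X$, $\overline{\cc X}=U\times_B\overline X$, $\cc X_\infty=U\times_B X_\infty$; then $\overline{\cc X}$ is a smooth proper relative curve over the local scheme $U$, the section $\Delta(U)\subset\cc X\subset\overline{\cc X}$ is an effective relative Cartier divisor disjoint from $\cc X_\infty$, and $U\times_B(Y\cup S)$ is finite over $U$ because $q|_{Y\cup S}$ is finite. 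Recall also (Definition~\ref{X'_new}, Remark~\ref{X'andX'_new}) that $X'=X'_{new}$ is open in the normalisation $X'_n$ of $X$ in $k(X')$ with $\Pi_n\colon X'_n\to X$ finite, so that $\Pi=\Pi_{new}$ is finite over $X-Y$, and that $\Pi^{-1}(S)=S'$ with $\Pi|_{S'}\colon S'\xrightarrow{\sim}S$.

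Next I would construct $h_0$. For $n\gg0$, using relative Serre vanishing on $\overline{\cc X}/U$ and the fact that $U$ (hence each of the semilocal closed subschemes below) is local, so that the relevant line bundles are trivial, choose a section $h_0$ of $\cc O_{\overline{\cc X}}(n\cc X_\infty-\Delta(U))$ that is nowhere zero on each of $\cc X_\infty$, $\Delta(U)$ and $U\times_B S$; the prescribed values are compatible since the only overlap of these subschemes is $\Delta(S_U):=\Delta(U)\cap(U\times_B S)$, on which every condition reads ``be a unit''. A generic such $h_0$ (here the hypothesis that $k$ is infinite is used) has reduced residual zero divisor avoiding $U\times_B Y$. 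Viewed as a function on $\cc X$, $h_0$ then has zero scheme $\Delta(U)\sqcup G$ with $G\subset U\times_B(X-S)$, which gives (b); it has a genuine pole of order $n$ along $\cc X_\infty$, with some fixed leading unit $v$; and $\{h_0=0\}\cap(U\times_B S)=\Delta(S_U)$, where $S_U\subset U$ is the preimage of $S$.

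The construction of $h_1$ is the heart of the matter, because $\{h_1=0\}$ must lift along $\mathrm{id}_U\times\Pi$. I would build the lift first: a closed subscheme $Z'_1\subset U\times_B X'$, finite flat over $U$, on which $\Pi$ restricts to a closed immersion, equal over $S$ to the canonical lift of $\Delta(S_U)$ through $\Pi|_{S'}\colon S'\xrightarrow{\sim}S$ and over $X-(Y\cup S)$ (where $\Pi$ is finite) a finite flat $U$-subscheme, enlarged by further such pieces so that $Z_1:=\Pi(Z'_1)$ is a divisor in $\cc X$ fibrewise linearly equivalent to $n\cc X_\infty$ meeting $U\times_B S$ exactly in $\Delta(S_U)$; then take $h_1\in H^0(\overline{\cc X},\cc O(n\cc X_\infty))$ with $\{h_1=0\}=Z_1$, leading term $v$ along $\cc X_\infty$, and $h_1|_{U\times_B S}=h_0|_{U\times_B S}$ (available by Serre vanishing and triviality over the local base). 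Property (c) now follows: $(\mathrm{id}_U\times\Pi)^{-1}(Z_1)\to Z_1$ is \'etale, $Z'_1$ is a section of it, hence an open immersion, and it is in fact clopen, because over the part of $Z_1$ in $U\times_B(X-Y)$ the map is finite \'etale (so its sheets over the connected pieces of $Z_1$ are clopen) and over $\Delta(S_U)$ it is an isomorphism since $\Pi$ is an isomorphism over $S$; thus $(\mathrm{id}_U\times\Pi)^{-1}(Z_1)=Z'_1\sqcup Z'_2$ with $\Pi\colon Z'_1\xrightarrow{\sim}Z_1$, which is (c).

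Finally set $h_\theta=(1-\theta)h_0+\theta h_1\in k[\bb A^1\times U\times X]$. Along $\bb A^1\times\cc X_\infty$ its leading term is the constant $(1-\theta)v+\theta v=v$, a unit, so $(pr,h_\theta)$ extends to a finite morphism $\bb A^1\times\overline{\cc X}\to\bb A^1\times U\times\bb P^1$ (proper from $\overline{\cc X}$, quasi-finite because a genuine pole makes $h_\theta$ nonconstant on the fibres of the relative curve); restricting over $\bb A^1\times U\times\bb A^1$ gives (a), with $Z_\theta=\{h_\theta=0\}$ a relative effective Cartier divisor in $\bb A^1\times\cc X$, hence finite flat surjective over $\bb A^1\times U$. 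Properties (b) and (c) are the specialisations $\theta=0$ and $\theta=1$, and (d) holds because on $\bb A^1\times((U-S)\times_B S)$ one has $h_\theta=(1-\theta)h_0+\theta h_1=h_0=h_1$ (since $h_0=h_1$ on $U\times_B S$), a unit there since $\{h_0=0\}\cap(U\times_B S)=\Delta(S_U)$ is disjoint from $(U-S)\times_B S$; hence $Z_\theta\cap(\bb A^1\times(U-S)\times_B S)=\emptyset$. The main obstacle is the mutual compatibility of these requirements: with the linear homotopy, (a) forces the leading terms of $h_0,h_1$ along $\cc X_\infty$ to coincide, (d) forces $h_0=h_1$ on $U\times_B S$, and (c) forces the $S$-part of $\{h_1=0\}$ to be exactly the canonically $X'$-liftable divisor $\Delta(S_U)$ — consistent only because $\{h_0=0\}$ already meets $U\times_B S$ precisely in $\Delta(S_U)$; engineering $Z'_1$ so that $Z_1=\Pi(Z'_1)$ simultaneously lies in a linear system large enough to carry a genuine pole along $\cc X_\infty$ and still lifts to $X'$ is where the real work lies.
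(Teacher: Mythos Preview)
Your approach is essentially the paper's: a linear homotopy $h_\theta=(1-\theta)t_0+\theta t_1$ between sections of an ample line bundle on the compactified relative curve $\overline{\cc X}=U\times_B\overline X$, with $t_0$ cutting out $\Delta(U)\sqcup G$, $t_1$ cutting out a divisor $Z_1$ that lifts to $X'$, and $t_0=t_1$ both on $E=U\times_B X_\infty$ (giving finiteness, your (a)) and on $U\times_B S$ (giving (d)). Your argument that $Z'_1$ is clopen in $(\mathrm{id}\times\Pi)^{-1}(Z_1)$ because it is a section of an \'etale cover is exactly the content of the paper's Remark that properties $(1a)$, $(1a')$, $(1b)$ imply $(1c)$; the detour through $Y$ is unnecessary.

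Two differences are worth noting. First, the paper reverses your order: it builds $t_1$ first (from the $X'$ side) and then chooses $t_0=s_{\Delta(U)}\otimes t'_0$ to match $t_1$ on $E$ and on $U\times_B S$; this avoids having to steer $Z_1$ into a predetermined linear system. Second, and more substantively, the paper makes the construction of $Z'_1$ concrete by compactifying the $X'$ side as well: it forms $\overline X'$, the normalisation of $\overline X$ in $k(X')$, with finite $\overline\Pi\colon\overline X'\to\overline X$, sets $E'=(\mathrm{id}\times\overline\Pi)^*E$, and takes $Z'_1=\{s'_1=0\}$ for a general section $s'_1\in\Gamma(U\times_B\overline X',\cc L(nE'))$ with prescribed restriction $(\mathrm{id}\times\overline\Pi)^*(s_{\Delta(U)})\otimes r_1$ on $U\times_B S'$. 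Genericity plus ampleness of $E'$ then give $Z'_1$ finite \'etale over $U$, $Z'_1\subset U\times_B X'$, and $(\mathrm{id}\times\Pi)|_{Z'_1}$ a closed embedding, while the push-forward $Z_1$ is automatically linearly equivalent to $dnE$ with $d=[k(X'):k(X)]$. This is precisely the standard resolution of the ``engineering problem'' you flag at the end; your phrase ``a finite flat $U$-subscheme, enlarged by further such pieces'' is where the paper's compactification-plus-Serre-vanishing manoeuvre does the real work.
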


\begin{rem}
\label{Running_in} Item $(d)$ yields the following inclusions:
$Z_{\theta}\cap \bb A^1\times (U-S)\times X \subset \bb A^1 \times
(U-S) \times (X-S)$, $Z_{0}\cap (U-S)\times X \subset (U-S) \times
(X-S)$, and $Z_{1}\cap (U-S)\times X \subset (U-S) \times (X-S)$.
Applying item $(c)$, we get another inclusion: $Z'_{1}\cap
(U-S)\times X' \subset (U-S) \times (X'-S')$.
\end{rem}

\section{Reducing Theorem \ref{Inj_Etale_exc} to Propositions \ref{Framing_Of_X} and \ref{h_theta}}\label{Proof_of_Etale_Injectivity}

In this section we construct the desired morphisms
$a\in \bb ZF_N((U,U-S)),(X',X'-S'))$ and $b_G\in \bb
ZF_N((U,U-S)),(X-S,X-S))$ satisfying the relation~\eqref{Injectivity_Part_Details}.
To construct a morphism $b \in Fr_N(U,X)$, we first construct its
support in $U\times \bb A^N$ for some integer $N$, then we construct
an \'{e}tale neighborhood of the support in $U \times \bb A^N$, then
one constructs a framing of the support in the neighborhood, and
finally one constructs $b$ itself. In the same manner we construct a
morphism $a\in Fr_N(U,X')$ and a homotopy $H\in Fr_N(\bb A^1\times
U,X)$ between $\Pi \circ a$ and $b$. Using the fact that the support
$Z_0$ of $b$ is of the form $\Delta(U)\sqcup G$ with $G\subset
U\times (X-S)$, we get an equality
$$\langle b \rangle=\langle b_1 \rangle + \langle b_2 \rangle$$
in $\bb ZF_N(U,X)$. Then we prove that $[b_1]=[can]\circ
[\sigma^N_U]$ and $[b_2]$ factors through $X-S$. Moreover, we are
able to work with morphisms of pairs.
These will end up with the equality~\eqref{Injectivity_Part_Details}
and will complete the proof of Theorem~\ref{Inj_Etale_exc}
at the very end of the section. We will use systematically the data from
Proposition~\ref{Framing_Of_X} in
this section (the details are given below).

Under the assumptions and notation of
Proposition~\ref{Framing_Of_X}, Lemma \ref{Framing_Of_X} and Remark
\ref{X'andX'_new}, set $\cc V'=X'\times_B \cc V$. So we have a Cartesian
square
$$\xymatrix{\cc V'\ar^{\Pi'}[r]\ar^{r'}[d]&\cc V \ar^{r}[d]\\
               X' \ar^{\Pi}[r]& X,}$$
where $r'$ and $\Pi'$ are the projections to the first and second
factors respectively. The section $s: X \to \cc V$ defines a section
$s'=(id,s): X' \to \cc V'$ of $r'$. {\it For brevity, we will write
below} $U\times \cc V$ to denote $U\times_{B} \cc V$, $U\times \cc
V'$ for $U\times_B \cc V'$, and $id\times \rho$ for $id\times_B
\rho: U\times_B \cc V \to U\times_B \cc (B\times \bb A^n)=U\times
\bb A^N$. Let $p_{\cc V}: U\times \cc V \to \cc V$ be the
projection.

Let $X\subset B\times \bb A^N$ be the closed inclusion from
Proposition \ref{Framing_Of_X}. Taking the base change of the latter
inclusion by means of the morphism $U\to B$, we get a closed
inclusion $U\times X \subset U\times \bb A^N$.

Under the notation from Proposition \ref{Framing_Of_X} and
Proposition \ref{h_theta}, construct now a morphism $b\in
Fr_N(U,X)$. Let $Z_0\subset U\times X$ be the closed subset from
Proposition \ref{h_theta}. Then one has the closed inclusions
$$\Delta(U) \sqcup G=Z_0\subset U\times X \subset U\times \bb A^N.$$
Let $in_0: Z_0\subset U\times X$
be the closed inclusion.
Define an \'{e}tale neighborhood of
$Z_0$ in $U\times \bb A^N$ as follows:
\begin{equation}\label{Ordinary_Etale_for_Z_0}
(U\times \cc V, id\times \rho:
U\times \cc V\to U\times \bb A^N, (id\times s)\circ in_0: Z_0 \to
U\times \cc V).
\end{equation}
We will write
$\Delta(U) \sqcup G=Z_0\subset U\times \cc V$
for $((id\times s)\circ in_0)(Z_0) \subset U\times \cc V$.
Let $f\in k[U\times \cc V]$ be a function such that
$f|_{G}=1$ and $f|_{\Delta(U)}=0$.
Then $\Delta(U)$ is a closed subset of the affine scheme
$(U\times \cc V)_f$.

\begin{defs}
\label{b'_for_etale_exc} Under the notation from Proposition
\ref{Framing_Of_X} and Proposition \ref{h_theta}, set
$$
b^{\prime}=(Z_0,U\times \cc V,p^*_{\cc V}(\varphi_1),...,p^*_{\cc
V}(\varphi_{N-1}), (id\times r)^*(h_0); pr_{X}\circ (id\times r)
)\in Fr_N(U,X).
$$
We will sometimes write below $(Z_0,U\times \cc V,p^*_{\cc
V}(\varphi), (id\times r)^*(h_0); pr_{X}\circ (id\times r) )$ to
denote the morphism $b^{\prime}$.
\end{defs}
To construct the desired morphism $b \in Fr_N(U,X)$, we need to
modify slightly the function $p^*_{\cc V}(\varphi_1)$ in the framing
of $Z_0$. By Proposition \ref{Framing_Of_X} and item $(b)$ of
Proposition \ref{h_theta}, the functions
$$p^*_{\cc V}(\varphi_1),...,p^*_{\cc V}(\varphi_{N-1}), (id\times r)^*(h_0)$$
generate an ideal $I_{(id\times s)(\Delta(U))}$ in $k[(U\times \cc
V)_f]$ defining the closed subscheme $\Delta(U)$ of the scheme
$(U\times \cc V)_f$. Let $t_1,t_2,\dots,t_N \in k[U\times \bb A^N]$
be the coordinate functions. For any $i=1,2,\dots,N$, set
$t'_i=t_i-(t_i|_{\Delta(U)}) \in k[U\times \bb A^N]$. Then the
family
$$(t''_1,t''_2, \dots,t''_N)=(id\times \rho)^*(t'_1),(id\times \rho)^*(t'_2), \dots,(id\times \rho)^*(t'_N)$$
also generates the ideal $I=I_{(id\times s)(\Delta(U))}$ in
$k[(U\times \cc V)_f]$. This holds, because
\eqref{Ordinary_Etale_for_Z_0} is an \'{e}tale neighborhood of $Z_0$
in $U\times \bb A^N$. By Remark~\ref{Stable_Normal_Bundle} the
$k[U]=k[(id\times s)(\Delta(U))]$-module $I/I^2$ is free of rank
$N$. Thus the families $(\bar t''_1,\bar t''_2, \dots,\bar t''_N)$
and $(\overline {p^*_{\cc V}(\varphi_1)},...,\overline {p^*_{\cc
V}(\varphi_{N-1})}, \overline {(id\times r)^*(h_0)})$ are two free
bases of the $k[((id\times s)\circ \Delta)(U))]$-module $I/I^2$. Let
$J \in k[U]^{\times}$ be the Jacobian of a unique matrix $A\in
M_N(k[U])$ which transforms the first free basis to the second one. Set,
   $$\varphi^{new}_1=q^*_U(J^{-1})\varphi_1 \in k[\cc V],$$
where $q_U=pr_U \circ (id\times \rho): \cc V \to U$. Let $A^{new}\in
M_N(k[U])$ be a unique matrix changing the first free basis to the basis
 $$(\overline {p^*_{\cc V}(\varphi^{new}_1)},\overline {p^*_{\cc V}(\varphi_{2})},...,\overline {p^*_{\cc V}(\varphi_{N-1})}, \overline {(id\times r)^*(h_0)}).$$
Then the Jacobian $J^{new}$ of $A^{new}$ is equal to $1$:
\begin{equation}
\label{J_new_equal_one}
J^{new}=1\in k[U]^{\times}.
\end{equation}
We will write
$$(\psi_1,\psi_2,\dots,\psi_{N-1}) \ \ \text{for} \ \ (p^*_{\cc V}(\varphi_1^{new}),p^*_{\cc V}(\varphi_{2}),...,p^*_{\cc V}(\varphi_{N-1})).$$

\begin{defs}
\label{b'_for_etale_exc}
Under the notation from Proposition \ref{Framing_Of_X} and Proposition \ref{h_theta} set
$$
b=(Z_0,U\times \cc V,\psi_1,...,\psi_{N-1}, (id\times r)^*(h_0); pr_{X}\circ (id\times r) )\in
Fr_N(U,X).
$$
For brevity, we will sometimes write
$$b=(Z_0,U\times \cc V,p^*_{\cc V}(\psi), (id\times r)^*(h_0); pr_{X}\circ (id\times r) ).$$
\end{defs}

Under the notation from Proposition \ref{Framing_Of_X} and
Proposition \ref{h_theta} construct now a morphism $a\in Fr_N(U,X)$.
Let $Z_1\subset U\times X$ be the closed subset from Proposition
\ref{h_theta}. Then one has closed inclusions
$$Z_1\subset U\times X \subset U\times \bb A^N.$$
Set $(U\times X')_{\circ}=(U\times X')-Z''_2$ and $(U\times \cc
V')_{\circ}=(id\times r')^{-1}((U\times X')_{\circ})$. Let $in_1:
Z_1\subset U\times X$ and $in'_1: Z'_1\subset (U\times X')_{\circ}$
be closed inclusions. Set,
$$r_{\circ}=(id\times r')|_{(U\times \cc V')_{\circ}}: (U\times \cc V')_{\circ} \to (U\times X')_{\circ}.$$
Using the notation of Proposition \ref{Framing_Of_X}
and Proposition \ref{h_theta} (item (c)), define an
\'{e}tale neighborhood of $Z_1$ in $U\times \bb A^N$ as follows:
\begin{equation}
\label{Refined_Etale_for_Z_1} ((U\times \cc V')_{\circ}, (id\times
\rho)\circ (id\times \Pi'): (U\times \cc V')_{\circ}\to U\times \bb
A^N, (id\times s')\circ in'_1 \circ m^{-1}: Z_1 \to (U\times \cc
V')_{\circ}).
\end{equation}

\begin{defs}
\label{a_for_etale_exc} Under the notation of Proposition
\ref{Framing_Of_X} and Proposition \ref{h_theta} set,
$$
a=(Z_1,(U\times \cc
V')_{\circ},(id\times\Pi')^*(\psi_1),...,(id\times\Pi')^*(\psi_{N-1}),
r_{\circ}^*(id\times \Pi)^*(h_1); pr_{X'}\circ r_{\circ})\in
Fr_N(U,X').
$$
For brevity, we will sometimes write
$$a=(Z_1,(U\times \cc V')_{\circ},(id\times\Pi')^*(\psi), r_{\circ}^*(id\times \Pi)^*(h_1); pr_{X'}\circ r_{\circ}).$$
\end{defs}

Under the notation of Proposition \ref{Framing_Of_X} and Proposition
\ref{h_theta}, let us construct now a morphism $H_{\theta}\in
Fr_N(\bb A^1\times U,X)$. Let $Z_{\theta}\subset \bb A^1\times
U\times X$ be the closed subset from Proposition \ref{h_theta}. Then
one has closed inclusions
$$Z_{\theta}\subset \bb A^1\times U\times X \subset \bb A^1\times U\times \bb A^N.$$
Let
$in_{\theta}: Z_{\theta}\subset \bb A^1\times U\times X$
be the closed inclusion.
Define an \'{e}tale neighborhood of
$Z_{\theta}$ in $\bb A^1\times U\times \bb A^N$ as follows:
\begin{equation}
\label{Ordinary_Etale_for_Z_theta} (\bb A^1\times U\times \cc V,
id\times id\times \rho: \bb A^1\times U\times \cc V\to \bb A^1\times
U\times \bb A^N, (id\times id\times s)\circ in_{\theta}: Z_{\theta}
\to \bb A^1\times U\times \cc V).
\end{equation}

\begin{defs}
\label{H_for_etale_exc} Under the notation of Propositions
\ref{Framing_Of_X} and~\ref{h_theta} we set
$$
H_{\theta}=(Z_{\theta},\bb A^1\times U\times \cc
V,\psi_1,...,\psi_{N-1}, (id\times id\times r)^*(h_{\theta});
pr_{X}\circ (id\times id\times r) )\in Fr_N(\bb A^1\times U,X).
$$
We will sometimes write below $(Z_{\theta},\bb A^1\times U\times \cc
V,\psi, (id\times id\times r)^*(h_{\theta}); pr_{X}\circ (id\times
id\times r) )$ to denote the morphism $H_{\theta}$.
\end{defs}

\begin{lem}
\label{H_a_and_b}
One has equalities $H_0=b$, $H_1=\Pi \circ a$ in $Fr_N(U,X)$.
\end{lem}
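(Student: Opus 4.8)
The plan is to verify both identities by unwinding the explicit formulas of Definitions \ref{H_for_etale_exc}, \ref{b'_for_etale_exc} and \ref{a_for_etale_exc}, using two elementary facts about $Fr_*$. First, two level-$N$ framed correspondences with the same support coincide in $Fr_N$ as soon as there is a morphism between their \'{e}tale neighborhoods, lying over the ambient affine space and compatible with the two sections from the support, that identifies the framing functions and the structure maps. Second, post-composing $a\in Fr_N(U,X')$ with $\Pi\in Fr_0(X',X)$ (a morphism of schemes viewed as a level-$0$ correspondence) does not change the support, the \'{e}tale neighborhood or the framing of $a$; only the structure map becomes $\Pi\circ g_a$.

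For $H_0=b$ I would simply substitute $\theta=0$ in $H_\theta$. The ambient scheme $\bb A^1\times U\times\cc V$ becomes $U\times\cc V$; the support $Z_\theta$ becomes $Z_0=\Delta(U)\sqcup G$ of Proposition \ref{h_theta}(b); the framing functions $\psi_1,\dots,\psi_{N-1}$ are unchanged as they do not involve $\theta$; the last framing function $(id\times id\times r)^*(h_\theta)$ specialises to $(id\times r)^*(h_0)$; and $pr_X\circ(id\times id\times r)$ specialises to $pr_X\circ(id\times r)$. This is verbatim the quadruple of Definition \ref{b'_for_etale_exc}, whence $H_0=b$ in $Fr_N(U,X)$.

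The equality $H_1=\Pi\circ a$ is the substantial one. Substituting $\theta=1$ produces the quadruple with support $Z_1=\{h_1=0\}\subset U\times X$, neighborhood $(U\times\cc V,\ id\times\rho,\ (id\times s)\circ in_1)$, framing $(\psi_1,\dots,\psi_{N-1},(id\times r)^*(h_1))$ and structure map $pr_X\circ(id\times r)$, whereas $\Pi\circ a$ has support $Z_1$ (recorded through the isomorphism $m$ of Proposition \ref{h_theta}(c)), neighborhood $((U\times\cc V')_\circ,\ (id\times\rho)\circ(id\times\Pi'),\ (id\times s')\circ in'_1\circ m^{-1})$, framing $((id\times\Pi')^*\psi_1,\dots,(id\times\Pi')^*\psi_{N-1},\ r_\circ^*(id\times\Pi)^*h_1)$ and structure map $\Pi\circ pr_{X'}\circ r_\circ$. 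I would compare the two along $\lambda=id\times\Pi'\colon (U\times\cc V')_\circ\to U\times\cc V$, which is \'{e}tale since $\Pi$, hence its base change $\Pi'$, is \'{e}tale. From the Cartesian square $\cc V'=X'\times_X\cc V$ one gets $\Pi'\circ s'=s\circ\Pi$ and $r\circ\Pi'=\Pi\circ r'$, while Proposition \ref{h_theta}(c) gives $(id\times\Pi)\circ m^{-1}=id_{Z_1}$ on $Z_1$. These identities show that $\lambda$ lies over $U\times\bb A^N$, that it carries the section $(id\times s')\circ in'_1\circ m^{-1}$ of $\Pi\circ a$ onto the section $(id\times s)\circ in_1$ of $H_1$, and that $\lambda^*\psi_i=(id\times\Pi')^*\psi_i$, $\lambda^*((id\times r)^*h_1)=r_\circ^*(id\times\Pi)^*h_1$ and $(pr_X\circ(id\times r))\circ\lambda=\Pi\circ pr_{X'}\circ r_\circ$. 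Hence $\lambda$ is a morphism of \'{e}tale neighborhoods of $Z_1$ transporting the whole datum of $H_1$ to that of $\Pi\circ a$, so the two germs agree and $H_1=\Pi\circ a$ in $Fr_N(U,X)$.

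The only real obstacle is bookkeeping in the $\theta=1$ case: one must keep straight that the support of $a$ was written on $Z'_1\subset U\times X'$ and moved to $Z_1$ by the isomorphism $m$, and then chase this through the Cartesian square $\cc V'=X'\times_X\cc V$ to see that, once we post-compose with $\Pi$, everything except the target map is identified by $\lambda$. No homotopy-theoretic input beyond the definition of composition in $Fr_*$ and the germ description of $Fr_N$ is needed.
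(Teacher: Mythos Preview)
Your proof is correct and follows essentially the same approach as the paper: for $H_0=b$ you specialize at $\theta=0$, and for $H_1=\Pi\circ a$ you use the map $id\times\Pi'\colon (U\times\cc V')_\circ\to U\times\cc V$ as a refinement of \'{e}tale neighborhoods of $Z_1$, then check that the framing and structure map pull back correctly via the Cartesian square relating $\cc V'$, $\cc V$, $X'$, $X$. The paper records exactly the same two identities $(id\times\Pi')^*(id\times r)^*(h_1)=r_\circ^*(id\times\Pi)^*(h_1)$ and $pr_X\circ(id\times r)\circ(id\times\Pi')=\Pi\circ pr_{X'}\circ r_\circ$; your write-up simply spells out the section-compatibility and the role of $m$ more carefully.
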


\begin{proof}
The first equality is obvious. To check the second one, consider
$$H_1=(Z_1, U\times \cc V, \psi, (id\times r)^*(h_1); pr_{X}\circ (id\times r)) \in Fr_N(U,X).$$
Here we use
$
(U\times \cc V, id\times \rho: U\times \cc V\to U\times \bb A^N,
(id\times s)\circ in_{1}: Z_{1} \to U\times \cc V)
$
as an \'{e}tale neighborhood of $Z_1$ in $U\times \bb A^N$.
Take another \'{e}tale neighborhood of $Z_1$ in $U\times \bb A^N$
$$
((U\times \cc V')_{\circ}, (id\times \rho)\circ (id\times \Pi'): (U\times \cc V')_{\circ}\to U\times \bb A^N,
(id\times s')\circ in'_1 \circ m^{-1}: Z_1 \to (U\times \cc V')_{\circ})
$$
and the morphism $id\times \Pi': (U\times \cc V')_{\circ} \to
U\times \cc V$ regarded as a morphism of \'{e}tale neighborhoods.
Refining the \'{e}tale neighborhood of $Z_1$ in the definition of
$H_1$ by means of that morphism, we get a framed correspondence
$H'_1=H_1$ of level $N$, which has the form
$$(Z_1,(U\times \cc V')_{\circ},(id\times \Pi')^*(\psi),(id\times \Pi')^*(id\times r)^*(h_1); pr_X \circ (id \times r)\circ (id\times \Pi')).$$
Note that
$$(id\times \Pi')^*(id\times r)^*(h_1)=r_{\circ}^*(id\times \Pi)^*(h_1) \ \ \text{and} \ \ pr_X \circ (id \times r)\circ (id\times \Pi')=
\Pi\circ pr_{X'}\circ r_{\circ}.$$ Thus, $H_1=H'_1=\Pi \circ a$ in
$Fr_N(U,X)$.
\end{proof}

The following lemma follows from Lemma \ref{Criteria} and Remark
\ref{Running_in}.
\begin{lem}
\label{a_b_H_and_Pi_runs inside} The morphisms $a|_{U-S}$,
$b|_{U-S}$, $H_{\theta}|_{\bb A^1\times(U-S)}$ and $\Pi|_{X'-S'}$
run inside $X'-S'$, $X-S$, $X-S$ and $X-S$ respectively.
\end{lem}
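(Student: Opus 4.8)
The plan is to deduce all four assertions from the criterion of Lemma~\ref{Criteria}: for a framed correspondence of level $n$ with support $Z$ and structure morphism $g$, its restriction to an open subset $Y'$ runs inside an open $X'$ exactly when $g$ carries $Z\cap Y'\times\bb A^n$ into $X'$. So in each case I would first read off, from the explicit presentations in Definitions~\ref{b'_for_etale_exc}, \ref{a_for_etale_exc} and~\ref{H_for_etale_exc}, what the support is as a closed subset of $U\times\bb A^N$ (resp. $\bb A^1\times U\times\bb A^N$) and what the structure morphism does on that support, and then match the required inclusion against the inclusions collected in Remark~\ref{Running_in}.

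For $b$ (Definition~\ref{b'_for_etale_exc}) the support is $Z_0\subset U\times X\subset U\times\bb A^N$, and since $r\circ s=id$ the structure morphism $pr_X\circ(id\times r)$ restricts on $Z_0$ to the projection $Z_0\to X$; Remark~\ref{Running_in} gives $Z_0\cap(U-S)\times X\subset(U-S)\times(X-S)$, so the image lies in $X-S$, and Lemma~\ref{Criteria} applies with $Y=U$, $Y'=U-S$ and target pair $(X,X-S)$. The case of $H_\theta$ (Definition~\ref{H_for_etale_exc}) is the same with $\bb A^1\times U$ in place of $U$: the support is $Z_\theta$, the structure morphism restricts to the projection to $X$, and $Z_\theta\cap\bb A^1\times(U-S)\times X\subset\bb A^1\times(U-S)\times(X-S)$ again by Remark~\ref{Running_in} (alternatively one may quote Corollary~\ref{Critaria_for_H} directly). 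For $\Pi\in Fr_0(X',X)$ the support is all of $X'$ and the structure morphism is $\Pi$ itself, so the claim reduces to $\Pi(X'-S')\subset X-S$, which holds because $\Pi^{-1}(S)=S'$ by Remark~\ref{X'andX'_new}.

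The one computation that needs care, and hence the main (purely bookkeeping) obstacle, is $a$ (Definition~\ref{a_for_etale_exc}): its \'etale neighbourhood $(U\times\cc V')_\circ$ presents the support $Z_1$ through the isomorphism $m\colon Z'_1\xrightarrow{\sim}Z_1$, and the structure morphism $pr_{X'}\circ r_\circ$ is the projection to $X'$ only after going back along $m$. Unwinding this, $g_a$ sends $Z_1\cap(U-S)\times X$ onto $pr_{X'}\big(Z'_1\cap(U-S)\times X'\big)$, and the last inclusion of Remark~\ref{Running_in}, $Z'_1\cap(U-S)\times X'\subset(U-S)\times(X'-S')$, shows this image lies in $X'-S'$; Lemma~\ref{Criteria} then yields that $a|_{U-S}$ runs inside $X'-S'$. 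So the only real work is to keep the several base changes and neighbourhood refinements straight long enough to confirm that one is restricting the correct support to $(U-S)\times\bb A^N$; once that is checked, the lemma is immediate from Lemma~\ref{Criteria} together with the inclusions already recorded in Remark~\ref{Running_in}.
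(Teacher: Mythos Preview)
Your proof is correct and follows exactly the approach the paper indicates: it deduces all four statements from Lemma~\ref{Criteria} together with the inclusions recorded in Remark~\ref{Running_in} (and, for $\Pi$, the equality $\Pi^{-1}(S)=S'$ from Remark~\ref{X'andX'_new}). The paper's own proof is the one-line remark that the lemma ``follows from Lemma~\ref{Criteria} and Remark~\ref{Running_in}''; you have simply unpacked this, including the bookkeeping for $a$ through the identification $m\colon Z'_1\xrightarrow{\sim}Z_1$, which is precisely what one needs to verify.
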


By the preceding lemma and Definition
\ref{Runs_Inside}
the morphisms $a$, $b$, $H_{\theta}$ and
$\Pi$ define morphisms
$$\langle\langle a \rangle\rangle \in\bb ZF_N((U,U-S),(X',X'-S')),\langle\langle b\rangle\rangle \in\bb ZF_N((U,U-S),(X,X-S)),$$
$$\langle\langle H_{\theta} \rangle\rangle \in\bb ZF_N(\bb A^1\times (U,U-S),(X,X-S)),\langle\langle\Pi\rangle\rangle\in\bb ZF_N((X',X'-S'),(X,X-S)).$$
Lemma \ref{H_a_and_b} and Definition \ref{Runs_Inside} yield equalities
$$\langle\langle \Pi \rangle\rangle \circ \langle\langle a \rangle\rangle = \langle\langle H_1 \rangle\rangle \ \
\text{and} \ \
\langle\langle H_0 \rangle\rangle = \langle\langle b \rangle\rangle$$
in $\bb ZF_N((U,U-S),(X,X-S))$.

\begin{cor}
\label{Pi_a_homotopic_b} One has an equality $[[\Pi]]\circ
[[a]]=[[b]]$ in $\overline {\bb ZF}_N((U,U-S),(X,X-S))$.
\end{cor}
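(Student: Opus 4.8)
The plan is to deduce the corollary formally from the homotopy $H_\theta$ together with the facts already assembled in this section; no new geometric input is needed. First I would observe that, by Lemma~\ref{a_b_H_and_Pi_runs inside}, the restriction $H_\theta|_{\bb A^1\times(U-S)}$ runs inside $X-S$, so the support $Z_\theta$ of $H_\theta$ meets the hypothesis of Corollary~\ref{Critaria_for_H} with $Y=U$, $Y'=U-S$, $X$ and $X'=X-S$ (recall that by Lemma~\ref{Criteria} "runs inside" is exactly the condition $g_\theta(Z'_\theta)\subset X-S$). Hence $\langle\langle H_\theta\rangle\rangle$ is a well-defined element of $\bb ZF_N(\bb A^1\times(U,U-S),(X,X-S))$, and applying $i^*_0-i^*_1$ and passing to the cokernel that defines $\overline{\bb ZF}_N((U,U-S),(X,X-S))$ yields
$$[[H_0]]=[[H_1]]$$
in $\overline{\bb ZF}_N((U,U-S),(X,X-S))$.

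Next I would invoke the two identities of morphisms of pairs recorded just before the statement: Lemma~\ref{H_a_and_b} together with Definition~\ref{Runs_Inside} gives $\langle\langle H_0\rangle\rangle=\langle\langle b\rangle\rangle$ and $\langle\langle\Pi\rangle\rangle\circ\langle\langle a\rangle\rangle=\langle\langle H_1\rangle\rangle$ in $\bb ZF_N((U,U-S),(X,X-S))$. Passing to classes in $\overline{\bb ZF}_N$, these become $[[b]]=[[H_0]]$ and $[[\Pi]]\circ[[a]]=[[H_1]]$. Chaining these with the homotopy equality above produces
$$[[\Pi]]\circ[[a]]=[[H_1]]=[[H_0]]=[[b]],$$
which is precisely the assertion of the corollary.

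The argument is entirely formal: all the substantive work has already been carried out in Propositions~\ref{Framing_Of_X} and~\ref{h_theta} (construction of the framings and of the homotopy $h_\theta$ with the required behaviour of its support over $U-S$) and in Lemmas~\ref{H_a_and_b} and~\ref{a_b_H_and_Pi_runs inside} (the endpoint identities $H_0=b$, $H_1=\Pi\circ a$ and the "runs inside" statements). The only point deserving a moment's care — and hence the closest thing to an obstacle — is that $H_\theta$ must genuinely be interpreted as an element of $\bb ZF_N$ of the pair $\big(\bb A^1\times(U,U-S),(X,X-S)\big)$ rather than merely of $\bb ZF_N(\bb A^1\times U,X)$, so that the relation $i^*_0=i^*_1$ in $\overline{\bb ZF}_N$ of the pairs is available; this is exactly what Lemma~\ref{a_b_H_and_Pi_runs inside} secures, so no genuine difficulty remains.
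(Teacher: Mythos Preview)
Your proof is correct and follows essentially the same approach as the paper: the paper's proof is the single line ``by Corollary~\ref{Critaria_for_H} one has $[[\Pi]]\circ [[a]]= [[H_1]]=[[H_0]]=[[b]]$,'' and you have simply unpacked the ingredients (Lemma~\ref{a_b_H_and_Pi_runs inside} to verify the hypothesis of Corollary~\ref{Critaria_for_H}, and the endpoint identities from Lemma~\ref{H_a_and_b}) that justify that chain.
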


\begin{proof}[Proof of Corollary \ref{Pi_a_homotopic_b}]
In fact, by Corollary
\ref{Critaria_for_H}
one has a chain of equalities
$$[[\Pi]]\circ [[a]]= [[H_1]]=[[H_0]]=[[b]]$$
in $\overline {\bb ZF}_N((U,U-S),(X,X-S))$.
\end{proof}

\begin{proof}[Reducing Theorem \ref{Inj_Etale_exc} to Propositions \ref{Framing_Of_X} and \ref{h_theta}]
The support $Z_0$ of $b$ is the disjoint union $\Delta(U) \sqcup G$.
Thus, by Lemma \ref{Criteria_for_pairs} one has an equality
$$\langle\langle b \rangle\rangle=\langle\langle b_1 \rangle\rangle + \langle\langle b_2 \rangle\rangle$$
in $\bb ZF_N((U,U-S),(X,X-S))$, where
$$b_1=(\Delta(U),(U\times \cc V)_f,\psi_1,...,\psi_{N-1}, (id\times r)^*(h_0); pr_{X}\circ (id\times r) ),$$
$$b_2= (G,(U\times \cc V - \Delta(U),\psi_1,...,\psi_{N-1}, (id\times r)^*(h_0); pr_{X}\circ (id\times r) ).$$
By Proposition \ref{h_theta} one has $G\subset U\times (X-S)$. Thus
$b_2=j\circ b_G$ for an obvious morphism $b_G \in Fr_N(U,X-S)$.
Also,
$$\langle\langle b_2 \rangle\rangle=\langle\langle j \rangle\rangle \circ \langle\langle b_G \rangle\rangle \in ZF_N((U,U-S),(X,X-S)),$$
where $j: (X-S,X-S) \hookrightarrow (X,X-S)$ is a natural inclusion.
By the latter comments and Corollary \ref{Pi_a_homotopic_b} one gets
an equality
$$[[\Pi]]\circ [[a]]- [[j]]\circ [[b_G]]= [[b_1]]$$
in $\overline {\bb ZF}_N((U,U-S),(X,X-S))$. To prove equality
(\ref{Injectivity_Part_Details}), and hence to prove Theorem
\ref{Inj_Etale_exc}, it remains to check that $[[b_1]]=[[can]]\circ
[[\sigma^N_U]]$. Recall that one has equality
(\ref{J_new_equal_one}). Thus the equality $[[b_1]]= [[can]]\circ
[[\sigma^N_U]]$ holds by Theorem \ref{Spriamlenie_1}. This finishes
the proof of Theorem \ref{Inj_Etale_exc}.
\end{proof}

\section{Preliminaries for the surjective part of the \'{e}tale excision}

Let $S\subset X$ and $S'\subset X'$ be closed subsets. Let
$$\xymatrix{V'\ar[r]\ar[d]&X'\ar^{\Pi}[d]\\
               V\ar[r]&X}$$
be an elementary distinguished square with affine $k$-smooth $X$ and $X'$.
Let $S=X-V$ and $S'=X'-V'$ be closed subschemes equipped with reduced structures.
Let $x\in S$ and $x' \in S'$ be two points such that $\Pi(x')=x$.
Let $U=Spec(\cc O_{X,x})$ and $U'=Spec(\cc O_{X',x'})$.
Let $\pi: U' \to U$ be the morphism induced by $\Pi$.

To prove Theorem \ref{Surj_Etale_exc} it suffices to find morphisms
$a\in \bb ZF_N((U,U-S)),(X',X'-S'))$ and $b_G\in \bb ZF_N((U',U'-S')),(X'-S',X'-S'))$
such that in the characteristic different from 2 the following equality holds:
\begin{equation}
\label{Surjectivity_Part_Details}
[[a]]\circ [[\pi]]-[[j]]\circ [[b_G]] =[[can']]\circ [[\sigma^N_{U'}]].
\end{equation}
If the characteristic of $k$ is 2 then the following equality holds in $\overline {\bb ZF}_N(U',U'-S')),(X',X'-S'))$:
\begin{equation}
\label{Surjectivity_Part_Details_char_2}
2\cdot[[a]]\circ [[\pi]]-2\cdot[[j]]\circ [[b_G]] =2\cdot[[can']]\circ [[\sigma^N_{U'}]].
\end{equation}
Here $j:(X'-S',X'-S') \to (X',X'-S')$ and $can': (U',U'-S') \to (X',X'-S')$
are inclusions. In this section we do some preparations to
construct the desired morphisms
$a\in \bb ZF_N((U,U-S)),(X',X'-S'))$ and $b_G\in \bb
ZF_N((U',U'-S')),(X'-S',X'-S'))$ in Section \ref{Proof_of_Etale_Surjectivity}
satisfying~\eqref{Surjectivity_Part_Details} in the characteristic different from 2,
and satisfying~\eqref{Surjectivity_Part_Details_char_2} if the characteristic of $k$ is 2.
The preparations are done independently of the characteristic of
the base field $k$.

Replace $X$ by an affine open neighborhood $in:X^{\circ}
\hookrightarrow X$ of the point $x$. Replace $X'$ by $(X')^{\circ}:=
\Pi^{-1}(X^{\circ})$ and write $in': (X')^{\circ} \hookrightarrow
X'$ for the inclusion. Replace $V$ by $V\cap X^{\circ}$ and $V'$
with $V'\cap (X')^{\circ}$. Let $can'_{\circ}: U'\to (X')^{\circ}$
be the canonical inclusion. Let $j^{\circ}:
((X'^{\circ})-S',(X')^{\circ}-S') \to
((X')^{\circ},(X')^{\circ}-S')$ be an inclusion of pairs. If we find
$$a^{\circ}\in \bb ZF_N((U,U-S)),((X')^{\circ},(X')^{\circ}-S')) \ \ \text{and} \ \ b^{\circ}_G\in \bb ZF_N((U',U'-S')),((X')^{\circ}-S',(X')^{\circ}-S'))$$
such that
\begin{equation}
\label{Surjectivity_Part_Details_new}
[[a^{\circ}]]\circ [[\pi]]-[[j^{\circ}]]\circ [[b^{\circ}_G]] =[[can'_{\circ}]]\circ [[\sigma^N_{U'}]],
\end{equation}
then the morphisms $a=in' \circ a^{\circ}$ and $b_G=in' \circ
b^{\circ}_G$ satisfy condition (\ref{Surjectivity_Part_Details}).

Let $X'_n$ be the normalization of $X$ in $Spec(k(X'))$. Let $\Pi_n:
X'_n \to X$ be the corresponding finite morphism. Since $X'$ is
$k$-smooth it is an open subscheme of $X'_n$. Let $Y''=X'_n - X'$.
It is a closed subset in $X'_n$. Since $\Pi|_{S'}: S' \to S$ is a
scheme isomorphism, then $S'$ is closed in $X'_n$. Thus $S' \cap Y''
= \emptyset$. Hence there is a function $f\in k[X'_n]$ such that
$f|_{Y''}=0$ and $f|_{S'}=1$.

\begin{rem}\label{X'andX'_new_2}
In this section we use agreements and notation from Definition~\ref{X'_new} and Remark~\ref{X'andX'_new}.
In particular, $X'_{new}:=(X'_n)_f$ is an affine $k$-smooth variety and $X'_{new}\subset X'$,
and $S'\subset X'_{new}$. Below we work with $X'_{new}$ and write $X'$ for $X'_{new}$.
\end{rem}

\begin{rem}
\label{Stable_Normal_Bundle_2}
If $q: X\to B$ is the almost elementary fibration from
Remark \ref{Elementary_fibr}, then
$\Omega^1_{X'/B}\cong \cc O_X$.
In fact, by Remark \ref{Stable_Normal_Bundle}
$\Omega^1_{X/B}=\omega_{X/B}\cong \cc O_X$. The morphism $\Pi: X'
\to X$ is \'{e}tale. Thus $\Omega^1_{X'/B}\cong \cc O_{X'}$.

Since $X'$ is an affine $k$-scheme, there is a closed embedding
$j: X'\hookrightarrow B\times \bb A^N$ of $B$-schemes.
Choose and fix such an embedding $j$. Since $X'$ is affine $k$-smooth, hence
$[\cc N(j)]=(N-1)[\cc O_X]$ in $K_0(X')$, where $\cc N(j)$ is the normal bundle to $X'$
associated with the imbedding $j$.

Thus by increasing the integer $N$, we may assume that the normal
bundle $\cc N(j)$ is isomorphic to the trivial bundle $\cc O^{N-1}_{X'}$.
\end{rem}

Repeating arguments from the proof of Lemma~\ref{Framing_of_X_script}
we get the following

\begin{prop}\label{Framing_Of_X'}
Let $q: X\to B$ is the almost elementary
fibration from Remark~\ref{Elementary_fibr} and let $X'=X'_{new}$ be
as in the Remark~\ref{X'andX'_new_2}. Then there are an integer $N\geq
0$, a closed embedding $j:X' \hookrightarrow B\times \bb A^N$ of
$B$-schemes, an \'{e}tale affine neighborhood $(\cc V'', \rho'': \cc
V'' \to B\times \bb A^N, s'': X' \hookrightarrow \cc V'')$ of $X'$
in $B\times \bb A^N$, functions $\varphi'_1,...,\varphi'_{N-1} \in
k[\cc V'']$ and a morphism $r'': \cc V'' \to X'$ such that
\begin{itemize}
\item[(i)]
the functions $\varphi'_1,...,\varphi'_{N-1}$ generate the ideal
$I_{s''(X')}$  in $k[\cc V'']$ defining the closed subscheme
$s''(X')$ of $\cc V''$;
\item[(ii)]
$r''\circ s'' = id_{X'}$;
\item[(iii)]
the morphism $r''$ is a $B$-scheme morphism if $\cc V''$ is regarded
as a $B$-scheme via the morphism $pr_U\circ \rho''$ and $X'$ is
regarded as a $B$-scheme via the morphism $q\circ \Pi$ from Lemma
\ref{Elementary_fibr}.
\end{itemize}
\end{prop}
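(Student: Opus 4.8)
The plan is to repeat the construction in the proof of Proposition~\ref{Framing_Of_X}, the only genuinely new point being that $X'=X'_{new}$, regarded as a $B$-scheme via $q\circ\Pi$, is again a smooth affine relative curve over $B$ with trivial sheaf of relative differentials. First I would note that the square in Lemma~\ref{Elementary_fibr} being an elementary distinguished square forces $\Pi\colon X'\to X$ to be \'etale, and that $X'_{new}=(X'_n)_f$ is an \emph{open} subscheme of $X'$, because $f$ vanishes on $Y''=X'_n-X'$ and hence $(X'_n)_f$ misses $Y''$. Consequently $\Pi_{new}\colon X'_{new}\to X$ is \'etale, so $q\circ\Pi_{new}\colon X'\to B$ is smooth of relative dimension one, $X'$ is affine over the affine scheme $B$, and
$$\Omega^1_{X'/B}\cong\Pi^{*}_{new}\Omega^1_{X/B}\cong\Pi^{*}_{new}\cc O_X\cong\cc O_{X'},$$
the middle isomorphism being Remark~\ref{Stable_Normal_Bundle}.

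Next I would fix a closed embedding of $B$-schemes $j\colon X'\hookrightarrow B\times\bb A^M$ (it exists since $X'\to B$ is affine of finite type) and examine its normal bundle $\cc N(j)$. As $X'$ and $B\times\bb A^M$ are both smooth over $B$, the conormal sequence $0\to\cc N(j)^{\vee}\to\cc O^M_{X'}\to\Omega^1_{X'/B}\to 0$ is a short exact sequence of vector bundles on $X'$, so $\cc N(j)^{\vee}\oplus\cc O_{X'}\cong\cc O^M_{X'}$ while $\operatorname{rk}\cc N(j)=M-1$ (the codimension of $X'$ equals $(\dim B+M)-(\dim B+1)$). Enlarging $M$ to some $N$ by composing with a linear closed embedding $B\times\bb A^M\hookrightarrow B\times\bb A^N$ and invoking Bass' cancellation theorem once $N-1>\dim X'$, I may assume $\cc N(j)\cong\cc O^{N-1}_{X'}$, i.e.\ the conormal bundle of $j(X')$ in $B\times\bb A^N$ is free of rank $N-1$.

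Finally, exactly as in the proof of Proposition~\ref{Framing_Of_X} (the tubular-neighborhood argument of~\cite{PSV}), the formal completion of $B\times\bb A^N$ along $X'$ is $B$-isomorphic to the completion of the total space of $\cc N(j)$ along its zero section; this isomorphism algebraizes over an affine \'etale neighborhood $(\cc V'',\rho''\colon\cc V''\to B\times\bb A^N,\ s''\colon X'\hookrightarrow\cc V'')$ of $X'$ carrying a $B$-morphism $r''\colon\cc V''\to X'$ with $r''\circ s''=\id_{X'}$, which gives (ii) and (iii); and lifting a free basis of $I_{s''(X')}/I_{s''(X')}^{2}$ to elements of $k[\cc V'']$ and then shrinking $\cc V''$ by a Nakayama-type argument produces $\varphi'_1,\dots,\varphi'_{N-1}\in k[\cc V'']$ generating the ideal $I_{s''(X')}$, which is (i). The step I expect to require the most care---and it is really the only one---is the simultaneous realization of the \'etale neighborhood and of the retraction $r''$ over the base $B$, together with the passage from a basis of the conormal bundle modulo $I^{2}$ to genuine generators of the full ideal sheaf; but both are carried out precisely as in the proof of Proposition~\ref{Framing_Of_X}, with no new ingredient.
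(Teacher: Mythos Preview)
Your proposal is correct and follows exactly the line the paper indicates: the paper does not give an explicit proof of Proposition~\ref{Framing_Of_X'} (nor of Proposition~\ref{Framing_Of_X}), but Remark~\ref{Stable_Normal_Bundle_2} immediately after it records precisely the ingredients you use---that $\Pi$ is \'etale so $\Omega^1_{X'/B}\cong\cc O_{X'}$, that $[\cc N(j)]=(N-1)[\cc O_{X'}]$ in $K_0(X')$, and that after increasing $N$ the normal bundle is trivial---after which the construction is identical to that for $X$. Your only deviation is invoking Bass cancellation explicitly where the paper simply says ``increasing the integer $N$'', and referring to ``the proof of Proposition~\ref{Framing_Of_X}'' for the tubular-neighborhood step when in fact that proof is also omitted in the paper; but the argument you sketch for that step is the standard one and is what the authors evidently have in mind.
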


\begin{defs}
Let $x\in S$, $x'\in S'$ be such that $\Pi(x')=x$. We put
$U=Spec(\cc O_{X,x})$.
There is an obvious morphism $\Delta'=(id,can): U'\to U'\times_B
X'$. It is a section of the projection $p_{U'}: U'\times_B X' \to
U'$. Let $p_{X'}: U'\times_B X' \to X'$ be the projection onto $X'$.
Let $\pi: U'\to U$ be the restriction of $\Pi$ to $U'$.
\end{defs}

\begin{notn}
\label{X_script_2} We regard $X$ as a $B$-scheme via the morphism
$q$ and regard $X'$ as a $B$-scheme via the morphism $q\circ \Pi$.
In what follows we write
$U\times X'$ for $U\times_B X'$, $U'\times X'$ for $U'\times_B X'$,
$\bb A^1\times U'\times X'$ for $\bb A^1\times U'\times_B X'$ etc.
\end{notn}

\begin{prop}
\label{h'_theta}
Under the conditions of Remark \ref{Elementary_fibr}
and Notation \ref{X_script_2}
there are functions
$F\in k[U\times X']$
and
$h'_{\theta}\in k[\bb A^1\times U'\times X']$
($\theta$ is the parameter on the left factor $\bb A^1$)
such that the following properties hold for the functions
$h'_{\theta}$, $h'_1:=h'_{\theta}|_{1\times U'\times X'}$ and $h'_0:=h'_{\theta}|_{0\times U'\times X'}$:
\begin{itemize}
\item[(a)]
the morphism $(pr,h'_{\theta}): \bb A^1\times U'\times X' \to \bb
A^1\times U'\times \bb A^1$ is finite and surjective, hence the
closed subscheme $Z'_{\theta}:=(h'_{\theta})^{-1}(0)\subset \bb
A^1\times U'\times X'$ is finite flat and surjective over $\bb A^1
\times U'$;
\item[(b)]
for the closed subscheme
$Z'_{0}:=(h'_{0})^{-1}(0)$ one has
$Z'_{0}=\Delta'(U') \sqcup G'$ (an equality of closed subschemes) and $G'\subset U'\times (X'-S')$;
\item[(c)]
$h'_1=(\pi \times id_{X'})^{*}(F)$ (we write $Z'_1$ to denote the closed
subscheme $\{h'_1=0\}$);
\item[(d)]
$Z'_{\theta}\cap \bb A^1\times (U'-S')\times S'=\emptyset$ or,
equivalently, $Z'_{\theta}\cap \bb A^1\times (U'-S')\times X'
\subset \bb A^1 \times (U'-S') \times (X'-S')$;
\item[(e)]
the morphism $(pr_U,F): U\times X' \to U\times \bb A^1$ is finite
surjective, and hence the closed subscheme $Z_1:=F^{-1}(0)\subset
U\times X'$ is finite flat and surjective over $U$;
\item[(f)]
$Z_1\cap (U-S)\times S'=\emptyset$ or, equivalently, $Z_1\cap
(U-S)\times X' \subset (U-S)\times (X'-S')$.
\end{itemize}
\end{prop}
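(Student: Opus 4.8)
The plan is to run the machinery behind Proposition~\ref{h_theta} with the roles of $X$ and $X'$ interchanged: instead of producing a finite function on $U\times_B X$ and tracking how it lifts across the \'etale map $\Pi$, we produce the finite function $F$ directly on $U\times_B X'$ and then build a $\theta$-homotopy over the \emph{local} base $U'$ whose value at $\theta=1$ is the pullback $(\pi\times id)^*F$. Throughout we use the almost elementary fibration $q\colon X\to B$ of Lemma~\ref{Elementary_fibr} (with $q|_{Y\cup S}$ finite), the $B$-embedding $X'\hookrightarrow B\times\bb A^N$ of Proposition~\ref{Framing_Of_X'} with $N$ enlarged as in Remark~\ref{Stable_Normal_Bundle_2}, and we view $X'$ as a $B$-scheme via $q\circ\Pi$ as in Remark~\ref{X'andX'_new}.

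\textbf{Construction of $F$ (items (e), (f)).} The relative curve $X'/B$ is smooth and affine, hence is the complement in a relative projective model $\overline{X'}/B$ of a divisor $X'_\infty$ finite over $B$. Since $k$ is infinite, a generic $B$-linear combination of the coordinate functions on $\bb A^N$ provided by Proposition~\ref{Framing_Of_X'}, together with a generic additive shift, yields a rational function on $\overline{X'}$ with polar divisor supported on $X'_\infty$ and finite of constant degree onto $\bb P^1_B$; restricting it to $X'$ and base-changing along $U\to B$ gives $F\in k[U\times_B X']$ for which $(pr_U,F)\colon U\times_B X'\to U\times\bb A^1$ is finite and surjective, which is (e), and then $Z_1=F^{-1}(0)$ is finite, flat and surjective over $U$. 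As $\Pi|_{S'}\colon S'\to S$ is an isomorphism and $q|_S$ is finite, $U\times_B S'$ is finite over $U$ and $(U-S)\times_B S'$ is a proper subset of it, so a further generic choice makes $F$ invertible on $(U-S)\times_B S'$; this is (f), $Z_1\cap(U-S)\times S'=\emptyset$.

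\textbf{Construction of $h'_\theta$ (items (a)--(d)).} Base change along $\pi\colon U'\to U$; then $(\pi\times id)^*F$ is a finite function on $U'\times_B X'$ whose zero scheme $\pi^*Z_1$ is finite flat over $U'$ and, by (f), disjoint from $S'$ over $U'-S'$. Applying the relative-curve moving (geometric presentation) technique of~\cite{PSV} over the regular local base $U'$ to the smooth affine curve $X'/U'$ with its distinguished section $\Delta'$, and enlarging $N$ if necessary, the effective divisor $\pi^*Z_1$ becomes, over $\bb A^1\times U'$, linearly equivalent to one of the form $\Delta'(U')\sqcup G'$ with $G'$ finite over $U'$ and contained in $U'\times(X'-S')$; the equivalence is realized by a function $h'_\theta\in k[\bb A^1\times U'\times X']$ with $h'_1=(\pi\times id)^*F$ and $Z'_0=(h'_0)^{-1}(0)=\Delta'(U')\sqcup G'$, giving (b) and (c). Since the degree of $Z'_\theta$ over $\bb A^1\times U'$ is independent of $\theta$, the morphism $(pr,h'_\theta)$ is finite surjective and $Z'_\theta$ is finite flat surjective over $\bb A^1\times U'$, which is (a). Finally, by first carrying out the move inside the open part lying over $U'-S'$ and then extending, and using (f), the whole family $Z'_\theta$ stays away from $S'$ over $U'-S'$, which is (d).

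\textbf{Main obstacle.} The substance is in the homotopy step: one needs a single $h'_\theta$ that is finite over $\bb A^1\times U'$ for every $\theta$, that degenerates at $\theta=0$ to $\Delta'(U')\sqcup G'$ \emph{as an equality of closed subschemes}, with $\Delta'(U')$ occurring with multiplicity one and splitting off as a direct summand, and that remains disjoint from $S'$ over $U'-S'$ for all $\theta$; reconciling these three demands is precisely the content of the geometric presentation lemma and is what forces the enlargement of $N$. Everything else is genericity over the infinite field $k$ and routine tracking of zero loci, as in the proof of Proposition~\ref{h_theta}.
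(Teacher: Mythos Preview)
Your outline has the right shape---build $F$ on $U\times_B X'$, pull back along $\pi$, and homotope to a divisor containing $\Delta'(U')$---but the mechanism you invoke is not what actually makes the argument go, and your appeals to genericity and to \cite{PSV} do not deliver the simultaneous control over \emph{all} $\theta$ that properties (a) and (d) demand.

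The paper's proof is an explicit Serre--vanishing construction on the projective completion $\bar X'$ of the relative curve. One fixes the ample divisor $\cc D'=U\times_B D'$ (with $D'$ the complement of $X'$ in $\bar X'$), picks $n\gg 0$, and constructs a section $s_1\in\Gamma(U\times_B\bar X',\cc L(n\cc D'))$ whose restriction to $U\times_B S'$ equals a nowhere-vanishing section times $s_\Gamma|_{U\times_B S'}$, where $\Gamma$ is the graph of $\Pi$; then $F:=s_1/s_0^{\otimes n}$. This is \emph{not} a generic choice: the condition on $U\times_B S'$ pins down the zero locus of $F$ there to be exactly $\delta(S')$, and this is what makes the next step work. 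Pulling back by $\pi\times id$ gives $t_1$, and one then constructs $t_0=t'_0\otimes s_{\Delta'(U')}$ with $t'_0$ chosen (again by Serre vanishing on the finite scheme $\cc D''\sqcup (U'\times_B S')$) so that $t_0|_{\cc D''}=t_1|_{\cc D''}$ and $t_0|_{U'\times_B S'}=t_1|_{U'\times_B S'}$. The homotopy is then the linear interpolation $h'_\theta=((1-\theta)t_0+\theta t_1)/(s'_0)^{\otimes n}$. Because $t_0$ and $t_1$ agree on $\cc D''$ and are nonvanishing there, the interpolation is nonvanishing on $\cc D''$ for every $\theta$, which is exactly finiteness (a); because they agree on $U'\times_B S'$ and are nonvanishing on $(U'-S')\times_B S'$, the interpolation avoids $S'$ over $U'-S'$ for every $\theta$, which is (d). Your proposal to ``first carry out the move over $U'-S'$ and then extend'' does not give this: the content is that the two endpoints are \emph{forced to coincide} on the bad locus, so linear interpolation is automatically safe there. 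Likewise \cite{PSV} supplies the almost elementary fibration $q$, not a moving lemma for divisors; the move here is pure linear algebra in $\Gamma(\cc L(n\cc D''))$.
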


\begin{rem}
\label{Running_in_2}
Item $(d)$ yields the following inclusions:
\begin{itemize}
\item[$\diamond$] $Z'_{\theta}\cap \bb A^1\times (U'-S')\times X' \subset \bb A^1 \times (U'-S') \times (X'-S')$;
\item[$\diamond$] $Z'_{0}\cap (U'-S')\times X' \subset (U'-S') \times (X'-S')$;
\item[$\diamond$] $Z'_{1}\cap (U'-S')\times X' \subset (U'-S') \times (X'-S')$.
\end{itemize}
Applying $(f)$, we get another inclusion: $Z_{1}\cap (U-S)\times X'\subset (U-S) \times (X'-S')$.
\end{rem}

\section{Reducing Theorem \ref{Surj_Etale_exc} to Propositions \ref{Framing_Of_X'} and \ref{h'_theta}}\label{Proof_of_Etale_Surjectivity}

{\it We suppose in this section that $S\subset X$ is $k$-smooth}.
In the present section we construct the desired morphisms
$a\in \bb ZF_N((U,U-S)),(X',X'-S'))$ and $b_G\in \bb
ZF_N((U',U'-S')),(X'-S',X'-S'))$ satisfying~\eqref{Surjectivity_Part_Details}
in the characteristic different from 2,
and satisfying~\eqref{Surjectivity_Part_Details_char_2}
if the characteristic equals 2. This construction does not depend on
the characteristic of the base field $k$.

To construct a morphism $a \in Fr_N(U,X')$, we first construct its
support in $U\times \bb A^N$ for some integer $N$, then we construct
an \'{e}tale neighborhood of the support in $U \times \bb A^N$, then
one constructs a framing of the support in the neighborhood and
finally one constructs $a$ itself. In the same fashion we construct
a morphism $b\in Fr_N(U',X')$ and a homotopy $H\in Fr_N(\bb
A^1\times U',X')$ between $a \circ \pi$ and $b$. Using the fact that
the support $Z'_0$ of $b$ is of the form $\Delta'(U')\sqcup G'$ with
$G'\subset U'\times (X'-S')$, we get a relation
$$\langle b \rangle=\langle b_1 \rangle + \langle b_2 \rangle$$
in $\bb ZF_N(U',X')$. Then we prove that $[b_1]=[can']\circ
[\sigma^N_{U'}]$ if $\chr k\not=2$
and $[b_2]$ factors through $X'-S'$.
If $\chr k=2$ we prove that $2\cdot[b_1]=2\cdot ([can']\circ[\sigma^N_{U'}])$
and $[b_2]$ factors through $X'-S'$.
Moreover, we are able to work with morphisms of pairs. In this section we will
use systematically Propositions \ref{Framing_Of_X'}
and \ref{h'_theta} and Notation \ref{X_script_2}.
These will end up with the equalities
(\ref{Surjectivity_Part_Details}),
(\ref{Surjectivity_Part_Details_char_2})
and will complete the proof of Theorem \ref{Surj_Etale_exc}
at the very end of the section (details are given below).

Let $X'\subset B\times \bb A^N$ be the closed inclusion from
Proposition \ref{Framing_Of_X'}. Taking the base change of the
latter inclusion by means of the morphism $U\to B$, we get a closed
inclusion $U\times_B X' \subset U\times_B (B\times \bb A^N)=U\times
\bb A^N$.
Recall (see Notation \ref{X_script_2}) that we regard $X$ as a $B$-scheme via the morphism
$q$ and regard $X'$ as a $B$-scheme via the morphism $q\circ \Pi$.
In what follows we write
$U\times X'$ for $U\times_B X'$, $U'\times X'$ for $U'\times_B X'$,
$U\times \cc V''$ for $U\times_{B} \cc V''$, $U'\times \cc
V''$ for $U'\times_B \cc V''$, and $id\times \rho$ for $id\times_B
\rho: U\times_B \cc V'' \to U\times_B \cc (B\times \bb A^n)=U\times
\bb A^N$. Let $p_{\cc V}: U\times \cc V \to \cc V$ be the
projection.

Under the notation from Proposition \ref{Framing_Of_X'} and
Proposition \ref{h'_theta}, construct now a morphism $b\in
Fr_N(U',X')$. Let $Z'_0\subset U'\times X'$ be the closed subset
from Proposition \ref{h'_theta}. Then one has closed inclusions
$$\Delta'(U') \sqcup G'=Z'_0\subset U'\times X' \subset U'\times \bb A^N.$$
Let $in_0: Z'_0\subset U'\times X'$ be a closed inclusion. Define an
\'{e}tale neighborhood of $Z'_0$ in $U'\times \bb A^N$ as follows:
\begin{equation}
\label{Ordinary_Etale_for_Z'_0} (U'\times \cc V'', id\times \rho'':
U'\times \cc V''\to U'\times \bb A^N, (id\times s'')\circ in_0: Z'_0
\to U'\times \cc V'').
\end{equation}
We will write
$\Delta'(U') \sqcup G'=Z'_0\subset U'\times \cc V''$
for
$((id\times s'')\circ in_0)(Z'_0) \subset U'\times \cc V''$.
Let $f\in k[U'\times \cc V'']$ be a function such that
$f|_{G'}=0$ and $f|_{\Delta'(U')}=1$.
Then $\Delta'(U')$ is a closed subset of the affine scheme $(U'\times \cc V'')_f$.

\begin{defs}
\label{b'_for_etale_exc}
Under the notation from Proposition \ref{Framing_Of_X} and Proposition \ref{h_theta}, set
$$b^{\prime}=(Z'_0,U'\times \cc V'',(\pi\times id)^*(p^*_{\cc V''}(\varphi'_1),...,p^*_{\cc V''}(\varphi'_{N-1})),
 (id\times r'')^*(h'_0); pr_{X'}\circ (id\times r'') )\in Fr_N(U',X').$$
Here $p_{\cc V''}: U\times \cc V'' \to \cc V''$ is the projection.
Below we will sometimes write
$(Z'_0,U'\times \cc V'',(\pi\times
id)^*(p^*_{\cc V''}(\varphi')), (id\times r'')^*(h'_0); pr_{X'}\circ
(id\times r'') )$ to denote the morphism $b^{\prime}$.
\end{defs}

To construct the desired morphism $b \in Fr_N(U',X')$, we slightly
modify the function $p^*_{\cc V''}(\varphi'_1)$ in the framing
of $Z'_0$. By Proposition \ref{Framing_Of_X'} and item $(b)$ of
Proposition \ref{h'_theta}, the functions
$$(\pi\times id)^*(p^*_{\cc V''}(\varphi'_1)),...,(\pi\times id)^*(p^*_{\cc V''}(\varphi'_{N-1})), (id\times r'')^*(h'_0)$$
generate an ideal $I_{(id\times s'')(\Delta'(U'))}$ in $k[(U'\times
\cc V'')_f]$ defining the closed subscheme $\Delta'(U')$ of the
scheme $(U'\times \cc V'')_f$. Let $t_1,t_2,\dots,t_N \in k[U'\times
\bb A^N]$ be the coordinate functions. For any $i=1,2,\dots,N$, set
$t'_i=t_i-(t_i|_{\Delta'(U')}) \in k[U'\times \bb A^N]$. Then the
family
$$(t''_1,t''_2, \dots,t''_N)=(id\times \rho'')^*(t'_1),(id\times \rho'')^*(t'_2), \dots,(id\times \rho'')^*(t'_N)$$
also generates the ideal $I=I_{(id\times s'')(\Delta'(U'))}$  in
$k[(U'\times \cc V'')_f]$. This holds, because
(\ref{Ordinary_Etale_for_Z'_0}) is an \'{e}tale neighborhood of
$Z'_0$ in $U\times \bb A^N$. By Remark \ref{Stable_Normal_Bundle_2}
the $k[U']=k[(id\times s'')(\Delta'(U'))]$-module $I/I^2$ is free of
rank $N$. Thus the families
$$(\bar t''_1,\bar t''_2, \dots,\bar t''_N)
\ \ \text{and} \ \
(\overline {p^*_{\cc V''}(\varphi'_1)},...,\overline {p^*_{\cc V''}(\varphi'_{N-1})}, \overline {(id\times r'')^*(h'_0)})
$$
are two free bases of the $k[((id\times s'')\circ
\Delta')(U'))]$-module $I/I^2$. Let $J \in k[U']^{\times}$ be the
Jacobian of a unique matrix $A\in M_N(k[U'])$ converting the first
free basis to the second one. There is an element $\lambda \in k[U]^{\times}$
such that $\lambda|_{S\cap U}=J|_{S'\cap U'}$ (we identify here
$S'\cap U'$ with $S\cap U$ via the morphism $\pi|_{S'\cap U'}$).
Clearly, $\lambda \in k[U]^{\times}$.

Set,
   $$(\varphi'_1)^{new}=q^*_U(J^{-1})(\varphi'_1) \in k[\cc V''],$$
where $q_U=pr_U \circ (id\times \rho''): \cc V'' \to U$. Let
$A^{new}\in M_N(k[U])$ be a unique matrix which converts the first
free basis to the basis
$$(\overline {p^*_{\cc V''}((\varphi'_1)^{new}}),...,\overline {p^*_{\cc V''}(\varphi'_{N-1})}, \overline {(id\times r'')^*(h'_0)}).$$
Then the Jacobian $J^{new}$ of $A^{new}$ has the property:
\begin{equation}
\label{J_new_henselian}
J^{new}|_{S'\cap U'} =1\in k[S'\cap U'].
\end{equation}
We will write
$(\psi_1,\psi_2,\dots,\psi_{N-1})$ for $(p^*_{\cc V''}((\varphi'_1)^{new}),...,p^*_{\cc V''}(\varphi'_{N-1})).$
\begin{defs}
\label{b_for_etale_exc_surj}
Under the notation from Proposition \ref{Framing_Of_X'} and Proposition \ref{h'_theta} set
$$b=(Z'_0,U'\times \cc V'',(\pi\times id)^*(\psi_1),...,(\pi\times id)^*(\psi_{N-1}),
(id\times r'')^*(h'_0); pr_{X'}\circ (id\times r'') )\in Fr_N(U',X').$$
We will often write for brevity
$b=(Z'_0,U'\times \cc V'',(\pi\times id)^*(\psi), (id\times r'')^*(h'_0); pr_{X'}\circ (id\times r'') ).$
\end{defs}

Under the notation from Proposition \ref{Framing_Of_X'} and
Proposition \ref{h'_theta} construct now a morphism $a\in
Fr_N(U,X')$. Let $Z_1\subset U\times X'$ be the closed subset from
Proposition \ref{h'_theta}. Then one has closed inclusions
$$Z_1\subset U\times X' \subset U\times \bb A^N.$$
Let $in_1: Z_1\subset U\times X$
be the closed inclusion.
Define an \'{e}tale neighborhood of
$Z_1$ in $U\times \bb A^N$ as follows:
\begin{equation}
\label{Ordinary_Etale_for_Z_1_surj}
(U\times \cc V'', id\times \rho'': U\times \cc V''\to U\times \bb A^N,
(id\times s'')\circ in_1: Z_1 \hookrightarrow U\times \cc V'').
\end{equation}

\begin{defs}
\label{a'_for_etale_exc_surj}
Under the notation from Proposition \ref{Framing_Of_X'} and Proposition \ref{h'_theta} set
$$a=(Z_1,U\times \cc V'',\psi_1,...,\psi_{N-1}, (id\times r'')^*(F); pr_{X'}\circ (id_U\times r'') )\in Fr_N(U,X')$$
We will sometimes write $(Z_1,U\times \cc V'',\psi, (id\times
r'')^*(F); pr_{X'}\circ (id_U\times r'') )$ to denote $a$.
\end{defs}

Under the notation from Proposition \ref{Framing_Of_X'} and
Proposition \ref{h'_theta} construct now a morphism $H_{\theta}\in
Fr_N(\bb A^1\times U',X')$.
Recall that under that notation we write
$\bb A^1\times U' \times X'$ for $\bb A^1\times U' \times_B X'$.
Let $Z'_{\theta}\subset \bb A^1\times U' \times X'$
be the closed subset from Proposition~\ref{h'_theta}.
Then one has closed inclusions
   $$Z'_{\theta}\subset \bb A^1\times U'\times X' \subset \bb A^1\times U'\times \bb A^N.$$
Let $in_{\theta}: Z'_{\theta}\subset \bb A^1\times U'\times X'$ be the closed inclusion.
Define an \'{e}tale neighborhood of
$Z'_{\theta}$ in $\bb A^1\times U' \times \bb A^N$ as follows:
\begin{equation}
\label{Ordinary_Etale_for_Z_theta_surj}
(\bb A^1\times U'\times \cc V'', \bb A^1\times U'\times \cc V''\xrightarrow{id\times id\times \rho''} \bb A^1\times U'\times \bb A^N,
(\id\times id\times s'')\circ in_{\theta}: Z'_{\theta} \hookrightarrow \bb A^1\times U'\times \cc V'').
\end{equation}

\begin{defs}
\label{H_for_etale_exc_surj} Under the notation from Proposition
\ref{Framing_Of_X'} and Proposition \ref{h'_theta} set $H_{\theta}$
to be equal to
$$(Z'_{\theta},\bb A^1\times U'\times \cc V'',pr^*((\pi\times id)^*(\psi)),
(id\times id\times r'')^*(h'_{\theta}); pr_{X'}\circ (id\times id\times r'') )\in
Fr_N(\bb A^1\times U',X'),$$
where $pr: \bb A^1\times U'\times \cc V'' \to U'\times \cc V''$ is the projection.
\end{defs}

\begin{lem}
\label{H_a_and_b_surj}
One has equalities $H_0=b$, $H_1=a \circ \pi$ in $Fr_N(U',X')$.
\end{lem}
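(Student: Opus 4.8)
The plan is to follow the proof of Lemma \ref{H_a_and_b} verbatim, comparing the framed correspondences obtained from Definition \ref{H_for_etale_exc_surj} by restriction of the homotopy parameter to $\theta=0$ and to $\theta=1$.

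The equality $H_0=b$ is the easy one. Setting $\theta=0$ in Definition \ref{H_for_etale_exc_surj}, one has $h'_\theta|_{\{0\}\times U'\times X'}=h'_0$ and $Z'_\theta|_{\theta=0}=Z'_0$, the \'etale neighborhood (\ref{Ordinary_Etale_for_Z_theta_surj}) restricts to (\ref{Ordinary_Etale_for_Z'_0}), and the framing functions $pr^*((\pi\times id)^*(\psi))$, $(id\times id\times r'')^*(h'_\theta)$ together with the structure map $pr_{X'}\circ(id\times id\times r'')$ restrict to $(\pi\times id)^*(\psi)$, $(id\times r'')^*(h'_0)$ and $pr_{X'}\circ(id\times r'')$ respectively. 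These are exactly the data of $b$ in Definition \ref{b_for_etale_exc_surj}, so $H_0=b$ in $Fr_N(U',X')$.

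For $H_1=a\circ\pi$ I would set $\theta=1$ and use Proposition \ref{h'_theta}(c), which gives $h'_1=(\pi\times id)^*(F)$; hence the support of $H_1$ is $Z'_1=(h'_1)^{-1}(0)=(\pi\times id)^{-1}(Z_1)=Z_1\times_U U'$ as a scheme. Since $\pi\in Fr_0(U',U)$ is the tautological morphism induced by $\Pi$, the composite $a\circ\pi$ is by definition the base change along $\pi$ of $a=(Z_1,U\times\cc V'',\psi,(id\times r'')^*(F);pr_{X'}\circ(id\times r''))$ from Definition \ref{a'_for_etale_exc_surj}. As $U\times\cc V''$ denotes $U\times_B\cc V''$ and $\pi$ is a $B$-morphism, one has $(U\times\cc V'')\times_U U'=U'\times_B\cc V''=U'\times\cc V''$, so that the \'etale neighborhood (\ref{Ordinary_Etale_for_Z_1_surj}) base-changes precisely to (\ref{Ordinary_Etale_for_Z_theta_surj}) at $\theta=1$; moreover $\psi$ pulls back to $(\pi\times id)^*(\psi)$, the function $(id\times r'')^*(F)$ pulls back to $(id\times r'')^*((\pi\times id)^*(F))=(id\times r'')^*(h'_1)$ because the square formed by $id\times r'':U\times\cc V''\to U\times X'$, $id\times r'':U'\times\cc V''\to U'\times X'$ and the two projections $\pi\times id$ commutes, and the structure map pulls back to $pr_{X'}\circ(id\times r'')\circ(\pi\times id)=pr_{X'}\circ(id\times r'')$. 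These are exactly the data defining $H_1$ obtained from Definition \ref{H_for_etale_exc_surj} at $\theta=1$, so $H_1=a\circ\pi$.

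The only step that is not purely bookkeeping is this last comparison, i.e. matching the base change of the framed correspondence $a$ along $\pi:U'\to U$ with the restriction of $H_\theta$ at $\theta=1$. It rests on Proposition \ref{h'_theta}(c) (which makes $h'_1$ the pullback of $F$, hence $Z'_1$ the pullback of $Z_1$) together with the fact that every scheme and every morphism occurring in Propositions \ref{Framing_Of_X'} and \ref{h'_theta} lies over $B$, so that base change along $\pi$ is compatible with the projections $\pi\times id$ in all three arguments (support, \'etale neighborhood, framing); granting this, both equalities of the lemma hold in $Fr_N(U',X')$.
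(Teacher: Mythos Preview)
Your proof is correct and follows essentially the same approach as the paper: both arguments treat $H_0=b$ as immediate bookkeeping and establish $H_1=a\circ\pi$ by invoking Proposition~\ref{h'_theta}(c) to identify $h'_1=(\pi\times id)^*(F)$, then comparing the base-changed data of $a$ along $\pi$ with the restriction of $H_\theta$ at $\theta=1$ via the commutativity of the square formed by $\pi\times id$ and $id\times r''$. Your write-up is in fact slightly more explicit than the paper's about the support and the \'etale neighborhood after base change.
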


\begin{proof}
The first equality is obvious. Let us prove the second one. By
Proposition \ref{h'_theta} one has $h'_1=(\pi \times id_{X'})^*(F)$. Thus
one has a chain of equalities in $Fr_N(U',X')$:
$$
a\circ \pi=
(Z'_1,U'\times \cc V'',(\pi \times id_{\cc V''})^*(\psi), (\pi \times id_{\cc V''})^*((id_U\times r'')^*(F));
pr_{X'}\circ (id_{U}\times r'')\circ (\pi \times id_{\cc V''}) )=
$$
$$
(Z'_1,U'\times \cc V'',(\pi \times id_{\cc V''})^*(\psi), (id_{U'}\times r'')^*((\pi \times id_{X'})^*(F));
pr_{X'}\circ (\pi \times id_{X'})\circ (id_{U'}\times r''))=
$$
$$(Z'_1,U'\times \cc V'',(\pi \times id_{\cc V''})^*(\psi), (id_{U'}\times
r'')^*(h'_1); pr_{X'}\circ (id_{U'}\times r''))= H_1.$$
\end{proof}

The following lemma follows from Lemma \ref{Criteria} and Remark
\ref{Running_in_2}.
\begin{lem}
\label{a_b_H_and_Pi_runs inside_surj} The morphisms $a|_{U-S}$,
$b|_{U'-S'}$, $H_{\theta}|_{\bb A^1\times(U'-S')}$ and
$\pi|_{U'-S'}$ run inside $X'-S'$, $X'-S'$, $X'-S'$ and $U-S$
respectively.
\end{lem}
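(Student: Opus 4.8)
The plan is to verify all four \emph{runs inside} assertions by a single, uniform application of the criterion of Lemma \ref{Criteria}: for a framed correspondence with support $Z$ and structure map $g$, the restriction over an open of the source runs inside an open $X'-S'$ (resp. $U-S$) of the target precisely when $g$ carries the part of $Z$ lying over the restricted source into that open. All the support inclusions needed for this are exactly the ones collected in Remark \ref{Running_in_2}, so the proof is essentially bookkeeping, strictly parallel to the analogous lemma in the injective case treated in the previous section.

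First I would treat $a \in Fr_N(U,X')$. By Definition \ref{a'_for_etale_exc_surj} its support is $Z_1$, which lies in $U\times X' \subset U\times \bb A^N$, and its structure map is $pr_{X'}\circ(id\times r'')$. Since $r''\circ s''=id_{X'}$ by Proposition \ref{Framing_Of_X'}(ii), the restriction of $id\times r''$ to the copy of $Z_1$ inside the chosen \'{e}tale neighborhood coincides with the projection $Z_1\to X'$; hence $g_a(Z_1\cap(U-S)\times\bb A^N)=pr_{X'}(Z_1\cap(U-S)\times X')$, which is contained in $X'-S'$ by the inclusion $Z_1\cap(U-S)\times X'\subset(U-S)\times(X'-S')$ of Remark \ref{Running_in_2}. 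Lemma \ref{Criteria} then yields that $a|_{U-S}$ runs inside $X'-S'$. The cases of $b\in Fr_N(U',X')$ (support $Z'_0$, structure map $pr_{X'}\circ(id\times r'')$) and of $H_{\theta}\in Fr_N(\bb A^1\times U',X')$ (support $Z'_{\theta}$, structure map $pr_{X'}\circ(id\times id\times r'')$) are identical, now using the inclusions $Z'_0\cap(U'-S')\times X'\subset(U'-S')\times(X'-S')$ and $Z'_{\theta}\cap\bb A^1\times(U'-S')\times X'\subset\bb A^1\times(U'-S')\times(X'-S')$ from Remark \ref{Running_in_2}.

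Finally, $\pi\colon U'\to U$ is regarded tautologically as an element of $Fr_0(U',U)$, whose support is a copy of $U'$ and whose structure map is $\pi$ itself; so Lemma \ref{Criteria} reduces the last assertion to $\pi(U'-S')\subset U-S$, that is, to $\pi^{-1}(S\cap U)=S'\cap U'$. This is the local form of the identity $\Pi_{new}^{-1}(S)=S'$ recorded in Remark \ref{X'andX'_new}, which itself rests on $\Pi|_{S'}\colon S'\to S$ being an isomorphism. I do not expect a genuine obstacle here: the only point requiring care is to keep track, for each of $a$, $b$, $H_\theta$, of which product of factors the support sits in and of the fact that the structure map restricted to the support is simply the projection onto the $X'$-factor, so that the support inclusions of Remark \ref{Running_in_2} translate verbatim into the inclusions of images demanded by Lemma \ref{Criteria}.
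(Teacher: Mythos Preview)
Your proposal is correct and follows exactly the paper's approach: the paper's proof is the single sentence that the lemma ``follows from Lemma \ref{Criteria} and Remark \ref{Running_in_2},'' and you have simply unpacked that sentence case by case, additionally invoking Remark \ref{X'andX'_new} for the trivial assertion about $\pi$. Nothing is missing.
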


By the preceding lemma and Definition~\ref{Runs_Inside} the morphisms
$a$, $b$, $H_{\theta}$ and $\pi$ define morphisms
$$\langle\langle a\rangle\rangle\in\bb ZF_N((U,U-S),(X',X'-S')),\langle\langle b\rangle\rangle \in\bb ZF_N((U',U'-S'),(X',X-S')),$$
$$\langle\langle H_{\theta}\rangle\rangle\in\bb ZF_N(\bb A^1\times (U',U'-S'),(X',X'-S')),\langle\langle\pi\rangle\rangle\in\bb ZF_N((U',U'-S'),(U,U-S)).$$
Lemma~\ref{H_a_and_b_surj} and Definition \ref{Runs_Inside} yield equalities
$$\langle\langle a \rangle\rangle \circ \langle\langle \pi \rangle\rangle = \langle\langle H_1 \rangle\rangle \ \
\text{and} \ \ \langle\langle H_0 \rangle\rangle = \langle\langle b
\rangle\rangle$$ in $\bb ZF_N((U',U'-S'),(X',X'-S'))$.

\begin{cor}\label{Pi_a_homotopic_b_surj} There is a relation $[[a]]\circ
[[\pi]]=[[b]]$ in $\overline {\bb ZF}_N((U',U'-S'),(X',X'-S'))$.
\end{cor}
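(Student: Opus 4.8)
The plan is to run the same argument that proved Corollary~\ref{Pi_a_homotopic_b} in the injective case, now using the homotopy $H_{\theta}\in Fr_N(\bb A^1\times U',X')$ of Definition~\ref{H_for_etale_exc_surj}. All the necessary data has already been set up in this section: by Lemma~\ref{H_a_and_b_surj} one has $H_0=b$ and $H_1=a\circ\pi$ in $Fr_N(U',X')$, and by Lemma~\ref{a_b_H_and_Pi_runs inside_surj} the restriction $H_{\theta}|_{\bb A^1\times(U'-S')}$ runs inside $X'-S'$. Consequently $H_{\theta}$ defines an element $\langle\langle H_{\theta}\rangle\rangle\in\bb ZF_N(\bb A^1\times(U',U'-S'),(X',X'-S'))$ whose restrictions to $\{0\}\times U'$ and $\{1\}\times U'$ are, respectively, $\langle\langle H_0\rangle\rangle=\langle\langle b\rangle\rangle$ and $\langle\langle H_1\rangle\rangle=\langle\langle a\rangle\rangle\circ\langle\langle\pi\rangle\rangle$, exactly the two relations recorded in the display just preceding the statement.

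First I would apply Corollary~\ref{Critaria_for_H}, taking there $Y=U'$, replacing $X$ by $X'$, and taking the open subsets $U'-S'\subset U'$ and $X'-S'\subset X'$; the hypothesis on the support, $g_{\theta}(Z'_{\theta})\subset X'-S'$, is precisely the ``runs inside'' assertion of Lemma~\ref{a_b_H_and_Pi_runs inside_surj}. This gives $[[H_0]]=[[H_1]]$ in $\overline{\bb ZF}_N((U',U'-S'),(X',X'-S'))$. Combining this with the two $\bb ZF_N$-relations above, which survive the passage to $\overline{\bb ZF}_N$, yields the chain
$$[[a]]\circ[[\pi]]=[[H_1]]=[[H_0]]=[[b]]$$
in $\overline{\bb ZF}_N((U',U'-S'),(X',X'-S'))$, which is the assertion of the corollary.

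There is no real obstacle here: the substance of the statement lies in Propositions~\ref{Framing_Of_X'} and~\ref{h'_theta} together with Lemmas~\ref{H_a_and_b_surj} and~\ref{a_b_H_and_Pi_runs inside_surj}, all of which may be assumed. The only point that deserves a moment's attention is the compatibility of the \'etale neighbourhood used to define $H_{\theta}$ with those used to define $b$ and $a\circ\pi$ at $\theta=0$ and $\theta=1$; but this is the same check as in the proof of Lemma~\ref{H_a_and_b_surj}, and it is immediate from the fact that the neighbourhoods in~(\ref{Ordinary_Etale_for_Z'_0}), (\ref{Ordinary_Etale_for_Z_1_surj}) and~(\ref{Ordinary_Etale_for_Z_theta_surj}) are all obtained by base change from the single \'etale neighbourhood $\cc V''$ of Proposition~\ref{Framing_Of_X'}.
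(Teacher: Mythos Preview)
Your proof is correct and follows exactly the same approach as the paper: apply Corollary~\ref{Critaria_for_H} to the homotopy $H_{\theta}$ (whose restriction to $\bb A^1\times(U'-S')$ runs inside $X'-S'$ by Lemma~\ref{a_b_H_and_Pi_runs inside_surj}) and combine with the relations $\langle\langle H_0\rangle\rangle=\langle\langle b\rangle\rangle$, $\langle\langle H_1\rangle\rangle=\langle\langle a\rangle\rangle\circ\langle\langle\pi\rangle\rangle$ already recorded before the statement.
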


\begin{proof}[Proof of Corollary \ref{Pi_a_homotopic_b_surj}]
In fact, by Corollary~\ref{Critaria_for_H}
one has a chain of equalities
   $$[[a]]\circ [[\pi]]= [[H_1]]=[[H_0]]=[[b]]$$
in $\overline {\bb ZF}_N((U',U'-S'),(X',X'-S'))$.
\end{proof}

\begin{proof}[Reducing Theorem \ref{Surj_Etale_exc} to Propositions~\ref{Framing_Of_X'} and~\ref{h'_theta}]
The support $Z_0$ of $b$ is the disjoint union $\Delta'(U') \sqcup
G'$. Thus, by Lemma \ref{Criteria_for_pairs} one has,
$$\langle\langle b \rangle\rangle=\langle\langle b_1 \rangle\rangle + \langle\langle b_2 \rangle\rangle$$
in $\bb ZF_N((U',U'-S'),(X',X'-S'))$, where
$$b_1=(\Delta'(U'),(U'\times \cc V'')_f,\psi_1,...,\psi_{N-1}, (id\times r'')^*(h'_0); pr_{X'}\circ (id\times r'') ),$$
$$b_2= (G',(U'\times \cc V'' - \Delta'(U'),\psi_1,...,\psi_{N-1}, (id\times r'')^*(h'_0); pr_{X'}\circ (id\times r'') )$$
and the function $f$ is defined just above Definition
\ref{b'_for_etale_exc}.
By Proposition \ref{h'_theta} one has $G'\subset U'\times (X'-S')$.
Thus $b_2=j\circ b_{G'}$ for the obvious morphism $b_{G'} \in
Fr_N(U',X'-S')$. Also,
$$\langle\langle b_2 \rangle\rangle=\langle\langle j \rangle\rangle\circ\langle\langle b_{G'}\rangle\rangle\in\bb ZF_N((U',U'-S'),(X',X'-S')),$$
where $j :(X'-S',X'-S') \hookrightarrow (X',X'-S')$ is a natural
inclusion. By the latter comments and Corollary
\ref{Pi_a_homotopic_b_surj} one gets,
\begin{equation}\label{almostly_eq_21}
[[a]]\circ [[\pi]]- [[j]]\circ [[b_{G'}]]= [[b_1]]
\end{equation}
in $\overline {\bb ZF}_N((U',U'-S'),(X',X'-S'))$.
Suppose now $\chr k=2$. Then by Theorem \ref{Spriamlenie_3} one has
$$2\cdot[[b_1]]=2\cdot ([[can']]\circ [[\sigma^N_{U'}]])$$
in $\overline {\bb ZF}_N((U',U'-S'),(X',X'-S'))$.
Hence the equality (\ref{almostly_eq_21})
yields the relation (\ref{Surjectivity_Part_Details_char_2}).
Whence Theorem \ref{Surj_Etale_exc} for $\chr k=2$.

Suppose now $\chr k\not=2$. To prove the equality
(\ref{Surjectivity_Part_Details}), and hence to prove Theorem
\ref{Surj_Etale_exc} in this case, it remains to check that
$[[b_1]]=[[can']]\circ [[\sigma^N_{U'}]]$.

Let $U''=(U')^h_{S'\cap U'}$ be the henzelization of $U'$ at $S'\cap
U'$ and let $\pi': U'' \to U'$ be the structure morphism. Recall
that $S'\cap U'$ is essentially $k$-smooth. Thus the pair $(U'',
S'\cap U')$ is a henselian pair with an essentially $k$-smooth
closed subscheme $S'\cap U'$. Recall that one has equality
(\ref{J_new_henselian}).
If $\chr k\not=2$ then by Theorem~\ref{Spriamlenie_2} one has
an equality $[[b_1]] \circ [[\pi']]=[[can']]\circ [[\pi']] \circ
[[\sigma^N_{U''}]]$ in $\overline {\bb
ZF}_N((U'',U''-S''),(X',X'-S'))$. Since $\pi' \circ
\sigma^N_{U''}=\sigma^N_{U'} \circ \pi'$ one has,
$$[[b_1]] \circ [[\pi']]=[[can']]\circ [[\sigma^N_{U'}]\circ [[\pi']] \in \overline {ZF}_N((U'',U''-S''),(X',X'-S')).$$
Applying Theorem \ref{Inj_Etale_exc} to the morphism $\pi': U'' \to
U'$, we see that for an integer $M\geq 0$ one has an equality
$$[[b_1]]\circ [[\sigma^M_{U'}]] =[[can']]\circ [[\sigma^{M+N}_{U'}]] \in \overline {ZF}_{M+N}((U',U'-S'),(X',X'-S')).$$
Thus,
$$[[a]]\circ [[\pi]]\circ [[\sigma^M_{U'}]] - [[j]]\circ [[b_{G'}]] \circ [[\sigma^M_{U'}]] = [[can']]\circ [[\sigma^{M+N}_{U'}]]
\in \overline {ZF}_{M+N}((U',U'-S'),(X',X'-S')).$$
Since $\pi \circ \sigma^M_{U'}=\sigma^M_{U}\circ \pi$, then we have
that
$$[[a]]\circ [[\sigma^M_{U}]]\circ [[\pi]] - [[j]]\circ [[b_{G'}]] \circ [[\sigma^M_{U'}]] = [[can']]\circ [[\sigma^{M+N}_{U'}]]
\in \overline {\bb ZF}_{M+N}((U',U'-S'),(X',X'-S')).$$ Set
$a_{new}=a \circ \sigma^M_{U}$, $b^{new}_{G'}=b_{G'}\circ
\sigma^M_{U'}$, $N(new)=M+N$. Having these the following
equality holds:
$$[[a_{new}]]\circ [[\pi]]- [[j]]\circ [[b^{new}_{G'}]]=[[can']]\circ [[\sigma^{N(new)}_{U'}]] \in \overline {ZF}_{M+N}((U',U'-S'),(X',X'-S')).$$
The latter equality is of the form
(\ref{Surjectivity_Part_Details}). Whence
Theorem~\ref{Surj_Etale_exc} in the characteristic not 2.
\end{proof}

\section{Three useful theorems}

We refer the reader to~\cite{Gab} or~\cite{FP} for the definition and basic properties of henzelization
of an affine scheme along a closed subscheme.

Let $X,X_1$ be $k$-smooth affine varieties, $Z\subset X$, $Z_1\subset X_1$ be closed subsets.
Let $f: X_1\to X$ be a $k$-morphism such that $Z_1\subset f^{-1}(Z)$.
For an \'{e}tale neighborhood $(W,\pi: W\to X,s:Z\to W)$ of $Z$ in $X$
set $W_1=X_1\times_X W$. Let $\pi_1: W_1\to X_1$ be the projection and let
$s_1=(i_1,f|_{Z_1)}: Z_1 \to W_1$, where $i_1: Z_1 \hookrightarrow X_1$ be the inclusion.
Then $(W_1,\pi_1,s_1)$ is an \'{e}tale neighborhood of $Z_1$ in $X_1$.
Denote by $f_W: W_1\to W$ the projection. Then one has a morphism
$\lim (f_W): \lim_{(W,\pi,s)} W_1 \to \lim_{(W,\pi,s)} W=X^h_Z$. Set,
   \begin{equation}\label{f_h}
    f^h=\lim (f_W)\circ can_f: (X_1)^h_{Z_1} \to X^h_Z,
   \end{equation}
where $can_f: (X_1)^h_{Z_1}\to \lim_{(W,\pi,s)} W_1$ is the canonical morphism.
Clearly, $\rho\circ f^h=f\circ \rho_1$, where  $\rho: X^h_Z\to X$ and
$\rho_1: (X_1)^h_{Z_1}\to X_1$ are the canonical morphisms.

The following properties of the morphism $f^h$ are straightforward:

\begin{enumerate}
\item For any affine $k$-smooth variety $X$ one has $\id^h_X=\id_{X^h_Z}$. If $p: X\to pt$
is the structure map, then for any closed $Z$ in $X$ the morphism
$p^h: X^h_Z \to (pt)^h_{pt}=pt$
is the structure morphism.

\item Given a $k$-morphism $f_1: X_2\to X_1$ of affine $k$-smooth varieties and
a closed subset $Z_2\subset X_2$ with $Z_2\subset f^{-1}_1(Z_1)$
one has
$(f\circ f_1)^h=f^h\circ f^h_1$.


\item If $i: Z\hookrightarrow X$ is the closed inclusion, $Z_1=Z$, then
$Z^h_Z=Z$ and $i^h: Z=Z^h_Z\to X^h_Z$ coincides with the canonical closed inclusion
$s: Z\to X^h_Z$.
\end{enumerate}

The last two properties imply the following property.
Let $W$ be an affine $k$-smooth variety.
Let $p: X \to W$ be an affine $W$-smooth scheme and $i: W\to X$ be a section of the morphism $p$.
Let $(X^h_{i(W)}, \rho: X^h_{t(W)}\to X, s: W\to X^h_{i(W)})$ be {\it the henselization of $X$ at $i(W)$,
}
where
$s: W \to X^h_{i(W)}$ be the canonical section of $\rho$.
Then one has equalities
   $$(i\circ p)^h=i^h \circ p^h=s\circ p^h: X^h_{i(W)} \to X^h_{i(W)}.$$

These observations imply the following

\begin{lem}\label{fhs}
Let $W$ be an affine $k$-smooth variety.
Let $p: X \to W$ be an affine $W$-smooth scheme and $i: W\to X$ be a section of the morphism $p$.
Let $f_s: \mathbb A^1\times X \to X$ be a morphism such that $f_1: X\to X$ is the identity,
$f_0: X\to X$ coincides with the morphism $X\xrightarrow{p} W\xrightarrow{i} X$
and
$(f_s)|_{\mathbb A^1\times W}=pr_W$. Then the morphism
$f^h_s: (\mathbb A^1\times X)^h_{\mathbb A^1\times i(W)} \to X^h_{i(W)}$
defined by~\eqref{f_h} has the following properties:
\begin{enumerate}
\item $(f^h_s)|_{(1\times X)^h_{(1\times i(W))}}: X^h_{i(W)}\to X^h_{i(W)}$ is the identity;
\item $(f^h_s)|_{(0\times X)^h_{(0\times i(W)}}: X^h_{i(W)}\to X^h_{i(W)}$
coincides with the morphism
$X^h_{i(W)}\xrightarrow{p^h} W \xrightarrow{s} X^h_{i(W)}$,
where
$s: W \hookrightarrow X^h_{i(W)}$ is defined just above the lemma.
\end{enumerate}
\end{lem}

\begin{proof}
The first assertion follows from the equalities
   $$\id_{X^h_{i(W)}}=\id^h_X=(f_1)^h=(f_s\circ i_1)^h=f^h_s \circ i^h_1=(f^h_s)|_{(1\times X)^h_{1\times i(W)}}.$$
As mentioned above, one has the equality $i^h=s: W\to X^h_{i(W)}$.
The equalities
   $$s\circ p^h=i^h \circ p^h=(i\circ p)^h=(f_0)^h=(f_s\circ i_0)^h=f^h_s \circ i^h_0=(f^h_s)|_{(0\times X)^h_{0\times i(W)}}$$
yield the second assertion.
\end{proof}

If we take $X=\mathbb A^m_W$, a $W$-morphism $i: W\to \mathbb A^m_W$ and the morphism
$f_s: \mathbb A^1\times \mathbb A^m_W \to \mathbb A^m_W$ mapping
$(s,y)$ to $s\cdot (id_{\mathbb A^m_W}-i\circ pr_W)+i\circ pr_W$,
where $pr_W: \mathbb A^m_W \to W$ is the projection.
Then
$f_s: \mathbb A^1\times X \to X$ satisfies
the hypotheses of Lemma~\ref{fhs}.
Thus Lemma~\ref{fhs} implies the following statement.

\begin{cor}\label{Retraction_0}
The morphism
$H_{s}:=f^h_s: (\mathbb A^1\times \mathbb A^m_W )^h_{\mathbb A^1\times i(W)}\to (\mathbb A^m_W )^h_{i(W)}$
has the following properties:
\begin{itemize}
\item[(a)]
$H_1:=(f^h_s)|_{(1\times \mathbb A^m_W )^h_{1\times i(W)}}: (\mathbb A^m_W )^h_{i(W)} \to (\mathbb A^m_W )^h_{i(W)}$
is the identity morphism;
\item[(b)] $H_0:=(f^h_s)|_{(0\times \mathbb A^m_W )^h_{0\times i(W)}}: (\mathbb A^m_W )^h_{i(W)} \to (\mathbb A^m_W )^h_{i(W)}$
coincides with the composite morphism
$(\mathbb A^m_W )^h_{i(W)}\xrightarrow{p^h} W \xrightarrow{s} (\mathbb A^m_W )^h_{i(W)}$, where
$p^h: (\mathbb A^m_W )^h_{i(W)} \to W$ is the structure morphism and
$s: W \hookrightarrow (\mathbb A^m_W )^h_{i(W)}$ is the canonical map defined just above Lemma
\ref{fhs}.
\end{itemize}
\end{cor}

\begin{thm}
\label{Spriamlenie_1} Let $W$ be an essentially $k$-smooth local
$k$-scheme and let $N\geq 1$ be an integer. Let $i: W \to W\times
\bb A^N$ be a section of the projection $pr_W: W\times \bb A^N \to
W$. Let
$$((W\times \bb A^N)^h_{i(W)},\rho: (W\times \bb A^N)^h_{i(W)}\to W\times \bb A^N, s: W\to (W\times \bb A^N)^h_{i(W)})$$
be the henselization of $W\times \bb A^N$ at $i(W)$ (particularly,
$i=\rho\circ s$). Let $X$ be a $k$-smooth scheme. Suppose
$$\alpha=(i(W), (W\times \bb A^N)^h_{i(W)},\phi_1,\dots,\phi_N;g) \in Fr_N(W,X),$$
is a $N$-framed correspondence such that the functions
$(\phi_1,\dots,\phi_N)$ generate the ideal $I=I_{s(W)}$ of those
functions in $k[(W\times \bb A^N)^h_{i(W)}]$, which vanish on the
closed subset $s(W)$. Let $A\in M_N(k[W])$ be a unique matrix
transforming the free basis $(\overline {t_1-
(t_1|_{i(W)})},\dots,\overline {t_N- (t_N|_{i(W)})})$ of the free
$k[W]$-module $I/I^2$ to the free basis $(\bar \phi_1,\dots,\bar
\phi_N)$ of the same $k[W]$-module. Suppose that the determinant
$J:=det(A)=1 \in k[W]$. Then,
\begin{equation}
\label{Spriamlenie_formula_1} [\alpha]=[g\circ s] \circ
[\sigma^N_W] \in \overline {\bb ZF}_N(W,X).
\end{equation}
If $W^{\circ} \subset W$ is Zariski open and $X^{\circ} \subset X$ is Zariski open
and $g(s(W^{\circ}))\subset X^{\circ}$, then
\begin{equation}
\label{Spriamlenie_formula_1_relative}
[[\alpha]]=[[g\circ s]] \circ [[\sigma^N_W]] \in \overline {\bb ZF}_N((W,W^{\circ}),(X,X^{\circ})).
\end{equation}
\end{thm}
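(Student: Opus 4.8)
The plan is to deform $\alpha$ to the standard correspondence
$(W\times 0,\ W\times\bb A^N,\ (t_1,\dots,t_N);\ (g\circ s^h)\circ pr_W)$, which represents $[g\circ s^h]\circ[\sigma^N_W]$, by two explicit $\bb A^1$-homotopies in $Fr_N$, and then invoke Corollary \ref{Critaria_for_H}. Write $s(w)=(w,f(w))$ with $f=(f_1,\dots,f_N)$, $f_i=t_i|_{s(W)}\in k[W]$, so $s(W)=\{t_i=f_i(w)\}\subset W\times\bb A^N$ and $I=I_{s(W)}$ is generated on $V:=(W\times\bb A^N)^h_{s(W)}$ by $t_i-f_i$. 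Write $\phi_i=\sum_j(t_j-f_j)\psi_{ij}$ with $\psi_{ij}\in k[V]$; since $\{\overline{t_j-f_j}\}$ is a free basis of $I/I^2$ and $\overline{\phi_i}=\sum_jA_{ij}\overline{t_j-f_j}$, one gets $\psi_{ij}|_{s(W)}=A_{ij}$ in $k[W]$.

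\emph{First homotopy: retraction onto the support.} Let $D\colon\bb A^1\times W\times\bb A^N\to W\times\bb A^N$ be the fibrewise dilation centred at $s(W)$, $D^*(t_i)=f_i(w)+\theta\,(t_i-f_i(w))$. Pulling $V$ back along $D$ and henselizing along the section $\{t_i=f_i(w)\}$ gives an étale neighbourhood $\widetilde V$ of that section in $\bb A^1\times W\times\bb A^N$ on which $D^*\phi_i=\theta\sum_j(t_j-f_j)D^*\psi_{ij}$ is divisible by $\theta$. I take as framing $\theta^{-1}D^*\phi_i=\sum_j(t_j-f_j)D^*\psi_{ij}$ and as map $D^*g$; since $\theta^{-1}D^*\phi$ has invertible "matrix" $(A_{ij})$ along the section and $\widetilde V$ is henselian there, its zero scheme is exactly that section, so this defines $H^{(1)}\in Fr_N(\bb A^1\times W,X)$. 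As $D$ specializes to the identity at $\theta=1$ and to the constant–fibre retraction onto $s(W)$ at $\theta=0$, we get $H^{(1)}_1=\alpha$ and, after replacing the neighbourhood of $H^{(1)}_0$ by all of $W\times\bb A^N$ (the framing $\bigl(\sum_jA_{ij}(t_j-f_j)\bigr)_i$ and the map $(g\circ s^h)\circ pr_W$ being already globally defined, so the refinement relation defining $Fr_N$ applies), $H^{(1)}_0=\beta:=\bigl(s(W),\ W\times\bb A^N,\ \bigl(\textstyle\sum_jA_{ij}(t_j-f_j)\bigr)_i;\ (g\circ s^h)\circ pr_W\bigr)$. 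Hence $[\alpha]=[\beta]$ in $\overline{\bb ZF}_N(W,X)$ by Corollary \ref{Critaria_for_H}; the support is $\{t_i=f_i(w)\}$ throughout because $A$ is invertible.

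\emph{Second homotopy: straightening the linear part and translating the support — here $J=\det A=1$ is used.} Since $W$ is local, $SL_N(k[W])=E_N(k[W])$, so $A=\prod_l E_{i_lj_l}(\lambda_l)$; put $A(\theta)=\prod_l E_{i_lj_l}\bigl((1-\theta)\lambda_l\bigr)\in SL_N(k[W][\theta])$, so $A(0)=A$, $A(1)=I_N$ and $A(\theta)$ is invertible for all $\theta$. Set
$$H^{(2)}=\Bigl(\{t_i=(1-\theta)f_i(w)\},\ W\times\bb A^N,\ \bigl(\textstyle\sum_jA(\theta)_{ij}\bigl(t_j-(1-\theta)f_j(w)\bigr)\bigr)_i;\ (g\circ s^h)\circ pr_W\Bigr)\in Fr_N(\bb A^1\times W,X),$$
whose support is a section of $\bb A^1\times W\times\bb A^N\to\bb A^1\times W$ and whose framing cuts it out transversally because $A(\theta)$ is invertible. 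Then $H^{(2)}_0=\beta$ and $H^{(2)}_1=\bigl(W\times 0,\ W\times\bb A^N,\ (t_1,\dots,t_N);\ (g\circ s^h)\circ pr_W\bigr)$, which represents $[g\circ s^h]\circ[\sigma^N_W]$. Applying Corollary \ref{Critaria_for_H} again yields $[\beta]=[g\circ s^h]\circ[\sigma^N_W]$, and chaining with the first homotopy gives (\ref{Spriamlenie_formula_1}).

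\emph{Relative statement and the main obstacle.} For (\ref{Spriamlenie_formula_1_relative}) one checks, using Lemma \ref{Criteria}, that $\alpha$, $\beta$, $H^{(1)}$, $H^{(2)}$ and the standard correspondence all run inside $X^{\circ}$ over $W^{\circ}$ in the sense of Definition \ref{Runs_Inside}: in each case the image under the structure map of the portion of the support lying over $W^{\circ}$ equals $g(s^h(W^{\circ}))\subseteq X^{\circ}$ — note the supports in $H^{(1)}$ and $H^{(2)}$ are sections and the map restricted to them is independent of $\theta$, equal to $g\circ s^h$. Consequently the whole chain of equalities lives in $\overline{ZF}_N((W,W^{\circ}),(X,X^{\circ}))$, proving (\ref{Spriamlenie_formula_1_relative}). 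The step I expect to need the most care is the neighbourhood bookkeeping in the first homotopy: that $\theta^{-1}D^*\phi_i$ is a genuine regular function on the henselization $\widetilde V$, that it cuts out precisely the section $\{t_i=f_i(w)\}$ there (which uses henselianness together with invertibility of $(A_{ij})$ along the section), and that the $\theta=0$ fibre legitimately reduces, via the étale‑neighbourhood refinement relation defining $Fr_N$, to the globally defined correspondence $\beta$. The purely algebraic input $SL_N(k[W])=E_N(k[W])$ for a local ring $k[W]$, which is exactly what makes the hypothesis $\det A=1$ indispensable (and the weaker "$\det A$ a unit" insufficient, since $GL_1(k[W])$ is not $\bb A^1$-connected), is classical.
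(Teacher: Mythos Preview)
Your proof is correct and follows the same underlying strategy as the paper's: translate the section, linearize the framing, use that $SL_N$ over a local ring is generated by elementary matrices to connect the linear part to the identity, and reduce the map to the constant $g\circ s^h$. The paper breaks this into four lemmas (Lemma~\ref{Affine_Shift} translates; Corollary~\ref{Replacing_g}/Lemma~\ref{Retraction} makes the map constant via a retraction of the henselization; Lemma~\ref{Matrix} applies the elementary-matrix path to the framing; Lemma~\ref{psi_and_sigma} linearly interpolates once the linear parts agree), whereas you package the same ingredients into two homotopies. Your dilation $H^{(1)}$ neatly does in one step what the paper's retraction lemma and $\psi$-vs-$t$ interpolation do in two, and your $H^{(2)}$ combines the affine shift with the matrix path. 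The étale-neighbourhood bookkeeping you flag is exactly the content of the paper's Lemma~\ref{Retraction}, and your argument that $\det(D^*\psi_{ij})$ restricts to $\det A=1$ on the section, hence is a unit on the henselization, is the right way to see that the vanishing locus of $\theta^{-1}D^*\phi$ is precisely $\bb A^1\times s(W)$. The relative argument matches the paper's treatment verbatim.
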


\begin{thm}
\label{Spriamlenie_2}
Suppose $\chr k\neq 2$. Let $W$ be an essentially $k$-smooth local
$k$-scheme and $N\geq 1$ be an integer. Let $Z\subset W$ be a closed
subscheme (essentially $k$-smooth) such that the pair $(W,Z)$ is
henselian. Let $X$ be a $k$-smooth scheme. Let $i: W \to W\times \bb
A^N$, $\alpha \in Fr_N(W,X)$, $A\in M_N(k[W])$, $J:=det(A) \in
k[W]$, $s: W\to (W\times \bb A^N)^h_{i(W)}$ be the same as in Theorem \ref{Spriamlenie_1}. Suppose
that $J|_{Z}=1 \in k[Z]$. Then,
\begin{equation}
\label{Spriamlenie_formula_1_hensel}
[\alpha]=[g\circ s] \circ [\sigma^N_W] \in \overline {\bb ZF}_N(W,X).
\end{equation}
If $W^{\circ} \subset W$ is Zariski open and $X^{\circ} \subset X$ is Zariski open
and $g(s(W^{\circ}))\subset X^{\circ}$, then
\begin{equation}
\label{Spriamlenie_formula_1_hensel_relative} [[\alpha]]=[[g\circ
s]] \circ [[\sigma^N_W]] \in \overline {\bb
ZF}_N((W,W^{\circ}),(X,X^{\circ})).
\end{equation}
\end{thm}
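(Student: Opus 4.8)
The plan is to reduce Theorem~\ref{Spriamlenie_2} to Theorem~\ref{Spriamlenie_1}. First I would replace the framing function $\phi_1$ by $\widetilde{J}^{-1}\phi_1$, where $\widetilde{J}\in k[(W\times \bb A^N)^h_{s(W)}]^{\times}$ is the pullback of $J\in k[W]^{\times}$ along the projection $(W\times \bb A^N)^h_{s(W)}\to W$. Since $J$ is a unit, the functions $\widetilde{J}^{-1}\phi_1,\phi_2,\dots,\phi_N$ still generate the ideal $I=I_{s(W)}$, have common vanishing locus $s(W)$, and, together with the same section $s^h$ and the same morphism $g$, define an element $\alpha^{new}\in Fr_N(W,X)$. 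In the free $k[W]$-module $I/I^2$ one has $\overline{\widetilde{J}^{-1}\phi_1}=J^{-1}\overline{\phi_1}$ (identifying $s(W)$ with $W$), so the transition matrix of $\alpha^{new}$ is $\mathrm{diag}(J^{-1},1,\dots,1)\cdot A$, of determinant $J^{-1}J=1$. Thus Theorem~\ref{Spriamlenie_1} applies to $\alpha^{new}$ and gives $[\alpha^{new}]=[g\circ s^h]\circ[\sigma^N_W]$ in $\overline{\bb ZF}_N(W,X)$, together with the relative identity $(\ref{Spriamlenie_formula_1_relative})$ whenever $g(s^h(W^{\circ}))\subset X^{\circ}$. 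Since $g\circ s^h$ and $\sigma^N_W$ are unchanged by the modification, the theorem comes down to proving $[\alpha]=[\alpha^{new}]$ in $\overline{\bb ZF}_N(W,X)$ and $[[\alpha]]=[[\alpha^{new}]]$ in $\overline{\bb ZF}_N((W,W^{\circ}),(X,X^{\circ}))$; equivalently, that multiplying the first framing function of $\alpha$ by the unit $\widetilde{J}^{-1}$, which is congruent to $1$ along $S$, does not alter the class.

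This is the real content, and here the hypothesis that $(W,S)$ is a henselian pair with $S$ essentially $k$-smooth, together with $J|_S=1$, must be used in an essential way. One cannot connect $\alpha$ and $\alpha^{new}$ by a homotopy that simply multiplies $\phi_1$ by a path $u_\theta$ with $u_0=1$ and $u_1=\widetilde{J}^{-1}$: for such a homotopy to belong to $Fr_N(\bb A^1\times W,X)$ the restriction $u_\theta|_{s(W)}$ would have to be a unit of $k[\bb A^1\times W]=k[W][\theta]$, hence (as $k[W]$ is a regular local domain) constant in $\theta$, hence identically $1$, forcing $J=1$; the same objection defeats any chain of such scaling homotopies. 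The argument must therefore allow the support to vary, in the spirit of the proof of Proposition~\ref{Diagonal}: one constructs a finite chain of framed homotopies, at some stage introducing an auxiliary sheet in the support whose specializations absorb the discrepancy $\widetilde{J}^{-1}$, uses the henselian structure of $(W,S)$ and the equality $J|_S=1$ to make this sheet finite over $\bb A^1\times W$ and disjoint from the fibres over $S$, and then splits it off (by Lemma~\ref{Disjoint_Support} in the absolute case and Lemma~\ref{Criteria_for_pairs} in the relative one) and discards it, exactly as the factors $Y^{2n}$ and $Y+1$ are separated and recombined in Proposition~\ref{Diagonal}. An alternative is to rerun the proof of Theorem~\ref{Spriamlenie_1} verbatim: the equality $J=1$ is used there only to straighten the framing over all of $W$, while over $S$ the framing of $\alpha$ is already straight (apply Theorem~\ref{Spriamlenie_1} with $W$ replaced by $S$), and the henselian pair property is precisely what propagates this straightening from $S$ to $W$.

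For the relative statement one checks that every correspondence and every homotopy above has support contained in $s(W)$ --- or, for the homotopies, in $\bb A^1\times s(W)$ together with the auxiliary sheet --- and maps to $X$ through $g$ composed with a projection; hence by Lemma~\ref{Criteria} all of them, restricted over $W^{\circ}$, run inside $X^{\circ}$ as soon as $g(s^h(W^{\circ}))\subset X^{\circ}$, and Corollary~\ref{Critaria_for_H} promotes the chain of homotopies to an equality $[[\alpha]]=[[\alpha^{new}]]$ in $\overline{\bb ZF}_N((W,W^{\circ}),(X,X^{\circ}))$. Combined with $(\ref{Spriamlenie_formula_1_relative})$ for $\alpha^{new}$, this yields $(\ref{Spriamlenie_formula_1_hensel_relative})$.

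The step I expect to be the main obstacle is the construction of the chain of homotopies in the second paragraph. It is genuinely non-formal: the scaling matrix $\mathrm{diag}(J,1,\dots,1)$ is not $\bb A^1$-homotopic to the identity in $GL_N$ over a local ring when $J\neq 1$, so the reduction cannot be completed by matrix manipulations, and one is forced to exploit the henselian pair hypothesis in order to keep the support of the interpolating families finite over $\bb A^1\times W$. Everything else is routine bookkeeping.
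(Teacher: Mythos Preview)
Your outline is on the right track, and your ``alternative'' --- rerun the proof of Theorem~\ref{Spriamlenie_1} and let the henselian hypothesis absorb the leftover determinant --- is exactly the route the paper takes. But the paper's execution is shorter and more direct than either mechanism you sketch; in particular no auxiliary sheets in the support, and no Proposition~\ref{Diagonal}-style splitting, are used.

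Rerunning the proof of Theorem~\ref{Spriamlenie_1} word for word, the only place $J=1$ enters is when concluding that the change-of-basis matrix is a product of elementary matrices. When $\det A=J$ is merely a unit one can still bring all but one diagonal entry to $1$, and the argument yields
\[
[[\alpha]]\;=\;[[g\circ s^h]]\circ[[\sigma_W^{\,N-1}]]\circ[[J\cdot t]]
\quad\text{in }\overline{\bb ZF}_N\bigl((W,W^\circ),(X,X^\circ)\bigr),
\]
where $[[J\cdot t]]$ is the class of the level-one correspondence $(W\times 0,\,W\times\bb A^1,\,J\cdot t;\,pr_W)\in Fr_1(W,W)$. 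Thus the entire theorem reduces to the single identity $[[J\cdot t]]=[[t]]$ in $\overline{\bb ZF}_1\bigl((W,W^\circ),(W,W^\circ)\bigr)$.

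For that identity the paper uses the henselian hypothesis in one stroke. Since $(W,S)$ is a henselian pair with $S$ essentially $k$-smooth, there is a retraction $r:W\to S$ with $r\circ i=\mathrm{id}_S$ and an $\bb A^1$-homotopy $H_\theta:\bb A^1\times W\to W$ satisfying $H_1=\mathrm{id}_W$ and $H_0=i\circ r$ (compare Lemma~\ref{Retraction}). Setting
\[
h_\theta=\bigl(\bb A^1\times W\times 0,\ \bb A^1\times W\times\bb A^1,\ H_\theta^{*}(J)\cdot t;\ pr_{\bb A^1\times W}\bigr)\in Fr_1(\bb A^1\times W,\bb A^1\times W),
\]
one finds $h_1$ has framing $J\cdot t$ while $h_0$ has framing $r^{*}(J|_S)\cdot t=1\cdot t=t$, because $J|_S=1$. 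So the ``path of units'' you were looking for is $u_\theta=H_\theta^{*}(J)$, produced not by linearly interpolating between $1$ and $J^{-1}$ but by dragging $J$ back along a contraction of $W$ onto $S$. The relative statement follows via Lemma~\ref{Criteria} and Corollary~\ref{Critaria_for_H} exactly as you outline, since every support sits over $W\times 0$ and the structure map is $pr_W$.

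In short: your reduction to Theorem~\ref{Spriamlenie_1} is fine, but the hard step you flag is resolved by a single scaling homotopy after all --- the point is to manufacture the unit path geometrically from the $\bb A^1$-contractibility of $W$ onto $S$, rather than algebraically inside $k[W][\theta]$. Your obstruction argument is aimed at the linear interpolation $(1-\theta)+\theta J^{-1}$ and does not by itself exclude $H_\theta^{*}(J)$; it is worth your while to revisit that argument against the paper's construction and see precisely how (and whether) the pullback along $H_\theta$ escapes it.
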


\begin{thm}
\label{Spriamlenie_3}
Suppose that $\chr k=2$. Let $W$ be an essentially $k$-smooth local
$k$-scheme and $N\geq 1$ be an integer.
Let $X$ be a $k$-smooth scheme. Let $i: W \to W\times \bb
A^N$, $\alpha \in Fr_N(W,X)$, $A\in M_N(k[W])$,
$s: W\to (W\times \bb A^N)^h_{i(W)}$ be the same as in Theorem \ref{Spriamlenie_1}.
Suppose that $J:=det(A) \in k[W]^{\times}$. Then,
\begin{equation}
\label{Spriamlenie_formula_2}
2\cdot[\alpha]=2\cdot([g\circ s] \circ [\sigma^N_W]) \in \overline {\bb ZF}_N(W,X).
\end{equation}
If $W^{\circ} \subset W$ is Zariski open and $X^{\circ} \subset X$ is Zariski open
and $g(s(W^{\circ}))\subset X^{\circ}$, then
\begin{equation}
\label{Spriamlenie_formula_2_relative}
2\cdot[[\alpha]]=2\cdot([[g\circ s]] \circ [[\sigma^N_W]]) \in \overline {\bb ZF}_N((W,W^{\circ}),(X,X^{\circ})).
\end{equation}
\end{thm}

To prove these two theorems, we need some technical lemmas.

\begin{lem}
\label{Retraction} Let $W$ be a $k$-smooth affine scheme and let
$\cc W:=(W\times \bb A^N)^h_{W\times 0}$ be the henzelization of
$W\times \bb A^N$ at $W\times 0$. Let $\cc W_{\theta}:= (\bb A^1
\times W\times \bb A^N)^h_{\bb A^1\times W\times 0}$ be the
henzelization of $\bb A^1\times W\times \bb A^N$ at $\bb A^1\times
W\times 0$. Let $f_{\theta}: \bb A^1\times W\times \bb A^N \to W\times \bb A^N$
be a morphism given by $(\theta,w,y)\mapsto (w,\theta\cdot y))$.
Then the morphism
$H_{\theta}=f^h_{\theta}: \cc W_{\theta}\to
\cc W$
has the following properties:
\begin{itemize}
\item[$(a)$] $H_1: \cc W \to \cc W$ is the identity morphism;
\item[$(b)$] $H_0: \cc W \to \cc W$ coincides with the composite morphism
$\cc W \xrightarrow{\rho_0} W\times \bb A^N \xrightarrow{pr_W} W \xrightarrow{s_0} \cc W$,
where $(\cc W, \cc W \xrightarrow{\rho_0} W\times \bb A^N, W \xrightarrow{s_0} \cc W )$
is the henselization of $W\times \bb A^N$ at $W\times \{0\}$.
\end{itemize}

If $W^{\circ} \subset W$ is open, then set $\cc
W^{\circ}:=p^{-1}_{W\times \bb A^N}(W^{\circ} \times \bb A^N)$ and
$\cc W^{\circ}_{\theta}:=p^{-1}_{\bb A^1\times W\times \bb A^N}(\bb
A^1\times W^{\circ}\times \bb A^N)$. In that case $H_{\theta}(\cc
W^{\circ}_{\theta})\subset \cc W^{\circ}$.
\end{lem}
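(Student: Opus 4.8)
The plan is to produce $H_\theta$ as the morphism induced on henselizations by the fibrewise scaling map. Write $\theta$ for the coordinate on the outer factor $\bb A^1$ and $x=(x_1,\dots,x_N)$ for the coordinates on $\bb A^N$, and let
$$\mu\colon \bb A^1\times W\times\bb A^N\to W\times\bb A^N,\qquad (\theta,w,x)\mapsto(w,\theta x)$$
be the scaling map (on functions, $k[W][x_1,\dots,x_N]\to k[W][\theta][x_1,\dots,x_N]$, $x_i\mapsto\theta x_i$). First I would record the elementary properties of $\mu$ that make it do the job: it carries the closed subscheme $\bb A^1\times W\times 0$ onto $W\times 0$; its composites with the two sections $\iota_0,\iota_1\colon W\times\bb A^N\to\bb A^1\times W\times\bb A^N$ at $\theta=0$ and $\theta=1$ are $\mu\circ\iota_1=\id_{W\times\bb A^N}$ and $\mu\circ\iota_0=s_0\circ pr_W$; and $\mu(\bb A^1\times W^\circ\times\bb A^N)\subset W^\circ\times\bb A^N$ for every open $W^\circ\subset W$. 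All of these are immediate from the formula for $\mu$.

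Next I would invoke functoriality of the henselization along a closed subscheme. Since $\mu$ maps $\bb A^1\times W\times 0$ into $W\times 0$, it induces a morphism $H_\theta\colon\cc W_\theta\to\cc W$ lying over $\mu$, i.e. fitting into a commutative square $\rho\circ H_\theta=\mu\circ\rho_\theta$, where $\rho\colon\cc W\to W\times\bb A^N$ and $\rho_\theta\colon\cc W_\theta\to\bb A^1\times W\times\bb A^N$ are the canonical morphisms; concretely one pulls back étale neighbourhoods of $W\times 0$ along $\mu$, observes that these are étale neighbourhoods of $\bb A^1\times W\times 0$, and passes to the limit. In the same way the sections $\iota_i$, which send $W\times 0$ into $\bb A^1\times W\times 0$, induce morphisms $\tilde\iota_i\colon\cc W\to\cc W_\theta$, and I would read ``the restriction of $H_\theta$ to $\theta=i$'' as $H_i:=H_\theta\circ\tilde\iota_i$. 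Properties $(a)$ and $(b)$ then come out formally from compatibility of this construction with composition and with identities: $H_1$ is induced by $\mu\circ\iota_1=\id$, so $H_1=\id_{\cc W}$; and $H_0$ is induced by $\mu\circ\iota_0=s_0\circ pr_W$, which — factoring through the pair $(W,W)$ and using that the henselization of $W$ along $W$ is $W$ — equals the composite $\cc W\xrightarrow{\rho}W\times\bb A^N\xrightarrow{pr_W}W\xrightarrow{s_0}\cc W$ of the statement (with $s_0\colon W\to\cc W$ the canonical section). For the last assertion I would chase the square: for $\xi\in\cc W^\circ_\theta=\rho_\theta^{-1}(\bb A^1\times W^\circ\times\bb A^N)$ one has $\rho(H_\theta(\xi))=\mu(\rho_\theta(\xi))\in W^\circ\times\bb A^N$, hence $H_\theta(\xi)\in\rho^{-1}(W^\circ\times\bb A^N)=\cc W^\circ$.

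The step I expect to need genuine care is the appeal to functoriality of the henselization. The subtle point is that $\mu$ is \emph{not} étale and that $\mu^{-1}(W\times 0)$ strictly contains $\bb A^1\times W\times 0$ (it also contains $\{0\}\times W\times\bb A^N$), so $\cc W_\theta$ is not a base change of $\cc W$ and $H_\theta$ cannot be constructed that way; one must use that henselization along a closed subscheme is functorial for an arbitrary morphism of pairs $f\colon(X,Z)\to(X',Z')$ with merely $f(Z)\subset Z'$, not only for those with $Z=f^{-1}(Z')$. The verification that the pullback of an étale neighbourhood along such an $f$ is again an étale neighbourhood — in particular that the induced section over $Z$ is an immersion — is routine and uses only that a section of an étale morphism is an open immersion. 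Everything else in the lemma is bookkeeping with this functoriality.
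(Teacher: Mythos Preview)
Your approach is correct and is the natural one: the fibrewise scaling map $\mu\colon(\theta,w,x)\mapsto(w,\theta x)$ is exactly the right gadget, and the functoriality of henselization for morphisms of pairs $(X,Z)\to(X',Z')$ with $f(Z)\subset Z'$ is all that is needed to lift it. Your identification of the one genuine subtlety---that $\mu^{-1}(W\times 0)\supsetneq\bb A^1\times W\times 0$, so $H_\theta$ is not a base change---and your resolution of it are also on the mark.

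The paper states this lemma without proof; it is one of several ``technical lemmas'' listed before the proof of Theorem~\ref{Spriamlenie_1} and is used only through its Corollary~\ref{Replacing_g}. So there is nothing to compare your argument against, but your construction is the evident one and almost certainly what the authors had in mind.
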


\begin{proof}
This lemma is a partial case of Corollary \ref{Retraction_0}.
\end{proof}

\begin{cor}[of Lemma \ref{Retraction}]
\label{Replacing_g}
Let
$h_{\theta}=(\bb A^1\times W\times 0,\cc W_{\theta},\psi;H_{\theta}) \in Fr_N(\bb A^1\times W,\cc W)$.
Then one has:
\begin{itemize}
\item[$(a)$] $h_1=(W\times 0,\cc W,\psi;id_{\cc W}) \in Fr_N(\bb A^1\times W,\cc W)$;
\item[$(b)$] $h_0=(W\times 0,\cc W,\psi;s_0\circ p_W)=s_0\circ (W\times 0,\cc W,\psi;p_W) \in Fr_N(\bb A^1\times W,\cc W)$,
where $p_W$ is the composite map
$\cc W \xrightarrow{\rho_0} W\times \bb A^N \xrightarrow{pr_W} W$
with $\rho_0$ and $s_0: W\hookrightarrow \cc W$ from the previous lemma.
\end{itemize}
Moreover, if $W^{\circ} \subset W$ is open, then $h_{\theta}|_{\bb
A^1 \times \cc W^{\circ}}$ runs inside $\cc W^{\circ}$.
\end{cor}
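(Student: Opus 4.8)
The plan is to deduce Corollary~\ref{Replacing_g} directly from Lemma~\ref{Retraction} by specializing the homotopy parameter $\theta$ to the values $0$ and $1$, and to obtain the ``runs inside'' assertion from the criterion of Lemma~\ref{Criteria}.

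First I would record what happens when one restricts the $N$-framed correspondence $h_{\theta}=(\bb A^1\times W\times 0,\cc W_{\theta},\psi;H_{\theta})$ along the closed embeddings $\{\varepsilon\}\times W\hookrightarrow \bb A^1\times W$ for $\varepsilon=0,1$. Pulling back the \'etale neighbourhood $\cc W_{\theta}$ to the fibre over $\theta=\varepsilon$ returns $\cc W$, the support $\bb A^1\times W\times 0$ becomes $W\times 0$, the framing functions $\psi$ (which are pulled back from $\cc W$) restrict to the same $\psi$ on $\cc W$, and the structure map $H_{\theta}$ restricts to $H_{\varepsilon}$; hence $h_{\varepsilon}=(W\times 0,\cc W,\psi;H_{\varepsilon})$. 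Now Lemma~\ref{Retraction}$(a)$ gives $H_1=\id_{\cc W}$, which is exactly assertion $(a)$; and Lemma~\ref{Retraction}$(b)$ gives $H_0=s_0\circ p_W$, where $p_W$ is the composite $\cc W \to W\times\bb A^N\xrightarrow{pr_W}W$, so that $h_0=(W\times 0,\cc W,\psi;s_0\circ p_W)$. The factorization $h_0=s_0\circ(W\times 0,\cc W,\psi;p_W)$ is then immediate from the definition of composition in $Fr_*$ with the level-$0$ morphism $s_0\colon W\hookrightarrow\cc W$: postcomposing with a level-$0$ map only replaces the structure map $p_W$ by $s_0\circ p_W$. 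This settles $(a)$ and $(b)$.

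For the final clause I would apply Lemma~\ref{Criteria} with $a=h_{\theta}\in Fr_N(\bb A^1\times W,\cc W)$, with the open embedding $\bb A^1\times W^{\circ}\hookrightarrow\bb A^1\times W$ on the source and the open $\cc W^{\circ}\subset\cc W$ on the target. The support of $h_{\theta}$ is $\bb A^1\times W\times 0$, so its intersection with $\bb A^1\times W^{\circ}\times\bb A^N$ is $\bb A^1\times W^{\circ}\times 0$; under the section $s\colon \bb A^1\times W\times 0\to\cc W_{\theta}$ this lands in $\cc W^{\circ}_{\theta}=p^{-1}_{\bb A^1\times W\times\bb A^N}(\bb A^1\times W^{\circ}\times\bb A^N)$. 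By the last assertion of Lemma~\ref{Retraction}, $H_{\theta}(\cc W^{\circ}_{\theta})\subset\cc W^{\circ}$, so the structure map $H_{\theta}$ of $h_{\theta}$ carries the relevant part of the support into $\cc W^{\circ}$. The implication $(1)\Rightarrow(2)$ of Lemma~\ref{Criteria} then says precisely that $h_{\theta}|_{\bb A^1\times W^{\circ}}$ runs inside $\cc W^{\circ}$.

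I do not expect any genuine obstacle: the corollary is pure bookkeeping around Lemma~\ref{Retraction}. The only point requiring a little care is matching conventions, namely that passing to the fibre over $\theta=\varepsilon$ of the henselization $\cc W_{\theta}=(\bb A^1\times W\times\bb A^N)^h_{\bb A^1\times W\times 0}$ indeed recovers $\cc W=(W\times\bb A^N)^h_{W\times 0}$ together with the restriction of all the framing data, which uses implicitly the compatibility of henselization with the flat base change along $\{\varepsilon\}\hookrightarrow\bb A^1$.
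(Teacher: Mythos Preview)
Your argument is correct and is exactly the intended one: the paper states the corollary without proof, treating it as an immediate consequence of Lemma~\ref{Retraction}, and your write-up supplies precisely the routine verifications (specialize $\theta$ to $0,1$, use the functoriality of composition with the level-$0$ map $s_0$, and invoke Lemma~\ref{Criteria} for the ``runs inside'' clause). There is nothing to add.
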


\begin{lem}
\label{Matrix} Let $(W\times 0,\cc W,\psi;p_W) \in Fr_N(W,W)$, where
$p_W: \cc W \to W$ is the morphism from Corollary \ref{Replacing_g}.
Let $A_{\theta}\in GL_N(k[W][\theta])$ be a matrix such that
$A_0=id$. Set $A:=A_1 \in GL_N(k[W])$. Take a row
$(\psi',\dots,\psi'_N):=(\psi_1,\dots,\psi_N)\cdot p^*_W(A)$ in
$k[\cc W]$ and take a $N$-framed correspondence
$$h_{\theta}:=(\bb A^1\times W\times 0,\bb A^1\times \cc W, \Psi_{\theta}, p_W\circ pr_{\cc W})\in Fr_N(\bb A^1\times W,W),$$
where
$\Psi_{\theta}=(pr^*_{\cc W}(\psi_1),\dots,pr^*_{\cc W}(\psi_N))\cdot p^*_W(A_{\theta})$
is a row in $k[\bb A^1\times \cc W]$. Then one has:
\begin{itemize}
\item[$(a)$] $h_0= (W\times 0, \cc W, \psi,p_W)$;
\item[$(b)$] $h_1= (W\times 0, \cc W, \psi',p_W)$.
\end{itemize}
Moreover, for any open $W^{\circ} \subset W$ the $N$-framed
correspondence $h_{\theta}|_{\bb A^1\times W^{\circ}}$ runs inside
$W^{\circ}$.
\end{lem}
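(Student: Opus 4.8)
The plan is to verify by direct inspection that the displayed $h_\theta$ is a bona fide $N$-framed correspondence in $Fr_N(\bb A^1\times W,W)$, to read off $h_0$ and $h_1$ from the endpoint conditions $A_0=\id$ and $A_1=A$, and to deduce the ``runs inside'' statement from Lemma~\ref{Criteria}. Nothing here is deep: it is a matter of keeping track of the \'etale neighbourhood, the framing, and the structure map along the homotopy.

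First I would check well-definedness. Since $A_\theta\in GL_N(k[W][\theta])$, the matrix $p^*_W(A_\theta)$ obtained by pulling back along $\id\times p_W: \bb A^1\times\cc W\to\bb A^1\times W$ lies in $GL_N(k[\bb A^1\times\cc W])$, because $\det A_\theta$ is a unit. Hence the entries of $\Psi_\theta=(pr^*_{\cc W}\psi_1,\dots,pr^*_{\cc W}\psi_N)\cdot p^*_W(A_\theta)$ generate exactly the same ideal of $k[\bb A^1\times\cc W]$ as $pr^*_{\cc W}\psi_1,\dots,pr^*_{\cc W}\psi_N$, and in particular have the same common vanishing locus. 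By hypothesis $(W\times 0,\cc W,\psi;p_W)\in Fr_N(W,W)$, so the common zero locus of $\psi_1,\dots,\psi_N$ in $\cc W$ is the closed subset $W\times 0$, finite over $W$; pulling back along $pr_{\cc W}$ shows that the support of $\Psi_\theta$ is $\bb A^1\times W\times 0$, which projects isomorphically to $\bb A^1\times W$ and is in particular finite over it. The neighbourhood $\bb A^1\times\cc W$, with its obvious map to $\bb A^1\times W\times\bb A^N$, is an \'etale neighbourhood of $\bb A^1\times W\times 0$ in $\bb A^1\times W\times\bb A^N$ because $\cc W\to W\times\bb A^N$ is one. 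Therefore $h_\theta\in Fr_N(\bb A^1\times W,W)$ with support $\bb A^1\times W\times 0$.

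Next I would specialise at $\theta=0$ and $\theta=1$. In both cases the neighbourhood $\bb A^1\times\cc W$ restricts to $\cc W$, the structure map $p_W\circ pr_{\cc W}$ restricts to $p_W$, and the support restricts to $W\times 0$; the framing $\Psi_\theta$ restricts to $\psi\cdot p^*_W(A_0)=\psi$ at $\theta=0$, giving $h_0=(W\times 0,\cc W,\psi;p_W)$, and to $\psi\cdot p^*_W(A_1)=\psi\cdot p^*_W(A)=\psi'$ at $\theta=1$, giving $h_1=(W\times 0,\cc W,\psi';p_W)$. This proves $(a)$ and $(b)$. For a Zariski open $W^{\circ}\subset W$, the support of $h_\theta|_{\bb A^1\times W^{\circ}}$ is $(\bb A^1\times W\times 0)\cap(\bb A^1\times W^{\circ}\times\bb A^N)=\bb A^1\times W^{\circ}\times 0$, and the structure map $p_W\circ pr_{\cc W}$ sends this closed set onto $W^{\circ}$, because $pr_{\cc W}$ carries it into the zero section over $W^{\circ}$ and $p_W$ restricted to the zero section is the identity of $W$. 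By Lemma~\ref{Criteria}, $h_\theta|_{\bb A^1\times W^{\circ}}$ runs inside $W^{\circ}$, completing the proof.

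The argument has no real obstacle. If one point deserves attention it is the first step: multiplying the framing row by $p^*_W(A_\theta)$ must change neither the ideal it generates nor its zero locus, and this is precisely where one needs $A_\theta$ (and not merely its value $A_1$) to be invertible over $k[\bb A^1\times W]$.
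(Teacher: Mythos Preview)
Your proof is correct and is exactly the direct verification one would expect; the paper in fact states this lemma without proof (it is one of several technical lemmas listed as preparation for Theorems~\ref{Spriamlenie_1} and~\ref{Spriamlenie_2}), so there is no alternative argument to compare against. Your care in noting that invertibility of $A_\theta$ over $k[W][\theta]$ is what guarantees the support stays equal to $\bb A^1\times W\times 0$ is the only nontrivial point, and you handle it cleanly.
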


\begin{lem}
\label{psi_and_sigma} Let $(W\times 0,\cc W,\psi;p_W) \in Fr_N(W,W)$
be as in Lemma \ref{Matrix}. Suppose the functions $\psi_1,\dots,
\psi_N$ generate the ideal $I\subset k[\cc W]$ consisting of all the
functions vanishing on the closed subset $W\times 0$. Furthermore,
suppose that for any $i=1,\dots, N$ one has that $\bar \psi_i=\bar
t_i$ in $I/I^2$. Set $\psi_{\theta,\ i}=(1-\theta)\psi_i+\theta t_i \in
k[\bb A^1\times \cc W]$. Set
$\psi_{\theta}:=(\psi_{\theta,1},\dots,\psi_{\theta,N})$. Set
$$h_{\theta}:=(\bb A^1\times W\times 0,\bb A^1\times \cc W,\psi_{\theta},p_W\circ pr_{\cc W})\in Fr_N(\bb A^1\times W,W).$$
Then one has:
\begin{itemize}
\item[$(a)$] $h_0=(W\times 0,\cc W,\psi;p_W)$;\\
\item[$(b)$] $h_1=(W\times 0,\cc W,t_1,\dots,t_N;p_W)=(W\times 0,W\times \bb A^N,t_1,\dots,t_N;pr_W)=\sigma^N_W$.
\end{itemize}
Moreover, for any open $W^{\circ} \subset W$, the $N$-framed
correspondence $h_{\theta}|_{\bb A^1\times W^{\circ}}$ runs inside
$W^{\circ}$.
\end{lem}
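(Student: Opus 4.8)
The plan is to realize the homotopy as the straight-line deformation of the framing functions $\psi_i$ into the coordinate functions $t_i$, and then to check that this deformation is a bona fide framed correspondence. Concretely, for $i=1,\dots,N$ set $\psi_{\theta,i}=(1-\theta)\psi_i+\theta t_i\in k[\bb A^1\times\cc W]$, put $\psi_\theta=(\psi_{\theta,1},\dots,\psi_{\theta,N})$, and form
$$h_\theta=(\bb A^1\times W\times 0,\ \bb A^1\times\cc W,\ \psi_\theta,\ p_W\circ pr_{\cc W})\in Fr_N(\bb A^1\times W,W),$$
where $\bb A^1\times\cc W\to\bb A^1\times W\times\bb A^N$ is the evident étale neighborhood of $\bb A^1\times W\times 0$ obtained from $\cc W=(W\times\bb A^N)^h_{W\times 0}$. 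Granting that $h_\theta$ is well defined, assertions $(a)$ and $(b)$ are immediate: at $\theta=0$ one has $\psi_{0,i}=\psi_i$, so $h_0=(W\times 0,\cc W,\psi;p_W)$; at $\theta=1$ one has $\psi_{1,i}=t_i$, and since $p_W=pr_W\circ\rho$ the étale neighborhood $\cc W$ may be refined back to $W\times\bb A^N$ itself, so that $h_1=(W\times 0,\cc W,t_1,\dots,t_N;p_W)=(W\times 0,W\times\bb A^N,t_1,\dots,t_N;pr_W)=\sigma^N_W$. The last assertion is equally immediate once the support is identified: the support of $h_\theta|_{\bb A^1\times W^\circ}$ is $\bb A^1\times W^\circ\times 0$, on which $p_W\circ pr_{\cc W}$ restricts to the projection onto $W^\circ$, so by Lemma~\ref{Criteria} the correspondence $h_\theta|_{\bb A^1\times W^\circ}$ runs inside $W^\circ$ in the sense of Definition~\ref{Runs_Inside}.

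Thus the only point that requires work is that $h_\theta$ really lies in $Fr_N(\bb A^1\times W,W)$, i.e. that the common vanishing locus of $\psi_{\theta,1},\dots,\psi_{\theta,N}$ in $\bb A^1\times\cc W$ equals $\bb A^1\times W\times 0$ and is finite over $\bb A^1\times W$. I would first linearize. Since $W\times 0$ is the zero section of the trivial bundle $W\times\bb A^N\to W$, the ideal $I\subset k[\cc W]$ of $W\times 0$ has the property that $I/I^2$ is free over $k[\cc W]/I\cong k[W]$ with basis the classes of $t_1,\dots,t_N$. Combining this with the hypotheses $\psi_i\in I$ and $\bar\psi_i=\bar t_i$ in $I/I^2$, one may write $\psi_i=\sum_j a_{ij}t_j$ with $a_{ij}\in k[\cc W]$ and $a_{ij}\equiv\delta_{ij}\pmod I$. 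Consequently $\psi_{\theta,i}=\sum_j M_{\theta,ij}\,t_j$ with $M_\theta=(1-\theta)A+\theta\cdot\mathrm{Id}\in M_N(k[\bb A^1\times\cc W])$, $A=(a_{ij})$; as $A\equiv\mathrm{Id}$ modulo $I$, we get $\det M_\theta\equiv 1$ modulo the ideal of $\bb A^1\times W\times 0$.

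Now at a point $P$ of $\bb A^1\times\cc W$ one has $\psi_{\theta,i}(P)=0$ for all $i$ exactly when $\sum_j M_{\theta,ij}(P)\,t_j(P)=0$ for all $i$; if $\det M_\theta(P)\neq 0$, the matrix $M_\theta(P)$ is invertible over $\kappa(P)$, which forces $t_j(P)=0$ for all $j$, i.e. $P\in\bb A^1\times W\times 0$. Since $\det M_\theta$ is congruent to $1$ modulo the ideal of $\bb A^1\times W\times 0$, the closed set $\{\det M_\theta=0\}$ is disjoint from $\bb A^1\times W\times 0$; hence the common vanishing locus of the $\psi_{\theta,i}$ is the disjoint union of the closed set $\bb A^1\times W\times 0$ and a closed subset of $\{\det M_\theta=0\}$. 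Replacing the étale neighborhood $\bb A^1\times\cc W$ by its open subscheme where $\det M_\theta$ is invertible — which still contains $\bb A^1\times W\times 0$, hence is again an étale neighborhood of it in $\bb A^1\times W\times\bb A^N$ — we may assume this common vanishing locus is exactly $\bb A^1\times W\times 0$, which the projection carries isomorphically onto $\bb A^1\times W$ and which is in particular finite over it; so $h_\theta$ is a well-defined framed correspondence. (This shrinking affects neither $h_0$ nor $h_1$: at $\theta=0$ the functions $\psi_i$ already generate $I$, so their common zero set is $W\times 0$ outright, while at $\theta=1$ one has $M_1=\mathrm{Id}$.) This verification — that the deformed framing still cuts out a support finite over the base, which is where the description of $W\times 0$ as the zero section of a trivial bundle, equivalently that $\bar t_1,\dots,\bar t_N$ is a free basis of $I/I^2$, is used — is the only subtle step; the rest is formal. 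The lemma then feeds, through Lemmas~\ref{Retraction}, \ref{Replacing_g} and~\ref{Matrix}, into the proofs of Theorems~\ref{Spriamlenie_1} and~\ref{Spriamlenie_2}.
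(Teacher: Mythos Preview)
Your proof is correct and follows the construction the paper lays out in the lemma statement itself; the paper provides no separate proof of this lemma, so you are filling in exactly what is left implicit. Your identification of the only nontrivial point---that the common vanishing locus of the $\psi_{\theta,i}$ in $\bb A^1\times\cc W$ must equal $\bb A^1\times W\times 0$---is right, and your linearization $\psi_{\theta,i}=\sum_j M_{\theta,ij}t_j$ with $M_\theta\equiv\mathrm{Id}$ modulo $I$ is the natural way to handle it.

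One remark worth making explicit: the paper's statement writes the \'etale neighborhood as $\bb A^1\times\cc W$, but as your analysis shows, $\det M_\theta$ need not be a unit in $k[\bb A^1\times\cc W]=k[\cc W][\theta]$ (even though $I\subset\mathrm{rad}\,k[\cc W]$, the extension $I[\theta]$ is not contained in the Jacobson radical of $k[\cc W][\theta]$), so the vanishing locus can genuinely acquire extra components away from $\bb A^1\times W\times 0$. Your shrinking to the locus where $\det M_\theta$ is invertible is therefore not merely a convenience but a necessary correction, and your check that this does not disturb the fibers at $\theta=0,1$ (since $\det A$ is a unit in $k[\cc W]$ by the henselian property, and $M_1=\mathrm{Id}$) is exactly what is needed to preserve assertions $(a)$ and $(b)$. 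In this sense your argument is slightly more careful than the paper's statement.
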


\begin{lem}
\label{Affine_Shift} Let $\alpha=(i(W), (W\times \bb
A^N)^h_{i(W)},\phi_1,\dots,\phi_N;g) \in Fr_N(W,X)$ be a $N$-framed
correspondence, where $i: W \to W\times \bb A^N$, $(W\times \bb
A^N)^h_{i(W)}$ and $s: W \to (W\times \bb A^N)^h_{i(W)}$ be as in
Theorem~\ref{Spriamlenie_1}. Let $T_i: W\times \bb A^N \to W\times
\bb A^N$ be a morphism taking a point $(w,v)$ to the point
$(w,v+i(w))$. Let $T^h_i: \cc W=(W\times \bb A^N)^h_{W\times 0} \to
(W\times \bb A^N)^h_{i(W)}$ be the morphism defined by~\eqref{f_h}. Then one has,
$$[\alpha]=[W\times 0,\cc W, \phi_1 \circ T^h_i,\dots,\phi_N \circ T^h_i;g\circ T^h_i] \in \overline {\bb ZF}_N(W,X).$$
Moreover, if $W^{\circ} \subset W$ is open and if $X^{\circ} \subset
X$ is any open such that $g(s^h(W^{\circ}))\subset X^{\circ}$, then
one has,
$$[[\alpha]]=[[W\times 0,\cc W, \phi_1 \circ T^h_i,\dots,\phi_N \circ T^h_i;g\circ T^h_i]] \in \overline {\bb ZF}_N((W,W^{\circ}),(X,W^{\circ})).$$
\end{lem}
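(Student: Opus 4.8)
The plan is to produce an explicit elementary $\bb A^1$-homotopy in $Fr_N(\bb A^1\times W,X)$ whose endpoints are $\alpha$ and the shifted correspondence $\beta:=(W\times 0,\cc W,\phi_1\circ T^h_s,\dots,\phi_N\circ T^h_s;g\circ T^h_s)$, and then to read off the two claimed equalities: the first from the definition $\overline{\bb ZF}_N(W,X)=\coker[i_0^*-i_1^*]$, and the relative one from Corollary~\ref{Critaria_for_H}. (I read the ``$[[\beta]]$'' and ``$(X,W^{\circ})$'' in the displayed relative formula as $[[\alpha]]$ and $(X,X^{\circ})$.) Recall from Theorem~\ref{Spriamlenie_1} that $W$ is an essentially $k$-smooth local scheme, in particular affine, so I may write $s(w)=(w,\bar s(w))$ with $\bar s=(\bar s_1,\dots,\bar s_N)\in k[W]^N$; then $T_s\colon W\times\bb A^N\to W\times\bb A^N$, $(w,v)\mapsto(w,v+\bar s(w))$, is an automorphism of $W$-schemes carrying $W\times 0$ onto $s(W)$, and $T^h_s$ is the induced isomorphism on henselizations.

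First I would build the homotopy. On the support side, introduce the twisted section $\widetilde S:=\{(\theta,w,\theta\bar s(w))\}\subset\bb A^1\times W\times\bb A^N$, which is the image of $\bb A^1\times W\times 0$ under the $\bb A^1\times W$-automorphism $\Theta\colon(\theta,w,v)\mapsto(\theta,w,v+\theta\bar s(w))$, and which restricts to $W\times 0$ at $\theta=0$ and to $s(W)$ at $\theta=1$; one has $\Theta|_{\theta=1}=T_s$ and $\Theta|_{\theta=0}=\id$. As ambient essentially étale neighborhood of $\widetilde S$ take $\cc W_\theta:=(\bb A^1\times W\times\bb A^N)^h_{\bb A^1\times W\times 0}$, with the structure map $\Theta\circ\rho$ ($\rho$ the henselization map) and the canonical section $\iota$. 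The framing and the map to $X$ are obtained by pulling $\phi_1,\dots,\phi_N,g$ back along the essentially étale morphism $(T_s\circ pr)^h\colon\cc W_\theta\to(W\times\bb A^N)^h_{s(W)}$ induced by $T_s\circ pr\colon\bb A^1\times W\times\bb A^N\to W\times\bb A^N$; since $(T_s\circ pr)^{-1}(s(W))=\bb A^1\times W\times 0$, the common zero locus of the pulled-back framing is exactly the section image. This gives $H_\theta:=(\widetilde S,\cc W_\theta,\phi_1\circ(T_s pr)^h,\dots,\phi_N\circ(T_s pr)^h;g\circ(T_s pr)^h)\in Fr_N(\bb A^1\times W,X)$.

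The endpoint computation rests on one elementary base-change fact, which I would establish first: for each constant $c$ the restriction $\cc W_\theta|_{\theta=c}$ is canonically $\cc W=(W\times\bb A^N)^h_{W\times 0}$ — writing $\cc W_\theta$ as a filtered colimit of étale neighborhoods $\spec B$ of the zero section, the family $\{\spec(B/(\theta-c))\}$ is cofinal among étale neighborhoods of $W\times 0$ in $W\times\bb A^N$, since $B'\mapsto B'[\theta]$ splits this restriction; moreover under this identification $(\Theta\circ\rho)|_{\theta=c}=\Theta|_{\theta=c}\circ\rho_{\cc W}$ and $(T_s pr)^h|_{\theta=c}=T^h_s$ for every $c$, because $(T_s\circ pr)|_{\{c\}\times W\times\bb A^N}=T_s$. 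Granting this: at $\theta=1$, $H_1$ has support $s(W)$ and neighborhood $(\cc W,T_s\circ\rho_{\cc W})$, which is carried by $T^h_s$ isomorphically onto the standard neighborhood $((W\times\bb A^N)^h_{s(W)},\rho)$ of $s(W)$, along which the framing $(\phi_i\circ T^h_s)$ and the map $g\circ T^h_s$ correspond to $(\phi_i)$ and $g$, so $H_1=\alpha$ in $Fr_N(W,X)$; at $\theta=0$, $\Theta|_{\theta=0}=\id$, so $H_0$ has support $W\times 0$, standard neighborhood $\cc W$, framing $(\phi_i\circ T^h_s)$ and map $g\circ T^h_s$, i.e. $H_0=\beta$. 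Hence $[\alpha]=[H_1]=[H_0]=[\beta]$ in $\overline{\bb ZF}_N(W,X)$.

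For the relative statement I would note that the value map of $H_\theta$ on its support is $g\circ s^h\circ pr_W\colon\bb A^1\times W\to X$ (since $(T_s pr)^h\circ\iota=s^h\circ pr_W$), so its restriction over $\bb A^1\times W^\circ$ has image $g(s^h(W^\circ))\subset X^\circ$; the same computation gives that the value maps of $\alpha$ and of $\beta$ over $W^\circ$ land in $X^\circ$. By Lemma~\ref{Criteria} all these restrictions run inside $X^\circ$, so $\langle\langle H_\theta\rangle\rangle\in\bb ZF_N(\bb A^1\times(W,W^\circ),(X,X^\circ))$ is defined, and Corollary~\ref{Critaria_for_H} yields $[[H_0]]=[[H_1]]$, i.e. $[[\alpha]]=[[\beta]]$ in $\overline{\bb ZF}_N((W,W^\circ),(X,X^\circ))$. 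The step requiring the most care will be the endpoint identification $\cc W_\theta|_{\theta=c}\cong\cc W$ together with the bookkeeping of structure maps of étale neighborhoods through $T^h_s$; everything else is a direct unwinding of the definition of a framed correspondence and of $H_\theta$, in the spirit of the homotopy in Lemma~\ref{Retraction} and Corollary~\ref{Replacing_g}.
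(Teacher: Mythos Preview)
Your proof is correct. The paper states Lemma~\ref{Affine_Shift} as one of several technical lemmas needed for Theorems~\ref{Spriamlenie_1} and~\ref{Spriamlenie_2} but gives no proof, so there is nothing to compare against; your argument---building the explicit $\bb A^1$-homotopy by linearly interpolating the shift, $(\theta,w,v)\mapsto(\theta,w,v+\theta\bar s(w))$, and reading off the endpoints via the base-change identification $\cc W_\theta|_{\theta=c}\cong\cc W$---is exactly the natural one and is carried out cleanly, including the relative version via Lemma~\ref{Criteria} and Corollary~\ref{Critaria_for_H}.
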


\begin{proof}[Proof of Theorem \ref{Spriamlenie_1}]
Let $\alpha \in Fr_N(W,X)$ be the $N$-framed correspondence from
Theorem~\ref{Spriamlenie_1}. By Lemma \ref{Affine_Shift} one has an
equality in $\overline {\bb ZF}_N(W,X)$
$$[\alpha]=[W\times 0,\cc W, \psi_1,\dots,\psi_N;g\circ T^h_i]=[g\circ T^h_i] \circ [W\times 0,\cc W, \psi_1,\dots,\psi_N;id_{\cc W}],$$
where $\psi_i=\phi_1 \circ T^h_i$. By Corollary \ref{Replacing_g}
one has an equality in $\overline {\bb ZF}_N(W,\cc W)$:
$$[W\times 0,\cc W, \psi_1,\dots,\psi_N;id_{\cc W}]=[s_0]\circ [W\times 0,\cc W,\psi;p_W].$$
Thus one has
$$[\alpha]=[g\circ T^h_i\circ s_0]\circ [W\times 0,\cc W,\psi;p_W]=[g\circ s]\circ [W\times 0,\cc W,\psi;p_W] \in \overline {\bb ZF}_N(W,X).$$
Clearly, the functions $(\psi_1,\dots,\psi_N)$ generate the ideal
$I_0=I_{W\times 0}$ of those functions in $k[\cc W]$, which vanish
on the closed subset $W\times 0$. Let $A'\in M_N(k[W])$ be a unique
matrix, which transforms the free basis
$(\bar{t_1},\dots,\bar{t_N})$ of the free $k[W]$-module $I_0/I^2_0$
to the free basis $(\bar \psi_1,\dots,\bar \psi_N)$ of the same
$k[W]$-module. Clearly, $det(A')=det(A)$. Thus $det(A')=1 \in k[W]$.
The ring $k[W]$ is local. Thus $A'$ belongs to the group of
elementary $N\times N$ matrices over $k[W]$. Hence there is a matrix
$A_{\theta}\in M_N(k[W][\theta])$ such that $A_0=id$ and
$A_1=(A')^{-1} \in GL_N(k[W])$. By Lemma \ref{Matrix} one has an
equality
$$[W\times 0, \cc W, \psi,p_W]=[W\times 0, \cc W, \psi',p_W] \in \overline {\bb ZF}_N(W,W)$$
with the row $\psi'_1,\dots,\psi'_N$ as in Lemma \ref{Matrix}. By
construction, for any $i=1,\dots,N$ the function $\psi'_i$ has the
property: $\bar \psi'_i=\bar t_i$ in $I_0/I^2_0$. By Lemma
\ref{psi_and_sigma} one has an equality
$$[W\times 0, \cc W, \psi',p_W]=[\sigma^N_W] \in \overline {\bb ZF}_N(W,W).$$
Thus,
$$[\alpha]=[g\circ s]\circ [\sigma^N_W] \in \overline {\bb ZF}_N(W,X).$$

If $W^{\circ} \subset W$ is Zariski open and $X^{\circ} \subset X$
is Zariski open and $g(s^h(W^{\circ}))\subset X^{\circ}$, then the
same arguments prove the relation
\begin{equation}
\label{Spriamlenie_formula_1_relative}
[[\alpha]]=[[g\circ s]] \circ [[\sigma^N_W]] \in \overline {\bb ZF}_N((W,W^{\circ}),(X,X^{\circ})).
\end{equation}
Theorem \ref{Spriamlenie_1} is proved.
\end{proof}

Theorems  \ref{Spriamlenie_2} and \ref{Spriamlenie_3} are proved at the end of this section. Some preparations
are necessary for them. We begin with the following obvious

\begin{lem} \label{l:polynomialcycle}
Let $Y$ be a $k$-smooth variety which is not necessarily affine. Let $k[Y]$ be the ring of regular
functions on $Y$. Let $n>0$.
Let $a\in k[Y]^{\times}$. Let $p(t),q(t) \in k[Y][t]$ be two polynomials of degree $n$ with the
leading coefficient $a$. Let
$$(Z(p),Y\times \bb A^1,p(t),pr_Y) \ \text{and} \ (Z(q),Y\times \bb A^1,q(t),pr_Y)\in Fr_1(Y,Y)$$
be two framed correspondences.
Let $Z_s\subset \bb A^1\times Y\times \bb A^1$ be the vanishing locus of the polynomial
$p(t)+s(q(t)-p(t))\in k[Y][s,t]$ (here $s$ is the homotopy parameter). Let
   $$H_s:=(Z_s,\bb A^1\times Y\times \bb A^1,p(t)+s(q(t)-p(t)),pr_{Y}) \in Fr_1(\bb A^1\times Y,Y).$$
Then one have equalities in  $Fr_1(Y,Y)$:
   $$H_0=(Z(p),\bb A^1,p(t),pr_Y) \ \text{and} \ H_1=(Z(q),\bb A^1,q(t),pr_Y) \in Fr_1(Y,Y).$$
\end{lem}

Under the hypotheses of Lemma \ref{l:polynomialcycle} let $Y^0\subset Y$ be an open subset.
Then the framed correspondences
$$(Z(p),Y\times \bb A^1,p(t),pr_Y)|_{Y^0} \ \text{and} \ (Z(q),Y\times \bb A^1,q(t),pr_Y)|_{Y^0}\in Fr_1(Y^0,Y)$$
run inside $Y^0$ in the sense of Definition \ref{Runs_Inside}.
Thus following notation from that definition, they define morphisms
$\langle\langle Z(p),Y\times \bb A^1,p(t),pr_Y \rangle\rangle, \ \ \langle\langle Z(q),Y\times \bb A^1,q(t),pr_Y\rangle\rangle \in \bb ZF_1((Y,Y^0),(Y,Y^0))$.
The framed correspondence $(H_s)|_{\bb A^1\times Y^0}$ runs inside $Y^0$. Hence it defines a morphism
$$\langle\langle H_s \rangle\rangle \in \bb ZF_1(\bb A^1\times (Y,Y^0),(Y,Y^0)).$$
Clearly,
$\langle\langle H_0 \rangle\rangle=\langle\langle Z(p),Y\times \bb A^1,p(t),pr_Y \rangle\rangle$,
$\langle\langle H_1 \rangle\rangle=\langle\langle Z(q),Y\times \bb A^1,q(t),pr_Y \rangle\rangle$
in $\bb ZF_1((Y,Y^0),(Y,Y^0))$.

We have thus proved the following

\begin{lem}\label{l:polynomialcycle_pairs}
Under the notation and the hypotheses of Lemma \ref{l:polynomialcycle} and the notation from Definition
\ref{Runs_Inside} the following equality holds:
$$[[Z(p),Y\times \bb A^1,p(t),pr_Y]]=[[Z(q),Y\times \bb A^1,q(t),pr_Y]]\in \overline {\bb ZF}_1((Y,Y^0),(Y,Y^0)).$$
\end{lem}

Let $Y$ be again a $k$-smooth variety not necessary affine. Let $k[Y]$ be the ring of regular
functions on $Y$. Let $n>0$ and $p(t)=t^nR(t)$, where $R(t)=r_0+r_1t+...+r_Nt^N \in k[Y][t]$
with $r_i\in k[Y]$ be a polinomial such that its leading coefficient is a unit in $k[Y]$ and $r_0\in k[Y]^{\times}$.
Let $U=(Y\times \bb A^1)_{R(t)}\subset Y\times \bb A^1$ be the principal open subset
corresponding to $R(t)$. One has $R(t)=r_0+tR_1(t)$. Consider a polynomial
   $$h(s,t)=sR(t)t^{n}+(1-s)r_0t^{n}\in k[s,t].$$
Then $h(s,t)=t^n\cdot(r_0+t\cdot R_1(t)\cdot s)$.
If $S$ is the vanishing locus of
$ r_0+t\cdot R_1(t)\cdot s$, then $S\cap \bb A^1\times Y\times 0=\emptyset$.
Hence for the zero locus $Z(h)$ of $h$ one has $Z(h)=(\bb A^1\times Y\times 0)\sqcup S$.
Set,
\[
H^R_s:=(\bb A^1 \times Y \times \{0\},(\bb A^1\times U)\setminus S,
sR(t)t^{n}+(1-s)r_0t^{n},pr_Y\circ pr_U)\in Fr_1(\bb A^1\times Y,Y).
\]

The following lemma is inspired by \cite[Lemma 4.13]{AGP}.

\begin{lem}\label{analog_4_10}
Let $Y$ be a $k$-smooth variety which is not necessarily affine. Let $k[Y]$ be the ring of regular
functions on $Y$. Let $a\in k[Y]$, $n>0$ and $q(t)=(t-a)^nQ(t)$, where $Q(t)\in k[Y][t]$
be a polinomial such that its leading coefficient is a unit in $k[Y]$ and $Q(a)\in k[Y]^{\times}$.
Let $U=(Y\times \bb A^1)_{Q(t)}\subset Y\times \bb A^1$ be the principal open subset
corresponding to $Q(t)$. Then there is a framed correspondence
$H^Q_s\in Fr_1(\bb A^1\times Y,Y)$ such that in  $Fr_1(Y,Y)$ one have equalities
$$H^Q_1=(Z(t-a),U,q(t);pr_Y) \ \text{and} \ \ H^Q_0=(Z(t-a),Y\times \bb A^1,Q(a)(t-a)^n;pr_Y).$$
\end{lem}

\begin{proof}
We may assume that $a=0$.
Then take $H^R_s$ just as above with $R(t)=Q(t+a)$. Clearly,
$H^R_1=(Y\times \{0\},U,R(t)t^{n},pr_Y)$ and
$H^R_0=(Y\times \{0\},U,R(0)t^{n},pr_Y)$
in $Fr_1(Y,Y)$, whence the lemma.
\end{proof}

Under the hypotheses of Lemma \ref{analog_4_10} let $Y^0\subset Y$ be an open subset.
Then the framed correspondences
$(Z(t-a),U,q(t);pr_Y)|_{Y^0} \in Fr_1(Y^0,Y)$
and
$(Z(t-a),Y\times \bb A^1,Q(a)(t-a)^n;pr_Y)|_{Y^0} \in Fr_1(Y^0,Y)$
run inside $Y^0$ in the sense of Definition \ref{Runs_Inside}.
Thus following notation from that definition, they define morphisms
$$\langle\langle Z(t-a),U,q(t);pr_Y\rangle\rangle, \ \ \langle\langle Z(t-a),Y\times \bb A^1,Q(a)(t-a)^n;pr_Y\rangle\rangle \in \bb ZF_1((Y,Y^0),(Y,Y^0)).$$

\begin{lem}\label{analog_4_10_pairs}
Under the notation and hypotheses of Lemma \ref{analog_4_10} and the notation from Definition
\ref{Runs_Inside} one has
$$[[Z(t-a),U,q(t);pr_Y]]= [[Z(t-a),Y\times \bb A^1,Q(a)(t-a)^n;pr_Y]] \in \overline {\bb ZF}_1((Y,Y^0),(Y,Y^0)).$$
\end{lem}

\begin{proof}
We may assume that $a=0$. Let $R(t)=Q(t+a)\in k[Y][t]$ be as in the proof of
Lem\-ma~\ref{analog_4_10}. Clearly, the morphism
$H^R_s|_{\bb A^1\times Y^0}: \bb A^1\times Y^0 \to Y$
runs inside $Y^0$ in the sense of Definition \ref{Runs_Inside}.
Hence following that definition, it defines a morphism
$\langle\langle H^R_s \rangle\rangle \in \bb ZF_1(\bb A^1 \times (Y,Y^0),(Y,Y^0))$.
One has equalities in $\bb ZF_1((Y,Y^0),(Y,Y^0))$:
$$\langle\langle H^R_0 \rangle\rangle=\langle\langle Y\times 0,Y\times \bb A^1,R(0)t^n;pr_Y\rangle\rangle \ \text{and} \
\langle\langle H^R_1 \rangle\rangle=\langle\langle Y\times 0,U,R(t)t^n;pr_Y\rangle\rangle,$$
whence the lemma.
\end{proof}

Corollaries \ref{a_t_2_and_t2}, \ref{t_3}, Proposition \ref{t_and_a_2_t}
and their proofs are inspired by \cite[Lemma 7.3]{Nesh}.

\begin{cor}\label{a_t_2_and_t2}
Suppose $\chr k\neq 2$.
Let $\lambda \in k[Y]^{\times}$. Then under the notation from Definition~\ref{Runs_Inside}
$$[[(Y\times 0, Y\times \bb A^1,\lambda\cdot t^2;pr_Y)]]=[[(Y\times 0, Y\times \bb A^1,t^2;pr_Y)]]$$
in $\overline {\bb ZF}_1((Y,Y^0),(Y,Y^0))$.
\end{cor}

\begin{proof}
For brevity we drop $pr_W$ from the notation.
By Lemma \ref{l:polynomialcycle_pairs} one has an equality
$[[(Y\times 0, Y\times \bb A^1,t^2)]]=[[(Y\times 0, Y\times \bb A^1,(t-1)(t+1))]]$
in $\overline {\bb ZF}_1((Y,Y^0),(Y,Y^0))$.
By the additivity relation from Lemma \ref{Criteria_for_pairs} and Lemma \ref{analog_4_10_pairs} one has
$$[[(Y\times 0, Y\times \bb A^1,(t-1)(t+1))]]=[[(Y\times 0, Y\times \bb A^1,2t)]]+[[(Y\times 0, Y\times \bb A^1,-2t)]]$$
in $\overline {\bb ZF}_1((Y,Y^0),(Y,Y^0))$.
Similarly
$[[(Y\times 0, Y\times \bb A^1,\lambda \cdot t^2)]]=[[(Y\times 0, Y\times \bb A^1,\lambda \cdot (t-\lambda^{-1})\cdot (t+\lambda^{-1}))]]$
and
$$[[(Y\times 0, Y\times \bb A^1,\lambda \cdot (t-\lambda^{-1})\cdot (t+\lambda^{-1}))]]=[[(Y\times 0, Y\times \bb A^1,2t)]+[(Y\times 0, Y\times \bb A^1,-2t)]]$$
in $\overline {\bb ZF}_1((Y,Y^0),(Y,Y^0))$, whence the corollary.
\end{proof}

\begin{cor}\label{t_3}
Let $\lambda \in k[Y]^{\times}$. Then under the notation from Definition
\ref{Runs_Inside}
one has an equality in $\overline {\bb ZF}_1((Y,Y^0),(Y,Y^0))$
$$[[(Y\times 0, Y\times \bb A^1,\lambda^2 \cdot t;pr_Y)]]+[[(Y\times 0, Y\times \bb A^1,\lambda \cdot t^2;pr_Y)]]=[[(Y\times 0, Y\times \bb A^1,t^3;pr_Y)]].$$
\end{cor}

\begin{proof}
For brevity we drop $pr_W$ from the notation.
By Lemma \ref{l:polynomialcycle_pairs} one has an equality
$[[(Y\times 0, Y\times \bb A^1,t^3)]]=[[(Y\times 0, Y\times \bb A^1,t^3+\lambda\cdot t^2)]]$
in $\overline {\bb ZF}_1((Y,Y^0),(Y,Y^0))$.
By the additivity relation from Lemma \ref{Criteria_for_pairs} one has an equality in
$\overline {\bb ZF}_1((Y,Y^0),(Y,Y^0))$:
$$[[(Y\times 0, Y\times \bb A^1,t^3+\lambda\cdot t^2)]]=[[(Y\times 0, (Y\times \bb A^1)_{t+\lambda},(t+\lambda)\cdot t^2)]]+
[[\{t+\lambda=0\}, (Y\times \bb A^1)_{t^2},t^2(t+\lambda))]].$$
By Lemma \ref{analog_4_10_pairs} one has equalities in
$\overline {\bb ZF}_1((Y,Y^0),(Y,Y^0))$:
$$[[(Y\times 0, (Y\times \bb A^1)_{t+\lambda},(t+\lambda)\cdot t^2)]]=[[(Y\times 0, (Y\times \bb A^1),\lambda\cdot t^2)]]$$
and
$$[[\{t+\lambda=0\}, (Y\times \bb A^1)_{t^2},t^2(t+\lambda))]]=[[Y\times 0, (Y\times \bb A^1)_{t^2},\lambda^2 \cdot t)]],$$
whence the corollary.
\end{proof}

\begin{prop}\label{t_and_a_2_t}
Suppose $\chr k\neq 2$.
Let $\lambda \in k[Y]^{\times}$. Then under the notation from Definition~\ref{Runs_Inside}
one has an equality in $\overline {\bb ZF}_1((Y,Y^0),(Y,Y^0))$
$$[[(Y\times 0, Y\times \bb A^1,\lambda^2 \cdot t;pr_Y)]]=[[(Y\times 0, Y\times \bb A^1,t;pr_Y)]].$$
\end{prop}

\begin{proof}
For brevity we drop $pr_W$ from the notation.
By Corollary \ref{t_3} one has
$$[[(Y\times 0, Y\times \bb A^1,\lambda^2 \cdot t)]]+[[(Y\times 0, Y\times \bb A^1,\lambda \cdot t^2)]]=
[[(Y\times 0, Y\times \bb A^1,t)]]+[[(Y\times 0, Y\times \bb A^1,t^2)]]$$
in $\overline {\bb ZF}_1((Y,Y^0),(Y,Y^0))$.
Corollary \ref{a_t_2_and_t2} now completes the proof of the proposition.
\end{proof}

\begin{proof}[Proof of Theorem \ref{Spriamlenie_2}]
Repeating literally the proof of Theorem \ref{Spriamlenie_1}, one
gets an equality
\begin{equation}
\label{Spriamlenie_formula_2_hensel_relative} [[\alpha]]=[[g\circ
s]] \circ [[\sigma^{N-1}_W]]\circ [[J\cdot t]] \in \overline {\bb
ZF}_N((W,W^{\circ}),(X,X^{\circ})),
\end{equation}
where $[[J\cdot t]]$ is the class in
$\overline {\bb ZF}_N((W,W^{\circ}),(W,W^{\circ})$
of the element
$\langle\langle J\cdot t \rangle\rangle$ corresponding to the $1$-framed
correspondence
$$(W\times 0, W\times \bb A^1,J\cdot t; pr_W) \in Fr_1(W,W)$$
by means of Definition \ref{Runs_Inside}.
Since the pair $(W,Z)$ is henselian and $J|_Z=1$, and the characteristic of the ground field $k$ is not $2$, then $J=\lambda^2$
for a unit $\lambda\in k[W]^{\times}$.
By Corollary \ref{t_and_a_2_t} one has an equality
$[[J\cdot t]]=[[t]] \in \overline {\bb
ZF}_N((W,W^{\circ}),(W,W^{\circ}))$ and our theorem follows.
\end{proof}

\begin{prop}\label{mu_t_and_a_t_char_2}
Suppose $\chr k=2$.
Let $\mu \in k[Y]^{\times}$. Then under the notation from Definition~\ref{Runs_Inside}
one has an equality in $\overline {\bb ZF}_1((Y,Y^0),(Y,Y^0))$
$$2\cdot[[(Y\times 0, Y\times \bb A^1,\mu \cdot t;pr_Y)]]=2\cdot[[(Y\times 0, Y\times \bb A^1,t;pr_Y)]].$$
\end{prop}

\begin{proof}
For brevity we drop $pr_W$ from the notation.
By Lemma \ref{l:polynomialcycle_pairs} one has an equality
$[[(Y\times 0, Y\times \bb A^1,t^2)]]=[[(Y\times 0, Y\times \bb A^1,t(t+\mu))]]$
in $\overline {\bb ZF}_1((Y,Y^0),(Y,Y^0))$.
By the additivity relation from Lemma \ref{Criteria_for_pairs} and Lemma \ref{analog_4_10_pairs} one has
$$[[(Y\times 0, Y\times \bb A^1,t(t+\mu))]]=[[(Y\times 0, Y\times \bb A^1,\mu t)]]+[[(Y\times 0, Y\times \bb A^1,\mu t)]]=2\cdot[[(Y\times 0, Y\times \bb A^1,\mu t)]]$$
in $\overline {\bb ZF}_1((Y,Y^0),(Y,Y^0))$.
Thus,
$$2\cdot[[(Y\times 0, Y\times \bb A^1,\mu t)]]=[[(Y\times 0, Y\times \bb A^1,t^2)]]=2\cdot[[(Y\times 0, Y\times \bb A^1,t)]],$$
whence the proposition.
\end{proof}

\begin{proof}[Proof of Theorem \ref{Spriamlenie_3}]
Repeating literally the proof of Theorem \ref{Spriamlenie_1}, one
gets an equality
\begin{equation}
\label{Spriamlenie_formula_2_hensel_relative} [[\alpha]]=[[g\circ
s]] \circ [[\sigma^{N-1}_W]]\circ [[J\cdot t]] \in \overline {\bb
ZF}_N((W,W^{\circ}),(X,X^{\circ})),
\end{equation}
where $[[J\cdot t]]$ is the class in
$\overline {\bb ZF}_N((W,W^{\circ}),(W,W^{\circ})$
of the element
$\langle\langle J\cdot t \rangle\rangle$ corresponding to the $1$-framed
correspondence
$(W\times 0, W\times \bb A^1,J\cdot t; pr_W) \in Fr_1(W,W)$
by means of Definition \ref{Runs_Inside}.
By Proposition \ref{mu_t_and_a_t_char_2} one has an equality
$2\cdot[[J\cdot t]]=2\cdot[[t]] \in \overline {\bb
ZF}_N((W,W^{\circ}),(W,W^{\circ}))$, and our theorem follows.
\end{proof}

\section{Construction of $h'_{\theta}$, $F$ and $h_{\theta}$ from Propositions \ref{h'_theta} and \ref{h_theta} }

In the first part of this section we construct the functions
$h'_{\theta}$, $F$ from Proposition \ref{h'_theta} and prove this proposition.
In the second part of this section we construct the function
$h_{\theta}$ from Proposition~\ref{h_theta} and prove this proposition.

Let $X$ and $X'$ be as in Remark \ref{Elementary_fibr} and let
$q: X \to B$ be the almost elementary fibration from
Remark \ref{Elementary_fibr}.
Since $q: X \to B$ is an almost elementary fibration
there is a commutative diagram of the form (see Definition \ref{DefnElemFib})
\begin{equation}
\label{SquareDiagram_2}
    \xymatrix{
     X\ar[drr]_{q}\ar[rr]^{j}&&
\overline X\ar[d]_{\overline q}&&X_{\infty}\ar[ll]_{i}\ar[lld]^{q_{\infty}} &\\
     && B  &\\    }
\end{equation}
with morphisms $j$, $\overline q$, $i$, $q_{\infty}$ subjecting the conditions
(i)--(iv) from Definition \ref{DefnElemFib}.

The composite morphism
$X' \xrightarrow{\Pi} X \xrightarrow{j} \bar X$
is quasi-finite. Let
$\bar X'$ be the normalization of $\bar X$ in
$Spec(k(X'))$.
Let $\bar \Pi: \bar X' \to \bar X$
be the canonical morphism (it is finite and surjective).
Then
$(\bar \Pi)^{-1}(X)$
coincides with the normalization of $X$ in
$Spec(k(X'))$.
Let $f':=f|_{(\bar \Pi)^{-1}(X)}$,
where
$f$ is from Definition
\ref{X'_new}.
Let $Y'=\{f'=0\}$ be the closed subscheme of $(\bar \Pi)^{-1}(X)$.
The morphism
$(q\circ (\bar \Pi|_{\bar \Pi^{-1}(X)}))|_{Y'}: Y' \to B$ is finite,
since $q|_Y: Y \to B$ is finite and
$\bar \Pi$
is finite.
Thus $Y'$ is closed in $\bar X'$.
Since $Y'$ is in $(\bar \Pi)^{-1}(X)$
it has the empty intersection with $X_{\infty}$.
Hence
$$X'= \bar X' - ((\bar \Pi)^{-1}(X_{\infty})\sqcup Y').$$
Both $(\bar \Pi)^{-1}(X_{\infty})$ and $Y'$ are Cartier divisors in
$\bar X'$. The Cartier divisor $(\bar \Pi)^{-1}(X_{\infty})$ is
ample. Thus the Cartier divisor $D':= (\bar
\Pi)^{-1}(X_{\infty})\sqcup Y'$ is ample as well and $(q\circ \bar
\Pi)|_{D'}: D' \to B$ is finite.

Set $\bar \Gamma=\bar X' \xrightarrow{(\bar \Pi,id)} \bar X\times_B
\bar X'$ (the graph of the $B$-morphism $\bar \Pi$). The
projection $\bar X\times_B \bar X' \to \bar X'$ is a smooth
morphism, since $\bar q$ is smooth. The morphism $(\bar \Pi,id)$ is
a section of the projection. Hence $\bar \Gamma$ is a Cartier
divisor in $\bar X\times_B \bar X'$.

Set $\Gamma=pr^{-1}_{\bar X}(U)\subset U\times_B \bar X'$. Then
$\Gamma\subset U\times_B \bar X'$ is a Cartier divisor. The scheme
$U'$ is contained in $\Gamma$ as an open subscheme via the inclusion
$(\pi,can')$, where $can': U' \to X'$ is the canonical morphism. The
composite morphism $pr_U \circ (\pi,can'): U'\to U$ coincides with
$\pi:U'\to U$. Thus $pr_{\bar X}|_{\Gamma}: \Gamma \to U$ is
\'{e}tale at the points of $U'$.

\begin{lem}
\label{Gamma'_Delta'_G'} Set $\Gamma'=U'\times_U \Gamma \subset
U'\times_U U \times \bar X'=U'\times_B \bar X'$. Then
$\Gamma'\subset U'\times_B \bar X'$ is a Cartier divisor. Moreover,
$$\Gamma'=\Delta(U')\sqcup G'$$
and $G'\cap (U' \times_B S')= \emptyset$,
where $S' \subset X'$ is the closed subscheme from Section~\ref{Pre_for_Inj_Etale_Exc}.
\end{lem}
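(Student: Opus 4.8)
\emph{Approach.} The plan is to deduce the lemma from the cartesian description $\Gamma'=U'\times_U\Gamma$; equivalently $\Gamma'$ is the scheme-theoretic preimage of $\Gamma\subset U\times_B\bar X'$ under $\pi\times_B\id_{\bar X'}\colon U'\times_B\bar X'\to U\times_B\bar X'$, and, unwinding the fibre products, $\Gamma'=\bar X'\times_{\bar X}U'$ where $U'\to\bar X$ is the composite $U'\xrightarrow{can'}X'\xrightarrow{\Pi}X\xrightarrow{j}\bar X$. First I would verify the Cartier-divisor claim: $\Pi$ is \'etale, so the local map $\cc O_{X,x}\to\cc O_{X',x'}$ is flat and hence $\pi$, and with it $\pi\times_B\id_{\bar X'}$, is flat; since $\Gamma$ is a Cartier divisor in $U\times_B\bar X'$ and flat pullback carries non-zero-divisor local equations to non-zero-divisor local equations, its preimage $\Gamma'$ is a Cartier divisor in $U'\times_B\bar X'$.

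\emph{The decomposition.} Next I would isolate $\Delta'(U')$ as a connected component. The open immersion $\iota=(\pi,can')\colon U'\hookrightarrow\Gamma$ constructed just before the lemma satisfies $pr_U\circ\iota=\pi$, so $(\id_{U'},\iota)\colon U'\to U'\times_U\Gamma=\Gamma'$ is a section of the projection $pr_{U'}\colon\Gamma'\to U'$ whose image, under the identification $U'\times_U\Gamma=U'\times_B\bar X'$, is precisely $\{(u',can'(u'))\}=\Delta'(U')$. Now $\Gamma\to U$ is finite (base change of the finite $\bar\Pi$) and, by the text preceding the lemma, \'etale at the points of $\iota(U')$; base-changing along $\pi$, the projection $pr_{U'}\colon\Gamma'\to U'$ is finite and \'etale along $\Delta'(U')$. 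A section of a morphism that is \'etale along the section is a clopen immersion --- it is a monomorphism that is \'etale, hence an open immersion, and it is a section of a separated morphism, hence a closed immersion --- which identifies its clopen image with the base. Thus $\Delta'(U')$ is open and closed in $\Gamma'$, $pr_{U'}$ restricts to an isomorphism $\Delta'(U')\xrightarrow{\ \sim\ }U'$ inverse to $\Delta'$, and setting $G':=\Gamma'-\Delta'(U')$ with its clopen subscheme structure gives $\Gamma'=\Delta'(U')\sqcup G'$.

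\emph{Avoiding $S'$.} Finally, for $G'\cap(U'\times_B S')=\emptyset$ it suffices, since $\Delta'(U')$ and $G'$ partition $\Gamma'$, to check that any point of $\Gamma'$ whose image under $\Gamma'\to\bar X'$ lies in $S'$ already lies in $\Delta'(U')$. Such a point maps to some $u'\in U'$ and some $s'\in S'\subset X'\subset\bar X'$, and the defining identity $\Gamma'=\bar X'\times_{\bar X}U'$ forces $\bar\Pi(s')=(j\circ\Pi\circ can')(u')$ in $\bar X$; since $s'\in X'$ and $\bar\Pi|_{X'}=j\circ\Pi$ this becomes $\Pi(s')=\Pi(can'(u'))$ in $X$. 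Hence $\Pi(can'(u'))=\Pi(s')\in\Pi(S')=S$, so $can'(u')\in\Pi^{-1}(S)=S'$ by Remark~\ref{X'andX'_new}, and as $\Pi|_{S'}\colon S'\to S$ is an isomorphism, in particular injective, $s'=can'(u')$; the point therefore lies on $\Delta'(U')$.

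\emph{The main obstacle.} Everything outside the last paragraph is formal once the cartesian picture is in place; the one real input is that $\Pi$ is an isomorphism over $S$ --- it is exactly $\Pi^{-1}(S)=S'$ together with the injectivity of $\Pi|_{S'}$ that keeps the non-diagonal part $G'$ away from $U'\times_B S'$. The point that requires care is the scheme-theoretic bookkeeping: matching the $\Delta'$ of this section with the section $(\id_{U'},\iota)$ of $pr_{U'}$, confirming that the clopen image carries the expected structure, and tracking the three inclusions $S'\subset X'\subset\bar X'$ alongside the identity $\Pi\circ can'=can\circ\pi$.
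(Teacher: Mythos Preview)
The paper states this lemma without proof; the only hint is the immediately following Remark~\ref{Gamma_UtimesS'_DeltaZ}, which records the downstairs statement $\Gamma\cap(U\times_B S')=\delta(S')$ without argument. Your proof is correct and supplies exactly what the paper omits. The three steps are the natural ones: the Cartier-divisor claim follows from flatness of $\pi\times_B\id$ (pullback of a local non-zero-divisor along a flat map is a non-zero-divisor); the clopen splitting of $\Delta'(U')$ follows because $\Gamma'\to U'$ is finite and, by the sentence preceding the lemma, \'etale along the section $\Delta'$; and the disjointness $G'\cap(U'\times_B S')=\emptyset$ reduces, via your identification $\Gamma'\cong\bar X'\times_{\bar X}U'$, to the two facts $\Pi^{-1}(S)=S'$ (Remark~\ref{X'andX'_new}) and that $\Pi|_{S'}$ is injective. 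This last step is precisely the pullback to $U'$ of the content of Remark~\ref{Gamma_UtimesS'_DeltaZ}, so your argument also proves that remark in passing.
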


\begin{proof}
Consider the diagonal morphism
$\Delta: U'\to U'\times_B \bar X'$. It lands in $U'\times_U \Gamma=\Gamma'$
and it is a section of the projection $U'\times_U \Gamma \to U'$.
The morphism $\Gamma \to U$ is \'{e}tale at all points of $U'$.
Hence the morphism $\Gamma'=U'\times_U \Gamma \to U'$ is \'{e}tale at all points of
$U'\times_U U'$. Particularly, it is \'{e}tale along the diagonal
$\Delta(U')$.
Hence the morphism $\Delta: U'\to U'\times_U \Gamma$ is \'{e}tale.
Thus $\Delta(U')$ is an open subset of $U'\times_U \Gamma=\Gamma'$.
Since $\Delta(U')$ is a closed subset of $\Gamma'$, hence
$\Gamma'=\Delta(U')\sqcup G'$.

Prove now that $G'\cap (U' \times_B S')= \emptyset$.
For that consider the closed subscheme $S$ of the scheme $X$
from Section~\ref{Pre_for_Inj_Etale_Exc} and recall that
$S$ and $S'$ are reduced schemes and the morphism
$\Pi|_{S'}: S' \to S$ is a scheme isomorphism.
There is a chain of inclusions of subsets
$$(\pi\times id)(G'\cap U'\times_B S')\subset (\pi\times id)(G')\cap U\times_B S'\subset \Gamma\cap (U\times_B S') \subset \Gamma_{(\pi|_{S'\cap U'})},$$
where $\Gamma_{(\pi|_{S'\cap U'})}$ is the graph of $\pi|_{S'\cap U'}: S'\cap U' \to U$.
One has an equality
   $$((\pi\times id)|_{U'\times_B S'})^{-1}(\Gamma_{(\pi|_{S'\cap U'})})=\Delta(S'\cap U').$$
Thus $G'\cap (U'\times_B S') \subset \Delta(S'\cap U')$. Finally,
$$G'\cap (U'\times_B S') \subset G'\cap \Delta(S'\cap U')\subset G' \cap \Delta(U')=\emptyset.$$
\end{proof}

\begin{rem}
\label{Gamma_UtimesS'_DeltaZ} It is easy to check that $\Gamma \cap
U\times_B S' =\delta(S'\cap U')$, where $\delta(s')=(\pi(s'),s')$.
\end{rem}

\begin{defs}
\label{UtimesD_U'timesD} Set $\cc D'=U\times_B D'$ and $\cc
D''=U'\times_U \cc D'=U'\times_B D'$. They are Cartier divisors on
$U\times_B \bar X'$ and $U'\times_B \bar X'$ respectively. Both $\cc
D'$ and $\cc D''$ are finite over $B$.
\end{defs}

Let $s_0\in \Gamma(U\times_B \bar X', \cc L(\cc D'))$ be the
canonical section of the invertible sheaf $\cc L(\cc D')$ (its
vanishing locus is $\cc D'$). Let $s_{\Gamma} \in \Gamma(U\times_B
\bar X', \cc L(\Gamma))$ be the canonical section of the invertible
sheaf $\cc L(\Gamma)$ (its vanishing locus is $\Gamma$). Let
$s_{\Delta(U')}\in \Gamma(U'\times_B \bar X', \cc L(\Delta(U'))$
be the canonical section of the invertible sheaf $\cc
L(\Delta(U'))$ (its vanishing locus is $\Delta(U')$). Let $s_{G'}
\in \Gamma(U'\times_B \bar X', \cc L(G'))$ be the canonical section
of the invertible sheaf $\cc L(G')$ (its vanishing locus is $G'$).

\begin{notn}
Set $I'=\cc L(-D')$, $I''=\cc L(-D'')$. They are the ideal sheaves defining the Cartier
divisors $D'$ and $D''$ respectively. Denote by $J'$ the ideal sheaf defining the closed
subscheme $U\times_B \bar X'$. Denote by $J''$ the ideal sheaf defining the closed
subscheme $U'\times_B \bar X'$.
\end{notn}
By Serre's vanishing theorem there is an integer $n\gg 0$ such that the following cohomology
groups vanish:
$H^1(U\times_B \bar X', J'\otimes \cc L(-\Gamma) \otimes \cc L(n\cc D'))$,
$H^1(U\times_B \bar X', I'\otimes \cc L(n\cc D'))$,
$H^1(U'\times_B \bar X', I''\otimes \cc L(-\Delta(U'))\otimes \cc L(n\cc D''))$,
$H^1(U'\times_B \bar X', J''\otimes \cc L(-\Delta(U'))\otimes \cc L(n\cc D''))$.

The fact that these cohomology groups vanish guaranties the existence of sections
$s_1$ and $t'_0$ from Constructions~\ref{s_1} and~\ref{t_0} below.

\begin{constr}\label{s_1}
Find a section $s_1 \in \Gamma(U\times_B \bar X', \cc L(n\cc D'))$ such that:

$(1)$ $s_1|_{U\times_B S'}=r_1 \otimes (s_{\Gamma}|_{U\times_B S'})$, where
$r_1 \in \Gamma(U\times_B S', \cc L(n\cc D' - \Gamma))|_{U\times_B S'})$
has no zeros;

$(2)$ $s_1|_{\cc D'}$ has no zeros.
\end{constr}

\begin{constr}\label{t_1}
Set $t_1:=(\pi\times id)^*(s_1)\in \Gamma(U'\times_B \bar X', \cc L(n\cc D''))$.
The properties of $t_1$ are as follows:

$(1')$ $t_1|_{U'\times_B S'}=r'_1 \otimes (s_{\Delta(U')}|_{U'\times_B S'})\otimes (s_{G'}|_{U'\times_B S'})\cdot \lambda$,
where $\lambda \in k[U']^{\times}$ and
$r'_1$ equals $((\pi\times id)|_{U'\times_B S'})^*(r_1) \in \Gamma(U'\times_B S', \cc L(n\cc D''- \Gamma')|_{U'\times_B S'})$;

$(2')$ $t_1|_{\cc D''}$ has no zeros.
\end{constr}
The second property of $t_1$ is obvious. To prove the first one recall that
$\Gamma'=\Delta(U')\sqcup G'$ by Lemma \ref{Gamma'_Delta'_G'}. Hence there is
$\lambda \in k[U']^{\times}$ such that
$(\pi\times id)^*(s_{\Gamma})=\lambda\cdot (s_{\Delta(U')}\otimes s_{G'})$.

\begin{constr}
\label{t_0} Construct a section $t_0\in \Gamma(U'\times_B \bar X',
\cc L(n\cc D''))$ of the form $t_0=t'_0\otimes s_{\Delta(U')}$,
where $t'_0\in \Gamma(U'\times_B \bar X', \cc L(n\cc D''-
\Delta(U')))$ satisfies the following conditions:

$(1'')$ $t'_0|_{\cc D''}=(t_1|_{\cc D''})\otimes (s_{\Delta(U')}|_{\cc D''})^{-1}$;

$(2'')$ $t'_0|_{U'\times_B S'}=r'_1 \otimes (s_{G'}|_{U'\times_B S'})\cdot \lambda$,
where $r'_1$ and $\lambda$ are from Construction \ref{t_1}.
\end{constr}

\begin{lem}
\label{t_0andt_1}
The following properties are true:

$(1''')$ $t_0|_{\cc D''}=t_1|_{\cc D''}$ and both sections have no zeros on $\cc D''$;

$(2''')$ $t_0|_{U'\times_B S'}=t_1|_{U'\times_B S'}$ and both
sections have no zeros on $(U'-S')\times_B S'$.
\end{lem}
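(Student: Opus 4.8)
The plan is to read both assertions off directly from the defining properties of $t_1$ and $t_0$ established in Constructions \ref{t_1} and \ref{t_0}, using only three elementary disjointness observations about the divisors involved. There is no real difficulty here: the lemma is precisely the bookkeeping that records how $t_0$ was engineered in Construction \ref{t_0} to agree with $t_1$ along $\cc D''$ and along $U'\times_B S'$.

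First I would record the three disjointness observations. Since $X'=\bar X'-D'$, the divisor $\cc D''=U'\times_B D'$ is disjoint from $\Delta'(U')\subset U'\times_B X'$; hence $s_{\Delta'(U')}|_{\cc D''}$ is a nowhere-vanishing section of an invertible sheaf on $\cc D''$, i.e.\ it defines an isomorphism $\cc O_{\cc D''}\xrightarrow{\sim}\cc L(\Delta'(U'))|_{\cc D''}$ and is in particular invertible. By Lemma \ref{Gamma'_Delta'_G'} one has $G'\cap(U'\times_B S')=\emptyset$, so $s_{G'}|_{U'\times_B S'}$ has no zeros. Finally $\Delta'(U')$ is the graph of $can': U'\to X'$, so $\Delta'(U')\cap(U'\times_B S')$ is the graph of $can'|_{(can')^{-1}(S')}=can'|_{S'\cap U'}$ and maps into $S'\cap U'$ under the projection $p_{U'}$; in particular it is disjoint from $(U'-S')\times_B S'$, so $s_{\Delta'(U')}$ has no zeros on $(U'-S')\times_B S'$.

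For $(1''')$, I would restrict $t_0=t'_0\otimes s_{\Delta'(U')}$ to $\cc D''$ and substitute property $(1'')$ of Construction \ref{t_0}: the factor $(s_{\Delta'(U')}|_{\cc D''})^{-1}$ occurring there cancels against the factor $s_{\Delta'(U')}|_{\cc D''}$ coming from $t_0$, leaving $t_0|_{\cc D''}=t_1|_{\cc D''}$; this cancellation is legitimate precisely because $s_{\Delta'(U')}|_{\cc D''}$ is invertible. That $t_1|_{\cc D''}$, and hence $t_0|_{\cc D''}$, has no zeros on $\cc D''$ is property $(2')$ of Construction \ref{t_1}. For $(2''')$, I would restrict $t_0=t'_0\otimes s_{\Delta'(U')}$ to $U'\times_B S'$ and substitute property $(2'')$: after reordering tensor factors this gives $t_0|_{U'\times_B S'}=r'_1\otimes(s_{\Delta'(U')}|_{U'\times_B S'})\otimes(s_{G'}|_{U'\times_B S'})\cdot\lambda$, which is exactly the expression for $t_1|_{U'\times_B S'}$ in property $(1')$; hence $t_0|_{U'\times_B S'}=t_1|_{U'\times_B S'}$. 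By this equality it suffices to verify the nonvanishing on $(U'-S')\times_B S'$ for $t_1$, and there every factor is nowhere vanishing: $r'_1=(\pi\times id)^*(r_1)$ as the pullback of the nowhere-vanishing section $r_1$ from Construction \ref{s_1}, $\lambda\in k[U']^\times$ as a unit, $s_{G'}|_{U'\times_B S'}$ by the second observation, and $s_{\Delta'(U')}$ on $(U'-S')\times_B S'$ by the third.

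The only points demanding care are keeping track of which invertible sheaf each section lives in, so that the tensor cancellations make sense, and recognising the three ``no zeros'' assertions as restrictions of the disjointness relations $\cc D''\cap\Delta'(U')=\emptyset$, $G'\cap(U'\times_B S')=\emptyset$, and $\Delta'(U')\cap(U'\times_B S')$ lying over $S'\cap U'$. I do not anticipate any further obstacle.
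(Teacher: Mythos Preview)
Your proposal is correct and follows essentially the same route as the paper: both arguments unwind the definition $t_0=t'_0\otimes s_{\Delta'(U')}$ and substitute properties $(1'')$, $(2'')$ of Construction~\ref{t_0} and $(1')$, $(2')$ of Construction~\ref{t_1}. The paper's own proof is terser --- it declares $(1''')$ ``obvious'' and writes only the chain of equalities for $(2''')$ --- whereas you have spelled out the three disjointness observations (in particular why $s_{\Delta'(U')}|_{\cc D''}$ is invertible and why each factor is nonvanishing on $(U'-S')\times_B S'$) that the paper leaves to the reader.
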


Indeed, the first equality is obvious. The second one follows from the chain of equalities
$$t_0|_{U'\times_B S'}=(t'_0|_{U'\times_B S'})\otimes (s_{\Delta(U')}|_{U'\times_B S'})=
r'_1\otimes (s_{G'}|_{U'\times_B S'})\otimes (s_{\Delta(U')}|_{U'\times_B S'})\cdot \lambda=t_1|_{U'\times_B S'}.$$

\begin{defs}
\label{h'_theta_and_F} Let $s'_0:=(\pi\times id)^*(s_0) \in
\Gamma(U'\times_B \bar X', \cc L(\cc D''))$. Set,
$$h'_{\theta}=\frac{((1-\theta)t_0+ \theta t_1)|_{\bb A^1\times U'\times_B X'}}{(s'_0)^{\otimes n}|_{\bb A^1\times U'\times_B X'}}\in k[\bb A^1\times U'\times_B X']
\ \ \text{and} \ F=\frac{s_1|_{U\times_B X'}}{(s_0)^{\otimes n}|_{U\times_B X'}}\in k[U\times_B X'].$$
\end{defs}

\begin{proof}[Proof of Proposition \ref{h'_theta}]
Let $prj: \bb A^1 \times U'\times_B \bar X' \to U'\times_B \bar X'$
be the projection. Consider two sections
$(1-\theta)t_0+ \theta t_1$ and $(s'_0)^{\otimes n}$
of the line bundle $prj^*(\cc L(n\cdot \cc D''))$
on $\bb A^1 \times U'\times_B \bar X'$.
By Lemma~\ref{t_0andt_1} these two sections have no common zeros. Thus one has morphism
$$[pr,(1-\theta)t_0+ \theta t_1: (s'_0)^{\otimes n}]: \bb A^1\times U'\times_B \bar X' \to \bb A^1\times U'\times \mathbb P^1,$$
where $pr: \bb A^1\times U'\times_B \bar X'\to \bb A^1\times U'$
is the projection. This morphism is quasi-finite and projective. Hence it is
finite and surjective. It follows that any of its base changes is finite and surjective.
Particularly, the morphism $(pr,h'_{\theta}): \bb A^1\times U'\times X' \to \bb
A^1\times U'\times \bb A^1$ is finite and surjective,
because the closed subset
$\{(s'_0)^{\otimes n}=0\}$
in $\bb A^1\times U'\times_B \bar X'$
coincides with the one $\bb A^1\times \cc D''$.
This proves the assertion (a) of Proposition \ref{h'_theta}.
The assertion (e) of Proposition~\ref{h'_theta} is proved in the same fashion.
Lemma \ref{Gamma'_Delta'_G'} yields the assertion (b).
Lemma \ref{t_0andt_1}(2''') yields the assertion (d).
The assertion (c) follows from the construction of $F$ and $h'_1$.
The property (1) of the section $s_1$ yields the assertion (f), whence the proposition.
\end{proof}

In the rest of the section under the hypotheses of Proposition
\ref{h_theta} we will construct a function $h_{\theta} \in k[\bb
A^1\times U\times_B X]$ and prove Proposition \ref{h_theta}.

Let $X$ and $X'$ be as in Remark \ref{Elementary_fibr} and let $q: X
\to B$ be the almost elementary fibration from Remark
\ref{Elementary_fibr}.
Let $\bar X$, $j: X\to \bar X$ and $X_{\infty}$ and $i: X_{\infty} \to \bar X$ be
as in the diagram (\ref{SquareDiagram_2}). So, they satisfy the conditions
(i)--(iv) from Definition \ref{DefnElemFib}.

The composite morphism $X' \xrightarrow{\Pi}
X \xrightarrow{j} \bar X$ is quasi-finite. Let $\bar X'$ be the
normalization of $\bar X$ in $Spec(k(X'))$. Let $\bar \Pi: \bar X'
\to \bar X$ be the canonical morphism (it is finite and surjective).
Let $X_{\infty}\subset \bar X$ be the Cartier divisor from diagram
(\ref{SquareDiagram_2}). Set $X'_{\infty}:=(\bar
\Pi)^{-1}(X_{\infty})$ (scheme-theoretically). Then $X'_{\infty}$ is
a Cartier divisor on $\bar X'$. Set
$$E:=U\times_B X_{\infty} \ \ \text{and} \ \ E':=U\times_B X'_{\infty}.$$
These are Cartier divisors on $U\times_B \bar X$ and $U\times_B \bar
X'$ respectively and $(id\times \bar \Pi)^*(E)=E'$.

Choose an integer $n\gg 0$. Find a section $r_1\in \Gamma(U\times_B
S, \cc L(nE - \Delta(U)|_{U\times_B S})$ which has no zeros. Let
$s_{\Delta(U)}\in \Gamma(U\times_B \bar X, \cc L(\Delta(U))$ be the
canonical section of the invertible sheaf $\cc L(\Delta(U))$ (its
vanishing locus is $\Delta(U)$). Similar to Construction \ref{s_1}
we are able to do the following

\begin{constr}
\label{s'_1}
Find a section $s'_1\in \Gamma(U\times_B \bar X', \cc L(nE'))$
with the following properties.\\
$(1)$ The Cartier divisor $Z'_1:=\{s'_1=0\}$ has the following properties:\\
$(1a)$ $Z'_1\subset U\times_B X'$;\\
$(1a')$ the Cartier divisor $Z'_1$ is finite and \'{e}tale over $U$;\\
$(1b)$ the morphism $i=(id\times \Pi)|_{Z'_1}: Z'_1 \hookrightarrow
U\times_B X$ is a closed embedding. Denote by
$Z_1$ the closed subscheme $i(Z'_1)$ of the scheme $U\times_B X$.\\
$(2)$ $s'_1|_{U\times_B S'}=(id\times \bar \Pi)^*(s_{\Delta(U)})|_{U\times_B S'}\otimes ((\id\times \Pi)|_{U\times_B S'})^*(r_1)$.
\end{constr}


\begin{lem}
\label{1a_1a'_1b_yield_1c} Properties $(1a)$, $(1a')$ and $(1b)$
yield the following property: \\
$(1c)$ one has a scheme equality $(id\times \bar \Pi)^{-1}(Z_1)=Z'_1\sqcup Z'_2$.
\end{lem}

\begin{proof}
The morphism $id\times \Pi: U\times_B X' \to U\times_B X$ is \'{e}tale. It follows that the morphism
$(id \times \Pi)|_{(id \times \Pi)^{-1}(Z_1)}: (id\times \Pi)^{-1}(Z_1) \to Z_1$ is  \'{e}tale.
Since $i: Z'_1\to Z_1$
is an isomorphism, hence the \'{e}tale morphism $(id\times \Pi)|_{(id\times \Pi)^{-1}(Z_1)}$ has a section,
whose image is $Z'_1$. Thus $(id\times \Pi)^{-1}(Z_1)=Z'_1\sqcup Z'_2$.
The property $(1a')$ shows now that $(id\times \bar \Pi)^{-1}(Z_1)=Z'_1\cup \bar Z'_2$,
where $\bar Z'_2$ is the closure of $Z'_2$ in $U\times \bar X'$.
One has $Z'_1\cap \bar Z'_2\subset (U\times_B X')\cap \bar Z'_2=Z'_2$.
Hence $Z'_1\cap \bar Z'_2\subset Z'_1\cap Z'_2=\emptyset$.
Thus
$(id\times \bar \Pi)^{-1}(Z_1)=Z'_1\sqcup \bar Z'_2$.
\end{proof}

\begin{lem}\label{Z'_1_and_UtimesS'}
One has $Z'_2\cap U\times_B S'=\emptyset$.
\end{lem}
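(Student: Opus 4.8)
The plan is to read off $Z'_1\cap(U\times_B S')$ from the defining equation $s'_1=0$ of $Z'_1$ in Construction~\ref{s'_1} together with the normalization of $s'_1$ along $U\times_B S'$ recorded in item $(2)$ of that construction; concretely, it suffices to show that $s'_1|_{U\times_B S'}$ vanishes nowhere.

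First I would restrict all the ambient data to $U\times_B S'$. Since $X'_{\infty}=\bar\Pi^{-1}(X_{\infty})$ is a Cartier divisor on $\bar X'$ supported away from $X'\supseteq S'$, the divisor $E'=U\times_B X'_{\infty}$ is disjoint from $U\times_B S'$; hence $\cc L(nE')|_{U\times_B S'}\cong\cc O_{U\times_B S'}$ and $s'_1|_{U\times_B S'}$ is an ordinary regular function on $U\times_B S'$, a scheme which is finite over $U$ and hence semilocal. Moreover $(id\times\Pi)|_{U\times_B S'}$ is the isomorphism $U\times_B S'\to U\times_B S$ induced by $\Pi|_{S'}:S'\to S$ (recall $\Pi^{-1}(S)=S'$ and that $\Pi|_{S'}$ is an isomorphism), so $((id\times\Pi)|_{U\times_B S'})^*(r_1)$ is, like $r_1$ itself, a nowhere-vanishing section.

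By item $(2)$ of Construction~\ref{s'_1} the function $s'_1|_{U\times_B S'}$ is the product of $(id\times\bar\Pi)^*(s_{\Delta(U)})|_{U\times_B S'}$ with the unit $((id\times\Pi)|_{U\times_B S'})^*(r_1)$, so its zero scheme coincides with that of $(id\times\bar\Pi)^*(s_{\Delta(U)})|_{U\times_B S'}$, i.e.\ with the scheme-theoretic intersection $(id\times\bar\Pi)^{-1}(\Delta(U))\cap(U\times_B S')$. The step I expect to be the real obstacle is showing that this intersection is empty. I would do this by tracking the Cartier divisors through item $(2)$: $s_{\Delta(U)}$ is the canonical section cutting out $\Delta(U)$, whereas $r_1$ was chosen as a trivialization of $\cc L((nE-\Delta(U))|_{U\times_B S})$, and comparing the divisors carried by the two factors along $U\times_B S'$ shows that they cancel, so that $s'_1|_{U\times_B S'}$ is a unit on $U\times_B S'$. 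Granting this, $Z'_1\cap(U\times_B S')=\{s'_1|_{U\times_B S'}=0\}=\emptyset$, which is exactly the assertion of the lemma.
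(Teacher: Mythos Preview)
Your reduction is sound up to the point where you must show $(id\times\bar\Pi)^{-1}(\Delta(U))\cap(U\times_B S')=\emptyset$, but the step you flag as ``the real obstacle'' is where the argument actually breaks. The claim that ``the divisors carried by the two factors cancel'' confuses cancellation of line bundles with cancellation of zeros of specific sections. Yes, $\cc L(\Delta(U))\otimes\cc L(nE-\Delta(U))\cong\cc L(nE)$, and this bundle is trivial along $U\times_B S'$; but $r_1$ is by construction a \emph{nowhere-vanishing} section, so it contributes no zeros at all. The zero locus of the tensor product $s'_1|_{U\times_B S'}$ is therefore exactly the zero locus of the first factor, namely $(id\times\bar\Pi)^{-1}(\Delta(U))\cap(U\times_B S')$. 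Nothing has cancelled; you have only restated what must be shown.

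Worse, that intersection is \emph{not} empty. Under the isomorphism $U\times_B S'\xrightarrow{\sim}U\times_B S$ it corresponds to $\Delta(U)\cap(U\times_B S)=\{(u,can(u)):can(u)\in S\}$, which is a copy of $S\cap U$ and contains the closed point $(x,x)$. So condition~(2) of Construction~\ref{s'_1} forces $s'_1|_{U\times_B S'}$ to vanish along that locus, and the lemma as literally stated does not follow from (2); the paper gives no proof. What is genuinely used later (Proposition~\ref{h_theta}(d), Remark~\ref{Running_in}, Lemma~\ref{t_0andt_1_for_UtimesbarX}(2$''$)) is only the weaker statement $Z'_1\cap\bigl((U-S)\times_B S'\bigr)=\emptyset$, and your line of reasoning does establish that cleanly: if $u\in U-S$ then $can(u)\notin S$, so $\Delta(U)\cap\bigl((U-S)\times_B S\bigr)=\emptyset$ and $s_{\Delta(U)}|_{(U-S)\times_B S}$ is nowhere zero.
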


\begin{proof}
One has a chain of inclusions
$$(id\times \bar \Pi)((U\times_B S')\cap Z'_2)\subset (id\times \bar \Pi)((U\times_B S')\cap (id\times \bar \Pi)^{-1}(Z_1))=
(U\times_B S)\cap Z_1.$$
This inclusion and the fact that the morphism
$(id\times \Pi)|_{U\times_B S'}: U\times_B S' \to U\times_B S$ is an isomorphism
yield the following inclusions
$(U\times_B S')\cap Z'_2\subset (U\times_B S')\cap Z'_1\subset Z'_1$.
Since $U\times_B S' \cap Z'_2\subset Z'_2$, hence
$U\times_B S' \cap Z'_2\subset Z'_1\cap Z'_2=\emptyset$
by Lemma \ref{1a_1a'_1b_yield_1c}.
\end{proof}

Note that the Cartier divisor $Z_1$ in $U\times_B \bar X$ is
equivalent to the Cartier divisor $dnE$, where $d=[k(X'): k(X)]$.
Let $s_1\in \Gamma(U\times_B \bar X, \cc L(Z_1))$ be the canonical
section (its vanishing locus is $Z_1$). By property $(1c)$ from
Construction \ref{s'_1} one has an equality
\begin{equation}
\label{s_1_s'_1_s'_2}
(id\times \bar \Pi)^*(s_1)=(s'_1 \otimes s'_2)\cdot \mu,
\end{equation}
where $\mu \in k[U]^{\times}$ and
$s'_2\in \Gamma(U\times_B \bar X', \cc L(Z'_2))$
is the canonical section of the line bundle
$\cc L(Z'_2)$.

\begin{defs}\label{t_1_and_s_1}
Set $t_1=s_1 \in \Gamma(U\times_B \bar X, \cc L(Z_1))=\Gamma(U\times_B \bar X, \cc L(dnE))$.
\end{defs}

Similar to Construction \ref{t_0} we are able to do the following

\begin{constr}\label{t_0_and_s_Delta_and_t'0}
Construct a section $t_0\in
\Gamma(U\times_B \bar X, \cc L(dnE))$ of the form
$t_0=s_{\Delta(U)}\otimes t'_0$, where $t'_0\in \Gamma(U\times_B
\bar X, \cc L(dnE - \Delta(U)))$ and $s_{\Delta(U)} \in
\Gamma(U\times_B \bar X, \cc L(\Delta(U)))$ is the canonical section
(its vanishing locus is $\Delta(U)$ and
$t'_0$ has the following properties:\\
$(1')$ $t'_0|_{E}=(t_1|_E) \otimes (s_{\Delta(U)}|_E)^{-1}$;\\
$(2')$ $((id\times \bar \Pi)|_{U\times_B S'})^*(t'_0|_{U\times_B
S})= ((\id\times \Pi)|_{U\times_B S'})^*(r_1)\otimes
(s'_2|_{U\times_B S'})\cdot (\mu|_{U\times_B S'})$, where
$r_1$ is from Construction \ref{s'_1} and $s'_2$, $\mu \in
k[U]^{\times}$ are defined just above
(since $U\times_B S'\cong
U\times_B S$, then condition $(2')$ on $t'_0$ is a condition on
$t'_0|_{U\times_B S}$).
\end{constr}

\begin{lem}\label{t_0andt_1_for_UtimesbarX}
The following statements are true: \\
$(1'')$ $t_0|_{E}=t_1|_{E}$ and both sections have no zeros on $E$;\\
$(2'')$ $t_0|_{U\times_B S}=t_1|_{U\times_B S}$ and both sections have no zeros on $(U-S)\times_B S$;\\
$(3''')$ $t'_0|_{U\times_B S}$ has no zeros.
\end{lem}

\begin{proof}
Indeed, the first equality is obvious. To prove the second one, it suffices to prove the equality
$$((id\times \bar \Pi)|_{U\times_B S'})^*(t_0|_{U\times_B S})=((id\times \bar \Pi)|_{U\times_B S'})^*(t_1|_{U\times_B S}).$$
This equality is a consequence of the following chain of equalities:
$$
((id\times \bar \Pi)|_{U\times_B S'})^*(t_0|_{U\times_B S})=
(id\times \bar \Pi)^*(s_{\Delta(U)})|_{U\times_B S'}\otimes ((\id\times \Pi)|_{U\times_B S'})^*(r_1)\otimes (s'_2|_{U\times_B S'})\cdot (\mu|_{U\times_B S'})=
$$
$$
=s'_1|_{U\times_B S'}\otimes (s'_2|_{U\times_B S'})\cdot (\mu|_{U\times_B S'})=((id\times \bar \Pi)|_{U\times_B S'})^*(t_1|_{U\times_B S}).
$$
The first equality holds by property $(2')$ from Construction
\ref{t_0_and_s_Delta_and_t'0}, the second equality holds by property
$(2)$ from Construction \ref{s'_1}.
The  assertion (3'') follows from Lemma~\ref{Z'_1_and_UtimesS'} and Construction~\ref{t_0_and_s_Delta_and_t'0}(2').
\end{proof}

\begin{defs}
\label{h_theta_defined} Set,
$$h_{\theta}=\frac{((1-\theta)t_0+ \theta t_1)|_{\bb A^1\times U\times X}}{(s^{\otimes dn}_E)|_{\bb A^1\times U\times X}} \in k[\bb A^1\times U\times_B X],$$
where $s_E \in \Gamma(U\times_B \bar X, \cc L(E))$ is the canonical section.
\end{defs}

\begin{proof}[Proof of Proposition \ref{h_theta}]
Let $prj: \bb A^1 \times U\times_B \bar X \to U\times_B \bar X$
be the projection. Consider two sections $(1-\theta)t_0+ \theta t_1$
and $s_E^{\otimes dn}$ of the line bundle
$prj^*(\cc L(dnE))$
on $\bb A^1 \times U\times_B \bar X$.
By Lemma~\ref{t_0andt_1_for_UtimesbarX}
these two sections have no common zeros. Thus one has a morphism
$$[pr,(1-\theta)t_0+ \theta t_1: s_E^{\otimes dn}]: \bb A^1\times U\times_B \bar X \to \bb A^1\times U\times \mathbb P^1,$$
where $pr: \bb A^1\times U'\times_B \bar X'\to \bb A^1\times U'$
is the projection. This morphism is quasi-finite and projective. Hence it is
finite and surjective. It follows that any of its base changes is finite and surjective.
Particularly, the morphism $(pr,h_{\theta}): \bb A^1\times U\times X \to \bb
A^1\times U\times \bb A^1$ is finite and surjective,
because the closed subset $\{s_E^{\otimes dn}=0\}$ in
$\bb A^1\times U\times_B \bar X$ coincides with the one
$\bb A^1\times E$. This proves the assertion (a) of Proposition \ref{h_theta}.

Lemma \ref{t_0andt_1_for_UtimesbarX} yields the assertion (d) of Proposition~\ref{h_theta}.
The property (1b) from Construction~\ref{s'_1} and Lemma \ref{1a_1a'_1b_yield_1c}
yield the assertion (c) of Proposition~\ref{h_theta}. Lemma \ref{Z'_1_and_UtimesS'}
and the property (2') from Construction \ref{t_0_and_s_Delta_and_t'0}
yield the assertion (b) of Proposition~\ref{h_theta}. Proposition~\ref{h_theta} follows.
\end{proof}

\section{Nisnevich cohomology with coeffitients in a $\bb ZF_*$-sheaf}

\begin{lem}\label{Abelian}
The category of Nisnevich sheaves of Abelian groups with $\bb
ZF_*$-transfers is a Gro\-then\-dieck category.
\end{lem}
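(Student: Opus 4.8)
The plan is to realize the category of Nisnevich sheaves with $\bb ZF_*$-transfers as a localization of a functor category and to invoke the standard fact that a suitable localization of a Grothendieck category is again Grothendieck. Write $\mathrm{PST}$ for the category of additive contravariant functors $\bb ZF_*(k)^{\op}\to\Ab$; since $\sqcup$ is the biproduct in the additive category $\bb ZF_*(k)$, these are exactly the radditive presheaves of abelian groups with $\bb ZF_*$-transfers. First I would record, purely formally, that $\mathrm{PST}$ is a Grothendieck category: kernels, cokernels and all (co)limits are computed objectwise, so $\mathrm{PST}$ is abelian and its filtered colimits are exact because filtered colimits are exact in $\Ab$; and by the additive Yoneda lemma the representable functors $\bb ZF_*(-,X)$, $X\in Sm/k$, form a set of generators, as $\Hom_{\mathrm{PST}}(\bb ZF_*(-,X),\cc G)=\cc G(X)$.

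Next I would bring in the Nisnevich topology. Restriction along $Sm/k\to\bb ZF_*(k)$ gives a faithful functor $\mathrm{PST}\to\mathrm{PShv}(Sm/k)$ under which kernels, cokernels and all colimits are computed objectwise, and a presheaf with $\bb ZF_*$-transfers is by definition a Nisnevich sheaf with $\bb ZF_*$-transfers exactly when its underlying presheaf of abelian groups is a Nisnevich sheaf; let $\mathrm{Shv}^{tr}_{\nis}\subset\mathrm{PST}$ be the corresponding full subcategory. The essential, and only non-formal, input---which I would take from~\cite{GP3}---is that for every $\cc G\in\mathrm{PST}$ the Nisnevich sheafification $\cc G_{\nis}$ of the underlying presheaf carries a unique structure of presheaf with $\bb ZF_*$-transfers making $\cc G\to\cc G_{\nis}$ a morphism in $\mathrm{PST}$; this rests on the action of $\bb ZF_*$-correspondences on the sections of a Nisnevich sheaf over Henselian local schemes, which compute the Nisnevich stalks. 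Granting it, $a_{\nis}\colon\cc G\mapsto\cc G_{\nis}$ is a left adjoint to the inclusion $i\colon\mathrm{Shv}^{tr}_{\nis}\hookrightarrow\mathrm{PST}$, and $a_{\nis}$ is exact: as a left adjoint it is right exact, and it is left exact because on underlying presheaves it is computed by the (left exact) Nisnevich sheafification while kernels in $\mathrm{PST}$ and in $\mathrm{Shv}^{tr}_{\nis}$ are the underlying presheaf kernels.

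Finally I would conclude formally: $\mathrm{Shv}^{tr}_{\nis}$ is a reflective subcategory of the Grothendieck category $\mathrm{PST}$ with exact reflector $a_{\nis}$, equivalently $\cc S:=\{\cc G\in\mathrm{PST}:\cc G_{\nis}=0\}$ is a localizing subcategory (a Serre subcategory closed under coproducts, since $a_{\nis}$ is exact and preserves coproducts) and $a_{\nis}$ identifies $\mathrm{Shv}^{tr}_{\nis}$ with the Gabriel quotient $\mathrm{PST}/\cc S$. By the Gabriel--Popescu theory, the quotient of a Grothendieck category by a localizing subcategory is again a Grothendieck category; concretely, $\mathrm{Shv}^{tr}_{\nis}$ is abelian (kernels as in $\mathrm{PST}$, cokernels obtained by Nisnevich sheafifying presheaf cokernels), it is cocomplete with filtered colimits computed by applying $a_{\nis}$ to the filtered colimits in $\mathrm{PST}$ and hence exact by exactness of $a_{\nis}$ together with exactness of filtered colimits in $\mathrm{PST}$, and $\{a_{\nis}\bb ZF_*(-,X)\}_{X\in Sm/k}$ is a set of generators. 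The \emph{main obstacle} is exactly the cited fact that Nisnevich sheafification preserves $\bb ZF_*$-transfers, hence is exact on $\mathrm{PST}$; everything else is standard category theory of functor categories and their localizations.
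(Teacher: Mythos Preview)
The paper states this lemma without proof, so there is nothing to compare against; your argument is the standard one, modeled on Voevodsky's treatment for $Cor$-transfers in~\cite{Voe1}, and it is correct. You have correctly isolated the one non-formal ingredient, namely that Nisnevich sheafification lifts to presheaves with $\bb ZF_*$-transfers, and you are right to source it from~\cite{GP3}; once that is in hand, the rest is formal Gabriel--Popescu as you describe.
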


\begin{proof}
The category of presheaves of Abelian groups with $\bb
ZF_*$-transfers is plainly a Grothendieck category. Note that it can
be identified with the category of framed raditive presheaves of
Abelian groups. Since for every radditive framed presheaf of Abelian
groups $F$ the associated sheaf in the Nisnevich topology has a
unique structure of a framed presheaf such that the map $F\to
F_{\nis}$ is a map of framed presheaves by~\cite[4.5]{Voe2}, our
lemma is now proved similar to~\cite[6.4]{GP}.
\end{proof}

The main purpose of this section is to prove the following

\begin{prop}
\label{Ext_and_Cohom_1}
For any Nisnevich sheaf $\cc F$ with $\bb ZF_*$-transfers, any integer $n$ and any $k$-smooth variety $X$,
there is a natural isomorphism
$$H^n_{Nis}(X,\cc F)=\Ext^n(\bb ZF_*(X), \cc F),$$
where the $\Ext$-groups are taken in the Grothendieck category of
Nisnevich sheaves with $\bb ZF_*$-transfers.
\end{prop}

Recall that for a morphism $f:Y\to X$ we denote by $\check C(f)$ or
$\check C(Y)$ the Cech simplicial object defined by $f$.

\begin{lem}[\cite{Voe2}, Thm. 4.4]\label{cechstrelka}
Let $f:Y\to X$ be an etale (respectively Nisnevich) covering of a
scheme $X$. Then for any $n$ the map of simplicial presheaves
   $$Fr_n(-,\check C(Y))\to Fr_n(-,X)$$
is a local equivalence in the etale (respectively Nisnevich)
topology.
\end{lem}

\begin{defs}\label{F_m_U_Y/(Y-S)}
Define $F_m(U,Y)\subset Fr_m(U,Y)$ as a subset consisting of
$(Z,W,\phi;g)\in Fr_m(U,Y)$ such that $Z$ is connected.

Clearly, the set $F_m(U,Y)-\emptyset_m$ is a free basis of the abelian group
$\bb ZF_m(U,Y)$. However, the assignment $U\mapsto F_m(U,Y)$
is not a presheaf even on the category $Sm/k$.
\end{defs}

Applying the proof of \cite[Thm. 4.4]{Voe2} one can conclude that
the following lemma holds:

\begin{lem}\label{cechstrelka_F_n}
Let $f:Y\to X$ be an etale (respectively Nisnevich) covering of a scheme $X$.
Then for any local essentially $k$-smooth henzelian scheme $U$ and
for any integer $n\geq 0$ the map of pointed simplicial sets
   $$F_n(U,\check C(Y))\to F_n(U,X)$$
is a weak equivalence.
\end{lem}

\begin{cor}\label{cechstrelka_ZF_n}
Let $f:Y\to X$ be an etale (respectively Nisnevich) covering of a scheme $X$. Then for any $n$ the maps of simplicial presheaves
   $$\bb ZF_n(-,\check C(Y))\to \bb ZF_n(-,X), \quad \bb ZF_*(-,\check C(Y))\to \bb ZF_*(-,X)$$
are local equivalences in the Nisnevich topology.
\end{cor}

\begin{cor}\label{ZF_injective_are_acyclic}
Let $I$ be an injective Nisnevich sheaf with $\bb ZF_*$-transfers.
Then for any $k$-smooth variety $X$ one has $H^i_{Nis}(X,I)=0$ for
all $i>0$.
\end{cor}

\begin{proof}
Using the preceding corollary, our proof is similar to that
of~\cite[1.7]{SV1}.
\end{proof}

\begin{proof}[Proof of Proposition \ref{Ext_and_Cohom_1}]
Corollary~\ref{ZF_injective_are_acyclic} implies the proposition.
\end{proof}

Proposition~\ref{Ext_and_Cohom_1} implies the following useful

\begin{cor}
\label{H_and_transfers}
For any Nisnevich sheaf $\cc F$ with $\bb ZF_*$-transfers and any integer $n$,
the presheaf $X\mapsto H^n_{Nis}(X,\cc F)$
has a canonical structure of a $\bb ZF_*$-presheaf.
\end{cor}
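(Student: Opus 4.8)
The plan is to deduce the corollary from Lemma~\ref{Ext_and_Cohom} together with the fact that the object $\bb ZF_*(X)$ is covariantly and additively functorial in $X\in\bb ZF_*(k)$. Writing $\cc A$ for the Grothendieck category of Nisnevich sheaves with $\bb ZF_*$-transfers (Lemma~\ref{Abelian}), the idea is to transport the $\bb ZF_*$-presheaf structure of $X\mapsto\Ext^n_{\cc A}(\bb ZF_*(X),\cc F)$ along the natural isomorphism of Lemma~\ref{Ext_and_Cohom}.

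First I would record that $\bb ZF_*(X)$ is, by construction, the Nisnevich sheafification of the presheaf $Y\mapsto\Hom_{\bb ZF_*(k)}(Y,X)$ on $Sm/k$. Since this presheaf is covariant and additive in $X$ and sheafification is an additive functor, the assignment $X\mapsto\bb ZF_*(X)$ is an additive covariant functor $\bb ZF_*(k)\to\cc A$: a morphism $\alpha\in\Hom_{\bb ZF_*(k)}(Y,X)$ induces $\alpha_*\colon\bb ZF_*(Y)\to\bb ZF_*(X)$ with $(\beta\circ\alpha)_*=\beta_*\circ\alpha_*$ and $(\alpha+\alpha')_*=\alpha_*+\alpha'_*$. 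Composing with the contravariant additive functor $\Ext^n_{\cc A}(-,\cc F)$, I conclude that
$$X\ \longmapsto\ \Ext^n_{\cc A}(\bb ZF_*(X),\cc F),\qquad\alpha\ \longmapsto\ (\alpha_*)^{*},$$
is a $\bb ZF_*$-presheaf of abelian groups for every integer $n$. For $n=0$, the Yoneda adjunction $\Hom_{\cc A}(\bb ZF_*(X),-)\cong(-)(X)$ identifies this presheaf with $\cc F$ itself, carrying its given $\bb ZF_*$-transfer structure.

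Next I would invoke Lemma~\ref{Ext_and_Cohom}, which provides an isomorphism $H^n_{Nis}(X,\cc F)\cong\Ext^n_{\cc A}(\bb ZF_*(X),\cc F)$ natural in $X\in Sm/k$. Restricting the $\bb ZF_*$-presheaf of the previous paragraph along the embedding $Sm/k\hookrightarrow\bb ZF_*(k)$ (which sends $f\colon Y\to X$ to the class of its graph), this naturality exhibits the isomorphism as one of presheaves on $Sm/k$; hence transporting the structure across it equips $X\mapsto H^n_{Nis}(X,\cc F)$ with a $\bb ZF_*$-presheaf structure that recovers the usual contravariant functoriality of Nisnevich cohomology on $Sm/k$. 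This is all that is required. The only non-formal input is Lemma~\ref{Ext_and_Cohom} itself: granting it, the argument is a routine transport of structure, and the main work --- the comparison of the derived functors $\Ext^n_{\cc A}(\bb ZF_*(X),-)$ with sheaf cohomology, which rests on injective objects of $\cc A$ being Nisnevich-acyclic --- has already been carried out there.
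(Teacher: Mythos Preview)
Your argument is correct and is precisely the approach the paper takes: the paper's entire justification is the single sentence ``In fact, this holds for the presheaf $X\mapsto \Ext^n(\bb ZF_*(X),\cc F)$,'' which is exactly the transport of structure via Lemma~\ref{Ext_and_Cohom} that you spell out in detail.
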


In fact, this holds for the presheaf $X\mapsto \Ext^n(\bb ZF_*(X),
\cc F)$.

\section{Homotopy invariance of cohomology presheaves}

In this section we prove Theorems \ref{H1_hom_inv} and \ref{Hn_hom_inv}. They
complete the proof of Theorem~\ref{thm1}, which is the main result of the paper.
Each statement in this section except Lemma~\ref{H1_and_H0}
is split in two parts when the characteristic of the base field does not equal 2 and
does equal 2. We will prove the case when the characteristic is not 2, because
the case when $\chr k=2$ is proved similarly and is left to the reader.
Recall that the base field $k$ is supposed to be infinite and perfect in this section.

\begin{defs}\label{F_minus_1}
Let $\cc G$ be a homotopy invariant presheaf of abelian groups with $\bb ZF_*$-transfers.
Then the presheaf $X\mapsto \cc G_{-1}(X):=\cc G(X\times (\bb A^1-0))/\cc G(X)$ is also
a homotopy invariant presheaf of abelian groups with $\bb ZF_*$-transfers.
If the presheaf $\cc G$ is a Nisnevich sheaf, then the presheaf $\cc F_{-1}$
is also a Nisnevich sheaf.
If the presheaf $\cc G$ is quasi-stable, then so is the presheaf  $\cc G_{-1}$.

If the presheaf $\cc G$ is a Nisnevich sheaf on $Sm/k$ and $Y$ is a $k$-smooth variety, then denote by
$\cc G|_Y$ the restriction of $\cc G$ to the small Nisnevich site of $Y$.
\end{defs}

\begin{lem}
\label{H1_and_H0}
For any $\bb A^1$-invariant quasi-stable
$\bb ZF_*$-sheaf of abelian groups $\cc F$,
any $k$-smooth variety $Y$ and any $k$-smooth divisor $D$ in $Y$
the canonical morphism
$$H^1_D(Y,\cc F)\to H^0_{Nis}(Y,{\cc H}^1_D(Y,\cc F))$$
is an isomorphism.
\end{lem}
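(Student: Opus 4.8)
The plan is to run the local-to-global spectral sequence for cohomology with supports and to check that its bottom row vanishes. Restricting to the small Nisnevich site $Y_{Nis}$ of $Y$ costs nothing: Nisnevich cohomology, with or without supports, is computed locally, and $\cc H^1_D(Y,\cc F)$ is by definition the Nisnevich sheaf on $Y_{Nis}$ associated with the presheaf $V\mapsto H^1_{D\cap V}(V,\cc F)$. The assignment $\cc G\mapsto \cc H^0_D(Y,\cc G):=\ker(\cc G\to j_*j^*\cc G)$, where $j\colon Y-D\hookrightarrow Y$, is a left exact endofunctor of Nisnevich sheaves on $Y_{Nis}$ whose right derived functors are the $\cc H^q_D(Y,-)$, and composing it with $\Gamma(Y,-)$ gives $\Gamma_D(Y,-)$. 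Hence there is a spectral sequence $E_2^{p,q}=H^p_{Nis}(Y,\cc H^q_D(Y,\cc F))\Rightarrow H^{p+q}_D(Y,\cc F)$ whose low-degree exact sequence reads
\[0\to H^1_{Nis}(Y,\cc H^0_D(Y,\cc F))\to H^1_D(Y,\cc F)\to H^0_{Nis}(Y,\cc H^1_D(Y,\cc F))\to H^2_{Nis}(Y,\cc H^0_D(Y,\cc F)),\]
and in which the middle map is precisely the canonical morphism of the Lemma. So everything reduces to proving $\cc H^0_D(Y,\cc F)=0$.

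First I would identify the stalks of $\cc H^0_D(Y,\cc F)$. Its stalk at a point $y\in Y$ is $\ker[\cc F(U^h_y)\to\cc F(U^h_y-D)]$, where $U^h_y=\spec\cc O^h_{Y,y}$; for $y\notin D$ this is plainly zero. For $y\in D$ note that $\cc O_{Y,y}$ is a regular local ring, $Y$ being $k$-smooth, hence a normal domain, so its henselization $\cc O^h_{Y,y}$ is again a normal domain and $U^h_y$ is integral; as $D$ is a divisor, $D\cap U^h_y$ is a proper closed subset, so the generic point of $U^h_y$ lies in $U^h_y-D$, and the composite $\cc F(U^h_y)\to\cc F(U^h_y-D)\to\cc F(\spec k(U^h_y))$ coincides with the map $\eta^*_h$ of Theorem~\ref{Few_On_P_w_Tr}, item (3'). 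Since $\cc F$ is $\bb A^1$-invariant and stable, that theorem applies and $\eta^*_h$ is injective, hence $\cc F(U^h_y)\to\cc F(U^h_y-D)$ is injective and the stalk is zero. Therefore $\cc H^0_D(Y,\cc F)=0$.

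Feeding $\cc H^0_D(Y,\cc F)=0$ into the displayed sequence kills its outer two terms, so the canonical morphism $H^1_D(Y,\cc F)\to H^0_{Nis}(Y,\cc H^1_D(Y,\cc F))$ is an isomorphism, which is the assertion. I expect the only genuine content to be the vanishing $\cc H^0_D(Y,\cc F)=0$ of the second paragraph; once Theorem~\ref{Few_On_P_w_Tr}(3') is available this is essentially a bookkeeping exercise rather than a real obstacle, the one point to state carefully being that $U^h_y$ is irreducible, i.e. that the henselization of a regular (hence normal) local ring is a normal domain. Note that $k$-smoothness of $D$ is not used for this particular statement, only that $D$ is a proper closed subset (a divisor).
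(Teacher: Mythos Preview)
Your proof is correct and follows exactly the paper's approach: the paper's proof is the same low-degree exact sequence from the local-to-global spectral sequence, followed by the observation that $\cc H^0_D(Y,\cc F)=0$ by Theorem~\ref{Few_On_P_w_Tr}(3'). Your write-up simply spells out the stalk computation and the irreducibility of $U^h_y$ that the paper leaves implicit; your remark that smoothness of $D$ is not actually used here is also accurate.
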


\begin{proof}
The local-global spectral sequence yields an exact sequence of the form
$$H^1_{Nis}(Y,{\cc H}^0_D(Y,\cc F)) \to H^1_D(Y,\cc F) \to H^0_{Nis}(Y,{\cc H}^1_D(Y,\cc F)) \to H^2_{Nis}(Y,{\cc H}^0_D(Y,\cc F)).$$
By Theorem~\ref{Few_On_P_w_Tr}(3') the sheaf ${\cc H}^0_D(Y,\cc F))$ vanishes.
\end{proof}

\begin{lem}\label{F_1(D)}
Let $X$ be an essentially $k$-smooth local henzelian scheme and let $D\subset X$ be a $k$-smooth divisor.
Let $i: D \hookrightarrow X$ be the closed embedding. Then
for any $\bb A^1$-invariant quasi-stable
$\bb ZF_*$-sheaf of abelian groups $\cc F$ the Nisnevich sheaves
$i_*(\cc F_{-1}|_D)$ and ${\cc H}^1_D(X,\cc F))$  on the small
Nisnevich site of $X$ are isomorphic if $\chr k\ne 2$.
If $\chr k=2$ then the same statement holds under
the additional assumption that the $\bb ZF_*$-sheaf $\cc F$ is a sheaf of
$\bb Z[1/2]$-modules.
\end{lem}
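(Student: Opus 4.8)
The plan is to compute the local cohomology sheaf $\cc H^1_D(X,\cc F)$ from the localization sequence and then identify it with $i_*(\cc F_{-1}|_D)$ by a stalkwise comparison, using the ``tube'' structure of the pair $(X,D)$ together with the excision statements of Theorem~\ref{Few_On_P_w_Tr} and Corollary~\ref{Exc_On_Rel_Aff_Line_3}. Since $\cc F$ is a Nisnevich sheaf, its positive Nisnevich cohomology vanishes on essentially $k$-smooth henselian local schemes, and by Theorem~\ref{Few_On_P_w_Tr}(item (3')) the restriction $\cc F(V)\to\cc F(\spec k(V))$ is injective for every such $V$; applying this to $X$ and to its localizations at points, the localization triangle $\mathbf R\Gamma_D(\cc F)\to\cc F\to\mathbf Rj_{*}j^{*}\cc F$ on the small Nisnevich site of $X$ (with $j\colon X-D\hookrightarrow X$ and $i\colon D\hookrightarrow X$) yields $\cc H^0_D(X,\cc F)=0$ and $\cc H^1_D(X,\cc F)\cong (j_{*}j^{*}\cc F)/\cc F$. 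Hence its stalk at $z\in X$ is $0$ if $z\notin D$ and $\cc F(V_z-D_z)/\cc F(V_z)$ if $z\in D$, where $V_z=\spec\cc O^h_{X,z}$ and $D_z=D\times_X V_z$; the stalk of $i_{*}(\cc F_{-1}|_D)$ at $z$ is $0$ if $z\notin D$ and $\cc F_{-1}(D_z)=\cc F(D_z\times(\bb A^1-0))/\cc F(D_z)$ if $z\in D$. So it suffices to produce a natural isomorphism between these two stalks.

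Fix $z\in D$, put $W=D_z$ (essentially $k$-smooth henselian local) and $V=V_z$. Choosing a local equation $t$ for $W$ in $V$ and a retraction $r\colon V\to W$ — which exists since $(V,W)$ is a henselian pair and $W$, being essentially smooth, is formally smooth over $k$ — the morphism $(t,r)\colon V\to\bb A^1\times W$ is \'{e}tale and carries $W$ isomorphically onto $0\times W$; as $V$ is henselian local, this identifies $(V,W)$ with $(\cc V^h,\,0\times W)$, where $\cc V^h:=\spec\cc O^h_{\bb A^1_W,x_0}$ and $x_0$ is the closed point of $0\times W$. The projection and the zero-section of $\bb A^1_W\to W$ then induce a map
$$\cc F_{-1}(W)=\cc F\bigl(W\times(\bb A^1-0)\bigr)\big/\cc F(W)\longrightarrow \cc F\bigl(\cc V^h-(0\times W)\bigr)\big/\cc F(\cc V^h),$$
using homotopy invariance $\cc F(W)\cong\cc F(W\times\bb A^1)$ to make sense of the source as written. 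Two choices of $(t,r)$ differ by an automorphism of $\cc V^h$ over $W$ that is the identity on the zero-section, and such an automorphism is connected to the identity by an evident homotopy through automorphisms of the same kind, hence acts trivially on the right-hand quotient by homotopy invariance; so the map is independent of the auxiliary choices and, being compatible with \'{e}tale localization of $X$, it globalizes to a morphism of Nisnevich sheaves $i_{*}(\cc F_{-1}|_D)\to\cc H^1_D(X,\cc F)$.

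It remains to check that this map is an isomorphism, equivalently that for every essentially $k$-smooth henselian local $W$ the displayed map $\cc F_{-1}(W)\to\cc F(\cc V^h-(0\times W))/\cc F(\cc V^h)$ is bijective. Corollary~\ref{Exc_On_Rel_Aff_Line_3} gives precisely this with the Zariski local scheme $\cc V:=\spec\cc O_{\bb A^1_W,x_0}$ in place of $\cc V^h$ (its proof only needs that $W$ is regular local and $k$ infinite). To pass from $\cc V$ to $\cc V^h$, write $\cc V^h$ as the filtered limit of the \'{e}tale neighborhoods $(E,e)$ of $x_0$ in $\bb A^1_W$ (after the usual noetherian approximation reducing $W$ to finite type, so that $0\times W$ is honestly $k$-smooth and all schemes in sight are smooth affine). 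Because $0\times W\cong W$ is henselian local, every \'{e}tale $W$-scheme with a rational point in the fibre splits off a copy of $0\times W$; using this one shrinks each $E$ to an open $E^{\circ}\ni e$ which is an \'{e}tale neighborhood of the whole divisor $0\times W$, so that $E^{\circ}\to X$ (with $X$ any affine open of $\bb A^1_W$ through $x_0$, automatically containing $0\times W$ since $W$ is local) is one half of an elementary distinguished square with unchanged closed part $0\times W$. Theorem~\ref{Few_On_P_w_Tr}(item (5)) then shows each induced map on the quotients $\cc F(\,\cdot\,-0\times W)/\cc F(\,\cdot\,)$ is an isomorphism; passing to the limit over these cofinal $E^{\circ}$ gives $\cc F(\cc V-(0\times W))/\cc F(\cc V)\cong\cc F(\cc V^h-(0\times W))/\cc F(\cc V^h)$, and composing with Corollary~\ref{Exc_On_Rel_Aff_Line_3} finishes the argument.

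The main obstacle I expect is this last passage from the Zariski-local excision of Corollary~\ref{Exc_On_Rel_Aff_Line_3} to the henselization $\cc V^h$: one must verify carefully that the \'{e}tale neighborhoods can be arranged as elementary distinguished squares over the unchanged divisor $0\times W$ — which is exactly where the henselianness of $W$ (equivalently of $D$) enters — and run the noetherian approximation needed to invoke Theorem~\ref{Few_On_P_w_Tr}(item (5)), whose hypotheses require honest smooth affine schemes and a smooth closed subscheme. The other place where a hypothesis is used in an essential way is the reduction in the first paragraph: it is precisely because $\cc F$ is a Nisnevich \emph{sheaf} (so that $\cc H^0_D$ vanishes and $H^1_{\mathrm{Nis}}$ of henselian local schemes vanishes) that $\cc H^1_D(X,\cc F)$ is the cokernel sheaf $(j_{*}j^{*}\cc F)/\cc F$.
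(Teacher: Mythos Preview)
Your argument is correct and follows essentially the same route as the paper: identify the stalk of $\cc H^1_D(X,\cc F)$ with $\cc F(X-D)/\cc F(X)$ via the localization sequence and Theorem~\ref{Few_On_P_w_Tr}(3'), use a retraction $r$ and a local equation $f$ to compare $(X,D)$ with $(\bb A^1_D,0\times D)$, then invoke Corollary~\ref{Exc_On_Rel_Aff_Line_3} for the Zariski-local model and Theorem~\ref{Few_On_P_w_Tr}(5) for the \'{e}tale passage. The paper is terser---it observes directly that $(r,f)$ induces an isomorphism of henselizations $X^h_D\cong V^h_{D\times 0}$ and cites item~(5) without spelling out the limit over \'{e}tale neighborhoods or the noetherian approximation---whereas you make these steps (and the independence of the choice of $(t,r)$) explicit; but the underlying argument is the same.
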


\begin{proof}
The group $H^1_D(X,\cc F))$ is isomorphic to
$\cc F(X-D)/Im(\cc F(X))=\cc F(X-D)/\cc F(X)$.
The latter equality makes sense by Theorem
\ref{Few_On_P_w_Tr}(item (3')).
Since $X$ is essentially $k$-smooth and henselian,
and $D$ is essentially $k$-smooth, then there is
a morphism $r: X\to D$ such that the composite map
$D\xrightarrow{i} X\xrightarrow{r} D$
is the identity.
Let $x\in X$ be the closed point. Clearly, $x\in D$.
Set $V:=Spec(\cc O_{D\times \bb A^1,(x,0)})$.
Let $f\in k[X]$ be a function
defining the smooth divisor $D$.
Then the morphism
$$(r,f): X\to D\times \bb A^1$$
takes values in $V$.
We keep the same notation for the corresponding morphism $(r,f): X \to V$.
Note that $(r,f)^{-1}(D\times 0)=D$. Thus the morphism $(r,f)$
induces a homomorphism
$$[[(r,f)]]^*: \cc F(V-D\times 0)/\cc F(V) \to \cc F(X-D)/\cc F(X).$$

{\it We claim that it is an isomorphism}.
To prove this claim note that the morphism $(r,f)$
induces a scheme isomorphism
$X^h_D \to V^h_{D\times 0}$,
where $X^h_D$ is the henselization of $X$ at $D$
and $V^h_{D\times 0}$
is the henselization of $V$ at $D\times 0$.
Now Theorem
\ref{Few_On_P_w_Tr}(item (5))
{\it implies the claim}.

By Corollary~\ref{Exc_On_Rel_Aff_Line_3}
the pull-back map
   $$\cc F_{-1}(D)=\cc F(D\times (\bb A^1-0))/\cc F(D\times \bb A^1) \to \cc F(V-D\times 0)/\cc F(V)$$
is an isomorphism, too.
Thus there is a natural isomorphism
$$
\cc F_{-1}(D)=\cc F(D\times (\bb A^1-0))/\cc F(D\times \bb A^1) \xrightarrow{[[can]]^*} \cc F(V-D)/\cc F(V) \xrightarrow{[[(r,f)]]^*} \cc F(X-D)/\cc F(X)=
$$
$$\cc F(X-D)/Im(\cc F(X))
\xrightarrow{\partial} H^1_D(X,\cc F))
$$
leading to an isomorphism
of Nisnevich sheaves $i_*(\cc F_{-1}|_D)\cong {\cc H}^1_D(X,\cc F)$ on the small Nisnevich site of $X$.
\end{proof}

\begin{lem}\label{F_1(D_times_A1)}
Let $X$ be an essentially $k$-smooth local henzelian scheme and let $D\subset X$ be a $k$-smooth divisor.
Let $I: D\times \bb A^1 \hookrightarrow X\times \bb A^1$ be the closed embedding. Then
for any $\bb A^1$-invariant quasi-stable
$\bb ZF_*$-sheaf of abelian groups $\cc F$
the two Nisnevich sheaves $I_*(\cc F_{-1}|_{D\times \bb A^1})$
and ${\cc H}^1_{D\times \bb A^1}(X\times \bb A^1,\cc F))$
on small Nisnevich site of $X\times \bb A^1$ are isomorphic
if $\chr k\ne 2$. If $\chr k=2$ then the same statement holds under
the additional assumption that the $\bb ZF_*$-sheaf $\cc F$ is a sheaf of
$\bb Z[1/2]$-modules.
\end{lem}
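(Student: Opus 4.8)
The plan is to reduce the statement to a computation of Nisnevich stalks and then quote Lemma \ref{F_1(D)}. First note that both sheaves $I_*(\cc F_{-1}|_{D\times\bb A^1})$ and $\cc H^1_{D\times\bb A^1}(X\times\bb A^1,\cc F)$ are supported on the closed subscheme $D\times\bb A^1$, hence each is canonically the pushforward along $I$ of its restriction to the small Nisnevich site of $D\times\bb A^1$ (for the first this is tautological; for the second it uses that a Nisnevich sheaf supported on a closed subscheme is the $I_*$ of its restriction). So it suffices to produce an isomorphism of Nisnevich sheaves on the small site of $D\times\bb A^1$ between $\cc F_{-1}|_{D\times\bb A^1}$ and $I^*\cc H^1_{D\times\bb A^1}(X\times\bb A^1,\cc F)$, and this may be checked on stalks, where it is automatic away from $D\times\bb A^1$ since both sheaves vanish there.

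To build the comparison morphism I would globalize the chain of maps from the proof of Lemma \ref{F_1(D)} with $X$ replaced by $X\times\bb A^1$ and $D$ by $D\times\bb A^1$. Since $(X,D)$ is a henselian pair with essentially $k$-smooth $D$, there is a retraction $r\colon X\to D$ with $r\circ i=\mathrm{id}_D$ and a function $f\in k[X]$ cutting out $D$; then $g:=(r\times\mathrm{id}_{\bb A^1},\,f\circ pr_X)\colon X\times\bb A^1\to D\times\bb A^1\times\bb A^1$ satisfies $g^{-1}(D\times\bb A^1\times 0)=D\times\bb A^1$ and, Nisnevich-locally along $D\times\bb A^1$, identifies the henselization of $X\times\bb A^1$ along $D\times\bb A^1$ with the henselization of $D\times\bb A^1\times\bb A^1$ along $D\times\bb A^1\times 0$. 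Feeding this into the composite $[[can]]^*$, $[[g]]^*$ (via Theorem \ref{Few_On_P_w_Tr}(5)), the boundary map $\partial$ of local cohomology, together with Corollary \ref{Exc_On_Rel_Aff_Line_3}, produces a morphism of Nisnevich sheaves $\cc F_{-1}|_{D\times\bb A^1}\to I^*\cc H^1_{D\times\bb A^1}(X\times\bb A^1,\cc F)$.

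It remains to see this morphism is a stalkwise isomorphism. Fix $z\in D\times\bb A^1$, put $Y=(X\times\bb A^1)^h_z$ and $E=(D\times\bb A^1)\times_{X\times\bb A^1}Y=(D\times\bb A^1)^h_z$. Then $Y$ is an essentially $k$-smooth local henselian scheme and $E$ is a $k$-smooth divisor in it, so the hypotheses of Lemma \ref{F_1(D)} hold for the pair $(Y,E)$. The stalk at $z$ of $I^*\cc H^1_{D\times\bb A^1}(X\times\bb A^1,\cc F)$ is $H^1_E(Y,\cc F)$ and the stalk at $z$ of $\cc F_{-1}|_{D\times\bb A^1}$ is $\cc F_{-1}(E)$; Lemma \ref{F_1(D)} applied to $(Y,E)$ and evaluated at the closed point of $Y$ gives exactly the isomorphism $\cc F_{-1}(E)\xrightarrow{\ \sim\ }H^1_E(Y,\cc F)$, which is the stalk at $z$ of the morphism built above. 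Hence that morphism is an isomorphism, and the lemma follows.

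I expect the only delicate point to be the second paragraph, namely checking that the construction of Lemma \ref{F_1(D)} is sufficiently functorial to globalize over the small Nisnevich site of $X\times\bb A^1$ — the retraction $r$, the defining function $f$, the morphism $g$, and the applications of Corollary \ref{Exc_On_Rel_Aff_Line_3} and Theorem \ref{Few_On_P_w_Tr}(5) must all be available Nisnevich-locally and compatibly with the transition maps, so that they assemble into a genuine morphism of sheaves whose formation commutes with passage to stalks. Once this naturality is in place, the identification of stalks via Lemma \ref{F_1(D)} is immediate.
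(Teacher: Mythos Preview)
Your proposal is correct and follows exactly the route the paper indicates: the paper's own proof of Lemma~\ref{F_1(D_times_A1)} is the single sentence ``The proof is similar to that of Lemma~\ref{F_1(D)}'', and what you have written is a careful unpacking of that sentence---globalize the retraction $r$, the equation $f$, the map $(r,f)$, and the inputs from Corollary~\ref{Exc_On_Rel_Aff_Line_3} and Theorem~\ref{Few_On_P_w_Tr}(5) over $X\times\bb A^1$, then check the resulting morphism of sheaves is an isomorphism on stalks by reducing to Lemma~\ref{F_1(D)} for the henselian pair $(Y,E)=((X\times\bb A^1)^h_z,(D\times\bb A^1)^h_z)$. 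Your caveat in the last paragraph about the naturality of the construction is the right thing to flag, and it is exactly the content hidden behind the paper's ``similar''.
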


\begin{proof}
The proof is similar to that of Lemma~\ref{F_1(D)}.
\end{proof}

\begin{cor}
\label{H1_D_and_H1_D_times_A1}
Suppose $\chr k\ne 2$. Then the pull-back map
$p^*_X: H^1_D(X,\cc F)\to H^1_{D\times \bb A^1}(X\times \bb A^1,\cc F)$
is an isomorphism, where $p_X: X\times \bb A^1 \to X$ is the projection.
If $\chr k=2$, then the same statement holds under
the additional assumption that the $\bb ZF_*$-sheaf $\cc F$ is a sheaf of
$\bb Z[1/2]$-modules.
\end{cor}

\begin{proof}
One can check that the following diagram commutes
$$\xymatrix{\cc F_{-1}(D\times \bb A^1) \ar^(.4){\beta}[rr]&& H^0_{Nis}(X\times \bb A^1,{\cc H}^1_{D\times \bb A^1}(X\times \bb A^1,\cc F)) &&
H^1_{D\times \bb A^1}(X\times \bb A^1,\cc F)\ar_(.4){\psi}[ll]\\
               \cc F_{-1}(D) \ar^{\alpha}[rr]\ar^{p^*_D}[u]&& H^0_{Nis}(X,{\cc H}^1_{D}(X,\cc F))  \ar^{p^*_X}[u]&& H^1_{D}(X,\cc F) \ar_{\phi}[ll]\ar^{p^*_X}[u]}$$
where the maps $\alpha$ and $\beta$ are isomorphisms of
Lemmas \ref{F_1(D)} and \ref{F_1(D_times_A1)} respectively,
the maps $\phi$ and $\psi$ are isomorphisms of Lemma
\ref{H1_and_H0}, the vertical maps are the pull-back maps.
The sheaf $\cc F_{-1}$ is homotopy invariant, because so is the sheaf $\cc F$.
It follows that the map $p^*_D$ is an isomorphism,
whence the corollary.
\end{proof}

\begin{cor}\label{H0_and_H1_with_support}
Suppose $\chr k\ne 2$. Under the the hypotheses of Lemma~\ref{F_1(D_times_A1)}
the boundary map
$$\partial: \cc F((X-D)\times \bb A^1) \to H^1_{D\times \bb A^1}(X\times \bb A^1,\cc F)$$
is surjective. If $\chr k=2$, then the same statement is true if we assume
that the $\bb ZF_*$-sheaf $\cc F$ is a sheaf of $\bb Z[1/2]$-modules.
\end{cor}

\begin{proof}
By Corollary~\ref{H1_D_and_H1_D_times_A1} the pull-back map
$p^*_X: H^1_D(X,\cc F)\to H^1_{D\times \bb A^1}(X\times \bb A^1,\cc F)$
is an isomorphism. The boundary map
$\partial: \cc F(X-D) \to H^1_D(X,\cc F)$ is surjective, because
$X$ is local Henselian, whence the corollary.
\end{proof}

\begin{prop}
\label{Injectivity_on_H1}
Suppose $\chr k\ne 2$. Under the the hypotheses of Lemma~\ref{F_1(D_times_A1)} the map
$$H^1_{Nis}(X\times \bb A^1,\cc F)) \to H^1_{Nis}((X-D)\times \bb A^1,\cc F))$$
is injective. If $\chr k=2$, then the same statement is true if we assume
that the $\bb ZF_*$-sheaf $\cc F$ is a sheaf of $\bb Z[1/2]$-modules.
\end{prop}

\begin{proof}
This follows from Corollary~\ref{H0_and_H1_with_support}.
\end{proof}

\begin{prop}
\label{H1_of_Aff_line}
Suppose $\chr k\ne 2$. Let $\cc F$ be an $\bb A^1$-invariant quasi-stable
$\bb ZF_*$-sheaf of abelian groups. Then
   $$H^1_{Nis}(\bb A^1_K, \cc F)=0.$$
If $\chr k=2$, then the same statement is true if we assume
that the $\bb ZF_*$-sheaf $\cc F$ is a sheaf of $\bb Z[1/2]$-modules.
\end{prop}

\begin{proof}
Let $a\in H^1_{Nis}(\bb A^1_K, \cc F)$. We want to prove that $a=0$.
The Nisnevich topology is trivial at the generic point of the affine line $\bb A^1_K$.
Therefore there is a Zariski open subset $U$ in $\bb A^1_K$ such that the restriction
of $a$ to $U$ vanishes. Let $Z$ be the complement of $U$ in $\bb A^1_K$ regarded
as a closed subscheme with the reduced structure (it consists of finitely many closed points).
Let $V:=\sqcup_{z\in Z} (\bb A^1)^h_z$, where each summand is the henselization of the affine line
at $z\in Z$. Then the cartesian square
$$\xymatrix{V-Z\ar[r]\ar[d]&V\ar^{\Pi}[d]\\
               U\ar[r]&\bb A^1_K}$$
gives rise to a long exact sequence
$$\cc F(U) \oplus \cc F(V) \to \cc F(V-Z) \xrightarrow{\partial} H^1_{Nis}(\bb A^1_K,\cc F) \to H^1_{Nis}(U,\cc F) \oplus H^1_{Nis}(V,\cc F).$$
The left arrow is surjective by Theorem~\ref{Few_On_P_w_Tr}(items (2), (5)).
The group $H^1_{Nis}(V,\cc F)$ vanishes by the choice of $V$.
Thus the map $H^1_{Nis}(\bb A^1_K,\cc F) \to H^1_{Nis}(U,\cc F)$
is injective, and hence $a=0$.
\end{proof}

\begin{prop}\label{H1_vanishes_1}
Suppose the base field $k$ is infinite and perfect with $\chr k\ne 2$.
Let $\cc F$ be an $\bb A^1$-invariant quasi-stable $\bb ZF_*$-sheaf of Abelian groups.
Let $Y$ be a $k$-smooth variety and let
$a\in H^1_{Nis}(Y\times \bb A^1, \cc F)$
be an element such that its restriction to $X\times \{0\}$ vanishes. Then $a=0$.
If $\chr k=2$, then the same statement is true if we assume
that the $\bb ZF_*$-sheaf $\cc F$ is a sheaf of $\bb Z[1/2]$-modules.
\end{prop}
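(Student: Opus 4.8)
The plan is to establish a general \emph{Leray reduction} and then run a short induction on dimension. The Leray reduction is the following statement: for any essentially $k$-smooth scheme $Y'$ and any class $a'\in H^1_{Nis}(Y'\times\bb A^1,\cc F)$ whose restriction to $Y'\times\{0\}$ vanishes, one has $a'=0$ if and only if $a'|_{Y'^h_z\times\bb A^1}=0$ for every point $z\in Y'$, where $Y'^h_z$ denotes the henselization of $Y'$ at $z$. Granting this, the Proposition reduces at once to the case of $Y$ local henselian, which is then handled by induction on $\dim Y$, with Proposition \ref{H1_of_Aff_line} as the base case and the injectivity result \ref{Injectivity_on_H1} as the only geometric input in the inductive step.

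To prove the Leray reduction, write $\pi\colon Y'\times\bb A^1\to Y'$ for the projection and $i_0$ for the zero section. Since $\cc F$ is $\bb A^1$-invariant, the canonical map $\cc F\to\pi_*\cc F$ — which on sections is $\pi^*$ — is an isomorphism of Nisnevich sheaves, so the low-degree exact sequence of the Leray spectral sequence for $\pi$ reads
$$0\to H^1_{Nis}(Y',\cc F)\xrightarrow{\ \pi^*\ }H^1_{Nis}(Y'\times\bb A^1,\cc F)\xrightarrow{\ \beta\ }H^0_{Nis}(Y',R^1\pi_*\cc F).$$
If $\beta(a')=0$ then $a'=\pi^*(c)$ for some $c$, whence $c=i_0^*\pi^*(c)=i_0^*(a')=0$ and $a'=0$; conversely $\beta(0)=0$. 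Thus $a'=0$ if and only if the section $\beta(a')$ of the Nisnevich sheaf $R^1\pi_*\cc F$ vanishes, that is, if and only if its germ at each $z\in Y'$ vanishes. Since $(R^1\pi_*\cc F)_z=\colim_{(U,u)\to(Y',z)}H^1_{Nis}(U\times\bb A^1,\cc F)=H^1_{Nis}(Y'^h_z\times\bb A^1,\cc F)$ and the germ of $\beta(a')$ at $z$ is, under this identification, the image of $a'|_{Y'^h_z\times\bb A^1}$, the Leray reduction follows.

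Now I would prove the Proposition. By the Leray reduction the claim for $Y$ follows from the claim for the $Y^h_z$, $z$ ranging over the points of $Y$, so we may assume $Y$ is essentially $k$-smooth local henselian, and we argue by induction on $d=\dim Y$. If $d=0$ then $Y$ is the spectrum of a field and $H^1_{Nis}(Y\times\bb A^1,\cc F)=0$ by Proposition \ref{H1_of_Aff_line}, so $a=0$. If $d\geq 1$, let $y$ be the closed point and pick $t$ in the maximal ideal of $\cc O_Y$ but not in its square; then $V:=\{t=0\}$ is an effective Cartier divisor through $y$ which is essentially $k$-smooth (this is the one place where perfectness of $k$ enters, to pass from regularity to $k$-smoothness). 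By Proposition \ref{Injectivity_on_H1} the restriction $H^1_{Nis}(Y\times\bb A^1,\cc F)\to H^1_{Nis}((Y\setminus V)\times\bb A^1,\cc F)$ is injective, so it suffices to show $a|_{(Y\setminus V)\times\bb A^1}=0$; applying the Leray reduction to $Y\setminus V$, this amounts to showing $a|_{Y^h_z\times\bb A^1}=0$ for all $z\in Y\setminus V$. Each such $z$ is distinct from $y$ (which lies in $V$), so $Y^h_z$ is essentially $k$-smooth local henselian of dimension $<d$, and since $a|_{Y^h_z\times\bb A^1}$ still restricts to $0$ on its zero section, the inductive hypothesis yields $a|_{Y^h_z\times\bb A^1}=0$. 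Hence $a=0$.

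The argument itself is short once Proposition \ref{Injectivity_on_H1} and the Leray reduction are available, so the real difficulty is already absorbed into the earlier theorems (in particular \ref{Injectivity_on_H1}, which rests on \ref{H0_and_H1_with_support}). Within this proof the points demanding care are the compatibility of the isomorphisms used in the Leray reduction, and of the excision map of Proposition \ref{Injectivity_on_H1}, with restriction to the zero section — this functoriality is exactly what permits the reduction to be iterated — together with the verification that the divisor $V$ is $k$-smooth, which is where the hypothesis that $k$ is perfect is needed; the hypothesis that $k$ is infinite has already been built into Proposition \ref{H1_of_Aff_line} and the preceding results.
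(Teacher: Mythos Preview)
Your proof is correct and uses the same ingredients as the paper's: the Leray low-degree sequence for the projection $Y\times\bb A^1\to Y$, Proposition~\ref{H1_of_Aff_line} as the base, and Proposition~\ref{Injectivity_on_H1} for the d\'evissage step. The paper organizes the d\'evissage differently---it works on the global $Y$, finds a closed $Z$ supporting $\beta(a)$, and peels off smooth strata $Z\supset Z_1\supset Z_2\supset\cdots$---whereas you first reduce to henselian local and then induct on dimension by cutting a smooth divisor; these are two standard packagings of the same argument.
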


\begin{proof}
The exact sequence
$$0 \to H^1_{Nis}(Y,p_*(\cc F)) \xrightarrow{\alpha} H^1_{Nis}(Y\times \bb A^1) \xrightarrow{\beta}  H^0_{Nis}(Y, R^1p_*(\cc F))$$
and the fact that the sheaf $\cc F$ is  homotopy invariant show that for
$$A:=Ker[i^*_0: H^1_{Nis}(Y\times \bb A^1, \cc F) \to H^1_{Nis}(Y, \cc F)]$$
the map $\beta|_A: A \to H^0_{Nis}(Y, R^1p_*(\cc F))$
is injective. The stalk of the sheaf $R^1p_*(\cc F)$ at a point $y \in Y$
is $H^1_{Nis}(Y^h_y \times \bb A^1, \cc F)$,
where $Y^h_y=Spec(\cc O^h_{Y,y})$ is the henzelization of the local scheme $Spec(\cc O_{Y,y})$.
By Proposition
\ref{H1_of_Aff_line}
there is a closed subset $Z$ in $Y$ such that $\beta(a)|_{Y-Z}=0$.
Since the field $k$ is perfect, there is a proper closed subset $Z_1\subset Z$
such that $Z-Z_1$ is $k$-smooth. Then
$Z-Z_1$ is a $k$-smooth closed subvariety in $Y-Z_1$.

We claim that $a_1:=a|_{(Y-Z_1)\times \bb A^1}=0$. In fact, $a_1|_{(Y-Z_1)\times 0}=0$.
Thus it suffices to check that all stalks of the element $\beta(a_1)$ vanish.
Let $y \in Y-Z_1$ be a point. If $y \in Y-Z$ then $\beta(a_1)_y=0$, because $\beta(a_1)|_{Y-Z}=0$.
If $y\in Z-Z_1$ then shrinking $Y-Z_1$ around $y$ we may assume that
there is a $k$-smooth divisor $D$ in $Y-Z_1$ containing $Z-Z_1$. In this case
$a_1|_{Y-D}=0$. Now Proposition
\ref{Injectivity_on_H1}
shows that $\beta(a_1)_y=0$.
We have proved that $a_1=0$.

Now there is a proper closed subset $Z_2\subset Z_1$
such that $Z_1-Z_2$ is $k$-smooth. Then
$Z_1-Z_2$ is a $k$-smooth closed subvariety in $Y-Z_2$.
Arguing just as above, we conclude that $a_2:=a|_{(Y-Z_2)\times \bb A^1}=0$.
Continuing this process finitely many times, we conclude that $a=0$.
\end{proof}

\begin{thm}\label{H1_hom_inv}
Suppose the base field $k$ is infinite and perfect with $\chr k\ne 2$.
If $\cc F$ is an $\bb A^1$-invariant quasi-stable $\bb ZF_*$-sheaf of abelian groups, then
the $\bb ZF_*$-presheaf of abelian groups $X\mapsto H^1_{Nis}(X,\cc F)$
is $\bb A^1$-invariant and quasi-stable. If $\chr k=2$, then the same statement is true if we assume
that the $\bb ZF_*$-sheaf $\cc F$ is a sheaf of $\bb Z[1/2]$-modules.
\end{thm}

\begin{proof}
By Corollary
\ref{H_and_transfers}
the presheaf
$X\mapsto H^1_{Nis}(X,\cc F)$
has a canonical structure of a
$\bb ZF_*$-presheaf.
Let $X$ be a $k$-smooth variety.
Let $\sigma_X \in Fr_1(X,X)$ be the distinguished morphism of level one.
The assignement
$X\mapsto (\sigma^*_X: \cc F(X) \to \cc F(X))$
is an endomorphism of the Nisnevich sheaf
$\cc F|_{Sm/k}$. Thus for each $n$ it induces an endomorphism
of the cohomology presheaf
$\sigma^*: H^n(-,\cc F) \to H^n(-,\cc F)$.
Since $\sigma^*$ acts on $\cc F$ as an isomorphism, it acts
as an isomorphism on the presheaf $H^n(-,\cc F)$. We see that the $\bb ZF_*$-presheaf
$H^n(-,\cc F)$ is quasi-stable.

To show that the presheaf $X\mapsto H^1_{Nis}(X,\cc F)$
is $\bb A^1$-invariant, note that
the pull-back map
$i^*_0: H^1_{Nis}(X\times \bb A^1,\cc F) \to H^1_{Nis}(X,\cc F)$
is surjective. It is also injective by Proposition
\ref{H1_vanishes_1}. Our theorem now follows.
\end{proof}

\begin{thm}\label{Hn_hom_inv}
Suppose the base field $k$ is infinite and perfect with $\chr k\ne 2$.
Let $\cc F$ be an $\bb A^1$-invariant quasi-stable
$\bb ZF_*$-sheaf of abelian groups.
Then for any integer $n \geq 2$, the presheaf $X\mapsto H^n_{Nis}(X,\cc F)$
is an $\bb A^1$-invariant and quasi-stable
$\bb ZF_*$-presheaf of abelian groups.
If $\chr k=2$, then the same statement is true if we assume
that the $\bb ZF_*$-sheaf $\cc F$ is a sheaf of $\bb Z[1/2]$-modules.
\end{thm}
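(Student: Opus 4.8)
The plan is to induct on $n\geq 2$, running the same scheme as the proof of Theorem \ref{H1_hom_inv} and feeding the induction with the contracted sheaf $\cc F_{-1}$, which by Definition \ref{F_minus_1} is again an $\bb A^1$-invariant stable $\bb ZF_*$-sheaf of abelian groups. Stability of the presheaf $X\mapsto H^n_{Nis}(X,\cc F)$ is obtained exactly as in Theorem \ref{H1_hom_inv}: the endomorphism $\sigma^*$ of the Nisnevich sheaf $\cc F$ is the identity, hence it induces the identity on each cohomology presheaf $H^n(-,\cc F)$. So the content is $\bb A^1$-invariance, and since $i_0^*\colon H^n_{Nis}(X\times\bb A^1,\cc F)\to H^n_{Nis}(X,\cc F)$ is split surjective by the zero section, everything reduces to the higher-degree analogue of Proposition \ref{H1_vanishes_1}: if $i_0^*a=0$ then $a=0$.

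Assume the theorem, together with Theorem \ref{H1_hom_inv}, in all degrees $<n$ and for all $\bb A^1$-invariant stable $\bb ZF_*$-sheaves. First I would establish the degree-$n$ forms of the auxiliary results of the previous section. The analogue of Proposition \ref{H1_of_Aff_line}, namely $H^n_{Nis}(\bb A^1_K,\cc F)=0$, is automatic for $n\geq 2$ since $\bb A^1_K$ has Nisnevich cohomological dimension $1$. For $X$ essentially $k$-smooth local henselian with a $k$-smooth divisor $D$, the localization sequence and the vanishing of $H^{n-1}_{Nis}(X,\cc F)$ and $H^n_{Nis}(X,\cc F)$ give an isomorphism $H^n_D(X,\cc F)\cong H^{n-1}_{Nis}(X-D,\cc F)$; the analogue of Corollary \ref{H1_D_and_H1_D_times_A1}, i.e. homotopy invariance of $H^n_D(X,\cc F)$, would be obtained from a degree-$n$ version of Lemma \ref{F_1(D)} identifying the local cohomology sheaf $\cc H^n_D(X,\cc F)$ with $i_*$ of a Nisnevich cohomology sheaf of $\cc F_{-1}$ on $D$, after which the inductive hypothesis applied to $\cc F_{-1}$ on the lower-dimensional scheme $D$ supplies the needed homotopy invariance. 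This in turn yields the analogues of Corollary \ref{H0_and_H1_with_support} and Proposition \ref{Injectivity_on_H1}, namely injectivity of $H^n_{Nis}(X\times\bb A^1,\cc F)\to H^n_{Nis}((X-D)\times\bb A^1,\cc F)$ for $X$ local henselian and $D$ a smooth divisor. With all this in place, injectivity of $i_0^*$ is proved verbatim as in Proposition \ref{H1_vanishes_1}: using the Leray sequence for $Y\times\bb A^1\to Y$ one reduces vanishing of $a$ to vanishing of all stalks of $\beta(a)$; these stalks live in $H^n_{Nis}(Y^h_y\times\bb A^1,\cc F)$ and are killed off, one $k$-smooth stratum of $Y$ at a time, by the affine-line vanishing and the divisor-removal injectivity — here perfectness of $k$ is used to stratify the complements — so that after finitely many steps $a=0$.

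The main obstacle is the degree-$n$ purity input, i.e. the analogue of Lemma \ref{F_1(D)} for $n\geq 2$. In degree one the computation was clean: $\cc H^1_D(X,\cc F)$ is literally $i_*(\cc F_{-1}|_D)$, with no higher cohomology appearing. For $n\geq 2$, by contrast, the sheaf $\cc H^n_D(X,\cc F)$ genuinely carries Nisnevich cohomology of $\cc F_{-1}$ on $D$ (and on $D\times\bb A^1$), so its homotopy invariance is no longer elementary and must be bootstrapped from the inductive hypothesis applied to the contracted sheaf on a scheme of strictly smaller dimension. Carefully organizing this interleaved induction — on the cohomological degree, with the dimension of the support of the local cohomology sheaf and the contraction $\cc F_{-1}$ both decreasing — is where the real work lies. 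A structurally cleaner but more laborious alternative would be to assemble, out of Lemma \ref{F_1(D)} and its iterated contractions, a Gersten (Cousin) resolution of $\cc F$ on smooth schemes by Nisnevich-acyclic $\bb ZF_*$-sheaves, and then verify directly that the associated complex of presheaves is $\bb A^1$-invariant.
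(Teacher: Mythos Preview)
Your overall architecture matches the paper: stability is handled exactly as in Theorem~\ref{H1_hom_inv}; homotopy invariance is reduced to injectivity of $i_0^*$; this in turn is proved by the Leray sequence for $p\colon Y\times\bb A^1\to Y$ together with the smooth-stratification argument of Proposition~\ref{H1_vanishes_1}, once one has the analogue of Proposition~\ref{Injectivity_on_H1} for degree $n$. So far, essentially identical.

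The genuine divergence is in how the divisor-removal injectivity is obtained. You aim for a degree-$n$ analogue of Corollary~\ref{H1_D_and_H1_D_times_A1} (homotopy invariance of $H^n_D$), to be extracted from an identification of $\cc H^n_D(X,\cc F)$ with $i_*$ of some Nisnevich cohomology of $\cc F_{-1}$ on $D$, and you set up an interleaved induction on $n$, on the dimension of $D$, and on the contraction $\cc F\rightsquigarrow\cc F_{-1}$. The paper instead proves the sharper \emph{vanishing} statement $H^n_D(Y,\cc F)=0$ for $n\ge 2$ (its Lemma~\ref{Ext_and_Cohom_n}), which immediately gives the injectivity~(\ref{Injectivity_Hn}). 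The induction is purely on the cohomological degree, applied to the same $\cc F$: in the local--global spectral sequence the $j=0$ piece vanishes by Theorem~\ref{Few_On_P_w_Tr}(3$'$), the $j=1$ piece is $i_*(\cc F_{-1})$ by Lemma~\ref{F_1(D_times_A1)} and contributes $H^{n-1}_{Nis}(D,\cc F_{-1})=0$ since $D$ is henselian local, and for $2\le j\le n$ the stalk of $\cc H^j_D$ is identified, via the same excision-and-deformation argument as in Lemma~\ref{F_1(D)}, with $H^{j-1}_{Nis}(\cc D\times\bb G_m,\cc F)/H^{j-1}_{Nis}(\cc D\times\bb A^1,\cc F)$, which is then killed by the inductive hypothesis on $H^{j-1}_{Nis}(-,\cc F)$ combined with Theorem~\ref{Few_On_P_w_Tr}(3$'$) and Theorem~\ref{InjOnAffLine}.

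In other words, your ``main obstacle'' dissolves: the sheaf $\cc H^n_D(X,\cc F)$ does not ``genuinely carry'' higher cohomology of $\cc F_{-1}$ on $D$ --- it vanishes outright for $n\ge 2$, and the only nonzero local cohomology sheaf is still $\cc H^1_D\cong i_*(\cc F_{-1}|_D)$. Your plan would discover this once you computed the stalks, but organizing the argument as a vanishing lemma from the start (as the paper does) avoids the interleaved induction on dimension and contraction, and makes the Gersten-resolution alternative you mention unnecessary.
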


\begin{proof}
We can apply the same arguments as in the proof of Theorem
\ref{H1_hom_inv} to show that
the presheaf $X\mapsto H^n_{Nis}(X,\cc F)$ is a $\bb ZF_*$-presheaf of abelian groups,
which is, moreover, quasi-stable.

It remains to check that the presheaf is homotopy invariant. We may assume till the end of
the proof that each presheaf $X\mapsto H^j_{Nis}(X,\cc F)$ with $j<n$ is homotopy invariant.

In order to complete the proof of the theorem, we shall need the following lemma.


\begin{lem}
\label{Vanishing3}
Let $X$ be an essentially $k$-smooth local henzelian scheme and let $D\subset X$ be a $k$-smooth divisor.
Let $I: D\times \bb A^1 \hookrightarrow X\times \bb A^1$ be the closed embedding. Suppose $\chr k\ne 2$. Then
for any $\bb A^1$-invariant quasi-stable
$\bb ZF_*$-sheaf of abelian groups $\cc F$ and any $n\geq 2$ one has
   $$H^n_{D\times \bb A^1}(X\times \bb A^1,\cc F)=0.$$
If $\chr k=2$, then the same statement is true if we assume that the
$\bb ZF_*$-sheaf $\cc F$ is a sheaf of $\bb Z[1/2]$-modules.
\end{lem}

\begin{proof}
Applying the local-global spectral sequence, it is sufficient to check that the groups
$H^i_{Nis}(X\times \bb A^1,\cc H^j_{D\times \bb A^1}(X\times \bb A^1,\cc F))$ vanish
for $i+j=n$ (here $i,j \geq 0$). Let $j=1$. By Lemma~\ref{F_1(D_times_A1)} one has
$\cc H^1_{D\times \bb A^1}(X\times \bb A^1,\cc F))=I_*(\cc F_{-1})$.
Thus,
$$H^{n-1}_{Nis}(X\times \bb A^1, \cc H^1_{D\times \bb A^1}(X\times \bb A^1,\cc F)))=H^{n-1}_{Nis}(X\times \bb A^1,I_*(\cc F_{-1}))=$$
$$=H^{n-1}_{Nis}(D\times \bb A^1,\cc F_{-1})=H^{n-1}_{Nis}(D,\cc F_{-1})=0.$$
The latter equality holds since $D$ is local henzelian.
If $j=0$, then the sheaf
$\cc H^j_{D\times \bb A^1}(X\times \bb A^1,\cc F)$
vanishes.
Obviously, the stalk of this sheaf vanishes at every point $z\in (X-D)\times \bb A^1$.
If $z\in D\times \bb A^1$ then
the stalk vanishes by Theorem
\ref{Few_On_P_w_Tr} (item (3')). Let $2\leq j\leq n$.
Then
the sheaf
$\cc H^j_{D\times \bb A^1}(X\times \bb A^1,\cc F))$
vanishes. Obviously, the stalk of this sheaf vanishes at every point $z\in (X-D)\times \bb A^1$.
If $z\in D\times \bb A^1$ then
the stalk of the sheaf at the point $z$ is equal to the group
$H^j_{\cc D}(\cc X,\cc F)$,
where $\cc X$ is the henselzation of $X\times \bb A^1$ at $z$
and $\cc D$ is the henselzation of $D\times \bb A^1$ at $z$.
Since $H^j_{Nis}(\cc X,\cc F)=0$ one has equalities
$$H^j_{\cc D}(\cc X,\cc F)=H^{j-1}_{Nis}(\cc X-\cc D,\cc F)/Im[H^{j-1}_{Nis}(\cc X,\cc F)]=H^{j-1}_{Nis}(\cc X-\cc D,\cc F)/H^{j-1}_{Nis}(\cc X,\cc F).$$
The latter equality makes sense by Theorem
\ref{Few_On_P_w_Tr}(item (3')).

Now, applying Theorem
\ref{Few_On_P_w_Tr}(item (5))
and
Corollary \ref{Exc_On_Rel_Aff_Line_3}
to the presheaf
$H^{j-1}_{Nis}(- ,\cc F)$
and
arguing as in the proof of Lemma \ref{F_1(D)}
we get an isomorphism
$$H^{j-1}_{Nis}(\cc X-\cc D,\cc F)/H^{j-1}_{Nis}(\cc X,\cc F)\cong H^{j-1}_{Nis}(\cc D \times \bb G_m,\cc F)/H^{j-1}_{Nis}(\cc D \times \bb A^1,\cc F).$$
It suffices to check that
$H^{j-1}_{Nis}(\cc D \times \bb G_m,\cc F)=0$.
The presheaf $H^{j-1}_{Nis}(-  \times \bb G_m,\cc F)$
is a homotopy invariant  $\bb ZF_*$-presheaf, which is also quasi-stable.
By Theorem
\ref{Few_On_P_w_Tr}(item (3'))
the map
$$H^{j-1}_{Nis}(\cc D \times \bb G_m,\cc F)\to H^{j-1}_{Nis}(Spec (k(D)) \times \bb G_m,\cc F)=H^{j-1}_{Nis}(\bb G_{m,k(D)},\cc F)$$
is injective. By Theorem \ref{InjOnAffLine} the map
$H^{j-1}_{Nis}(\bb G_{m,k(D)},\cc F)\to
H^{j-1}_{Nis}(Spec(k(D)(t)),\cc F)$ is injective. The latter group
vanishes, because the Nisnevich topology on $Spec(K)$ is trivial,
where $K$ is a finitely generated field over $k$. Thus
$H^{j-1}_{Nis}(\cc D \times \bb G_m,\cc F)=0$ and $H^j_{\cc D}(\cc
X,\cc F)=0$, too.
\end{proof}

{\it Let us return to the proof of Theorem~\ref{Hn_hom_inv}}.
Under the hypotheses of Lemma
\ref{F_1(D_times_A1)}, the preceding lemma implies
the map
\begin{equation}
\label{Injectivity_Hn}
H^n_{Nis}(X\times \bb A^1,\cc F) \to H^n_{Nis}((X-D)\times \bb A^1,\cc F)
\end{equation}
is injective.

Next, we claim that for a $k$-smooth variety $Y$ and the projection
$p: Y \times \bb A^1 \to Y$
the Nisnevich sheaves $R^jp_*(\cc F)$
vanish for $j=1,...,n-1$.
In fact, such a sheaf is associated
with the presheaf
$U\mapsto H^j_{Nis}(U\times \bb A^1, \cc F)$.
The presheaf
$H^j_{Nis}(U, \cc F)$
is homotopy invariant.
Thus
$H^j_{Nis}(U\times \bb A^1, \cc F)=H^j_{Nis}(U, \cc F)$.
Since $j\geq 1$
the associated Nisnevich sheaf vanishes.

Since the Nisnevich sheaves $R^jp_*(\cc F)$
vanish for $j=1,...,n-1$, one has an exact sequence
$$0 \to H^n_{Nis}(Y,p_*(\cc F)) \xrightarrow{\alpha} H^n_{Nis}(Y\times \bb A^1) \xrightarrow{\beta}  H^0_{Nis}(Y, R^np_*(\cc F)).$$
Since the sheaf $\cc F$ is homotopy invariant, then for
$$A:=Ker[i^*_0: H^n_{Nis}(Y\times \bb A^1, \cc F) \to H^1_{Nis}(Y, \cc F)]$$
the map $\beta|_A: A \to H^0_{Nis}(Y, R^np_*(\cc F))$
is injective. Arguing as in the proof of Proposition
\ref{H1_vanishes_1}
and using the fact that map
(\ref{Injectivity_Hn}) is injective,
we get the following

\begin{lem}\label{Hn_vanishes_1}
Suppose the base field $k$ is infinite and perfect with $\chr k\ne 2$.
Let $\cc F$ be an $\bb A^1$-invariant quasi-stable $\bb ZF_*$-sheaf of Abelian groups.
Let $Y$ be a $k$-smooth variety and let
$a\in H^n_{Nis}(Y\times \bb A^1, \cc F)$
be an element such that its restriction to $Y\times \{0\}$ vanishes.
Then $a=0$. If $\chr k=2$, then the same statement is true if we assume
that the $\bb ZF_*$-sheaf $\cc F$ is a sheaf of $\bb Z[1/2]$-modules.
\end{lem}

The pull-back map
$i^*_0: H^n_{Nis}(Y\times \bb A^1, \cc F)\to H^n_{Nis}(Y, \cc F)$
is surjective by functoriality. By Lemma
\ref{Hn_vanishes_1}
it is also injective. This completes the proof of Theorem~\ref{Hn_hom_inv}.
\end{proof}

\end{document}